\documentclass[11pt,reqno, a4paper]{amsart}
\usepackage{graphicx} 
\usepackage{mathrsfs}
\usepackage{color}
\usepackage{epsfig}
\usepackage[outdir=./]{epstopdf}
\usepackage{enumerate}
\usepackage{amssymb,amsmath,amsthm,graphicx,amsfonts}
\usepackage{hyperref}
\usepackage[margin=1.0in]{geometry}
\usepackage[ruled,linesnumbered]{algorithm2e}

\usepackage{standalone}
\usepackage{tikz} 
\usepackage{pgfplots}
\usepackage{xstring}
\usepackage{subcaption}

\newcommand{\projectroot}{.} 
\newcommand{\datapath}{\projectroot/data}

\usepackage{titlesec} 
\titleformat{\section}{\vskip10pt\large\bfseries}{\thesection.}{0.5em}{\centering\vspace{5pt}}
\titleformat{\subsection}{\vskip10pt\normalsize\bfseries}{\thesubsection.}{0.5em}{}

\newtheorem{theorem}{Theorem}[section]
\newtheorem{assumption}[theorem]{Assumption}
\newtheorem{lemma}[theorem]{Lemma}

\newtheorem{remark}[theorem]{Remark}
\newtheorem{corollary}[theorem]{Corollary}
\theoremstyle{definition}

\def\R{\mathbb{R}}

\def\d{\mathrm{d}}
\def\P{\mathbb{P}}
\def\L {\mathcal{L}}
\allowdisplaybreaks

\newcommand{\taufac}{C_{\textup{scal}}}
\newcommand{\tauref}{\tau_{\textup{ref}}}
\newcommand{\hspaceref}{h_{\textup{ref}}}

\newcommand{\massmatrix}{\mathbf{M}_h}
\newcommand{\stiffmatrix}{\mathbf{A}_h}
\newcommand{\solutionvec}{\mathbf{u}_h}
\newcommand{\solutiondtvec}{\mathbf{v}_h}
\newcommand{\loadvectorn}{\mathbf{f}_h^n}

\numberwithin{equation}{section}
\numberwithin{figure}{section}

\def\vec#1{\mathbf{#1}}

\usepackage{xcolor}

\begin{document}

\title[]{Finite element exponential integration for rough solutions of nonlinear wave equations. Part II: Dynamic boundary conditions on curved domains
}

\author[]{Jiachuan Cao\,\,}
\address{Institute for Applied and 
	Numerical Mathematics, Karlsruhe Institute of Technology, Englerstr.~2, 76131 Karlsruhe, Germany}
\email{\{jiachuan.cao,benjamin.doerich\}@kit.edu}

\author[]{Benjamin D\"{o}rich\,\,}
%

\author[]{Buyang Li}
\address{Department of Applied Mathematics, The Hong Kong Polytechnic University,
	Hung Hom, Hong Kong}
\email{buyang.li@polyu.edu.hk}

\subjclass[2010]{65M12, 65M15, 65M60, 35L05}


\keywords{nonlinear wave equation, low regularity, nontrivial boundary conditions, error estimates}
\thanks{The first two  authors acknowledge funding by the Deutsche Forschungsgemeinschaft (DFG, German Research Foundation) - 
Project-ID 258734477 - SFB 1173.
The work of B. Li was partially supported by the National Natural Science Foundation of China (project no. 12525111) and Hong Kong Research Grants Council (project no. 15306123). 
}

\maketitle
\vspace{-20pt}

\begin{abstract}\noindent
{\small 
	We study nonlinear wave equations with dynamic boundary conditions on smooth bounded domains and analyze a fully discrete approximation in the low-regularity regime. The method combines isoparametric bulk--surface finite elements of degree \(k\) with an exponential integrator in time. Assuming only bounded energy of the exact solution, we prove convergence of the displacement--velocity pair in the weak norm \(L^2(\Omega;\Gamma)\times H^{-1}(\Omega;\Gamma)\). The scheme achieves first-order convergence in time and spatial convergence of order \(h^{2/3}\) for \(k=1\) and \(h^{(k+2)/(k+3)}\) for \(k\ge 2\). In particular, these rates show that higher-order finite elements retain a provable asymptotic advantage even at low regularity. A central difficulty is that the continuous and discrete bulk--surface problems are posed on different geometries and must therefore be compared directly in weak norms. To address this, we develop a weak-norm framework for non-conforming geometries based on lift and adjoint-lift operators, combined with a frequency-decomposition argument. To the best of our knowledge, this is the first fully discrete low-regularity convergence result for nonlinear wave equations with dynamic boundary conditions in a non-conforming bulk--surface finite element setting. Numerical experiments confirm the predicted rates and illustrate the improved efficiency of higher-order methods.
}
\end{abstract}


\setlength\abovedisplayskip{4.5pt}
\setlength\belowdisplayskip{4.5pt}

\section{Introduction}
We study the numerical approximation of rough solutions to nonlinear wave equations with dynamic boundary conditions. A prototypical example is the following coupled bulk--surface system with purely second-order dynamic boundary conditions; see, for example, \cite{HipK20,vitillaro2017wave}:
\begin{equation}\label{eq:NLW_db}
	\left\{
	\begin{array}{lll}
		\partial_{tt}u - \Delta u = f_{\Omega}(u) & \text{in } \Omega \times (0, T], \\[2mm]
		\partial_{tt}u - \Delta_{\Gamma} u +\partial_{\vec{n}}u= f_{\Gamma}(u) & \text{in } \Gamma \times (0, T], \\[2mm]
		u|_{t=0} = u^{0}, \quad \partial_{t}u|_{t=0} = v^{0} & \text{in } \overline{\Omega}.
	\end{array}
	\right.
\end{equation}
Here, \(\Omega \subset \mathbb{R}^d\), with \(d=2,3\), is a smooth bounded domain with boundary \(\Gamma = \partial\Omega\), \(\Delta_\Gamma\) denotes the Laplace--Beltrami operator on \(\Gamma\), and \(\partial_{\vec{n}}\) is the outer normal derivative. Problems of the form \eqref{eq:NLW_db} arise in models of wave propagation with active boundary dynamics, transmission phenomena, and bulk--surface interactions. 
In contrast to standard boundary conditions, the boundary is part of the evolution itself: it carries its own wave dynamics and is coupled to the bulk through the normal derivative. For analytical results on well-posedness and qualitative properties of such problems, we refer to \cite{vitillaro2017wave,vitillaro2013strong}.

In recent years, numerical methods for wave equations with dynamic or other nonstandard boundary conditions have been developed in several directions. For linear wave-type equations, a unified error analysis for spatial discretizations was established in \cite{HipHS19,HipK20}, and this was extended to nonlinear problems in \cite{HL2020}. Fully discrete schemes based on bulk--surface finite element spaces have been analyzed together with various time discretizations, including exponential integrators \cite{DL2022} and implicit--explicit methods \cite{HL2021}. We also mention related developments for elliptic problems \cite{ER2013,ER2021}, parabolic equations with bulk--surface coupling \cite{ER2021,KL2017,altmann2024second}, and quasilinear wave equations \cite{dorich2022exponential,dorich2025strong}.
However, available convergence analyses typically assume sufficient smoothness of the exact solution. For hyperbolic problems, this assumption is often too strong, as limited regularity of the initial data generally persists in time. Consequently, classical error estimates are not applicable, making it essential to design methods whose convergence holds under minimal regularity assumptions.

Recent years have seen significant progress in the numerical treatment of low-regularity dispersive equations through the development of low-regularity integrators; see, for example, \cite{feng2025explicit,Hofmanova-Schratz-2017,ji2025filtered,li2025unfiltered,OS18,wu2022first} and, for wave-type equations, \cite{CLLY2024,li2023second,RS21,wang2022symmetric}. However, nearly all available convergence results for such methods rely on Fourier or spectral discretizations and are therefore largely restricted to simple geometries. The reason is structural: in spectral settings, the analysis benefits crucially from commutation properties between the projection operators and the underlying linear dynamics. For other spatial discretizations, this commutativity is lost, and in the low-regularity regime the resulting commutator errors cannot be absorbed by assuming additional smoothness of the exact solution.

A central question is therefore whether rigorous fully discrete error estimates can be established beyond periodic geometries and Fourier-based spatial discretizations. Our companion paper~\cite{cao2026approximating} provided a first answer to this question for nonlinear wave equations with standard boundary conditions on polygonal domains. In the present work, we show that this low-regularity theory can be pushed substantially further: it extends to smooth domains, where the curved bulk--surface geometry leads naturally to non-conforming finite element meshes, and also to dynamic boundary conditions of the form \eqref{eq:NLW_db}. This is precisely a setting in which previous rigorous convergence analyses have typically required substantially stronger regularity assumptions on the exact solution. By contrast, we prove convergence already for finite-energy solutions
\begin{align}\label{finite-energy}
U(0)=(u^0,v^0)\in H^1(\Omega;\Gamma)\times L^2(\Omega;\Gamma),
\end{align}
using a fully discrete scheme that combines bulk--surface isoparametric finite elements in space with an exponential integrator in time.

This extension is far from routine. In a geometric bulk--surface finite element method for wave equations with dynamic boundary conditions, one must deal with a coupled bulk--surface operator whose structure is substantially more intricate than that of the standard Laplacian. Moreover, the exact problem is posed on \((\Omega,\Gamma)\), whereas the discrete problem is formulated on the approximate geometry \((\Omega_h,\Gamma_h)\), so that the continuous and discrete solutions live on different domains. In the low-regularity regime considered here, the comparison must be carried out in weak Sobolev norms, where the geometric mismatch near the boundary requires particularly delicate estimates in order not to compromise the sharpness of the overall error bounds. A central analytical task is therefore to develop a lift and adjoint-lift framework that provides sufficiently sharp approximation estimates in \(L^2\)-, \(H^{-1}\)-, and \(H^1\)-based bulk--surface scales for the weak-error analysis.

A further difficulty comes from the nonlinear term. Because of the geometric complexity, the fully discrete method evaluates the nonlinearity on the approximate geometry by means of nodal interpolation. The analysis must therefore control, again in weak norms, the combined effect of low regularity, geometric transfer, and interpolation in the treatment of the nonlinear term.

In this paper we prove such a result. We consider finite-energy  solutions with initial value \eqref{finite-energy}
and analyze an implementable fully discrete method combining isoparametric bulk--surface finite elements of degree \(k\) with the exponential Euler scheme
\begin{equation}\label{eq:exp_intro}
	U_h^{n+1}
	= e^{\tau L_h}U_h^n
	+\tau\varphi_1(\tau L_h)F_h(U_h^n),
	\qquad
	\varphi_1(z)=\frac{e^z-1}{z}.
\end{equation}
Here \(L_h\) is the discrete coupled bulk--surface wave operator and \(F_h\) is the interpolated discrete nonlinearity on \((\Omega_h,\Gamma_h)\). Under the sole assumption of finite energy, i.e. \(U\in C([0,T];H^1(\Omega;\Gamma)\times L^2(\Omega;\Gamma))\),
we prove first-order convergence in time and the spatial error bounds \(h^{2/3}\) for linear elements and \(h^{(k+2)/(k+3)}\) for isoparametric elements of degree \(k\ge 2\). These rates are obtained without any additional Sobolev regularity of the exact solution. In particular, higher-order isoparametric finite elements retain a provable asymptotic advantage even in the energy regime.

The proof rests on several key ingredients, which are developed in Section~\ref{subsec:preliminary_db}:
\begin{itemize}
	\item We introduce a spectral truncation associated with the coupled bulk--surface elliptic operator. This truncation is used only at the analytical level and does not enter the numerical method itself. It yields an auxiliary regularized dynamics that allows the linear, nonlinear, temporal, and geometric errors to be balanced within a unified fully discrete argument.
	
	\item We develop a geometric transfer theory between \((\Omega_h,\Gamma_h)\) and \((\Omega,\Gamma)\) based on lift and adjoint-lift operators. This theory provides the stability and approximation estimates in weak Sobolev norms needed in the low-regularity regime.
	
	\item We establish weak-norm error estimates for the continuous and discrete coupled bulk--surface wave propagators. These estimates yield a weak \(H^{-1}\)-type gain for the linear error and are crucial for recovering spatial convergence from finite-energy data.
	
	\item We derive weak-norm estimates for the nonlinear term. In particular, we control both the consistency error arising from the evaluation of the nonlinearity on the approximate geometry via nodal interpolation and the weak-norm stability of the resulting interpolated nonlinear operator, both of which are essential for the convergence analysis of the fully discrete scheme.
\end{itemize}
%
%
%
%
%
%
%
%

The paper is organized as follows. Section~\ref{sec:main_results} introduces the analytical setting, the isoparametric bulk--surface finite element discretization, the fully discrete exponential Euler scheme, and the assumptions underlying the analysis, and it states the main convergence result. Section~\ref{subsec:preliminary_db} develops the main analytical ingredients of the low-regularity error analysis, including the frequency truncation, the weak-norm lift and adjoint-lift estimates, the weak-norm bounds for the filtered linear propagators, and the corresponding estimates for the nonlinear term. Section~\ref{sec:proof} contains the proof of the main convergence theorem: it first derives the consistency estimate for the frequency-localized auxiliary scheme, then analyzes the approximation of the nonlinear term in the fully discrete method, and finally establishes the global error recursion and completes the proof of Theorem~\ref{thm:convergence_db}. Finally, Section~\ref{sec:numerical_experiments} presents numerical experiments.

\textbf{Notations.} For the sake of notational simplicity, we write \( A \lesssim B \) (or equivalently \( B \gtrsim A \)) to denote that
\(A \leq C B\) for some constant \( C > 0 \). The constant \( C \) may vary from line to line, but it is always independent of the time step \(\tau\), the mesh size \(h\), the frequency cut-off constant \(r\) and the bounds of the numerical solution. 
Similarly, we use \( A \sim B \) to denote the equivalence \(C^{-1} B \leq A \leq C B\)
for some constant \( C > 0 \). In other words, \( A \sim B \) is equivalent to \( A \lesssim B \lesssim A \).  We also use the notation \(C(\cdot)\) to denote a positive constant whose value may change from line to line but depends only on the quantities specified in the parentheses.

\section{Analytical framework and main results}\label{sec:main_results}
In this section, we reformulate the nonlinear wave equation with dynamic boundary conditions in an abstract framework and introduce the fully discrete isoparametric bulk--surface finite element method. We then state the main convergence result.

\subsection{Abstract formulation}

Let \(V\) and \(H\) be Hilbert spaces such that \(V\) is densely and continuously embedded in \(H\). In this article, we take
\begin{align*}
	H &:= L^2(\Omega)\times L^2(\Gamma),\\
	V &:= H^1(\Omega;\Gamma)
	:= \{v\in H^1(\Omega)\,:\,\gamma(v)\in H^1(\Gamma)\}.
\end{align*}
Here, \(\gamma\) denotes the trace operator. Equivalently, \(H^1(\Omega;\Gamma)\) may be identified with the subspace of \(H^1(\Omega)\times H^1(\Gamma)\) consisting of pairs whose bulk and boundary components satisfy the trace compatibility condition. Throughout, we use the notation \(H^0(\Omega;\Gamma)=L^2(\Omega;\Gamma)=L^2(\Omega)\times L^2(\Gamma).\) For each integer \(q \geq 2\), we define
	\[
	H^q(\Omega;\Gamma):=\{v\in H^q(\Omega):\gamma(v)\in H^q(\Gamma)\}.
	\]
	See also \cite{Kashiwabara-2015} for further details on these function spaces.

The nonlinear wave equation \eqref{eq:NLW_db} can be reformulated in the following weak form:
\begin{align}\label{eq:weak_form}
	m(u_{tt},v)+a(u,v)=m(f(u),v),
	\qquad \text{for all } v\in V,\ t\in(0,T].
\end{align}
Here, the bilinear forms \(m\) and \(a\) are defined by
\begin{align*}
	m(u,v) &:= \int_\Omega uv\,\mathrm{d}x + \int_\Gamma uv\,\mathrm{d}s,\\
	a(u,v) &:= \int_\Omega \nabla u\cdot\nabla v\,\mathrm{d}x
	+ \int_\Gamma \nabla_\Gamma u\cdot\nabla_\Gamma v\,\mathrm{d}s.
\end{align*}
The nonlinear term \(f(u)=(f_\Omega(u),f_\Gamma(u))\) is understood in the abstract setting through
\begin{align*}
	m(f(u),v)
	:= \int_\Omega f_\Omega(u)\,v\,\mathrm{d}x
	+ \int_\Gamma f_\Gamma(u)\,v\,\mathrm{d}s.
\end{align*}
Thus, \(m\) defines the inner product on \(H\), while \(a\) is a symmetric, nonnegative bilinear form on \(V\).
Furthermore we know that there exist a constant $c_{G}> 0$ such that
\begin{align*}
	\tilde{a}:=a+c_G m
\end{align*}
is an inner product on $V$ with the induced norm $\|\cdot\|_{\tilde{a}}\sim \|\cdot\|_{H^1(\Omega;\Gamma)}$.

 By the standard operator representation associated with the bilinear form \(a\), there exists a self-adjoint, nonnegative operator \(A : D(A) \subset H \to H\), such that
 \begin{align}\label{eq:def_A}
 a(u,v)=m(Au,v), \qquad u\in D(A), \ v\in V.
 \end{align}
 For \(c_G>0\), we further define the positive operator
 \begin{align}\label{eq:def_tilde_A}
 \widetilde{A}=A+c_G.
 \end{align}
Here and in the following, \(c_G\) is understood as \(c_G\mathrm{Id}\). For the sake of notational simplicity, we omit the identity operator whenever no confusion can arise.

Thus, we can reformulate the problem \eqref{eq:NLW_db} in the following abstract form:
\begin{align}\label{eq:abstract_form_db}
	u_{tt}+Au=f(u).
\end{align}

\begin{remark}\upshape
	Introducing the strictly positive operator \(\widetilde{A}\)  serves two purposes. First, the spectral structure of \(\widetilde{A}\) provides an eigenspace decomposition of \(H\), which in turn yields the frequency decomposition operators used in the low-regularity convergence analysis. Second, the strict positivity of \(\widetilde{A}\) ensures the existence of \(\widetilde{A}^{-1}\) and its discrete analogue, which is crucial for the Aubin--Nitsche argument and the derivation of sharp weak-norm error estimates.
\end{remark}




We next rewrite \eqref{eq:abstract_form_db} as a first-order evolution system:
\begin{equation}\label{system_db}
	\left\{
	\begin{array}{lll}
		\partial_t U - L U = F(U) & \text{for } t \in (0, T], \\[2mm]
		U|_{t=0} = U^0,
	\end{array}
	\right.
\end{equation}
where
\begin{align}\label{system_notation_db}
	U = \begin{pmatrix}
		u \\ v
	\end{pmatrix}, \quad
	U^0 = \begin{pmatrix}
		u^0 \\ v^0
	\end{pmatrix}, \quad
	F(U) = \begin{pmatrix}
		0 \\ f(u)
	\end{pmatrix}, \quad
	L = \begin{pmatrix}
		0 & 1 \\ -A & 0
	\end{pmatrix}.
\end{align}
This system has the same formal structure as the classical first-order formulation of nonlinear wave equations, with the bulk and boundary dynamics coupled through \(A\) and \(f\). Throughout the paper, \(U\in C([0,T];H^1(\Omega;\Gamma)\times L^2(\Omega;\Gamma))\) denotes a mild solution of \eqref{system_db}, that is,
\begin{align}\label{eq:variation-of-constant-db}
	U(t+s)
	=
	e^{sL}U(t)
	+
	\int_0^s e^{(s-\sigma)L}
	F\bigl(U(t+\sigma)\bigr)\,\d\sigma,
\end{align}
for \(t,s\ge0\) with \(t+s\le T\). For related well-posedness results for wave equations with dynamic boundary conditions, we refer to \cite{vitillaro2017wave,vitillaro2013strong}.

\subsection{Bulk--surface finite element discretization and main convergence result}\label{subsec:fully_discrete}
Next, we consider solving equation \eqref{system_db} using the fully discrete scheme of the bulk-surface finite element method. Let $\mathcal{T}_h$ be the quasi-uniform mesh of isoparametric elements $K$ of degree $k$ with mesh size $h$. The discrete domain and its boundary are denoted by 
\begin{align*}
	\Omega_h:=\bigcup_{K\in\mathcal{T}_h}K\approx \Omega,\quad \text{and}\quad \Gamma_h=\partial \Omega_h.
\end{align*}
Let $\gamma_h$ denote the trace operator on $\Gamma_h$. We define the isoparametric bulk--surface finite element space of degree \(k\) as \(X^k_h:=\{(v_h,\gamma_h(v_h)): v_h\in V_h^\Omega,\;\text{and}\; \gamma_h(v_h)\in V_h^\Gamma\}\), where:
	\begin{align*}
		&V^{\Omega}_h:=\left\{v_h\in C(\overline{\Omega_h})\; :\; v_h\big|_K=\hat{v}_h\circ (F_K)^{-1}\;\text{with}\; \hat{v}_h\in \mathbb{P}_k(\hat{K})\;\text{for all}\; K\in \mathcal{T}_h\right\},\\
		&V^{\Gamma}_h:=\left\{w_h\in C(\Gamma_h)\;:\; w_h=v_h\big|_{\Gamma_h}\;\text{with}\; v_h\in V^{\Omega}_h\right\}.
	\end{align*}
	Here $\mathbb{P}_k(\hat{K})$ denotes the space of polynomial of total degree at most $k$ on the reference simplex $\hat{K}$ and $F_K$ is a transformation from $\hat{K}$ to $K$.

We note that, in this work, the computational domain of the current numerical solution, $\Omega_h$, does not match the domain of the original problem, $\Omega$. To compare discrete functions on \(\Omega_h\) and \(\Gamma_h\) with functions on the exact geometry, we use the lift operator $\L_h: L^{2}(\Omega_h)\times L^2(\Gamma_h) \rightarrow L^2(\Omega)\times L^2 (\Gamma)$ to align the domains. More precisely, we use the geometric map \(G\) constructed in \cite[Section~4]{ER2013}, which satisfies
\begin{align}\label{eq:def_G}
	G:\Omega_h\rightarrow \Omega \quad \text{such that}\; G(p)-p\; \text{orthogonal to the tangent space} \; T_{G(p)}\Gamma,
\end{align}
for all $p\in \Gamma_h$. For sufficiently small \(h\), the map \(G\) is a bijection, is piecewise \(C^{k+1}\), and is uniformly bi-Lipschitz. Moreover, it coincides with the identity outside the boundary layer and fixes all finite element interpolation nodes.

The lift operator $\L_h$ is then defined as $\L_h u_h(x) = u_h(G^{-1}(x))$. Furthermore, based on the discussion in \cite{ER2013} regarding the mapping $G$, we have the following norm equivalence results:
\begin{equation}\label{eq:boundedness_lift_operator}
	\begin{array}{l}
		\|v_h\|_{L^2(\Omega_h)}\sim \|\L_h v_h\|_{L^2(\Omega)},\quad\quad\quad\quad\, \|v\|_{L^2(\Omega)}\sim \|\L_h^{-1}v\|_{L^2(\Omega_h)},\\[2mm]
		\|\nabla v_h\|_{L^2(\Omega_h)}\sim \|\nabla \L_h v_h\|_{L^2(\Omega)},\quad\quad\;\; \|\nabla v\|_{L^2(\Omega)}\sim \|\nabla\L_h^{-1}v\|_{L^2(\Omega_h)},\\[2mm]
		\|w_h\|_{L^2(\Gamma_h)}\sim \|\L_h w_h\|_{L^2(\Gamma)},\quad\quad\quad\quad \|w\|_{L^2(\Gamma)}\sim \|\L_h^{-1}w\|_{L^2(\Gamma_h)},\\[2mm]
		\|\nabla_{\Gamma_h} w_h\|_{L^2(\Gamma_h)}\sim \|\nabla_{\Gamma} \L_h w_h\|_{L^2(\Gamma)},\quad \|\nabla_{\Gamma} w\|_{L^2(\Gamma)}\sim \|\nabla_{\Gamma_h}\L_h^{-1}w\|_{L^2(\Gamma_h)}.
	\end{array}
\end{equation}
We note that the discrete versions of the bilinear forms $m$ and $a$ are given by:
\begin{equation}\label{eq:discrete_bilinear}
	\begin{aligned}
		&m_h(u_h, v_h) = \int_{\Omega_h} u_h \,  v_h \, \d x+\int_{\Gamma_h} u_h \, v_h \, \d s,\\
		&a_h(u_h, v_h) = \int_{\Omega_h} \nabla u_h \cdot \nabla v_h \, \d x+\int_{ \Gamma_h} \nabla_{\Gamma_h} u_h \cdot \nabla_{\Gamma_h} v_h \, \d s .\\
	\end{aligned}
\end{equation}
Similarly, we define the discrete analogue of \(\tilde a\) by
\begin{align}\label{eq:def_tilde_ah}
	\tilde{a}_h=a_h+c_G m_h. 
\end{align}

We set
\(H^{-1}(\Omega_h;\Gamma_h)
=(H^1(\Omega_h;\Gamma_h))^*\), using \(m_h\) as the pivot
pairing. Moreover, the uniform shape regularity of the isoparametric meshes and the Lipschitz properties of \(G\) imply that the family \(\{\Omega_h\}_{h\le h_0}\) is uniformly Lipschitz. Consequently, all Sobolev embedding and trace constants used below can be chosen independently of \(h\).


With the help of the lift operator, we define the adjoint lift operators of $\L_h$ in the functional spaces $H$ and $V$ for the problem \eqref{system_db} with a dynamic boundary condition. Here, $H = L^2(\Omega) \times L^2(\Gamma)$ and $V = H^1(\Omega; \Gamma)$. Specifically, the adjoint lift operator $\L^{H*}_{h}: H \to X^k_h$ is defined as
\begin{align}\label{eq:def:Lh*H}
	m_h(\L^{H*}_h u, v_h) = m(u, \L_h v_h), \quad \text{for all} \quad v_h \in X^k_h,
\end{align}
and the adjoint lift operator $\L^{V*}_{h}: V \to X^k_h$ is defined as
\begin{align}\label{eq:def:Lh*V}
	\tilde{a}_h(\L^{V*}_h u, v_h) = \tilde{a}(u, \L_h v_h), \quad \text{for all} \quad v_h \in X_h^k.
\end{align}
By duality, we use the same notation \(\L_h^{H*}\) for its extension
from \(V^*=H^{-1}(\Omega;\Gamma)\) to \(X_h^k\), where the right-hand
side of \eqref{eq:def:Lh*H} is understood as the duality pairing
\(\langle u,\L_hv_h\rangle_{V^*,V}\) whenever \(u\in V^*\).
This extension agrees with \eqref{eq:def:Lh*H} for \(u\in H\).

Based on the variation-of-constants formula \eqref{eq:variation-of-constant-db},
we now construct the fully discrete scheme.
Spatial discretization is performed using the bulk–surface finite element method,
while the exponential Euler method is used for the temporal discretization.
This leads to the following fully discrete scheme:
\begin{align}\label{eq:fully_discrete_db}
	U_h^{n+1}
	= e^{\tau L_h} U_h^n + \tau \varphi_1(\tau L_h) F_h(U_h^n),
	\qquad \text{with} \qquad
	\varphi_1(z)=\frac{e^z-1}{z}.
\end{align}
Here,
	\begin{align}\label{eq:fully_discrete_notation_db}
		U_h^n =
		\begin{pmatrix}
			u_h^n \\[2mm] v_h^n
		\end{pmatrix},
		\qquad
		U_h^0 =
		\begin{pmatrix}
			u_h^0 \\[2mm] v_h^0
		\end{pmatrix},
		\qquad
		F_h(U_h^n) =
		\begin{pmatrix}
			0 \\[2mm] f_h(u_h^n)
		\end{pmatrix},
		\qquad
		L_h =
		\begin{pmatrix}
			0 & 1 \\[2mm] -A_h & 0
		\end{pmatrix}.
	\end{align}
	We set \(u_h^0=\mathcal{L}_h^{H*}u^0\) and \(v_h^0=\mathcal{L}_h^{H*}v^0\). The nonlinear term \(f_h\) is defined by
	\begin{align}\label{eq:def_fh}
		f_h(u_h) = I_h f(\mathcal{L}_h u_h),
	\end{align}
	where \(I_h\) denotes the bulk--surface interpolation operator mapping functions defined on \( (\Omega,\Gamma)\) into the isoparametric finite element space \(X_h^k\). More precisely, for \(g=(g_\Omega,g_\Gamma)\), no compatibility condition between the trace of \(g_\Omega\) on \(\Gamma\) and \(g_\Gamma\) is required. 
	The function \(I_h g\in X_h^k\) is then defined by
	\begin{align}\label{eq:def_Ih}
		I_h g=Q_h(I_{h,\Omega}g_{\Omega}, I_{h,\Gamma}g_{\Gamma})
	\end{align}
	where \(I_{h,\Omega}\) and \(I_{h,\Gamma}\) denote the nodal interpolation operators mapping functions defined on \(\Omega\) and \(\Gamma\) into the finite element spaces \(V_h^\Omega\) and \(V_h^\Gamma\), respectively. Here, \(Q_h : L^2(\Omega_h;\Gamma_h)\to X_h^k\)
	be the \(L^2\)-projection defined by
	\begin{align}
		\label{eq:def_Qh}
		m_h(Q_h \eta,w_h)=m_h(\eta,w_h)
		\qquad \text{for all } w_h\in X_h^k.
	\end{align}

Moreover, \(A_h\) in \eqref{eq:fully_discrete_notation_db} is the discrete analogue of \(A\), defined by
\[
a_h(u_h,v_h)=m_h(A_h u_h,v_h).
\]
We further define \(\widetilde{A}_h=A_h+c_G\), which is the discrete analogue of \(\widetilde{A}\).

\begin{remark}\label{rem:relation_Ih}\upshape
		The operator $I_h$ defined in \eqref{eq:def_Ih} extends the standard
		isoparametric nodal interpolation operator to general bulk--surface
		pairs. Indeed, if $g$ is trace compatible, then
		\[
		\bigl(I_{h,\Omega}g_\Omega,I_{h,\Gamma}g_\Gamma\bigr)
		=
		\bigl(I_{h,\Omega}g_\Omega,
		\gamma_hI_{h,\Omega}g_\Omega\bigr)
		\in X_h^k.
		\]
		Since $Q_h$ is the identity on $X_h^k$, it follows immediately that \(I_hg=\bigl(I_{h,\Omega}g_\Omega,
		\gamma_hI_{h,\Omega}g_\Omega\bigr).\)
\end{remark}

We collect several basic properties that will be used repeatedly.

\begin{lemma}[Basic discrete stability properties]
	\label{lem:basic_results}
	There exist constants \(c,C,C_L>0\), independent of \(h\), such that:
	\begin{itemize}
		\item[(a)] For every \(v\in H^1(\Omega_h;\Gamma_h)\),
		\begin{align}
			\|(1-Q_h)v\|_{L^2(\Omega_h;\Gamma_h)}
			&\le Ch\|v\|_{H^1(\Omega_h;\Gamma_h)}, \label{eq:Qh_approximation_db}\\
			\|Q_hv\|_{H^1(\Omega_h;\Gamma_h)}
			&\le C\|v\|_{H^1(\Omega_h;\Gamma_h)}.
			\label{eq:Qh_H1_stability_db}
		\end{align}
		
		\item[(b)] For every \(w_h\in X_h^k\),
		\begin{align}\label{eq:negative_norm_equivalence_db}
			c\|w_h\|_{H^{-1}(\Omega_h;\Gamma_h)}
			\le
			\|\widetilde A_h^{-1/2}w_h\|_{L^2(\Omega_h;\Gamma_h)}
			\le
			C\|w_h\|_{H^{-1}(\Omega_h;\Gamma_h)}.
		\end{align}
		
		\item[(c)] The operator \(L_h\) generates a \(C_0\)-group on
		\(X_h^k\times X_h^k\), and
		\begin{align}\label{assump_etLh}
			\|e^{t L_h} U_h\|_
			{H^{\theta}(\Omega_h;\Gamma_h)
				\times H^{\theta-1}(\Omega_h;\Gamma_h)}
			\le
			e^{C_L t}
			\|U_h\|_
			{H^{\theta}(\Omega_h;\Gamma_h)
				\times H^{\theta-1}(\Omega_h;\Gamma_h)}
		\end{align}
		for \(\theta=0,1\), \(t\ge0\), and
		\(U_h\in X_h^k\times X_h^k\).
		
	\item[(d)] For \(\theta=0,1\), \(\tau\ge0\), and every
		\(U_h\in X_h^k\times X_h^k\),
		\begin{align}\label{eq:phi1_stability_db}
			\|\varphi_1(\tau L_h)U_h\|_
			{H^\theta(\Omega_h;\Gamma_h)
				\times H^{\theta-1}(\Omega_h;\Gamma_h)}
			\le
			e^{C_L\tau}
			\|U_h\|_
			{H^\theta(\Omega_h;\Gamma_h)
				\times H^{\theta-1}(\Omega_h;\Gamma_h)}.
		\end{align}
	\end{itemize}
\end{lemma}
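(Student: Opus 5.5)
For (a) I would compare \(Q_h\) with a Scott--Zhang-type quasi-interpolant \(\Pi_h\) on the isoparametric mesh that preserves the trace structure. For \(v\in H^1(\Omega_h;\Gamma_h)\), take \(\Pi_h v=(\Pi_{h,\Omega}v,\gamma_h\Pi_{h,\Omega}v)\). The boundary degrees of freedom of \(\Pi_{h,\Omega}\) are averaged over boundary faces, so \(\gamma_h\Pi_{h,\Omega}v\) is the surface Scott--Zhang interpolant of \(\gamma_h v\). Standard local estimates, with constants uniform by shape regularity and the uniform Lipschitz bounds of the element maps \(F_K\), then give
\[
\|v-\Pi_hv\|_{L^2(\Omega_h;\Gamma_h)}\lesssim h\|v\|_{H^1(\Omega_h;\Gamma_h)}
\quad\text{and}\quad
\|\Pi_hv\|_{H^1(\Omega_h;\Gamma_h)}\lesssim\|v\|_{H^1(\Omega_h;\Gamma_h)}.
\]
Since \(Q_h\) is the \(m_h\)-orthogonal projection, \(\|(1-Q_h)v\|\le\|v-\Pi_hv\|\), which proves \eqref{eq:Qh_approximation_db}. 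For \eqref{eq:Qh_H1_stability_db}, I would write \(Q_hv=\Pi_hv+Q_h(v-\Pi_hv)\) and apply the inverse inequality on the quasi-uniform bulk--surface space:
\[
\|Q_h(v-\Pi_hv)\|_{H^1}\lesssim h^{-1}\|Q_h(v-\Pi_hv)\|_{L^2}\le h^{-1}\|v-\Pi_hv\|_{L^2}\lesssim\|v\|_{H^1}.
\]
I expect this to be the main technical point. One needs the bulk and surface inverse inequalities, and the trace-compatible quasi-interpolation, to hold uniformly on curved isoparametric elements. This should follow from the stated uniform regularity of \(\{\Omega_h\}\) and of the maps \(F_K\).

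For (b), \(\widetilde A_h\) is \(m_h\)-self-adjoint and positive on \(X_h^k\), and \(\|\widetilde A_h^{1/2}z_h\|^2=\tilde a_h(z_h,z_h)\sim\|z_h\|_{H^1}^2\), uniformly in \(h\). For the lower bound, for any \(\phi\in H^1(\Omega_h;\Gamma_h)\),
\[
m_h(w_h,\phi)=m_h(w_h,Q_h\phi)=m_h(\widetilde A_h^{-1/2}w_h,\widetilde A_h^{1/2}Q_h\phi)\le\|\widetilde A_h^{-1/2}w_h\|\,\|Q_h\phi\|_{\tilde a_h}.
\]
By \eqref{eq:Qh_H1_stability_db} the last factor is bounded by \(C\|\phi\|_{H^1}\), which gives \(c\|w_h\|_{H^{-1}}\le\|\widetilde A_h^{-1/2}w_h\|\). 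For the upper bound, set \(z_h=\widetilde A_h^{-1}w_h\). Then
\[
\|\widetilde A_h^{-1/2}w_h\|^2=m_h(w_h,z_h)\le\|w_h\|_{H^{-1}}\|z_h\|_{H^1}\lesssim\|w_h\|_{H^{-1}}\|\widetilde A_h^{-1/2}w_h\|.
\]

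For (c), \(L_h\) is a linear operator on a finite-dimensional space, so it generates a group. To get the bound for \(\theta=1\), I would use the energy \(\mathcal E(u,v)=\tilde a_h(u,u)+m_h(v,v)\). Along \(u'=v\), \(v'=-A_hu\) we have
\[
\tfrac{\d}{\d t}\mathcal E=2c_Gm_h(u,v)\le c_G\mathcal E,
\]
and Gronwall gives the bound in an equivalent norm. That norm equivalence is harmless, but the statement has no constant in front, so I would define the \(H^1\times L^2\) norm on the discrete space by \(\mathcal E^{1/2}\) with \(C_L=c_G/2\). For \(\theta=0\), I would use the \(L^2\times H^{-1}\) norm given by \(\|u\|^2+\|\widetilde A_h^{-1/2}v\|^2\), which is legitimate by (b). Alternatively, \(e^{tL_h}\) commutes with \(\mathrm{diag}(\widetilde A_h^{-1/2},\widetilde A_h^{-1/2})\), and this maps the \(\theta=0\) norm onto the \(\theta=1\) norm, so the estimate transfers.

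For (d), I would write
\[
\varphi_1(\tau L_h)=\int_0^1e^{s\tau L_h}\,\d s
\]
and apply (c) under the integral, which gives the bound \(e^{C_L\tau}\).
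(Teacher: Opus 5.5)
Your proposal is correct and follows the paper's proof essentially step by step: a trace-compatible Scott--Zhang comparison, $L^2$-best approximation and an inverse estimate for (a); the choice $z_h=\widetilde A_h^{-1}w_h$ with the $H^1$-stability of $Q_h$ for (b), where you merely bypass the paper's intermediate discrete dual norm $\|\cdot\|_{-1,h}$; the energies $\tilde a_h(u,u)+m_h(v,v)$ and $\|u\|^2+\|\widetilde A_h^{-1/2}v\|^2$ with Gronwall for (c); and the integral representation of $\varphi_1$ for (d). Your remark that the prefactor-free bound in (c) really holds in these energy norms makes explicit what the paper absorbs silently. These norms are uniformly equivalent to the stated ones by coercivity of $\tilde a_h$ and by (b). Your commutation shortcut with $\mathrm{diag}(\widetilde A_h^{-1/2},\widetilde A_h^{-1/2})$ for $\theta=0$ is also valid. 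One small correction: the sharp Gronwall rate is $\sqrt{c_G}$ rather than $c_G$ when $c_G<1$, which is immaterial since any $h$-independent $C_L$ suffices.
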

\begin{proof}
	The proof is provided in Appendix~\ref{sec:proof_of_etL-etLh}.
\end{proof}

We impose the following assumption on the nonlinear terms:

\begin{assumption}\label{assump_f_db}
	The nonlinear functions \(f_{\Omega}\) and \(f_{\Gamma}\) belong to \(C^{d}(\mathbb{R})\), where \(d=2,3\) denotes the spatial dimension, and they are assumed to satisfy the following growth conditions. There exist constants
	\begin{align}\label{eq:growth_condition_zeta}
		\zeta_{\Omega}<\left\{\begin{array}{l}
			\infty, \quad \;\; d=2,\\
			3
			,\qquad d= 3,
		\end{array}\right. \quad \text{and}\quad \zeta_{\Gamma}	< \infty, \quad \;\; d=2,3,
	\end{align}
	such that for all $u\in \R$ there hold:
	\begin{equation}\label{eq:growth_condition}
		\begin{array}{l}
			|f^{(a)}_{\Omega}(u)|\leq C \max\{\left(1+|u|\right)^{\zeta_{\Omega}-a},1\},\quad |f^{(a)}_{\Gamma}(u)|\leq C \max\{\left(1+|u|\right)^{\zeta_{\Gamma}-a},1\},
		\end{array}
	\end{equation}
	where $f^{(a)}_{\Omega}$ and $f^{(a)}_{\Gamma}$ denote the $a$-th derivatives of the nonlinear functions 
	$f_{\Omega}$ and $f_{\Gamma}$, respectively, for all $a =0, 1,\cdots, d$.
\end{assumption}

\begin{remark}\upshape
	The assumption~\ref{assump_f_db} imposes regularity and growth conditions on the nonlinearities that are compatible with low-regularity solutions and sufficient for the stability and interpolation estimates used later. Typical examples include sine-type nonlinearities and polynomial nonlinearities of admissible degree.
\end{remark}

We now state the main convergence result.
\begin{theorem}[Main result]\label{thm:convergence_db}
	Under Assumption~\ref{assump_f_db}, let \(U\) be the exact solution of \eqref{system_db}. If $U \in C([0,T]; H^{1}(\Omega; \Gamma) \times L^2(\Omega; \Gamma))$, then there exist \(h_0>0\) and \(\tau_0>0\) such that, for \(0<h\le h_0\) and \(0<\tau\le \tau_0\), the fully discrete isoparametric bulk--surface finite element scheme \eqref{eq:fully_discrete_db} of degree \(k\ge1\) satisfies
	\begin{align}\label{eq:convergence_db}
		\sup_{0 \leq n \leq T/\tau}\|U(t_n)-\L_h U^n_h\|_{L^2(\Omega;\Gamma)\times H^{-1}(\Omega;\Gamma)}\leq C\Bigl( \tau +h^{\frac{\ell_k}{\ell_k+1}}\Bigr),
	\end{align}
	with
	\begin{align}\label{eq:def_lk}
		\ell_k :=
		\begin{cases}
			k+1, & k=1,\\
			k+2, & k\ge2.
		\end{cases}
	\end{align}
	The constant $C>0$ only depends on $T$, $\sup_{t\in[0,T]}\|U(t)\|_{H^1(\Omega;\Gamma)\times L^2(\Omega;\Gamma)}$, and the constants appearing in Assumptions~\ref{assump_f_db}, but is independent of $h$ and $\tau$.
\end{theorem}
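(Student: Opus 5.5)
The proof will go through an auxiliary frequency-truncated problem combined with a discrete error recursion in the weak norm $\|\cdot\|_{L^2\times H^{-1}}$.

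\textbf{Step 1: frequency truncation.} Let $\Pi_r$ be the spectral projection onto eigenfunctions of $\widetilde A$ with eigenvalues at most $r^2$. Set $U_r(t)$ to be the solution of the truncated mild formulation, with data $\Pi_r U^0$ and nonlinearity $\Pi_r F$. Finite energy and the $H^1\to L^2$ gain of $1-\Pi_r$ give
\[
\|U-U_r\|_{L^2\times H^{-1}}\lesssim r^{-1},
\]
uniformly in time. This uses Gronwall in the weak norm together with the local Lipschitz bound of $f$ from $H^1$ into $H^{-1}$, which holds under Assumption~\ref{assump_f_db} since $\zeta_\Omega<3$ and Sobolev embeddings apply. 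Moreover $U_r\in H^2\times H^1$ with norms $\lesssim r$, and $\partial_t U_r$ is bounded in $L^2\times H^{-1}$ uniformly in $r$.

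\textbf{Step 2: error splitting.} Write
\[
U(t_n)-\L_h U_h^n=(U(t_n)-U_r(t_n))+\bigl(U_r(t_n)-\L_h\L_h^{H*}U_r(t_n)\bigr)+\L_h E_h^n,
\]
where $E_h^n=\L_h^{H*}U_r(t_n)-U_h^n$. The middle term is a lift/adjoint-lift approximation error. I will show it is bounded by $h^{\ell_k}$ times an $H^{\ell_k}$-type norm in the weak scale. For the truncated function I interpolate between the available regularity and the frequency cap, giving a bound $\lesssim h^{\ell_k}r^{\ell_k}$ (up to the weak-norm shift). The geometric part of this error is $O(h^{k+1})$ in $L^2$ and better in $H^{-1}$ through the boundary-layer estimates of $G$; this explains $\ell_k=k+2$ for $k\ge2$ and only $k+1$ for $k=1$.

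\textbf{Step 3: recursion for $E_h^n$.} Insert $\L_h^{H*}U_r$ into the scheme to get
\[
E_h^{n+1}=e^{\tau L_h}E_h^n+\tau\varphi_1(\tau L_h)\bigl(F_h(\L_h^{H*}U_r(t_n))-F_h(U_h^n)\bigr)+\delta^n .
\]
The defect $\delta^n$ collects three contributions.
\begin{itemize}
\item A temporal consistency error of order $\tau^2$ in the weak norm. It is obtained from the mild formula, using that $\partial_t f(u_r)$ is bounded in $H^{-1}$.
\item A propagator error $\L_h^{H*}e^{sL}-e^{sL_h}\L_h^{H*}$ acting on truncated data. Via Duhamel this reduces to the Ritz-type error $\L_h^{V*}-\L_h^{H*}$, estimated by Aubin--Nitsche with $\widetilde A_h^{-1}$. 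Summed over steps it gives $\lesssim h^{\ell_k}r^{\ell_k}$ in the weak norm.
\item A nonlinear consistency error $\L_h^{H*}f(u_r)-I_hf(\L_h\L_h^{H*}u_r)$ in $H^{-1}$. It is bounded by the interpolation error of $f(u_r)$ plus the geometric term plus the $Q_h$ defect from Lemma~\ref{lem:basic_results}(a), all of order $h^{\ell_k}r^{\ell_k}$ after using derivatives of $f$ up to order $d$.
\end{itemize}
Stability uses Lemma~\ref{lem:basic_results}(c),(d) with $\theta=0$ and a weak-norm Lipschitz bound for $F_h$ from $L^2$ into $H^{-1}$. That bound is the hard point, because it requires $L^\infty$- or $H^1$-type control of $U_h^n$. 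I plan to obtain it by a bootstrap. Assume inductively $\|E_h^n\|_{L^2\times H^{-1}}\le h^{\ell_k/(\ell_k+1)}$. The inverse estimate and the $H^1$-bound of $\L_h^{H*}U_r$ then give $\|u_h^n\|_{H^1}\lesssim 1$, since the gain exponent matches the choice of $r$ below. This keeps the nonlinear Lipschitz constants uniform, and a discrete Gronwall argument closes the induction for $h\le h_0$, $\tau\le\tau_0$.

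\textbf{Step 4: balancing and main obstacle.} Collecting the bounds gives
\[
\text{error}\lesssim \tau+r^{-1}+h^{\ell_k}r^{\ell_k}.
\]
Choosing $r=h^{-\ell_k/(\ell_k+1)}$ yields $\tau+h^{\ell_k/(\ell_k+1)}$, as claimed. The main obstacle will be the weak-norm nonlinear consistency and stability on the non-conforming geometry. There one must show the interpolated nonlinearity $I_h f(\L_h\cdot)$ differs from $\L_h^{H*}f$ by $O(h^{\ell_k}r^{\ell_k})$ in $H^{-1}(\Omega_h;\Gamma_h)$ despite boundary-layer mismatch. My first attempt would be duality against $\widetilde A_h^{-1}$ test functions in $H^1$, splitting the integrals into interior and boundary-strip parts and using the $O(h^{k+1})$ distance between $\Gamma$ and $\Gamma_h$.
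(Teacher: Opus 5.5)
\textbf{Verdict: genuine gap.} Your overall architecture matches the paper: frequency truncation, Ritz-type adjoint lift, interpolation consistency, and balancing $r^{-1}$ against $h^{\ell_k}r^{\ell_k}$. The step that fails is the bootstrap that keeps the nonlinear Lipschitz constants uniform.

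\textbf{Why the bootstrap in Step 3 fails.} You assume $\|E_h^n\|_{L^2\times H^{-1}}\le h^{\ell_k/(\ell_k+1)}$ and recover $\|u_h^n\|_{H^1}\lesssim 1$ from an inverse estimate. But $E_h^n$ is a general finite element function, so the inverse constant is $h^{-1}$, not $r$. This gives $\|e_h^n\|_{H^1(\Omega_h;\Gamma_h)}\lesssim h^{-1}\bigl(\tau+h^{\ell_k/(\ell_k+1)}\bigr)$. That bound blows up like $h^{-1/(\ell_k+1)}$, and it would also need a CFL-type coupling $\tau\lesssim h$, which the theorem does not assume. The exponent matching you have in mind is the Bernstein inequality $\|\Pi_r w\|_{H^1}\lesssim r\|w\|_{L^2}$, which holds only for spectrally localized functions. $E_h^n$ is not spectrally localized: its defect contains $\mathcal{L}_h^{H*}(\Pi_r-1)F(U_r)$ and the initial mismatch $\mathcal{L}_h^{H*}(\Pi_r-1)U^0$, both only $O(r^{-1})$ in the weak norm. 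The same circularity sits in Step 1. Your weak-norm Gronwall argument for $U-U_r$ needs uniform-in-$r$ energy bounds for the truncated nonlinear solution $U_r$ as input, and you never justify them.

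\textbf{The missing idea.} You need an energy-norm bound for the numerical solution that comes from the scheme itself, not from the weak error.
\begin{itemize}
\item The paper uses the $\theta=1$ stability of Lemma~\ref{lem:basic_results}(c)--(d) to get $\|U_h^{n+1}\|_{H^1\times L^2}\lesssim\|U_h^0\|_{H^1\times L^2}+\sum_{j\le n}\tau\|f_h(u_h^j)\|_{L^2}$.
\item It then applies the H\"older-type estimate \eqref{eq:Ihfuh-Ihfvh_2} around the smooth comparison function $I_h\Pi_r u(t_j)$. This gives $\|f_h(u_h^j)\|_{L^2}\le C+C(\|u_h^j\|_{H^1})\,\|I_h\Pi_r u(t_j)-u_h^j\|_{L^2}^{\alpha}$, where the last factor is $\lesssim h+r^{-1}+a^j$.
\item The result is a coupled system for the weak error $a^n$ and the energy norm $b^n$: $a^{n+1}\le A+\sum_j\tau C(b^j)(\delta+a^j)$ and $b^{n+1}\le B+\sum_j\tau C(b^j)(\delta+a^j)^{\alpha}$.
\item Because $\alpha>0$ makes the perturbation of $b^n$ small, this system closes by induction for $h\le h_0$ and $\tau\le\tau_0$ chosen independently.
\end{itemize}
The paper also never solves a truncated nonlinear problem. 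It compares $U(t_{n+1})$ directly with $e^{\tau L}U(t_n)+\tau\varphi_1(\tau L)\Pi_rF(\Pi_rU(t_n))$. All energy bounds on the continuous side therefore come from the hypothesis on $U$, which removes the Step 1 issue.

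\textbf{Secondary point: the linear propagator error.} With your splitting through $\mathcal{L}_h^{H*}$, the one-step linear defect is $\bigl(0,\widetilde A_h(\mathcal{L}_h^{H*}-\mathcal{L}_h^{V*})w_1\bigr)$. By Lemma~\ref{lem:basic_results}(b), its $H^{-1}(\Omega_h;\Gamma_h)$-norm is comparable to the $H^1$-norm of $(\mathcal{L}_h^{H*}-\mathcal{L}_h^{V*})w_1$, which is only $O(h^kr^k)$. Summing these defects with $\theta=0$ stability therefore does not give $h^{\ell_k}r^{\ell_k}$. You have two ways out:
\begin{itemize}
\item Compare with $\mathcal{L}_h^{V*}U_r$ instead. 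The defect then becomes $(\mathcal{L}_h^{H*}-\mathcal{L}_h^{V*})\widetilde A w_1$, measured in $H^{-1}$.
\item Treat the linear flow globally by an energy argument, as in Theorem~\ref{lem:estimate_etL-etLh_db}.
\end{itemize}
This is also the real source of the dichotomy in $\ell_k$, not boundary-layer geometry. The $H^{-1}$-estimate \eqref{eq:Hm1_eastimate_Lv*_kp1} for $\mathcal{L}_h^{V*}$ requires $k\ge2$, because its duality step uses the Ritz estimate with $h^2\|\widetilde A^{-1}v\|_{H^3}$.
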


{
	\begin{remark}\upshape \label{rem:other_bc}
		Let us note that our theory remains valid also for the case of homogeneous Dirichlet or Neumann boundary conditions, by simply neglecting the boundary terms in the formulation and the analysis. 	
	\end{remark}
}

\begin{remark}\upshape
	Since we assume only bounded energy, namely \(U \in C([0,T]; H^{1}(\Omega;\Gamma)\times L^2(\Omega;\Gamma)),\) the stronger \(H^1\times L^2\)-type error estimates available for smooth solutions; see, for example, \cite{DL2022,KL2017,HL2020}; are not applicable in the present setting. For this reason, we measure the error in the weaker norm \(L^2(\Omega;\Gamma)\times H^{-1}(\Omega;\Gamma)\), in which we prove convergence of the geometrically non-conforming bulk--surface finite element scheme with lifting.
	
	Moreover, the sharp estimate for the lift adjoint \(\mathcal{L}^{V*}\) in the weak norm \(H^{-1}(\Omega;\Gamma)\) requires \(k\ge2\); see \eqref{eq:Hm1_eastimate_Lv*_kp1} in Lemma~\ref{lem:high_order_estimates}. As a consequence, in Theorem~\ref{lem:estimate_etL-etLh_db} we treat the cases \(k=1\) and \(k\ge2\) separately, which leads to the different convergence orders in \eqref{eq:convergence_db}.
	
	Although the limited regularity prevents the optimal higher-order spatial rates known for smooth solutions, our analysis still yields the rate \(h^{\frac{k+2}{k+3}}\) for \(k\ge2\) and \(h^{\frac 23}\) for \(k=1\). This shows that higher-order schemes remain advantageous even for low-regularity problems. The numerical experiments in Section~\ref{sec:numerical_experiments} support this observation.
\end{remark}

\section{Analytical ingredients of the low-regularity error analysis}\label{subsec:preliminary_db}

In this section, we develop the four main ingredients of the low-regularity error analysis used in the proof of the main theorem. We first introduce the spectral tools and the frequency-localized auxiliary scheme. We then establish weak-norm estimates for the lift and adjoint-lift operators $\L_h^{H*}$ and $\L_h^{V*}$, which allow us to compare functions on the exact and approximate geometries. Next, we derive weak-norm bounds for the difference between the continuous and discrete propagators $e^{tL}$ and $e^{tL_h}$ on low-frequency components. Finally, we prove the corresponding weak-norm estimates for the nonlinear terms under Assumption~\ref{assump_f_db}.

\subsection{Spectral truncation and the frequency-localized auxiliary scheme}
Classical finite element error estimates require more regularity than the
finite-energy assumption available here. We therefore introduce a spectral
cut-off, used only in the analysis, which regularizes the low-frequency
component while controlling the high-frequency remainder.

Since $\widetilde A=A+c_G$ is self-adjoint and strictly positive and the
embedding $V\hookrightarrow H$ is compact, $\widetilde A^{-1}$ is compact on
$H$. Hence, there exists an $H$-orthonormal basis
$\{e_\alpha\}_{\alpha\in I}$ and positive numbers
$\{\lambda_\alpha\}_{\alpha\in I}$ such that
\[
\widetilde A e_\alpha=\lambda_\alpha^2e_\alpha,
\qquad
\lambda_\alpha\to\infty.
\]


For $\ell\geq 0$, we introduce the Hilbert space
\[
\mathcal{Y}^\ell:=D(\widetilde A^{\ell/2}),
\qquad
\|w\|_{\mathcal{Y}^\ell}:=\|\widetilde A^{\ell/2}w\|_{H},
\]
and define $\mathcal{Y}^{-\ell}:=(\mathcal{Y}^\ell)^*$ as the dual space of $\mathcal{Y}^{\ell}$ with respect to the inner product in $H$. By the spectral theorem, the operator \(L\) generates a
\(C_0\)-group on
\(\mathcal Y^\ell\times\mathcal Y^{\ell-1}\) for every \(\ell\in\mathbb R\).
Moreover, by the continuous counterpart of the
energy argument in Lemma~\ref{lem:basic_results}(c), there exists a
constant \(C>0\), independent of \(\ell\), such that
\begin{align}\label{eq:stability_phi1}
	\|e^{tL}W\|_{\mathcal Y^\ell\times\mathcal Y^{\ell-1}}
	\le
	e^{C|t|}
	\|W\|_{\mathcal Y^\ell\times\mathcal Y^{\ell-1}},
	\qquad
	W\in\mathcal Y^\ell\times\mathcal Y^{\ell-1}.
\end{align}

\begin{lemma}[Relation between $\mathcal{Y}^q$ and $H^{q}(\Omega;\Gamma)$]
	\label{lem:spectral-sobolev-scale}
	Let $\Omega$ have a smooth boundary. Then, with equivalence of norms,
	\[
	\mathcal{Y}^0=H=H^0(\Omega;\Gamma),\qquad
	\mathcal{Y}^1=V=H^1(\Omega;\Gamma),\qquad
	\mathcal{Y}^{-1}=V^{*}=H^{-1}(\Omega;\Gamma).
	\]
	Moreover, for every integer $q\geq 2$,
	\[
	\mathcal{Y}^q\hookrightarrow H^q(\Omega;\Gamma),
	\qquad
	\|w\|_{H^q(\Omega;\Gamma)}
	\lesssim \|w\|_{\mathcal{Y}^q}.
	\]
\end{lemma}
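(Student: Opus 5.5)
The plan is to use the form-domain characterization for $\ell=0,1$, duality for $\ell=-1$, and elliptic regularity of the coupled bulk--surface problem combined with induction for $q\ge 2$.

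\emph{The cases $\ell=0,\pm1$.} The identity $\mathcal{Y}^0=H$ holds by definition. Since $\widetilde A$ is the operator associated with the closed, symmetric, coercive form $\tilde a$ on $V$, Kato's second representation theorem gives $D(\widetilde A^{1/2})=V$ and $\|\widetilde A^{1/2}w\|_H^2=\tilde a(w,w)\sim\|w\|_{H^1(\Omega;\Gamma)}^2$. This yields $\mathcal{Y}^1=V$ with equivalent norms. The space $\mathcal{Y}^{-1}$ is defined as the dual of $\mathcal{Y}^1=V$ with pivot $H$, which is exactly $V^*=H^{-1}(\Omega;\Gamma)$. The norms are equivalent because the dual norms of equivalent norms are equivalent.

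\emph{The case $q=2$.} Let $w\in\mathcal{Y}^2=D(\widetilde A)$ and set $g:=\widetilde Aw\in H$, so that $\|g\|_H=\|w\|_{\mathcal{Y}^2}$. Then $w\in V$ is the weak solution of the coupled elliptic problem
\[
-\Delta w+c_Gw=g_\Omega\ \text{in }\Omega,\qquad -\Delta_\Gamma w+\partial_{\vec n}w+c_Gw=g_\Gamma\ \text{on }\Gamma.
\]
I will invoke the $H^2$ regularity for this bulk--surface problem on smooth domains, as in \cite{Kashiwabara-2015} or \cite{ER2013}. If a direct proof is wanted, I would bootstrap as follows. Since $w|_\Gamma\in H^1(\Gamma)$, the Neumann-type problem in the bulk, read as a Dirichlet problem with data $w|_\Gamma$, gives $w\in H^{3/2}(\Omega)$, so $\partial_{\vec n}w\in L^2(\Gamma)$. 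Elliptic regularity for $-\Delta_\Gamma+c_G$ on the closed smooth manifold $\Gamma$ then gives $w|_\Gamma\in H^2(\Gamma)$. The Dirichlet problem in $\Omega$ with data in $H^{3/2}(\Gamma)$ upgrades this to $w\in H^2(\Omega)$. Tracking constants in each step gives $\|w\|_{H^2(\Omega;\Gamma)}\lesssim\|g\|_H+\|w\|_V\lesssim\|w\|_{\mathcal{Y}^2}$, where the last step uses $\|w\|_{\mathcal{Y}^1}\lesssim\|w\|_{\mathcal{Y}^2}$, which holds because $\lambda_\alpha$ is bounded below.

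\emph{Induction for $q\ge3$.} Assume the inclusion holds up to $q-1$. For $w\in\mathcal{Y}^q$ we have $g=\widetilde Aw\in\mathcal{Y}^{q-2}$, which lies in $H^{q-2}(\Omega;\Gamma)$ with $\|g\|_{H^{q-2}}\lesssim\|w\|_{\mathcal{Y}^q}$; for $q=3$ this uses $\mathcal{Y}^1=V$. The shifted regularity estimate $\|w\|_{H^{q}(\Omega;\Gamma)}\lesssim\|g\|_{H^{q-2}(\Omega;\Gamma)}+\|w\|_{H^{q-1}(\Omega;\Gamma)}$ then closes the induction. This estimate follows from the same alternating bootstrap: the surface problem gains two derivatives from $g_\Gamma-\partial_{\vec n}w\in H^{q-2}(\Gamma)$, and the bulk Dirichlet problem with data in $H^{q-1/2}(\Gamma)$ then gives $w\in H^q(\Omega)$.

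\emph{Main obstacle.} The hardest step is establishing the shifted elliptic regularity of the coupled system, because the bulk and surface equations must be bootstrapped alternately and the half-integer trace gains must match. I would cite \cite{Kashiwabara-2015} for this. The remaining steps reduce to the abstract spectral theory above.
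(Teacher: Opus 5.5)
Your proposal is correct and is essentially the paper's proof: the representation theorem for the closed coercive form gives $\mathcal{Y}^0,\mathcal{Y}^1$, duality gives $\mathcal{Y}^{-1}$, and the case $q\ge 2$ follows by induction from $g=\widetilde A w\in\mathcal{Y}^{q-2}$ together with the generalized Robin regularity of \cite{Kashiwabara-2015}. The paper applies Theorems~3.3--3.4 there directly, with $c_G w$ moved to the bulk right-hand side, rather than going through a shifted estimate with a lower-order term.

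There is one imprecision in your optional bootstrap. The claim $\partial_{\vec n}w\in L^2(\Gamma)$ (resp.\ $H^{q-2}(\Gamma)$) does not follow from $w\in H^{3/2}(\Omega)$ (resp.\ $H^{q-1/2}(\Omega)$) by the trace theorem, which fails at this borderline index. It does hold, however, because $w$ solves the elliptic equation, e.g.\ by the mapping property $H^{s}(\Gamma)\to H^{s-1}(\Gamma)$ of the Dirichlet-to-Neumann operator.
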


\begin{proof}
	The identities for $\mathcal{Y}^0$ and $\mathcal{Y}^1$ follow from the definition of
	$\widetilde A$ and the representation theorem for closed symmetric
	coercive forms. The identity $\mathcal{Y}^{-1}=H^{-1}(\Omega;\Gamma)$ then
	follows by duality.
	
	We prove the remaining assertion inductively. Let $q\geq2$ and
	$w\in \mathcal{Y}^q$. Then
	\[
	g:=\widetilde A w\in \mathcal{Y}^{q-2}.
	\]
	Using the cases $q=0,1$ and the induction hypothesis, both $w$ and
	$g$ have the required $H^{q-2}$ bulk--surface regularity. The equation $\widetilde A w=g$ corresponds to
	\[
	-\Delta w=g_\Omega-c_Gw \quad\text{in }\Omega,
	\qquad
	\partial_n w+c_G\gamma w-\Delta_\Gamma(\gamma w)
	=g_\Gamma \quad\text{on }\Gamma.
	\]
	Hence, the elliptic regularity results for generalized Robin
	boundary conditions
	\cite[Theorems~3.3 and~3.4]{Kashiwabara-2015} yield
	\[
	\begin{aligned}
		\|w\|_{H^q(\Omega;\Gamma)}
		&\lesssim
		\|g_\Omega-c_Gw\|_{H^{q-2}(\Omega)}
		+\|g_\Gamma\|_{H^{q-2}(\Gamma)} \\
		&\lesssim
		\|g\|_{\mathcal{Y}^{q-2}}+\|w\|_{\mathcal{Y}^{q-2}}
		\lesssim \|w\|_{\mathcal{Y}^q}.
	\end{aligned}
	\]
	This completes the induction.
\end{proof}

In particular, $e_\alpha\in H^q(\Omega;\Gamma)$ for every integer
$q\geq0$, and
\begin{equation}
	\label{eq:estimate_el}
	\|e_\alpha\|_{H^q(\Omega;\Gamma)}
	\lesssim
	\|e_\alpha\|_{\mathcal Y^q}
	=
	\lambda_\alpha^q\|e_\alpha\|_H.
\end{equation}
Thus, we can define the projection operator $\Pi_{r}$ as the spectral projection of functions onto the low-frequency subspace $H_r \hookrightarrow H$, 
which is spanned by the basis functions
\[
\mathcal{B}_r = \left\{ e_\alpha \in H \;\middle|\; \alpha \in I,\; |\lambda_\alpha| \leq r \right\}.
\]
The projection operator $\Pi_r$ satisfies
\begin{align}\label{eq:projection_bd}
	\Pi_r u \in H_r, 
	\qquad 
	m( \Pi_r u, v ) = m( u, v ), 
	\quad \forall\, v \in H_r.
\end{align}
Moreover, we use the same notation $\Pi_r$ for its componentwise extension to
state vectors. Since $A=\widetilde A-c_G$, the projector $\Pi_r$
commutes with $A$, and hence with $L$ and $e^{tL}$.

The key advantage of this frequency decomposition is that it simultaneously provides smoothing and approximation. Although the exact solution is assumed to have only limited regularity, the low-frequency projection \(\Pi_r u\) is smoother because it is composed of smooth eigenfunctions, while still approximating \(u\) well in weaker norms. More precisely, we have the following Bernstein-type estimates.

\begin{lemma}\label{Bernstein}
	Let \(w\in \mathcal{Y}^{\ell}\) with \(\ell\in\mathbb{R}\). Then, for all \(m\ge 0\),
	\begin{align}
		\|\Pi_r w\|_{\mathcal{Y}^{\ell+m}}
		&\lesssim r^{m}\|w\|_{\mathcal{Y}^{\ell}},\quad\text{and}\quad
		\|(1-\Pi_r)w\|_{\mathcal{Y}^{\ell-m}}
		\lesssim r^{-m}\|w\|_{\mathcal{Y}^{\ell}}.
	\end{align}
\end{lemma}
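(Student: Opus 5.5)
The plan is to argue directly in the eigenbasis $\{e_\alpha\}_{\alpha\in I}$ of $\widetilde A$, where everything reduces to comparing scalar weights. For $\ell\in\mathbb R$ the space $\mathcal Y^\ell$ is characterized by
\[
\|w\|_{\mathcal Y^\ell}^2=\sum_{\alpha\in I}\lambda_\alpha^{2\ell}|w_\alpha|^2,
\qquad w_\alpha:=m(w,e_\alpha),
\]
with the obvious interpretation of $w_\alpha$ as the duality pairing $\langle w,e_\alpha\rangle$ when $\ell<0$. This is legitimate because every $e_\alpha$ lies in $\mathcal Y^q$ for all $q$ by \eqref{eq:estimate_el}. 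For negative $\ell$ the characterization is the standard identification of the dual of $D(\widetilde A^{|\ell|/2})$ with pivot $H$, since $\widetilde A^{|\ell|/2}$ is an isometry from $\mathcal Y^{|\ell|}$ onto $H$.

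In the same basis, $\Pi_r$, extended to $\mathcal Y^\ell$ by density or duality, simply keeps the coefficients with $\lambda_\alpha\le r$. This is consistent with \eqref{eq:projection_bd}, as $\Pi_r$ commutes with $\widetilde A$.

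For the first estimate, only indices with $\lambda_\alpha\le r$ contribute, so
\[
\|\Pi_r w\|_{\mathcal Y^{\ell+m}}^2=\sum_{\lambda_\alpha\le r}\lambda_\alpha^{2m}\lambda_\alpha^{2\ell}|w_\alpha|^2\le r^{2m}\|w\|_{\mathcal Y^\ell}^2,
\]
using $m\ge0$ and $\lambda_\alpha>0$.

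For the second estimate, only indices with $\lambda_\alpha>r$ contribute, and there $\lambda_\alpha^{-2m}<r^{-2m}$. Hence
\[
\|(1-\Pi_r)w\|_{\mathcal Y^{\ell-m}}^2=\sum_{\lambda_\alpha>r}\lambda_\alpha^{-2m}\lambda_\alpha^{2\ell}|w_\alpha|^2\le r^{-2m}\|w\|_{\mathcal Y^\ell}^2.
\]
Both bounds in fact hold with constant one.

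The only point requiring care, and the main obstacle, is justifying the series representation of the norm on the dual spaces $\mathcal Y^{-\ell}$. Equivalently, one must check that $\Pi_r$ is well defined there and acts diagonally. I would handle this by writing $\|w\|_{\mathcal Y^{-\ell}}=\sup_{v}\langle w,v\rangle/\|v\|_{\mathcal Y^\ell}$ and expanding $v$ in the orthonormal basis. Applying Cauchy--Schwarz with weights $\lambda_\alpha^{\pm\ell}$ gives the inequality in one direction, and the choice $v=\sum\lambda_\alpha^{-2\ell}w_\alpha e_\alpha$ (finite truncations, then a limit) gives equality. Once this is settled, the estimates are the weight comparisons above.
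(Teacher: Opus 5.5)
Your proposal is correct and follows the paper's approach: the paper's proof says only that the result follows from the spectral definitions of $\mathcal{Y}^\ell$ and $\Pi_r$. Your coefficient-wise comparison of the weights $\lambda_\alpha^{2(\ell\pm m)}$ on $\{\lambda_\alpha\le r\}$ and $\{\lambda_\alpha>r\}$, together with your duality justification of the series norm on $\mathcal{Y}^{-\ell}$, supplies exactly the details the paper leaves implicit. One cosmetic point: for $m=0$ the strict inequality $\lambda_\alpha^{-2m}<r^{-2m}$ should be $\le$, which is all you actually use.
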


\begin{proof}
	The result follows directly from the spectral definitions of
	$\mathcal Y^\ell$ and $\Pi_r$.
\end{proof}

\begin{remark}
	As a consequence of Lemma~\ref{lem:spectral-sobolev-scale} and Lemma~\ref{Bernstein}, for every integer \(q\geq 0\) and every \(w\in H^{\ell}(\Omega;\Gamma)\), with \(\ell=0,1\), we have
	\begin{align}\label{eq:Bernstein_low}
		\|\Pi_r w\|_{H^{\ell+q}(\Omega;\Gamma)}\lesssim \|\Pi_r w\|_{\mathcal{Y}^{\ell+q}}\lesssim r^q \| w\|_{\mathcal{Y}^{\ell}}\sim r^q \| w\|_{H^{\ell}(\Omega;\Gamma)}.
	\end{align}
	and
	\begin{align}\label{eq:Bernstein_high}
		\|(1-\Pi_r)w\|_{H^{\ell-1}(\Omega;\Gamma)}\sim\|(1-\Pi_r)w\|_{\mathcal{Y}^{\ell-1}}\lesssim r^{-1}\|w\|_{\mathcal{Y}^{\ell}}\sim r^{-1}\|w\|_{H^{\ell}(\Omega;\Gamma)}.
	\end{align}
\end{remark}


Motivated by the abstract formulation \eqref{system_db}, we now introduce the frequency-localized auxiliary scheme; see also \cite{cao2026approximating} for a related construction. This scheme is not part of the practical algorithm, but it serves as a key intermediate approximation in the convergence analysis of the fully discrete method. It is defined by
\begin{align}\label{eq:semi-discrete_db}
	U^{n+1} &= e^{\tau L} U^n + \tau \varphi_1(\tau L) \Pi_r F(\Pi_r U^n),\quad \text{with}\quad \varphi_1(z)=\frac{e^z-1}{z}\quad \text{and}\quad U^0=U(0).
\end{align}

A comparison of \eqref{eq:semi-discrete_db} with the variation-of-constants formula \eqref{eq:variation-of-constant-db} yields the decomposition

\begin{align}\label{eq:decompose_U}
	U(t_{n+1})=e^{\tau L}U(t_n)+\tau \varphi_1(\tau L)\Pi_r F(\Pi_r U(t_n))+\sum_{i=1}^{4}R_i(t_n),
\end{align}
with
\begin{align*}
	R_1(t_n)=&\int_{0}^{\tau}e^{(\tau-s)L}\left(F\big(U(t_n+s)\big)-F(e^{s L}U(t_n))\right)\d s,\\
	R_2(t_n)=&\int_{0}^{\tau}e^{(\tau-s)L}\left(F(e^{s L}U(t_n))-F(e^{s L}\Pi_r U(t_n))\right)\d s,\\
	R_3(t_n)=&\int_0^\tau e^{(\tau-s) L}\int_0^s \frac{\d}{\d \sigma}F(e^{\sigma L}\Pi_r U(t_n))\d\sigma\d s,\\
	R_4(t_n)=&\tau \varphi_1(\tau L)\left(F\big(\Pi_r U(t_n)\big)-\Pi_r F(\Pi_r U(t_n))\right),
\end{align*}
which will be estimated in Lemma~\ref{lem:consistency_error_db}.

\subsection{Geometric transfer between $(\Omega_h,\Gamma_h)$ and $(\Omega,\Gamma)$ in weak norm}

In this subsection, we study the operators \(\mathcal{L}_h^{H*}\) and \(\mathcal{L}_h^{V*}\), which play the roles of the \(L^2\)-projection and the Ritz projection, respectively, in the nonconforming setting. In particular, we derive its error estimates and stability results in the $H^{-1}$, $L^2$, and $H^1$ norms. Based on these estimates, we further deduce key bounds for the difference between the exponential operators $e^{tL}$ and $e^{tL_h}$ when applied to low-frequency components, which represents the main source of error in the finite element approximation of equation~\eqref{eq:NLW_db}.

First, from the definition of $\mathcal{L}_h^{H*}$ in \eqref{eq:def:Lh*H}, and by combining the stability of the lift operator $\mathcal{L}_h$ given by \eqref{eq:boundedness_lift_operator}, it follows directly that $\mathcal{L}_h^{H*}$ is a bounded operator from $L^2(\Omega;\Gamma)$ to $L^2(\Omega_h;\Gamma_h)$:
\begin{align}\label{eq:L2_bound_Lh*}
	\|\mathcal{L}_h^{H*}u\|_{L^2(\Omega_h;\Gamma_h)} \lesssim \|u\|_{L^2(\Omega;\Gamma)}.
\end{align}
Next, by exploiting the duality of the $L^2$ inner product and invoking the error estimates for the discrepancies of the $L^2$ inner products after lifting (see \cite{ER2013, ER2021}), we establish the following error estimates for $\mathcal{L}_h^{H*}$ in the $L^2$, $H^{-1}$, and $H^1$ norms, respectively.

\begin{lemma}\label{lem:estimate_Lh}
	Let the domain $\Omega$ be smooth, and consider the operator $\L_h \L_h^{H*}\colon L^2(\Omega; \Gamma) \to L^2(\Omega; \Gamma)$.
	Then $\L_h \L_h^{H*}$ is self-adjoint with respect to the inner product in $L^2(\Omega; \Gamma)$, and the following estimates hold for sufficiently regular $u$:
	
	\begin{enumerate}[(i)]
		\item ($L^2$-estimate)  
		For any $u \in H^\ell(\Omega; \Gamma)$ with $0 \le \ell \le k$,
		\begin{equation}\label{eq:L2_eastimate_Lh*}
			\|(1 - \L_h \L_h^{H*})u\|_{L^2(\Omega; \Gamma)} 
			\lesssim h^{\ell}\|u\|_{H^\ell(\Omega; \Gamma)}.
		\end{equation}
		
		\item ($H^{-1}$-estimate)  
		For any $u \in H^{\ell}(\Omega; \Gamma)$ with $0 \le \ell \le k$,
		\begin{equation}\label{eq:H-1_estimate}
			\|(1 - \L_h \L_h^{H*})u\|_{H^{-1}(\Omega; \Gamma)} 
			\lesssim h^{\ell+1}\|u\|_{H^{\ell}(\Omega; \Gamma)}.
		\end{equation}
		%
	\end{enumerate}
\end{lemma}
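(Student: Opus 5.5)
Self-adjointness comes straight from the definition \eqref{eq:def:Lh*H}. For $u,w\in L^2(\Omega;\Gamma)$, take $v_h=\L_h^{H*}w$ in \eqref{eq:def:Lh*H}. This gives
\[
m(\L_h\L_h^{H*}w,u)=m(u,\L_h\L_h^{H*}w)=m_h(\L_h^{H*}u,\L_h^{H*}w).
\]
The right-hand side is symmetric in $u$ and $w$, so $m(\L_h\L_h^{H*}u,w)=m(u,\L_h\L_h^{H*}w)$. The main tool for the estimates is the geometric perturbation bound for the mass forms from \cite{ER2013,ER2021}:
\begin{align*}
|m(\L_h v_h,\L_h w_h)-m_h(v_h,w_h)|\lesssim h^{k+1}\|\L_h v_h\|_{H^1(\Omega;\Gamma)}\|\L_h w_h\|_{H^1(\Omega;\Gamma)},
\end{align*}
together with its $L^2$ version, which has $h^{k}$ and $L^2$ norms. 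I will also use the $L^2$ boundedness \eqref{eq:L2_bound_Lh*} and the approximation properties of a lifted interpolant or Scott--Zhang-type operator $\tilde I_h$ on $X_h^k$. In lifted form these are
\[
\|u-\L_h\tilde I_h u\|_{L^2(\Omega;\Gamma)}\lesssim h^\ell\|u\|_{H^\ell(\Omega;\Gamma)},\qquad 0\le\ell\le k+1,
\]
with $H^1$-stability.

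For (i), I write
\[
(1-\L_h\L_h^{H*})u=(u-\L_h\tilde I_h u)+\L_h\bigl(\tilde I_hu-\L_h^{H*}u\bigr).
\]
The first term is bounded by $h^\ell\|u\|_{H^\ell}$. For the second, set $e_h:=\tilde I_hu-\L_h^{H*}u$ and test \eqref{eq:def:Lh*H} with $e_h$:
\[
m_h(e_h,e_h)=m_h(\tilde I_h u,e_h)-m(u,\L_h e_h)=m(\L_h\tilde I_hu-u,\L_he_h)+\bigl[m_h(\tilde I_hu,e_h)-m(\L_h\tilde I_hu,\L_he_h)\bigr].
\]
The first bracket is at most $h^\ell\|u\|_{H^\ell}\|e_h\|$. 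For the geometric bracket I use the $L^2$ version of the perturbation bound, which gives $\lesssim h^{k}\|u\|_{L^2}\|e_h\|$; a sharper alternative is the $h^{k+1}$ bound combined with an inverse estimate on $e_h$. By the norm equivalence \eqref{eq:boundedness_lift_operator}, this yields $\|e_h\|\lesssim h^\ell\|u\|_{H^\ell}$ for $\ell\le k$. This is the reason (i) stops at $\ell\le k$.

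For (ii), I argue by duality. Take $\phi\in H^1(\Omega;\Gamma)$ and use self-adjointness:
\[
m((1-\L_h\L_h^{H*})u,\phi)=m(u,(1-\L_h\L_h^{H*})\phi).
\]
Naively this gives only $\|u\|_{L^2}\,h\|\phi\|_{H^1}$ by (i) with $\ell=1$, i.e.\ the case $\ell=0$. For $\ell\ge1$ I use the Galerkin-type orthogonality instead. By \eqref{eq:def:Lh*H}, $m(u,\L_hv_h)=m_h(\L_h^{H*}u,v_h)$. Choosing $v_h=\tilde I_h\phi$, I get
\[
m((1-\L_h\L_h^{H*})u,\phi)=m\bigl((1-\L_h\L_h^{H*})u,\phi-\L_h\tilde I_h\phi\bigr)+\Bigl[m(\L_h\L_h^{H*}u,\L_h\tilde I_h\phi)-m_h(\L_h^{H*}u,\tilde I_h\phi)\Bigr].
\]
The first term is at most $h^\ell\|u\|_{H^\ell}\cdot h\|\phi\|_{H^1}$, using (i). The geometric bracket is bounded by the $H^1$ perturbation estimate, provided I also have the $H^1$ stability $\|\L_h\L_h^{H*}u\|_{H^1}\lesssim\|u\|_{H^1}$. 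That stability follows from (i), an inverse estimate, and $\tilde I_h$ stability. The bracket is then $\lesssim h^{k+1}\|u\|_{H^1}\|\phi\|_{H^1}$, which is fine for $\ell\le k$ and $\ell\ge1$. For $\ell=0$ the naive bound above already suffices, and an interpolation argument is not needed since $\ell$ is an integer.

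The main obstacle is the geometric consistency term. One needs the sharp $h^{k+1}$ bound for $m-m_h$, whose boundary-layer contribution has $O(h^{k+1})$ width and is controlled by the trace-type estimate of \cite{ER2021}, in a form applicable to $\L_h^{H*}u$. This in turn requires the $H^1$ stability of $\L_h^{H*}$ on $H^1$ data, which must be established first.
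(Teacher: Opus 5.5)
Your proposal is correct and is essentially the paper's argument. The paper compares $\L_h^{H*}u$ with $Q_h\L_h^{-1}u$ rather than $\tilde I_h u$; that function is defined on all of $L^2(\Omega_h;\Gamma_h)$, whereas for you the case $\ell=0$ of (i) is simply trivial by \eqref{eq:L2_bound_Lh*}. It proves the $H^1$-stability \eqref{eq:bounded_H1} in between, exactly as you propose, and in (ii) it splits the test function as $(1-\L_h\L_h^{H*})v+\L_h\L_h^{H*}v$ instead of via $\L_h\tilde I_h\phi$, bounding the resulting bracket with Lemma~\ref{lem:geom_consistency}, the boundary-layer estimate and \eqref{eq:bounded_H1}.

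Two harmless slips:
\begin{enumerate}
\item The geometric bracket in your identity for (ii) has the opposite sign; it should read $m_h(\L_h^{H*}u,\tilde I_h\phi)-m(\L_h\L_h^{H*}u,\L_h\tilde I_h\phi)$.
\item The $h^{k+1}$ in the mass-form perturbation does not come from a layer of width $O(h^{k+1})$. It comes from an $O(h^k)$ Jacobian defect of $G$ on the layer $B_h^{\L}$ of width $O(h)$, converted by $\|\eta\|_{L^2(B_h^{\L})}\lesssim h^{1/2}\|\eta\|_{H^1(\Omega)}$ from Lemma~\ref{lem:boundary_layer_estimate}.
\end{enumerate}
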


\begin{proof}
	The proof is provided in Appendix~\ref{Appendix:A}. 
\end{proof}

\begin{remark}\upshape
	Lemma~\ref{lem:estimate_Lh}, in particular estimates~\eqref{eq:L2_eastimate_Lh*} and~\eqref{eq:H-1_estimate}, shows that the operator $\mathcal{L}_h \mathcal{L}_h^{H*}$ exhibits properties very similar to those of the classical $L^2$ projection operator. 
	In particular, it yields similar error bounds in both the $L^2$ and $H^{-1}$ norms. 
\end{remark}

Utilizing the error estimates established in Lemma~\ref{lem:estimate_Lh} together with duality arguments, we derive the following stability results.

\begin{corollary}\label{cor:boundedness_properties}
	Let $\Omega$ be a smooth domain, and consider the operator $\L_h$ and its adjoint $\L_h^{H*}$. 
	Then the following boundedness properties hold:
	
	\begin{enumerate}[(i)]
		\item ($H^1$-boundedness of $\L_h^{H*}$)  
		For any $u \in H^1(\Omega; \Gamma)$,
		\begin{equation}\label{eq:bounded_H1}
			\|\L_h^{H*}u\|_{H^1(\Omega_h; \Gamma_h)} 
			\lesssim \|u\|_{H^1(\Omega; \Gamma)}.
		\end{equation}
		
		\item ($H^{-1}$-norm equivalence of $\L_h$)  
		For any $u_h \in X^k_h$,
		\begin{equation}\label{eq:equivalence_H-1}
			\|\L_h u_h\|_{H^{-1}(\Omega; \Gamma)} 
			\sim \|u_h\|_{H^{-1}(\Omega_h; \Gamma_h)}.
		\end{equation}
		
		\item ($H^{-1}$-boundedness of $\L_h^{H*}$)  
		For any $u \in H^{-1}(\Omega; \Gamma)$,
		\begin{equation}\label{eq:bounded_H-1}
			\|\L_h^{H*}u\|_{H^{-1}(\Omega_h; \Gamma_h)} 
			\lesssim \|u\|_{H^{-1}(\Omega; \Gamma)}.
		\end{equation}
	\end{enumerate}
\end{corollary}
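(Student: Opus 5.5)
We prove the three parts in the order (i), (ii), (iii), since (iii) follows from (i) by duality.

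\emph{Part (i).} We compare $\L_h^{H*}u$ with a discrete quasi-interpolant. Take $u\in H^1(\Omega;\Gamma)$ and set $w_h:=Q_h\L_h^{-1}u\in X_h^k$. By \eqref{eq:boundedness_lift_operator} and \eqref{eq:Qh_H1_stability_db},
\[
\|w_h\|_{H^1(\Omega_h;\Gamma_h)}\lesssim\|u\|_{H^1(\Omega;\Gamma)}.
\]
We then write $\L_h^{H*}u=w_h+(\L_h^{H*}u-w_h)$. The difference $\L_h^{H*}u-w_h$ is a finite element function, so by the inverse inequality on the quasi-uniform mesh
\[
\|\L_h^{H*}u-w_h\|_{H^1}\lesssim h^{-1}\|\L_h^{H*}u-w_h\|_{L^2}\lesssim h^{-1}\bigl(\|\L_h\L_h^{H*}u-u\|_{L^2(\Omega;\Gamma)}+\|u-\L_hw_h\|_{L^2(\Omega;\Gamma)}\bigr).
\]
Here we used the $L^2$ norm equivalence of the lift. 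The first term is $\lesssim h\|u\|_{H^1}$ by \eqref{eq:L2_eastimate_Lh*} with $\ell=1$, valid since $k\ge1$. The second term equals, up to the lift equivalence, $\|(1-Q_h)\L_h^{-1}u\|_{L^2(\Omega_h;\Gamma_h)}\lesssim h\|u\|_{H^1}$ by \eqref{eq:Qh_approximation_db}. This proves (i). The only subtlety is that the inverse estimate must hold on curved isoparametric elements and on $\Gamma_h$ with $h$-uniform constants, which is standard for quasi-uniform isoparametric meshes.

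\emph{Part (ii).} We use the dual-norm definitions, with pivot pairings $m$ and $m_h$. For the bound $\|\L_hu_h\|_{H^{-1}}\lesssim\|u_h\|_{H^{-1}}$, take $v\in H^1(\Omega;\Gamma)$. Then
\[
m(\L_hu_h,v)=m_h(u_h,\L_h^{H*}v)\le\|u_h\|_{H^{-1}(\Omega_h;\Gamma_h)}\|\L_h^{H*}v\|_{H^1}\lesssim\|u_h\|_{H^{-1}(\Omega_h;\Gamma_h)}\|v\|_{H^1},
\]
where the first equality is the symmetry of \eqref{eq:def:Lh*H} and the last step is (i).

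The converse bound is the main obstacle, because the supremum defining $\|u_h\|_{H^{-1}(\Omega_h;\Gamma_h)}$ runs over all of $H^1(\Omega_h;\Gamma_h)$, not only over discrete test functions. Our first attempt is a direct change of variables, without using (i). For $v\in H^1(\Omega_h;\Gamma_h)$ we write
\[
m_h(u_h,v)=\int_\Omega \L_hu_h\,\L_hv\,\delta_h\,\d x+\int_\Gamma\L_hu_h\,\L_hv\,\delta_{\Gamma,h}\,\d s,
\]
where $\delta_h,\delta_{\Gamma,h}$ are the Jacobians of $G^{-1}$. These Jacobians are piecewise smooth and uniformly bounded with bounded derivatives on each (lifted) element. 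They are generally not globally $W^{1,\infty}$, however: they jump across element faces inside the boundary layer. Consequently $\L_hv\,\delta_h$ need not lie in $H^1(\Omega)$. If this happens, we fall back to the discrete characterization. Inserting $v=Q_hv+(1-Q_h)v$, we have $m_h(u_h,(1-Q_h)v)=0$ because $u_h\in X_h^k$ and by \eqref{eq:def_Qh}. Together with \eqref{eq:Qh_H1_stability_db}, the supremum may therefore be restricted to $v_h\in X_h^k$. For such $v_h$ we use
\[
m_h(u_h,v_h)=m(\L_hu_h,\L_hv_h)+\bigl[m_h(u_h,v_h)-m(\L_hu_h,\L_hv_h)\bigr].
\]
The first term is bounded by $\|\L_hu_h\|_{H^{-1}}\|v_h\|_{H^1}$ via \eqref{eq:boundedness_lift_operator}. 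For the bracket, which is the geometric perturbation of the $L^2$ form, we invoke the estimate $|m_h(u_h,v_h)-m(\L_hu_h,\L_hv_h)|\lesssim h^{k+1}\|u_h\|_{L^2}\|v_h\|_{L^2}$ from \cite{ER2013,ER2021}. The resulting $L^2$ norm of $u_h$ must then be traded for its $H^{-1}$ norm by an inverse estimate costing $h^{-1}$, which leaves the factor $h^{k}$. Since this factor is small, the perturbation is absorbed into the left-hand side for $h\le h_0$, giving (ii).

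\emph{Part (iii).} This follows by duality from (i) and the $H^1$ equivalence of the lift. For $v\in H^1(\Omega_h;\Gamma_h)$, we again reduce to $v_h=Q_hv\in X_h^k$ using \eqref{eq:def_Qh} and \eqref{eq:Qh_H1_stability_db}. By the extended definition \eqref{eq:def:Lh*H},
\[
m_h(\L_h^{H*}u,v_h)=\langle u,\L_hv_h\rangle\le\|u\|_{H^{-1}}\|\L_hv_h\|_{H^1}\lesssim\|u\|_{H^{-1}}\|v\|_{H^1}.
\]
Taking the supremum over $v$ gives (iii).
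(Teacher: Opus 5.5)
The gap is in the converse bound $\|u_h\|_{H^{-1}(\Omega_h;\Gamma_h)}\lesssim\|\L_hu_h\|_{H^{-1}(\Omega;\Gamma)}$ of (ii); everything else is sound. Your part (i) and the bound $\|\L_hu_h\|_{H^{-1}(\Omega;\Gamma)}\lesssim\|u_h\|_{H^{-1}(\Omega_h;\Gamma_h)}$ are exactly the paper's arguments. Your part (iii) is correct and more direct than the paper's, which goes through the hard direction of (ii), the self-adjointness of $\L_h\L_h^{H*}$, and (i).

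In the converse bound, the perturbation estimate you invoke, $|m_h(u_h,v_h)-m(\L_hu_h,\L_hv_h)|\lesssim h^{k+1}\|u_h\|_{L^2}\|v_h\|_{L^2}$, is not what the geometric analysis provides. Only the surface part of the form is perturbed at order $h^{k+1}$. The volume Jacobian of $G$ deviates from $1$ by $O(h^k)$ in the boundary layer, which is why \eqref{eq:geom_consistency_m} carries only the factor $h^k$ in front of $\|\L_h\eta_h\|_{L^2(B_h^{\L})}\|\L_h\xi_h\|_{L^2(B_h^{\L})}$. Suppose you bound these boundary-layer norms by global $L^2$ norms and then use $\|u_h\|_{L^2}\lesssim h^{-1}\|u_h\|_{H^{-1}(\Omega_h;\Gamma_h)}$. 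The perturbation is then only $\lesssim h^{k-1}\|u_h\|_{H^{-1}(\Omega_h;\Gamma_h)}\|v_h\|_{H^1}$. For $k=1$ there is no small factor, so absorbing this term into the left-hand side is not justified. Since the corollary is asserted for all $k\ge1$, your argument fails in the linear case.

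Either of two modifications closes the gap. The first keeps your absorption argument but applies the boundary-layer estimate \eqref{eq:boundary_layer_estimate} to $\L_hv_h\in H^1(\Omega)$. This turns the bulk term into $h^{k+\frac12}\|v_h\|_{H^1}\|u_h\|_{L^2}$, and after the inverse estimate it leaves the factor $h^{k-\frac12}\le h^{\frac12}$, which is small for $h\le h_0$. The second, as in the paper, performs the inverse estimate on the continuous side, using the discrete inverse inequality and \eqref{eq:boundedness_lift_operator}:
\begin{align*}
\|\L_hu_h\|_{L^2(\Omega;\Gamma)}^2&=m(\L_hu_h,\L_hu_h)\le\|\L_hu_h\|_{H^{-1}(\Omega;\Gamma)}\|\L_hu_h\|_{H^1(\Omega;\Gamma)}\\
&\lesssim h^{-1}\|\L_hu_h\|_{H^{-1}(\Omega;\Gamma)}\|\L_hu_h\|_{L^2(\Omega;\Gamma)}.
\end{align*}
The perturbation is then bounded by a multiple of $\|\L_hu_h\|_{H^{-1}(\Omega;\Gamma)}\|v_h\|_{H^1}$, i.e.\ by the right-hand side itself. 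No absorption, and hence no smallness, is needed; even without the boundary-layer estimate the crude factor $h^{k-1}\le 1$ suffices. Your reduction to discrete test functions is harmless but unnecessary, because \eqref{eq:geom_consistency_m} holds for arbitrary $\eta_h,\xi_h\in L^2(\Omega_h;\Gamma_h)$.
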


\begin{proof}
	The proof is provided in Appendix~\ref{Appendix:B}. 
\end{proof}

Lemma~\ref{lem:estimate_Lh} shows that, for
		\(u\in H^\ell(\Omega;\Gamma)\) with \(0\le \ell\le k\), the approximation
		properties of \(\L_h\L_h^{H*}\) are consistent with the classical error
		estimates for the \(L^2\)-projection on conforming meshes. In particular,
		Lemma~\ref{lem:estimate_Lh} provides the expected \(L^2\)- and
		\(H^{-1}\)-estimates only up to the order \(\ell=k\), and therefore does
		not cover the endpoint case \(u\in H^{k+1}(\Omega;\Gamma)\).
		
		For the operator \(\L_h\L_h^{V*}\), it is well known
		(see \cite[Lemma~3.8]{ER2021}) that, for any
		\(u\in H^{\ell+1}(\Omega;\Gamma)\) with \(0\le \ell\le k\),
		\begin{align}\label{eq:H1_estimate_Lv*}
			\|(1-\L_h\L_h^{V*})u\|_{L^2(\Omega;\Gamma)}
			+h\,\|(1-\L_h\L_h^{V*})u\|_{H^1(\Omega;\Gamma)}
			\lesssim h^{\ell+1} \|u\|_{H^{\ell+1}(\Omega;\Gamma)} ,
		\end{align}
		which is consistent with the classical Ritz projection error estimate.
		Taking \(\ell=k\), this gives the endpoint \(L^2\)-estimate, together with
		the corresponding \(H^1\)-estimate, for \(\L_h\L_h^{V*}\), but it does not
		provide an endpoint \(H^{-1}\)-estimate.
		
		It remains to establish the endpoint estimates not covered by the preceding
		results: the \(L^2\)- and \(H^{-1}\)-estimates for \(\L_h\L_h^{H*}\), and
		the \(H^{-1}\)-estimate for \(\L_h\L_h^{V*}\). On conforming meshes, the
		corresponding optimal endpoint rates would be \(h^{k+1}\) in the
		\(L^2\)-norm and \(h^{k+2}\) in the \(H^{-1}\)-norm. In the present setting,
		however, the lift between \(\Omega_h\) and \(\Omega\) introduces geometric
		consistency errors, which lead to additional lower-order terms. The following
		lemma records these endpoint estimates.

\begin{lemma}\label{lem:high_order_estimates}
	Let $\Omega$ be a smooth domain, and let the lift operator $\L_h$ and its adjoints $\L_h^{H*}$ and $\L_h^{V*}$ be defined by \eqref{eq:def:Lh*H} and \eqref{eq:def:Lh*V}, respectively. Then, for any $u\in H^{k+1}(\Omega;\Gamma)$, the following estimates hold:
	\begin{enumerate}[(i)]
		\item ($L^2$-estimate for $\L_h^{H*}$) For all $k\geq 1$:
		\begin{align}\label{eq:L2_eastimate_Lh*_kp1}
			\|(1 - \L_h \L_h^{H*})u\|_{L^2(\Omega;\Gamma)} 
			\lesssim h^{k+1}\|u\|_{H^{k+1}(\Omega;\Gamma)}
			+ h^{k+\frac{1}{2}}\|u\|_{H^{1}(\Omega;\Gamma)}.
		\end{align}
		
		\item ($H^{-1}$-estimate for $\L_h^{H*}$) For all $k\geq 1$:
		\begin{align}\label{eq:Hm1_eastimate_Lh*_kp1}
			\|(1 - \L_h \L_h^{H*})u\|_{H^{-1}(\Omega;\Gamma)} 
			\lesssim h^{k+2}\|u\|_{H^{k+1}(\Omega;\Gamma)}
			+ h^{k+1}\|u\|_{H^{1}(\Omega;\Gamma)}.
		\end{align}
%
		
		\item ($H^{-1}$-estimate for $\L_h^{V*}$) For all $k\geq 2$:
		\begin{align}\label{eq:Hm1_eastimate_Lv*_kp1}
			\|(1 - \L_h \L_h^{V*})u\|_{H^{-1}(\Omega;\Gamma)} 
			\lesssim h^{k+2}\|u\|_{H^{k+1}(\Omega;\Gamma)}
			+ h^{k+\frac{1}{2}}\|u\|_{H^{1}(\Omega;\Gamma)}.
		\end{align}
	\end{enumerate}
\end{lemma}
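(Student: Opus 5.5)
We prove all three estimates by the same two-step scheme. First we split off a standard interpolation/projection error. Then we express the remaining discrete part through the geometric consistency errors of the bilinear forms after lifting. The tools from \cite{ER2013,ER2021} that we use are the following. For $v_h,w_h\in X_h^k$,
\[
|m(\L_h v_h,\L_h w_h)-m_h(v_h,w_h)|\lesssim h^{k+1}\|\L_h v_h\|_{H^1(\Omega;\Gamma)}\|\L_h w_h\|_{H^1(\Omega;\Gamma)},
\]
together with its sharper variant in which one factor is measured only in a boundary layer $D_h$ of width $O(h^{k+1})$. The latter is combined with the layer estimate $\|w\|_{L^2(D_h)}\lesssim h^{(k+1)/2}\|w\|_{H^1(\Omega)}$. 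The analogous bound $|\tilde a(\L_h v_h,\L_h w_h)-\tilde a_h(v_h,w_h)|\lesssim h^{k}\|\cdot\|_{H^1}\|\cdot\|_{H^1}$ holds for the stiffness form, with an $h^{k+1}$ version available when the bulk term is localized to $D_h$ and the surface term is treated through $\delta_h=1-\mu_h$. We also use the isoparametric lifted interpolant $\L_h I_h u$, which satisfies $\|u-\L_hI_hu\|_{L^2}+h\|u-\L_hI_hu\|_{H^1}\lesssim h^{k+1}\|u\|_{H^{k+1}}$.

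\emph{(i).} Write $u-\L_h\L_h^{H*}u=(u-\L_hI_hu)+\L_h(I_hu-\L_h^{H*}u)$. The first term gives $h^{k+1}\|u\|_{H^{k+1}}$. For the second, set $e_h=I_hu-\L_h^{H*}u$ and test with $e_h$:
\[
m_h(e_h,e_h)=m_h(I_hu,e_h)-m(u,\L_he_h)=\bigl[m_h(I_hu,e_h)-m(\L_hI_hu,\L_he_h)\bigr]+m(\L_hI_hu-u,\L_he_h).
\]
The last term is bounded by $h^{k+1}\|u\|_{H^{k+1}}\|e_h\|$. The geometric term is localized to the layer $D_h$ and to the surface density error $|1-\mu_h|\lesssim h^{k+1}$. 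In the layer we use the layer $L^2$ bound only on the factor $I_hu$, namely $h^{k+1}\cdot h^{-(k+1)/2}$ times $\|I_hu\|_{L^2(D_h)}\lesssim h^{(k+1)/2}\|u\|_{H^1}$. For $e_h$ we use only the $L^2$ norm, which costs an inverse-type factor; it is controlled through the trace-type inequality $\|e_h\|_{L^2(D_h)}\lesssim h^{(k+1)/2}\|e_h\|_{L^\infty}$, or simply $\|e_h\|_{L^2}$. Balancing these yields $h^{k+1/2}\|u\|_{H^1}\|e_h\|_{L^2}$. Dividing by $\|e_h\|$ gives \eqref{eq:L2_eastimate_Lh*_kp1}.

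\emph{(ii) and (iii).} We use duality. For $\phi\in H^1(\Omega;\Gamma)$, write
\[
m(u-\L_h\L_h^{H*}u,\phi)=m(u-\L_h\L_h^{H*}u,\phi-\L_h\L_h^{H*}\phi)+\bigl[m(u,\L_h\L_h^{H*}\phi)-m(\L_h\L_h^{H*}u,\L_h\L_h^{H*}\phi)\bigr]+\dots
\]
The self-adjointness of $\L_h\L_h^{H*}$ (Lemma~\ref{lem:estimate_Lh}) reduces this to the sum of two pieces. The first is the product of the error from (i) and \eqref{eq:L2_eastimate_Lh*} with $\ell=1$ applied to $\phi$, which gives $h\|\phi\|_{H^1}$. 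The second is a geometric defect $m(\L_hv_h,\L_hw_h)-m_h(v_h,w_h)$ with $v_h=\L_h^{H*}u$ and $w_h=\L_h^{H*}\phi$. This defect is bounded by $h^{k+1}\|u\|_{H^1}\|\phi\|_{H^1}$, using the $H^1$-stability \eqref{eq:bounded_H1}. Taking the supremum gives \eqref{eq:Hm1_eastimate_Lh*_kp1}.

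For (iii) we run the Aubin--Nitsche argument with $\tilde a$. Let $z\in H^3(\Omega;\Gamma)$ solve $\widetilde A z=\phi$, so that $\|z\|_{H^3}\lesssim\|\phi\|_{H^1}$ by Lemma~\ref{lem:spectral-sobolev-scale}. Then
\[
m(e,\phi)=\tilde a(e,z-\L_h\L_h^{V*}z)+\bigl[\tilde a(e,\L_h\L_h^{V*}z)\bigr],\qquad e=u-\L_h\L_h^{V*}u.
\]
By \eqref{eq:H1_estimate_Lv*}, the first term is bounded by $h^{k}\|u\|_{H^{k+1}}\cdot h^{2}\|z\|_{H^3}$; the estimate with $\ell=2$ needs $k\ge2$, and this is exactly where that restriction enters. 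By definition of $\L_h^{V*}$, the bracket equals the stiffness defect $\tilde a_h(\L_h^{V*}u,w_h)-\tilde a(\L_h\L_h^{V*}u,\L_hw_h)$ with $w_h=\L_h^{V*}z$. We bound this defect by the layer-localized $h^{k+1}$ estimate. One factor is taken in the layer via $\|\nabla w\|_{L^2(D_h)}\lesssim h^{(k+1)/2}\|z\|_{H^2}$, and the other via $\|\L_h^{V*}u\|_{H^1}\lesssim\|u\|_{H^1}$. This gives at least $h^{k+1/2}\|u\|_{H^1}\|\phi\|_{H^1}$.

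The main obstacle is getting the sharp half-power in the layer estimates. These depend on localizing all geometric defects to $D_h$ and on having one factor smooth enough to gain $h^{(k+1)/2}$ there. For (i) in particular, I would first try to avoid inverse estimates by comparing with $I_hu$, whose layer norm is controlled by $\|u\|_{H^1}$.
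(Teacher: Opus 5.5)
Your skeleton coincides with the paper's. Part (ii) is literally the paper's argument. Part (iii) is the same Aubin--Nitsche argument with $z=\widetilde A^{-1}\phi$, and $k\ge2$ enters at the same place. In (i), comparing with $I_hu$ and testing with $e_h$, instead of using $Q_h\L_h^{-1}u$ and a supremum over $v$, is only cosmetic.

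The gap is the step that produces the half power in (i) and (iii), where you misstate the geometric consistency estimate. For the lift $G$ of \cite{ER2013}, the bulk defect does not live in a layer of width $O(h^{k+1})$. Rather, $G$ differs from the identity on the whole layer $B_h^{\L}$ of boundary elements, which has width $O(h)$, and $DG-I$ and $\det DG-1$ are of size $h^k$ there. This is exactly \eqref{eq:geom_consistency_m}--\eqref{eq:geom_consistency_a}: a weight $h^k$ times $B_h^{\L}$-norms of both factors, plus $h^{k+1}$ times global norms. It is combined with $\|\eta\|_{L^2(B_h^{\L})}\lesssim h^{1/2}\|\eta\|_{H^1(\Omega)}$.

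With your model (an unweighted defect on a width-$h^{k+1}$ layer with layer bound $h^{(k+1)/2}$), the bookkeeping does not close. In (iii), placing one factor in the layer and bounding the other by $\|\L_h\L_h^{V*}u\|_{H^1}\lesssim\|u\|_{H^1}$ yields only $h^{(k+1)/2}$, e.g.\ $h^{3/2}$ instead of $h^{5/2}$ for $k=2$. In (i), your fallback ``simply $\|e_h\|_{L^2}$'' gives the same $h^{(k+1)/2}$. The ``balancing'' would need an elementwise thin-strip inverse inequality $\|e_h\|_{L^2(D_h)}\lesssim h^{k/2}\|e_h\|_{L^2}$, which you never state.

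With the correct estimate, the half power comes from the weight $h^k$ together with the layer bound applied to a single factor, and no inverse inequality is needed.

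In (i), use $\|\L_hI_hu\|_{L^2(B_h^{\L})}\le\|u\|_{L^2(B_h^{\L})}+\|u-\L_hI_hu\|_{L^2}$. This gives $h^k\|\L_hI_hu\|_{L^2(B_h^{\L})}\|\L_he_h\|_{L^2}\lesssim\bigl(h^{k+\frac12}\|u\|_{H^1}+h^{2k+1}\|u\|_{H^{k+1}}\bigr)\|e_h\|$. You cannot control $I_hu$ by $\|u\|_{H^1}$ alone, since $I_h$ is not $H^1$-stable. The paper sidesteps this by inserting $\L_h^{-1}u$ itself into \eqref{eq:geom_consistency_m}, which is allowed because that estimate holds for arbitrary $L^2(\Omega_h;\Gamma_h)$ functions.

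In (iii), the layer bound cannot be applied directly to the discrete function $\L_h\L_h^{V*}z$. One needs $\|\L_h\L_h^{V*}z\|_{H^1(B_h^{\L})}\le\|z\|_{H^1(B_h^{\L})}+\|(1-\L_h\L_h^{V*})z\|_{H^1}\lesssim h^{1/2}\|z\|_{H^2}$. This works precisely because the layer has width $h$, so the Ritz error $h\|z\|_{H^2}$ is of lower order; in a width-$h^{k+1}$ layer it would dominate. Multiplying by $h^k\|u\|_{H^1}$ gives the term $h^{k+\frac12}\|u\|_{H^1}$ in \eqref{eq:Hm1_eastimate_Lv*_kp1}. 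With this correction your proof becomes the paper's.
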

\begin{proof}
	The proof is provided in Appendix~\ref{Appendix:estimate_kp1}. 
\end{proof}
\begin{remark}\upshape
	The second term on the right-hand side of \eqref{eq:Hm1_eastimate_Lv*_kp1} can be sharpened to \(h^{k+1}\|u\|_{H^2(\Omega;\Gamma)}\).
	This follows from a minor refinement of the proof in Appendix~\ref{Appendix:estimate_kp1}, but requires one additional order of regularity of \(u\). Since our focus is on estimates under low regularity assumptions, the form \eqref{eq:Hm1_eastimate_Lv*_kp1} is sharper in the present setting.
\end{remark}

\subsection{Weak-norm error estimates for the linear propagators}

%

 We now combine the discrete stability properties from Lemma~\ref{lem:basic_results} with the preceding lift and adjoint-lift estimates to compare the continuous and discrete linear propagators in \(L^2(\Omega;\Gamma)\times H^{-1}(\Omega;\Gamma)\). The frequency projection \(\Pi_r\) supplies the additional regularity needed for the following estimate. 

\begin{theorem}\label{lem:estimate_etL-etLh_db}
	Let \(\Omega\) be smooth, and let
	\(V(0)=(V_1(0),V_2(0))^\top \in H^1(\Omega;\Gamma)\times L^2(\Omega;\Gamma).\)
	Let \(L\) and \(L_h\) be the operators defined in \eqref{system_notation_db} and \eqref{eq:fully_discrete_notation_db}, respectively. Then, for every fixed \(T_*>0\), \(1\le r\le h^{-1}\), and
	\(t\in[0,T_*]\), the following estimate holds:
	\begin{align}\label{eq:improve_etL-etLh_db}
		&\left\|e^{t L} \Pi_r V(0) - \L_h e^{t L_h} \P^*_h \Pi_r V(0)\right\|_{L^2(\Omega;\Gamma)\times H^{-1}(\Omega;\Gamma)}\notag\\
		&\quad\lesssim \left\{\begin{array}{l}
			\left(h^{2}r^{2}+h^{\frac{3}{2}}r\right) \|V(0)\|_{H^{1}(\Omega;\Gamma)\times L^2(\Omega;\Gamma)},\qquad\qquad\quad\quad\text{for}\quad k=1,\\
			\left( h^{k+2}r^{k+2}+h^{k+\frac{1}{2}}r^k+h\right)\|V(0)\|_{H^{1}(\Omega;\Gamma)\times L^2(\Omega;\Gamma)},\quad \text{for}\quad k\geq 2,
		\end{array}\right.
	\end{align}
	where $\P^*_h=\text{diag}(\L^{H*}_h,\L^{H*}_h)$. The hidden constant may depend on \(T_*\), but is independent of \(h\) and \(r\).
\end{theorem}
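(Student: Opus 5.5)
The plan is the standard error-splitting argument for semidiscrete wave equations, run in the weak norm $L^2\times H^{-1}$ and with the Ritz-type operator $\L_h^{V*}$ in the role of the Ritz projection. Write $W(t)=e^{tL}\Pi_rV(0)=(w(t),\partial_tw(t))$. Since $\Pi_r$ commutes with $e^{tL}$, Lemma~\ref{Bernstein} together with \eqref{eq:Bernstein_low} and \eqref{eq:stability_phi1} gives
\[
\|w(t)\|_{H^{q+1}}+\|\partial_tw(t)\|_{H^{q}}\lesssim r^{q}\|V(0)\|_{H^1\times L^2}
\]
uniformly on $[0,T_*]$ for all integers $q\ge0$. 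We also have $\partial_{tt}w=-Aw$. Set $W_h(t)=e^{tL_h}\P_h^*\Pi_rV(0)$ and split
\[
W-\L_hW_h=(1-\L_h\P_h^V)W+\L_h\bigl(\P_h^VW-W_h\bigr),
\qquad \P_h^V:=\operatorname{diag}(\L_h^{V*},\L_h^{V*}).
\]
The first term is the projection error. In the $L^2$ component, \eqref{eq:H1_estimate_Lv*} with $\ell=k$ gives $h^{k+1}r^k$. In the $H^{-1}$ component we use \eqref{eq:Hm1_eastimate_Lv*_kp1} applied to $\partial_tw$ when $k\ge2$, and for $k=1$ only the $L^2$ bound $h^2r^2$ via \eqref{eq:H1_estimate_Lv*}. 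Both are dominated by the claimed right-hand sides, since $h^{k+1}r^k\le h^{k+2}r^{k+2}+h$ is implied by $hr\le1$ or otherwise.

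For the discrete part $E_h=\P_h^VW-W_h$ we derive an error equation. Differentiating, $E_h$ satisfies $\partial_tE_h=L_hE_h+D_h$ with defect $D_h=(0,d_h)^\top$. To identify $d_h$, test with $v_h\in X_h^k$ and use
\[
\tilde a_h(\L_h^{V*}w,v_h)=\tilde a(w,\L_hv_h),\qquad
m(\partial_{tt}w,\L_hv_h)+a(w,\L_hv_h)=0 .
\]
This gives
\[
m_h(d_h,v_h)=m_h(\L_h^{V*}\partial_{tt}w,v_h)-m(\partial_{tt}w,\L_hv_h)+c_G\bigl[m(w,\L_hv_h)-m_h(\L_h^{V*}w,v_h)\bigr].
\]
So $d_h=(\L_h^{V*}-\L_h^{H*})\partial_{tt}w+c_G(\L_h^{H*}-\L_h^{V*})w$, where the first term is understood in $H^{-1}$. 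Duhamel's formula with Lemma~\ref{lem:basic_results}(c) at $\theta=0$ then bounds $\|E_h(t)\|_{L^2\times H^{-1}}$ by two contributions. The first is $\|E_h(0)\|$, with $E_h(0)=\P_h^V\Pi_rV(0)-\P_h^*\Pi_rV(0)$. The second is $\int_0^t\|d_h\|_{H^{-1}(\Omega_h;\Gamma_h)}$. By \eqref{eq:equivalence_H-1} these discrete norms transfer to lifted differences $\L_h(\L_h^{V*}-\L_h^{H*})g=(1-\L_h\L_h^{H*})g-(1-\L_h\L_h^{V*})g$, which are controlled by Lemmas~\ref{lem:estimate_Lh} and \ref{lem:high_order_estimates}.

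The main obstacle is the defect term $\partial_{tt}w=-Aw$, which is only one derivative smoother in the Bernstein scale than what the $H^{-1}$ estimates can absorb. We avoid differentiating twice by integrating by parts in time in the Duhamel integral, writing
\[
\int_0^te^{(t-s)L_h}(0,P\partial_{s}(\partial_sw))^\top\,\d s,\qquad P:=\L_h^{V*}-\L_h^{H*},
\]
and moving $\partial_s$ onto $e^{(t-s)L_h}$. This produces boundary terms in $P\partial_tw$ and a term with $L_h$ applied to $(0,P\partial_sw)^\top$, whose first component $P\partial_sw$ is measured in $L^2$. The required $L^2$ bounds on $P\partial_tw$ come from \eqref{eq:L2_eastimate_Lh*_kp1} and \eqref{eq:H1_estimate_Lv*} applied at level $k+1$ to $\partial_tw$. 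These give $h^{k+1}r^{k+1}+h^{k+1/2}r$, which after using $hr\le1$ and interpolation produces the listed terms.

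The $H^{-1}$ pieces use \eqref{eq:Hm1_eastimate_Lh*_kp1} and \eqref{eq:Hm1_eastimate_Lv*_kp1}, giving $h^{k+2}r^{k+2}+h^{k+1/2}r^{k}$. For $k=1$ the latter is unavailable, and we fall back on the $L^2$ bounds, yielding $h^2r^2+h^{3/2}r$. The residual $h$ for $k\ge2$ comes from the geometric terms, for example the $h^{k+1}\|\cdot\|_{H^1}$ term applied to $w\in H^1$, which is crudely bounded by $h$. Combining the projection error, the initial error and the integrated defect gives \eqref{eq:improve_etL-etLh_db}.
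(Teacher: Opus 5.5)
Your $k=1$ argument works, but for $k\ge2$ the proof as written does not deliver \eqref{eq:improve_etL-etLh_db}. After you integrate by parts in the Duhamel integral, the volume term $\int_0^t e^{(t-s)L_h}(P\partial_s w,0)^\top\,\d s$ is controlled by $\|P\partial_s w\|_{L^2(\Omega_h;\Gamma_h)}$. The available $L^2$ estimates give only $h^{k+1}\|\partial_s w\|_{H^{k+1}}+h^{k+\frac12}\|\partial_s w\|_{H^1}\lesssim (hr)^{k+1}+h^{k+\frac12}r$.

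The term $(hr)^{k+1}$ is not bounded by $h^{k+2}r^{k+2}+h^{k+\frac12}r^k+h$ uniformly in $1\le r\le h^{-1}$. For $k=2$ and $r=h^{-3/4}$ it equals $h^{3/4}$, while the right-hand side is $3h$. Neither $hr\le1$ nor interpolation repairs this: measuring $P\partial_s w$ in $L^2$ instead of $H^{-1}$ costs a factor $(hr)^{-1}$. The loss also matters downstream. Balancing $r^{-1}$ against $(hr)^{k+1}$ in the proof of Theorem~\ref{thm:convergence_db} would give only $h^{(k+1)/(k+2)}$ instead of $h^{(k+2)/(k+3)}$.

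For $k\ge2$ the obstacle you describe does not exist, so you should not integrate by parts there. Bound the defect directly, using \eqref{eq:equivalence_H-1}: $\|d_h\|_{H^{-1}(\Omega_h;\Gamma_h)}\lesssim\|(1-\L_h\L_h^{H*})Aw\|_{H^{-1}(\Omega;\Gamma)}+\|(1-\L_h\L_h^{V*})Aw\|_{H^{-1}(\Omega;\Gamma)}+\|Pw\|_{H^{-1}(\Omega_h;\Gamma_h)}$. Then apply \eqref{eq:Hm1_eastimate_Lh*_kp1} and \eqref{eq:Hm1_eastimate_Lv*_kp1} with $\|Aw\|_{H^{k+1}}\lesssim r^{k+2}$ and $\|Aw\|_{H^1}\lesssim r^2$. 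This yields $h^{k+2}r^{k+2}+h^{k+\frac12}r^2\le h^{k+2}r^{k+2}+h^{k+\frac12}r^k$. That is exactly the $h^{k+2}r^{k+2}$ you quote in your last paragraph, which can only come from $Aw$, not from $\partial_t w$.

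This is the paper's argument. The paper phrases it as an energy estimate: it tests the error equation for $\theta_h=\L_h^{V*}v_r-v_{r,h}$ with $\widetilde A_h^{-1}\partial_t\theta_h$ and applies Gronwall. That is equivalent to your Duhamel formula combined with Lemma~\ref{lem:basic_results}(c) for $\theta=0$. Your initial-error and projection-error bounds are then all $\lesssim h$ under $hr\le1$.

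The difficulty is genuine only for $k=1$. There \eqref{eq:Hm1_eastimate_Lv*_kp1} is unavailable, and an $L^2$ bound on $PAw$ would cost $h^2r^3$. Your integration by parts, using $L_h(0,g)^\top=(g,0)^\top$, correctly gives $h^2r^2+h^{3/2}r$.

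For $k=1$ your route is a more compact alternative to the paper's. The paper integrates the error equation in time and proves an $H^1\times L^2$ energy estimate for $(\Theta_h,\theta_h)$ with $\Theta_h=\int_0^t\theta_h$. It then recovers $\partial_t\theta_h$ in $H^{-1}$ by a separate duality step. Your version gets both components at once from the $L^2\times H^{-1}$ stability of $e^{tL_h}$.
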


\emph{Idea of the proof.}
The estimate is a weak-norm finite element error estimate for the homogeneous
wave equation applied to frequency-localized data. The cut-off \(\Pi_r\)
provides the additional smoothness needed in the projection estimates, while
the loss is measured by powers of \(r\). For \(k\ge2\), the main step is to
compare the semidiscrete solution with the Ritz-type adjoint lift
\(\mathcal L_h^{V*}v_r\). The corresponding error equation contains, as its
principal consistency term, the adjoint-lift mismatch \((\mathcal L_h^{V*}-\mathcal L_h^{H*})\widetilde A v_r\), while the remaining shifted-mass terms are lower-order and are absorbed by the Gronwall inequality. Testing with
\(\widetilde A_h^{-1}\partial_t\theta_h\) gives control of
\((\theta_h,\partial_t\theta_h)\) in \(L^2\times H^{-1}\). For \(k=1\), the sharp \(H^{-1}\)-estimate for
\(\L_h^{V*}\) is not available, and we instead integrate the error equation in
time and recover \(\partial_t\theta_h\) by duality.

\begin{proof}[Proof of \eqref{eq:improve_etL-etLh_db} for $k\geq 2$]
	Let
	\[
	V_r(t):=e^{tL}\Pi_r V(0)=(v_r(t),\partial_tv_r(t))
	\]
	be the continuous solution, and let
	\[
	V_{r,h}(t):=e^{tL_h}\P_h^*\Pi_r V(0)
	=(v_{r,h}(t),\partial_tv_{r,h}(t))
	\]
	be the corresponding semidiscrete solution.
	
	We first estimate the difference between $v_{r,h}$ and $v_r$. Define
	\begin{align}\label{eq:def_theta_h}
	\theta_h(t):=\L_h^{V*}v_r(t)-v_{r,h}(t)\in X_h^k.
	\end{align}
	Since $V_{r,h}$ solves the semidiscrete wave equation, we have
	\(
	\partial_tV_{r,h}(t)=L_hV_{r,h}(t),
	\)
	and hence, for every $w_h\in X_h^k$,
	\begin{equation}
		m_h(\partial_{tt}v_{r,h}(t),w_h)+a_h(v_{r,h}(t),w_h)=0.
	\end{equation}
	Recalling the definition of \(\widetilde{a}_h\), this can be rewritten as
	\begin{align}\label{eq:semi_wave_db}
		m_h(\partial_{tt}v_{r,h}(t),w_h)+\tilde{a}_h(v_{r,h}(t),w_h)=c_G m_h(v_{r,h}(t),w_h).
	\end{align}
	
	On the other hand, since $V_r$ solves the continuous wave equation
	\(
	\partial_tV_r(t)=LV_r(t),
	\)
	it follows from the definition of \(\tilde{a}\) that
	\[
	m(\partial_{tt}v_r(t),w)+\tilde{a}(v_r(t),w)=c_G\, m(v_r(t),w),
	\qquad\forall\, w\in H^1(\Omega;\Gamma).
	\]
	Taking $w=\L_hw_h$ and using the definitions of $\L_h^{H*}$ and $\L_h^{V*}$, we obtain
	\[
	m_h(\L_h^{H*}\partial_{tt}v_r(t),w_h)
	+
	\tilde{a}_h(\L_h^{V*}v_r(t),w_h)
	=c_G\, m_h(\L_h^{H*}v_r(t),w_h),
	\qquad\forall\, w_h\in X_h^k.
	\]
	Since $\L_h^{H*}$ and $\L_h^{V*}$ commute with $\partial_t$, and since $\partial_{tt} v_{r}(t)=-Av_r(t)$, the above identity can be rewritten as
	\begin{align*}
	&m_h\bigl(\partial_{tt}\L_h^{V*}v_r(t),w_h\bigr)
	+
	\tilde{a}_h(\L_h^{V*}v_r(t),w_h)\\
	&\quad=
	m_h\bigl((\L_h^{V*}-\L_h^{H*})\partial_{tt}v_r(t),w_h\bigr)+c_G\, m_h(\L_h^{H*}v_r(t),w_h)\\
	&\quad=-m_h\bigl((\L_h^{V*}-\L_h^{H*})(A+c_G)v_r(t),w_h\bigr)+c_G\, m_h(\L_h^{V*}v_r(t),w_h)\\
	&\quad=-m_h\bigl((\L_h^{V*}-\L_h^{H*})\widetilde{A}v_r(t),w_h\bigr)+c_G\, m_h(\L_h^{V*}v_r(t),w_h).
	\end{align*}
	Subtracting \eqref{eq:semi_wave_db} and by combining the definition of $\widetilde{A}$ in \eqref{eq:def_tilde_A} yields the error equation
	\begin{equation}\label{eq:error_eq_db}
		m_h(\partial_{tt}\theta_h(t),w_h)+\tilde{a}_h(\theta_h(t),w_h)
		=
		-m_h\bigl((\L_h^{V*}-\L_h^{H*})\widetilde{A} v_r(t),w_h\bigr)+c_G\,m_h(\theta_h(t),w_h),
	\end{equation}
	for all $w_h\in X_h^k.$
	We now choose
	\[
	w_h=\widetilde{A}_h^{-1}\partial_t \theta_h(t)\in X_h^k.
	\]
	Then, using the definition of $\widetilde{A}_h$, we get
	\[
	\tilde{a}_h(\theta_h,\widetilde{A}_h^{-1}\partial_t \theta_h)=m_h(\theta_h,\partial_t \theta_h)
	=\frac12\frac{\d}{\d t}\|\theta_h(t)\|_{L^2(\Omega_h;\Gamma_h)}^2,
	\]
	and
	\[
	m_h(\partial_{tt}\theta_h,\widetilde{A}_h^{-1}\partial_t \theta_h)
	=
	\frac12\frac{\d}{\d t}
	\|\widetilde{A}_h^{-1/2}\partial_t\theta_h(t)\|_{L^2(\Omega_h;\Gamma_h)}^2.
	\]
	Therefore, \eqref{eq:error_eq_db} gives
	\begin{align*}
	&\frac12\frac{\d}{\d t}
	\Bigl(
	\|\widetilde{A}_h^{-1/2}\partial_t\theta_h(t)\|_{L^2(\Omega_h;\Gamma_h)}^2
	+
	\|\theta_h(t)\|_{L^2(\Omega_h;\Gamma_h)}^2
	\Bigr)\\
	&\quad=
	-m_h\Bigl(
	\widetilde{A}_h^{-1/2}(\L_h^{V*}-\L_h^{H*})\widetilde{A}v_r(t),
	\,\widetilde{A}_h^{-1/2}\partial_t\theta_h(t)
	\Bigr)+c_Gm_h(\widetilde{A}_h^{-1/2}\theta_h(t),\widetilde{A}_h^{-1/2}\partial_t\theta_h(t)).
	\end{align*}
	By Cauchy--Schwarz and Young's inequality,
	\begin{align}\label{eq:energy_estimate}
		&\frac{\d}{\d t}
		\Bigl(
		\|\widetilde{A}_h^{-1/2}\partial_t\theta_h(t)\|_{L^2(\Omega_h;\Gamma_h)}^2
		+
		\|\theta_h(t)\|_{L^2(\Omega_h;\Gamma_h)}^2
		\Bigr)\notag\\
		&\quad\lesssim
		\|\widetilde{A}_h^{-1/2}(\L_h^{V*}-\L_h^{H*})\widetilde{A}v_r(t)\|_{L^2(\Omega_h;\Gamma_h)}^2
		+
		\|\widetilde{A}_h^{-1/2}\partial_t\theta_h(t)\|_{L^2(\Omega_h;\Gamma_h)}^2+\|\theta_h(t)\|_{L^2(\Omega_h;\Gamma_h)}^2.
	\end{align}
	Here we have used the norm equivalence from Lemma~\ref{lem:basic_results} (b),
	\begin{align*}
		\|\widetilde{A}_h^{-1/2}\theta_h(t)\|_{L^2(\Omega_h;\Gamma_h)}
		\sim
		\|\theta_h(t)\|_{H^{-1}(\Omega_h;\Gamma_h)}
		\lesssim
		\|\theta_h(t)\|_{L^2(\Omega_h;\Gamma_h)}.
	\end{align*}
	Hence, applying Gronwall's inequality to \eqref{eq:energy_estimate}, we obtain
	\begin{align*}
		&\|\widetilde{A}_h^{-1/2}\partial_t\theta_h(t)\|_{L^2(\Omega_h;\Gamma_h)}^2
		+
		\|\theta_h(t)\|_{L^2(\Omega_h;\Gamma_h)}^2
		\notag\\
		&\lesssim \|\widetilde{A}_h^{-1/2}\partial_t\theta_h(0)\|_{L^2(\Omega_h;\Gamma_h)}^2+\|\theta_h(0)\|_{L^2(\Omega_h;\Gamma_h)}^2+\sup_{0\le s\le t}
		\|\widetilde{A}_h^{-1/2}(\L_h^{V*}-\L_h^{H*})\widetilde{A}v_r(s)\|_{L^2(\Omega_h;\Gamma_h)}^2.
	\end{align*}
	By the equivalence of the discrete and continuous $H^{-1}$-norms in Lemma~\ref{lem:basic_results} (b), we obtain
	\begin{align}\label{eq:Gronwall_db}
		&\|\partial_t\theta_h(t)\|_{H^{-1}(\Omega_h;\Gamma_h)}^2
		+
		\|\theta_h(t)\|_{L^2(\Omega_h;\Gamma_h)}^2
		\notag\\
		&\lesssim \|\partial_t\theta_h(0)\|_{H^{-1}(\Omega_h;\Gamma_h)}^2+\|\theta_h(0)\|_{L^2(\Omega_h;\Gamma_h)}^2+\sup_{0\le s\le t}
		\|(\L_h^{V*}-\L_h^{H*})\widetilde{A}v_r(s)\|_{H^{-1}(\Omega_h;\Gamma_h)}^2.
	\end{align}
	We first estimate the last term on the right-hand side of \eqref{eq:Gronwall_db}. By the triangle inequality and the norm equivalence result \eqref{eq:equivalence_H-1}, applied to \((\L_h^{V*}-\L_h^{H*})\widetilde{A}v_r(s)\), we decompose
	\begin{align}\label{eq:Lv*-Lh*_db}
		&\|(\L_h^{V*}-\L_h^{H*})\widetilde{A}v_r(s)\|_{H^{-1}(\Omega_h;\Gamma_h)}\lesssim \|(\L_h\L_h^{V*}-\L_h\L_h^{H*})\widetilde{A}v_r(s)\|_{H^{-1}(\Omega;\Gamma)}\notag\\
		&\quad\lesssim
		\|(1-\L_h\L_h^{H*})\widetilde{A}v_r(s)\|_{H^{-1}(\Omega;\Gamma)}+
		\|(1-\L_h\L_h^{V*})\widetilde{A}v_r(s)\|_{H^{-1}(\Omega;\Gamma)}.
	\end{align}
	Applying the approximation estimates for $\L_h^{H*}$ and $\L_h^{V*}$ established in \eqref{eq:Hm1_eastimate_Lh*_kp1} and \eqref{eq:Hm1_eastimate_Lv*_kp1} yields
	\begin{align}\label{eq:estimate_Lv*-Lh*_db_1}
		\|(\L_h^{V*}-\L_h^{H*})\widetilde{A}v_r(s)\|_{H^{-1}(\Omega_h;\Gamma_h)}
		\lesssim
		h^{k+2}\|\widetilde{A}v_r(s)\|_{H^{k+1}(\Omega;\Gamma)}
		+
		h^{k+\frac12}\|\widetilde{A}v_r(s)\|_{H^1(\Omega;\Gamma)}.
	\end{align}
	Recalling the definition of \(\widetilde{A}\) in \eqref{eq:def_tilde_A}, and using the stability estimate for \(e^{sL}\) in \eqref{eq:stability_phi1}, together with the Bernstein-type inequality \eqref{eq:Bernstein_low} and the equivalence between \(\mathcal{Y}^1 \times \mathcal{Y}^0\) and \(H^1(\Omega;\Gamma)\times L^2(\Omega;\Gamma)\) established in Lemma~\ref{lem:spectral-sobolev-scale}, we obtain
	\begin{align}\label{eq:filted_regularity}
		&\|\widetilde{A}v_r(s)\|_{H^{k+1}(\Omega;\Gamma)}\lesssim\|\widetilde{A}v_r(s)\|_{\mathcal{Y}^{k+1}}\lesssim\|v_r(s)\|_{\mathcal{Y}^{k+3}}\notag\\
		&\quad\lesssim\|\Pi_r V(0)\|_{\mathcal{Y}^{k+3}\times \mathcal{Y}^{k+2}}  \lesssim r^{k+2}\| V(0)\|_{\mathcal{Y}^{1}\times \mathcal{Y}^{0}}\sim r^{k+2}\| V(0)\|_{H^{1}(\Omega;\Gamma)\times L^2(\Omega;\Gamma)},
	\end{align}
	and similarly
	\begin{align*}
		\|\widetilde{A}v_r(s)\|_{H^1(\Omega;\Gamma)}\lesssim r^2\| V(0)\|_{H^{1}(\Omega;\Gamma)\times L^2(\Omega;\Gamma)}.
	\end{align*}
	By substituting these estimates into \eqref{eq:estimate_Lv*-Lh*_db_1} we conclude
	\begin{align}\label{eq:estimate_Lv*-Lh*_db}
		\|(\L_h^{V*}-\L_h^{H*})\widetilde{A} v_r(s)\|_{H^{-1}(\Omega_h;\Gamma_h)}\lesssim \big(h^{k+2}r^{k+2}+h^{k+\frac{1}{2}}r^2\big)\| V(0)\|_{H^{1}(\Omega;\Gamma)\times L^2(\Omega;\Gamma)}.
	\end{align}
	
	Moreover noting that 
	\begin{align*}
		\partial_{t} \theta_h(0)=\L_h^{V*}\partial_t v_r(0)-\partial_t v_{r,h}(0)=(\L_h^{V*}-\L_h^{H*})\Pi_r V_2(0)\quad \text{and}\quad \theta_h(0)=(\L_h^{V*}-\L_h^{H*})\Pi_rV_1(0).
	\end{align*}
	By using the similar argument as in \eqref{eq:Lv*-Lh*_db} and \eqref{eq:estimate_Lv*-Lh*_db} and applying the estimates \eqref{eq:L2_eastimate_Lh*_kp1}--\eqref{eq:Hm1_eastimate_Lv*_kp1} together with \eqref{eq:H1_estimate_Lv*} we deduce:
	\begin{align*}
		\|\partial_t\theta_h(0)\|_{H^{-1}(\Omega_h;\Gamma_h)}&\lesssim \|(1-\L_h\L_h^{H*})\Pi_r V_2(0)\|_{H^{-1}(\Omega;\Gamma)}+
		\|(1-\L_h\L_h^{V*})\Pi_r V_2(0)\|_{H^{-1}(\Omega;\Gamma)}\\
		&\lesssim h^{k+2}\|\Pi_{r} V_2(0)\|_{H^{k+1}(\Omega;\Gamma)}+h^{k+\frac{1}{2}}\|\Pi_{r} V_2(0)\|_{H^1(\Omega;\Gamma)},
	\end{align*}
	and
	\begin{align*}
		\|\theta_h(0)\|_{L^2(\Omega_h;\Gamma_h)}&\lesssim \|(1-\L_h\L_h^{H*})\Pi_r V_1(0)\|_{L^2(\Omega;\Gamma)}+
		\|(1-\L_h\L_h^{V*})\Pi_r V_1(0)\|_{L^2(\Omega;\Gamma)}\\
		&\lesssim h^{k+1}\|\Pi_{r} V_1(0)\|_{H^{k+1}(\Omega;\Gamma)}+h^{k+\frac{1}{2}}\|\Pi_{r} V_1(0)\|_{H^{1}(\Omega;\Gamma)}.
	\end{align*}
	Noting that $V(0)=(V_1(0),V_2(0))^\top \in H^1(\Omega;\Gamma)\times L^2(\Omega;\Gamma)$, under the condition $r\leq h^{-1}$ and $k\geq 2$, by using the Bernstein-type inequality \eqref{eq:Bernstein_low} we have
	\begin{align}\label{eq:theta0_db}
		&\|\partial_t\theta_h(0)\|_{H^{-1}(\Omega_h;\Gamma_h)}+\|\theta_h(0)\|_{L^2(\Omega_h;\Gamma_h)}\notag\\
		&\quad\lesssim \big(h^{k+2}r^{k+1}+h^{k+\frac{1}{2}}r\big)\|V_2(0)\|_{L^2(\Omega;\Gamma)}+\big(h^{k+1}r^{k}+h^{k+\frac{1}{2}}\big)\|V_1(0)\|_{H^1(\Omega;\Gamma)}\notag\\
		&\quad\lesssim h\|V(0)\|_{H^1(\Omega;\Gamma)\times L^2(\Omega;\Gamma)}.
	\end{align}
	Substituting \eqref{eq:estimate_Lv*-Lh*_db} and \eqref{eq:theta0_db} into \eqref{eq:Gronwall_db}, we infer
	\begin{equation}\label{eq:theta_estimate_db}
		\|\partial_t\theta_h(t)\|_{H^{-1}(\Omega_h;\Gamma_h)}
		+
		\|\theta_h(t)\|_{L^2(\Omega_h;\Gamma_h)}
		\lesssim
		\bigl(h^{k+2}r^{k+2}+h^{k+\frac12}r^2+h\bigr)
		\|V(0)\|_{H^1(\Omega;\Gamma)\times L^2(\Omega;\Gamma)}.
	\end{equation}
	
	It remains to estimate the full error in
	$L^2(\Omega;\Gamma)\times H^{-1}(\Omega;\Gamma)$.
	By the triangle inequality,
	\begin{align}\label{eq:improve_kg2_decompose1}
		&\|e^{tL}\Pi_r V(0)-\L_he^{tL_h}\P_h^\ast\Pi_r V(0)\|_{L^2(\Omega;\Gamma)\times H^{-1}(\Omega;\Gamma)}\notag\\
		&\quad\le
		\|\partial_tv_r(t)-\L_h\partial_tv_{r,h}(t)\|_{H^{-1}(\Omega;\Gamma)}
		+
		\|v_r(t)-\L_hv_{r,h}(t)\|_{L^2(\Omega;\Gamma)}.
	\end{align}
	For the first term, we write
	\[
	\partial_tv_r-\L_h\partial_tv_{r,h}
	=
	\bigl(\partial_tv_r-\L_h\L_h^{V*}\partial_tv_r\bigr)
	+
	\L_h\bigl(\L_h^{V*}\partial_tv_r-\partial_tv_{r,h}\bigr),
	\]
	and similarly
	\[
	v_r-\L_hv_{r,h}
	=
	\bigl(v_r-\L_h\L_h^{V*}v_r\bigr)
	+
	\L_h\bigl(\L_h^{V*}v_r -v_{r,h}\bigr).
	\]
	Hence,
	\begin{align}\label{eq:improve_kg2_decompose2}
		&\|\partial_tv_r(t)-\L_h\partial_tv_{r,h}(t)\|_{H^{-1}(\Omega;\Gamma)}
		+
		\|v_r(t)-\L_hv_{r,h}(t)\|_{L^2(\Omega;\Gamma)}\notag\\
		&\quad\lesssim \|(\partial_tv_r-\L_h\L_h^{V*}\partial_tv_r)(t)\|_{H^{-1}(\Omega;\Gamma)}
		+
		\|(v_r-\L_h\L_h^{V*}v_r)(t)\|_{L^2(\Omega;\Gamma)} \notag\\
		&\qquad+\|(\L_h^{V*}\partial_tv_r-\partial_tv_{r,h})(t)\|_{H^{-1}(\Omega_h;\Gamma_h)}+\|(\L_h^{V*}v_r-v_{r,h})(t)\|_{L^2(\Omega_h;\Gamma_h)}.
	\end{align}
	Recalling the definition of \(\theta_h\) in \eqref{eq:def_theta_h}, the last two terms are controlled by \eqref{eq:theta_estimate_db}. For the first two terms on the right-hand side of \eqref{eq:improve_kg2_decompose2}, the approximation properties of \(\L_h^{V*}\) from \eqref{eq:H1_estimate_Lv*} and \eqref{eq:Hm1_eastimate_Lv*_kp1} yield
	\begin{align*}
		&\|\partial_tv_r(t)-\L_h\L_h^{V*}\partial_tv_r(t)\|_{H^{-1}(\Omega;\Gamma)}
		+
		\|v_r(t)-\L_h\L_h^{V*}v_r(t)\|_{L^2(\Omega;\Gamma)}\\
		&\quad \lesssim
		h^{k+2}\|\partial_tv_r(t)\|_{H^{k+1}(\Omega;\Gamma)}
		+
		h^{k+\frac12}\|\partial_tv_r(t)\|_{H^1(\Omega;\Gamma)}
		+
		h^{k+1}\|v_r(t)\|_{H^{k+1}(\Omega;\Gamma)}. 
	\end{align*}
	By the same argument as in \eqref{eq:filted_regularity}, using
	\eqref{eq:stability_phi1} and the Bernstein-type estimate
	\eqref{eq:Bernstein_low}, we obtain
	\begin{align*}
		\|\partial_tv_r(t)\|_{H^{k+1}(\Omega;\Gamma)}
		&\lesssim
		r^{k+1}\|V(0)\|_{H^{1}(\Omega;\Gamma)\times L^2(\Omega;\Gamma)}\\
		\|v_r(t)\|_{H^{k+1}(\Omega;\Gamma)}&\lesssim r^{k}\|V(0)\|_{H^{1}(\Omega;\Gamma)\times L^2(\Omega;\Gamma)},	
	\end{align*} 
	and similar estimates for $\|\partial_tv_r(t)\|_{H^1(\Omega;\Gamma)}$ we have
	\begin{align*}
		&\|\partial_tv_r(t)-\L_h\L_h^{V*}\partial_tv_r(t)\|_{H^{-1}(\Omega;\Gamma)}
		+
		\|v_r(t)-\L_h\L_h^{V*}v_r(t)\|_{L^2(\Omega;\Gamma)}\\
		&\quad \lesssim \big(h^{k+2}r^{k+1}+h^{k+\frac{1}{2}}r+h^{k+1}r^{k}\big)\lesssim h, \qquad \text{when}\quad r\leq h^{-1}.
	\end{align*}
	Combining this with \eqref{eq:theta_estimate_db}, \eqref{eq:improve_kg2_decompose1} and \eqref{eq:improve_kg2_decompose2}, we conclude that
	\begin{align*}
		&\bigl\|e^{tL}\Pi_r V(0)-\L_h e^{tL_h}\P_h^* \Pi_r V(0)\bigr\|_{L^2(\Omega;\Gamma)\times H^{-1}(\Omega;\Gamma)}\lesssim \bigl(h^{k+2}r^{k+2}+h^{k+\frac12}r^2+h\bigr)
		\|V(0)\|_{H^1(\Omega;\Gamma)\times L^2(\Omega;\Gamma)}.
	\end{align*}
	Since \(k\ge2\) and \(r\ge1\), we have \(r^2\le r^k\). This yields the stated estimate in \eqref{eq:improve_etL-etLh_db} and completes the proof for the case \(k\ge2\).
\end{proof}

\begin{proof}[Proof of \eqref{eq:improve_etL-etLh_db} for \(k=1\)]
	We keep the notation
	\begin{align*}
		V_r(t)&:=e^{tL}\Pi_r V(0)=\bigl(v_r(t),\partial_t v_r(t)\bigr),\\
		V_{r,h}(t)&:=e^{tL_h}\P_h^*\Pi_r V(0)
		=\bigl(v_{r,h}(t),\partial_t v_{r,h}(t)\bigr),
	\end{align*}
	where \(\P_h^*=\operatorname{diag}(\L_h^{H*},\L_h^{H*})\). We define
	\[
	\theta_h(t):=\L_h^{V*}v_r(t)-v_{r,h}(t)\in X_h^1,
	\qquad
	\rho_h(t):=(\L_h^{V*}-\L_h^{H*})v_r(t)\in X_h^1.
	\]
	As in the case \(k\ge2\), i.e. \eqref{eq:error_eq_db}, the function \(\theta_h\) satisfies
	\begin{align}\label{eq:err_eq_db_k1}
		&m_h(\partial_{tt}\theta_h(t),w_h)+\tilde{a}_h(\theta_h(t),w_h)\notag\\
		&\quad=
		-m_h\bigl((\L_h^{V*}-\L_h^{H*})(A+c_G) v_r(t),w_h\bigr)+c_G\,m_h(\theta_h(t),w_h)\notag\\
		&\quad=m_h\bigl((\L_h^{V*}-\L_h^{H*})(\partial_{tt}-c_G) v_r(t),w_h\bigr)+c_G\,m_h(\theta_h(t),w_h)\notag\\
		&\quad=m_h\bigl(\partial_{tt}\rho_h(t),w_h\bigr)+c_G\,\Big(m_h(\theta_h(t),w_h)-m_h\bigl(\rho_h(t),w_h\bigr)\Big)
	\end{align}
	for all \(w_h\in X_h^1\). Here we used the identity \(\partial_{tt}v_r(t)=-Av_r(t)\), together with the fact that \(\L_h^{H*}\) and \(\L_h^{V*}\) commute with \(\partial_t\).
	
	Define
	\[
	\Theta_h(t):=\int_0^t \theta_h(s)\,ds .
	\]
	Integrating \eqref{eq:err_eq_db_k1} from \(0\) to \(t\), we obtain
	\begin{align}
		&m_h(\partial_t\theta_h(t),w_h)-m_h\bigl(\partial_t\theta_h(0),w_h\bigr)+\tilde{a}_h(\Theta_h(t),w_h)\notag\\
		&\quad=
		m_h\bigl(\partial_t\rho_h(t)-\partial_t\rho_h(0),w_h\bigr)+c_G \,m_h(\Theta_h(t),w_h)	-c_G\int_0^tm_h\bigl(\rho_h(s),w_h\bigr)\d s
		\label{eq:int_err_eq_db_k1}
	\end{align}
	for all \(w_h\in X_h^1\). Choosing \(w_h=\theta_h(t)=\partial_t\Theta_h(t)\), we get
	\begin{align*}
		&\frac12\frac{\d}{\d t}m_h(\theta_h(t),\theta_h(t))
		+\frac12\frac{\d}{\d t}\tilde a_h(\Theta_h(t),\Theta_h(t))\\
		&\quad=
		m_h\bigl(\partial_t\rho_h(t)-\partial_t\rho_h(0),\theta_h(t)\bigr)+m_h\bigl(\partial_t\theta_h(0),\theta_h(t)\bigr)\\
		&\qquad
		+c_G \,m_h(\Theta_h(t),\theta_h(t))-c_G\int_0^tm_h\bigl(\rho_h(s),\theta_h(t)\bigr)\d s\\
		&\quad\lesssim
		\Bigl(
		\|\partial_t\rho_h(t)\|_{L^2(\Omega_h;\Gamma_h)}
		+\|\partial_t\rho_h(0)\|_{L^2(\Omega_h;\Gamma_h)}
		+\|\partial_t\theta_h(0)\|_{L^2(\Omega_h;\Gamma_h)}
		\Bigr)
		\|\theta_h(t)\|_{L^2(\Omega_h;\Gamma_h)}\\
		&\qquad+\|\theta_h(t)\|_{L^2(\Omega_h;\Gamma_h)}\|\Theta_h(t)\|_{L^2(\Omega_h;\Gamma_h)}+t\sup_{0\le s\le t}\|\rho_h(s)\|_{L^2(\Omega_h;\Gamma_h)}\|\theta_h(t)\|_{L^2(\Omega_h;\Gamma_h)}\\
		&\quad\lesssim \|\partial_t\rho_h(t)\|^2_{L^2(\Omega_h;\Gamma_h)}
		+\|\partial_t\rho_h(0)\|^2_{L^2(\Omega_h;\Gamma_h)}
		+\|\partial_t\theta_h(0)\|^2_{L^2(\Omega_h;\Gamma_h)}+\sup_{0\le s\le t}\|\rho_h(s)\|^2_{L^2(\Omega_h;\Gamma_h)}\\
		&\qquad+\|\theta_h(t)\|_{L^2(\Omega_h;\Gamma_h)}^2+\|\Theta_h(t)\|^2_{H^1(\Omega_h;\Gamma_h)}.
	\end{align*}
	Here, in the last inequality, we used \(t\le T\), the Cauchy--Schwarz inequality, Young's inequality, and the embedding
	\(H^1(\Omega_h;\Gamma_h)\hookrightarrow L^2(\Omega_h;\Gamma_h)\).

	Using the equivalence of \(m_h\) and \(\tilde a_h\) with the
	\(L^2(\Omega_h;\Gamma_h)\)- and \(H^1(\Omega_h;\Gamma_h)\)-norms, respectively,
	and integrating the resulting energy inequality, we infer
	\begin{align}
		&\|\theta_h(t)\|^2_{L^2(\Omega_h;\Gamma_h)}
		+\|\Theta_h(t)\|^2_{H^1(\Omega_h;\Gamma_h)}\lesssim \|\theta_h(0)\|^2_{L^2(\Omega_h;\Gamma_h)}
		+\|\partial_t\rho_h(0)\|^2_{L^2(\Omega_h;\Gamma_h)}\notag\\
		&\qquad+\|\partial_t\theta_h(0)\|^2_{L^2(\Omega_h;\Gamma_h)}+\sup_{0\le s\le t}\|\partial_t\rho_h(s)\|^2_{L^2(\Omega_h;\Gamma_h)}+\sup_{0\le s\le t}\|\rho_h(s)\|^2_{L^2(\Omega_h;\Gamma_h)},
		\label{eq:theta_Theta_est_db_k1}
	\end{align}
	where we have used \(\Theta_h(0)=0\).
	
	We now estimate the terms on the right-hand side of \eqref{eq:theta_Theta_est_db_k1}. First, by the triangle inequality and boundedness result \eqref{eq:boundedness_lift_operator},
	\[
	\|\partial_t\rho_h(s)\|_{L^2(\Omega_h;\Gamma_h)}
	=
	\|(\L_h^{V*}-\L_h^{H*})\partial_t v_r(s)\|_{L^2(\Omega_h;\Gamma_h)}
	\lesssim
	\|(\L_h\L_h^{V*}-\L_h\L_h^{H*})\partial_t v_r(s)\|_{L^2(\Omega;\Gamma)}.
	\]
	Hence,
	\begin{align*}
		\|\partial_t\rho_h(s)\|_{L^2(\Omega_h;\Gamma_h)}
		&\lesssim
		\|(1-\L_h\L_h^{V*})\partial_t v_r(s)\|_{L^2(\Omega;\Gamma)}
		+
		\|(1-\L_h\L_h^{H*})\partial_t v_r(s)\|_{L^2(\Omega;\Gamma)}.
	\end{align*}
By the \(L^2\)-error estimates \eqref{eq:H1_estimate_Lv*} and \eqref{eq:L2_eastimate_Lh*_kp1} with \(k=1\), we obtain
\begin{align}
	\|\partial_t\rho_h(s)\|_{L^2(\Omega_h;\Gamma_h)}
	&\lesssim
	h^2\|\partial_t v_r(s)\|_{H^2(\Omega;\Gamma)}
	+h^{\frac32}\|\partial_t v_r(s)\|_{H^1(\Omega;\Gamma)} .
	\label{eq:dt_rho_est_db_k1}
\end{align}
Similarly, applying the same estimates to \(\rho_h\) instead of \(\partial_t\rho_h\), we have
\begin{align}
	\|\rho_h(s)\|_{L^2(\Omega_h;\Gamma_h)}
	&\lesssim
	h^2\|v_r(s)\|_{H^2(\Omega;\Gamma)}
	+h^{\frac32}\|v_r(s)\|_{H^1(\Omega;\Gamma)} .
	\label{eq:rho_est_db_k1}
\end{align}
Moreover, applying these estimates to the initial data gives
\begin{align}
	\|\partial_t\theta_h(0)\|_{L^2(\Omega_h;\Gamma_h)}
	&=
	\|(\L_h^{V*}-\L_h^{H*})\Pi_r V_2(0)\|_{L^2(\Omega_h;\Gamma_h)}
	\notag\\
	&\lesssim
	h^2\|\Pi_r V_2(0)\|_{H^2(\Omega;\Gamma)}
	+h^{\frac32}\|\Pi_r V_2(0)\|_{H^1(\Omega;\Gamma)} ,
	\label{eq:dt_theta0_est_db_k1}
\end{align}
and
\begin{align}
	\|\theta_h(0)\|_{L^2(\Omega_h;\Gamma_h)}
	&=
	\|(\L_h^{V*}-\L_h^{H*})\Pi_r V_1(0)\|_{L^2(\Omega_h;\Gamma_h)}
	\notag\\
	&\lesssim
	h^2\|\Pi_r V_1(0)\|_{H^2(\Omega;\Gamma)}
	+h^{\frac32}\|\Pi_r V_1(0)\|_{H^1(\Omega;\Gamma)} .
	\label{eq:theta0_est_db_k1}
\end{align}
	
	Using the boundedness of \(e^{tL}\) in \eqref{eq:stability_phi1}, the condition \(V(0)=(V_1(0),V_2(0))^\top \in H^1(\Omega;\Gamma)\times L^2(\Omega;\Gamma)\) and Bernstein-type inequality \eqref{eq:Bernstein_low} for the spectral localization of \(\Pi_r\), we have 
	\begin{align}\label{eq:H1_reg_db_k1_v1}
		\|\Pi_r V_1(0)\|_{H^1(\Omega;\Gamma)}+\|v_r(s)\|_{H^1(\Omega;\Gamma)}\lesssim \|V(0)\|_{H^1(\Omega;\Gamma)\times L^2(\Omega;\Gamma)}
	\end{align}
	and
	\begin{align}
		&\|\partial_t v_r(s)\|_{H^2(\Omega;\Gamma)}
		+\|\Pi_r V_2(0)\|_{H^2(\Omega;\Gamma)}
		\lesssim
		r^2\|V(0)\|_{H^1(\Omega;\Gamma)\times L^2(\Omega;\Gamma)},
		\label{eq:H2_reg_db_k1}
		\\[3mm]
		&\|\partial_t v_r(s)\|_{H^1(\Omega;\Gamma)}+\|v_r(s)\|_{H^2(\Omega;\Gamma)}
		+\|\Pi_r V_2(0)\|_{H^1(\Omega;\Gamma)}
		+\|\Pi_r V_1(0)\|_{H^2(\Omega;\Gamma)}\notag\\
		&\quad\lesssim
		r\|V(0)\|_{H^1(\Omega;\Gamma)\times L^2(\Omega;\Gamma)},
		\label{eq:H1_reg_db_k1}
	\end{align}
	Substituting \eqref{eq:H1_reg_db_k1_v1}--\eqref{eq:H1_reg_db_k1} into
	\eqref{eq:dt_rho_est_db_k1}--\eqref{eq:theta0_est_db_k1}, we obtain
	\begin{align}
		&\|\theta_h(0)\|_{L^2(\Omega_h;\Gamma_h)}
		+\|\partial_t\rho_h(0)\|_{L^2(\Omega_h;\Gamma_h)}
		+\|\partial_t\theta_h(0)\|_{L^2(\Omega_h;\Gamma_h)}
		+\sup_{0\le s\le t}\|\partial_t\rho_h(s)\|_{L^2(\Omega_h;\Gamma_h)}\notag\\
		&\quad+\sup_{0\le s\le t}\|\rho_h(s)\|_{L^2(\Omega_h;\Gamma_h)}\lesssim
		\bigl(h^2r^2+h^{\frac32}r\bigr)
		\|V(0)\|_{H^1(\Omega;\Gamma)\times L^2(\Omega;\Gamma)}.
		\label{eq:rhs_theta_est_db_k1}
	\end{align}
	Therefore, by \eqref{eq:theta_Theta_est_db_k1},
	\begin{equation}
		\|\theta_h(t)\|_{L^2(\Omega_h;\Gamma_h)}
		+\|\Theta_h(t)\|_{H^1(\Omega_h;\Gamma_h)}
		\lesssim
		\bigl(h^2r^2+h^{\frac32}r\bigr)
		\|V(0)\|_{H^1(\Omega;\Gamma)\times L^2(\Omega;\Gamma)}.
		\label{eq:theta_Theta_final_db_k1}
	\end{equation}
	
Next, we estimate \(\partial_t\theta_h(t)\) in \(H^{-1}(\Omega_h;\Gamma_h)\). By duality and the norm equivalence established in \eqref{eq:Hminus1_Hminus1h_db}, we obtain
\begin{align}\label{eq:duality}
	\|\partial_t\theta_h(t)\|_{H^{-1}(\Omega_h;\Gamma_h)}
	&\lesssim
	\sup_{\substack{w_h\in X^1_h\atop \|w_h\|_{H^1(\Omega_h;\Gamma_h)}=1}}
	m_h(\partial_t\theta_h(t),w_h).
\end{align}
Using \eqref{eq:int_err_eq_db_k1}, Lemma~\ref{lem:basic_results} (b), the Cauchy--Schwarz inequality, and the embedding
\(H^1(\Omega_h;\Gamma_h)\hookrightarrow L^2(\Omega_h;\Gamma_h)\), we obtain
\begin{align*}
	\big|m_h(\partial_t\theta_h(t),w_h)\big|
	\lesssim
	\Big(&
	\|\Theta_h(t)\|_{H^1(\Omega_h;\Gamma_h)}
	+\|\partial_t\theta_h(0)\|_{L^2(\Omega_h;\Gamma_h)}
	+\|\partial_t\rho_h(t)\|_{L^2(\Omega_h;\Gamma_h)}
	\\
	&\quad
	+\|\partial_t\rho_h(0)\|_{L^2(\Omega_h;\Gamma_h)}
	+\sup_{0\le s\le t}\|\rho_h(s)\|_{L^2(\Omega_h;\Gamma_h)}
	\Big)
	\|w_h\|_{H^1(\Omega_h;\Gamma_h)} .
\end{align*}
Combining this estimate with \eqref{eq:theta_Theta_final_db_k1} and \eqref{eq:rhs_theta_est_db_k1}, we get
\begin{align*}
	\big|m_h(\partial_t\theta_h(t),w_h)\big|
	\lesssim
	\bigl(h^2r^2+h^{\frac32}r\bigr)
	\|V(0)\|_{H^1(\Omega;\Gamma)\times L^2(\Omega;\Gamma)}
	\|w_h\|_{H^1(\Omega_h;\Gamma_h)} .
\end{align*}
Therefore, by the duality estimate \eqref{eq:duality}, we conclude that
\begin{equation}
	\|\partial_t\theta_h(t)\|_{H^{-1}(\Omega_h;\Gamma_h)}
	\lesssim
	\bigl(h^2r^2+h^{\frac32}r\bigr)
	\|V(0)\|_{H^1(\Omega;\Gamma)\times L^2(\Omega;\Gamma)} .
	\label{eq:dttheta_final_db_k1}
\end{equation}
	
	As in the proof for the case \(k\ge2\), we now write
	\begin{align}
		&\|e^{tL}\Pi_rV(0)-\L_h e^{tL_h}\P_h^*\Pi_rV(0)\|_{L^2(\Omega;\Gamma)\times H^{-1}(\Omega;\Gamma)}
		\notag\\
		&\qquad\le
		\|\partial_t v_r(t)-\L_h\L_h^{V*}\partial_t v_r(t)\|_{H^{-1}(\Omega;\Gamma)}
		+\|v_r(t)-\L_h\L_h^{V*}v_r(t)\|_{L^2(\Omega;\Gamma)}
		\notag\\
		&\qquad\quad
		+\|\partial_t\theta_h(t)\|_{H^{-1}(\Omega_h;\Gamma_h)}
		+\|\theta_h(t)\|_{L^2(\Omega_h;\Gamma_h)}.
		\label{eq:split_total_error_db_k1}
	\end{align}
	The last two terms are controlled by \eqref{eq:theta_Theta_final_db_k1} and \eqref{eq:dttheta_final_db_k1}. For the first two terms, applying \eqref{eq:H1_estimate_Lv*} with \(k=1\) and the Bernstein-type inequality \eqref{eq:Bernstein_low}, we obtain
	\begin{align*}
		\|\partial_t v_r(t)-\L_h\L_h^{V*}\partial_t v_r(t)\|_{H^{-1}(\Omega;\Gamma)}&\lesssim \|\partial_t v_r(t)-\L_h\L_h^{V*}\partial_t v_r(t)\|_{L^2(\Omega;\Gamma)}\\
		&\lesssim h^2\|\partial_t v_r(t)\|_{H^2(\Omega;\Gamma)}
		\lesssim h^2r^2\|V(0)\|_{H^1(\Omega;\Gamma)\times L^2(\Omega;\Gamma)},\\[2mm]
		\|v_r(t)-\L_h\L_h^{V*}v_r(t)\|_{L^2(\Omega;\Gamma)}
		&\lesssim h^2\|v_r(t)\|_{H^2(\Omega;\Gamma)}
		\lesssim h^2r\|V(0)\|_{H^1(\Omega;\Gamma)\times L^2(\Omega;\Gamma)}.
	\end{align*}
	Substituting these bounds into \eqref{eq:split_total_error_db_k1}, we conclude that
	\[
	\|e^{tL}\Pi_rV(0)-\L_h e^{tL_h}\P_h^*\Pi_rV(0)\|_{L^2(\Omega;\Gamma)\times H^{-1}(\Omega;\Gamma)}
	\lesssim
	\bigl(h^2r^2+h^{\frac32}r\bigr)
	\|V(0)\|_{H^1(\Omega;\Gamma)\times L^2(\Omega;\Gamma)}.
	\]
	This proves \eqref{eq:improve_etL-etLh_db} for \(k=1\).
\end{proof}

\subsection{Weak-norm estimates for the nonlinear term}
Previously, we provided a series of estimates for linear operators. Next, we will further discuss the estimates for the nonlinear terms in equation \eqref{system_db}.
\begin{lemma}\label{lem:nonlinear_term_estimates}
	Suppose that $f(u)=(f_{\Omega}(u),f_{\Gamma}(u))$, where the nonlinear functions $f_\Omega$ and $f_{\Gamma}$ satisfy the growth conditions \eqref{eq:growth_condition_zeta} and \eqref{eq:growth_condition}. Then, the following estimates hold:
	\begin{align}
		\|f(u)\|_{L^2(\Omega;\Gamma)}&\leq C\big(\|u\|_{H^1(\Omega;\Gamma)}\big),\label{eq:fu_L2}\\[2mm]
		\|f(u)\|_{H^{-1}(\Omega;\Gamma)}&\leq C\big(\|u\|_{H^1(\Omega;\Gamma)}\big)\big(1+\|u\|_{L^2(\Omega;\Gamma)}\big),\label{eq:fu_H-1}\\[2mm]
		\|f(u)-f(v)\|_{H^{-1}(\Omega;\Gamma)}&\leq C\Big(\|u\|_{H^1(\Omega;\Gamma)},\|v\|_{H^1(\Omega;\Gamma)} \Big)\|u-v\|_{L^2(\Omega;\Gamma)},\label{eq:fu-fv}\\[2mm]
		\|f^{\prime}(u)v\|_{H^{-1}(\Omega;\Gamma)}&\leq C\big(\|u\|_{H^1(\Omega;\Gamma)}\big)\|v\|_{L^2(\Omega;\Gamma)},\label{eq:fuv}
	\end{align}
	where $C\big(\|u\|_{H^1(\Omega;\Gamma)}\big)$ and $C\big(\|u\|_{H^1(\Omega;\Gamma)},\|v\|_{H^1(\Omega;\Gamma)} \big)$ denote constants that depend on $\|u\|_{H^1(\Omega;\Gamma)}$ and both $\|u\|_{H^1(\Omega;\Gamma)}$ and $\|v\|_{H^1(\Omega;\Gamma)}$, respectively.
\end{lemma}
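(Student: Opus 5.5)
The plan is to treat the bulk and boundary components separately. In each, we use Sobolev embeddings on $\Omega$ (dimension $d\le 3$) and on $\Gamma$ (dimension $d-1\le 2$), together with the trace-compatible structure of $H^1(\Omega;\Gamma)$. Recall that $\|u\|_{H^1(\Omega;\Gamma)}$ controls both $\|u\|_{H^1(\Omega)}$ and $\|\gamma u\|_{H^1(\Gamma)}$. Hence $u\in L^6(\Omega)$ for $d=3$ and $u\in L^p(\Omega)$ for all $p<\infty$ for $d=2$. On $\Gamma$, which has dimension at most $2$, we have $\gamma u\in L^p(\Gamma)$ for every $p<\infty$.

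\textbf{Proof of \eqref{eq:fu_L2}.} We use the growth bound $|f_\Omega(u)|\lesssim (1+|u|)^{\zeta_\Omega}$. Since $\zeta_\Omega<3$ for $d=3$, we get $\|f_\Omega(u)\|_{L^2(\Omega)}\lesssim 1+\|u\|_{L^{2\zeta_\Omega}}^{\zeta_\Omega}$, and $L^{2\zeta_\Omega}\supset L^6$. For $d=2$ any finite $\zeta_\Omega$ is admissible. The boundary term is handled identically, with no restriction on $\zeta_\Gamma$.

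\textbf{Proof of \eqref{eq:fu_H-1}.} By duality against $w\in H^1(\Omega;\Gamma)$ with $\|w\|_{H^1}=1$, we write $f(u)=f(0)+\int_0^1 f'(su)\,\d s\, u$. The constant $f(0)$ contributes $O(1)$. The remaining term is bounded by \eqref{eq:fuv} applied with $v=u$ (more precisely, with the averaged derivative $\int_0^1 f'(su)\,\d s$, which satisfies the same growth bound with constants uniform in $s$). This gives $C(\|u\|_{H^1})\|u\|_{L^2}$. In the same way, \eqref{eq:fu-fv} follows from $f(u)-f(v)=\int_0^1 f'(v+s(u-v))\,\d s\,(u-v)$ and \eqref{eq:fuv} applied with base point $v+s(u-v)$, whose $H^1$ norm is bounded by $\|u\|_{H^1}+\|v\|_{H^1}$. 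Thus everything reduces to \eqref{eq:fuv} for a multiplier $g$ obeying $|g|\lesssim(1+|u|)^{\zeta-1}$.

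\textbf{Proof of \eqref{eq:fuv}, the main step.} We estimate
\[
\Big|\int_\Omega f_\Omega'(u)\,v\,w\,\d x\Big|\le \|f'_\Omega(u)\|_{L^{p}(\Omega)}\|v\|_{L^2(\Omega)}\|w\|_{L^q(\Omega)},\qquad \tfrac1p+\tfrac12+\tfrac1q=1.
\]
For $d=3$ we take $q=6$, which forces $p=3$. Then $\|f'_\Omega(u)\|_{L^3}\lesssim 1+\|u\|_{L^{3(\zeta_\Omega-1)}}^{\zeta_\Omega-1}$. This is finite because $3(\zeta_\Omega-1)<6$ when $\zeta_\Omega<3$, and this is exactly where the subcritical exponent is used. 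For $d=2$ we take $q$ large and $p$ slightly above $2$, which is possible for any finite $\zeta_\Omega$. The boundary integral $\int_\Gamma f_\Gamma'(u)vw\,\d s$ is treated the same way on the at most two-dimensional surface, using $\|w\|_{L^q(\Gamma)}\lesssim\|w\|_{H^1(\Gamma)}$ for all $q<\infty$. Summing the two contributions and taking the supremum over $w$ yields \eqref{eq:fuv}.

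The only delicate points are the exponent bookkeeping in $d=3$ and the borderline case $\zeta<1$. In that case the $\max\{\cdot,1\}$ in \eqref{eq:growth_condition} keeps $f'$ bounded, and the estimate becomes trivial.
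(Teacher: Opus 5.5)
Your proof is correct and takes essentially the same route as the paper. The paper proves \eqref{eq:fu-fv} with the fundamental theorem of calculus, the growth bound, the dual embedding $L^{6/5}(\Omega)\hookrightarrow H^{-1}(\Omega)$ (and $L^{1+}$ in 2D and on $\Gamma$), and H\"older with $L^3\cdot L^2$; this is exactly your pairing against $w\in L^6$ under the subcritical condition $3(\zeta_\Omega-1)\le 6$. Reducing \eqref{eq:fu_H-1} and \eqref{eq:fu-fv} to a multiplier version of \eqref{eq:fuv}, as you do, simply makes explicit the ``analogous arguments'' that the paper leaves to the reader.
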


\begin{proof}
	The proof is provided in Appendix~\ref{Appendix:C}. 
\end{proof}

We next derive the interpolation error estimate for the nonlinear term. In the present framework, the regularity of both the exact and numerical solutions is below that required for a direct application of the standard interpolation theory. We therefore again make use of the smoother frequency-projected function \(\Pi_r u\), and employ the associated nonlinear term as an intermediate quantity. Following the proof of Lemma~\ref{lem:nonlinear_term_estimates}, and combining the piecewise smoothness estimate for the lift mapping \(G\) from \cite{ER2013} with the standard interpolation estimate in \cite[Theorem~4.4.20]{brenner2008mathematical}, we obtain the following \(H^{-1}(\Omega;\Gamma)\)-error bound.

\begin{lemma}\label{lem:high_order_nonlinear_term_estimates}
		Let $f(\Pi_r u) = \big(f_{\Omega}(\Pi_r u),\, f_{\Gamma}(\Pi_r u)\big)$, 
		where the nonlinear functions $f_{\Omega}$ and $f_{\Gamma}$ satisfy the growth conditions 
		\eqref{eq:growth_condition_zeta} and \eqref{eq:growth_condition}, respectively. Here, \( I_h \) is the bulk--surface interpolation operator defined in \eqref{eq:def_Ih}.
		Then the following estimate holds:
		\begin{align}\label{eq:fu_H_ell}
			\|\L_h^{H*}f(\Pi_r u)- I_h f(\Pi_{r}u)\|_{H^{-1}(\Omega_h;\Gamma_h)}
			\;\le\;
			C\!\left(\|u\|_{H^1(\Omega;\Gamma)}\right) (r^{d-1}h^{d}+h^k),
		\end{align}
		for $k\geq d-1$, and
		\begin{align}\label{eq:fu_H_ell_1}
			\|\L_h^{H*}f(\Pi_r u)-I_h f(\Pi_{r}u)\|_{H^{-1}(\Omega_h;\Gamma_h)}
			\;\le\;
			C\!\left(\|u\|_{H^1(\Omega;\Gamma)}\right) (r^{2}h^{2}+h),
		\end{align}
		for $k=1$, $d=3$, where $d=2,3$ is the spatial dimension. 
		The constant 
		$C\!\left(\|u\|_{H^1(\Omega;\Gamma)}\right)$ 
		depends only on $\|u\|_{H^1(\Omega;\Gamma)}$, 
		and is independent of both $r$ and $h$.
\end{lemma}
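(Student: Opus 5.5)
We split the error through the nodal interpolant on the exact geometry. Write \(g:=f(\Pi_r u)=(g_\Omega,g_\Gamma)\), which is smooth because \(\Pi_r u\) is a finite combination of eigenfunctions. Then decompose
\[
\L_h^{H*}g-I_hg=\bigl(\L_h^{H*}g-\L_h^{H*}\L_h I_hg\bigr)+\bigl(\L_h^{H*}\L_h I_hg-I_hg\bigr).
\]
For the first term we use the \(H^{-1}\)-boundedness \eqref{eq:bounded_H-1}, which reduces it to \(\|g-\L_h I_h g\|_{H^{-1}(\Omega;\Gamma)}\). For the second term we test with \(w_h\) and use the definition \eqref{eq:def:Lh*H}: it equals \(m(\L_h I_hg,\L_hw_h)-m_h(I_hg,w_h)\). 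This is the standard geometric perturbation of the mass form. Following \cite{ER2013,ER2021}, it is bounded by \(h^{k}\|I_hg\|_{L^2}\|w_h\|_{L^2}\), or better with an \(h^{k+1}\) bulk and boundary-layer refinement. Since \(\|g\|_{L^2}\le C(\|u\|_{H^1})\) by \eqref{eq:fu_L2}, this contributes the \(h^k\) term (respectively the \(h\) term when \(k=1\)), provided we show \(\|I_hg\|_{L^2}\lesssim C(\|u\|_{H^1})\).

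The same stability holds for the \(Q_h\) step in \eqref{eq:def_Ih}. For trace-incompatible data, \(Q_h\) differs from the identity only through the boundary mismatch \(g_\Gamma-\gamma g_\Omega\). We bound \((1-Q_h)\) in \(H^{-1}\) by duality, using \eqref{eq:Qh_approximation_db}: \(\|(1-Q_h)\eta\|_{H^{-1}}\lesssim h\|\eta\|_{L^2}\). This is acceptable against \(h^k\) for \(k=1\), and needs care for \(k\ge2\). I expect to handle it by noting that \(m_h((1-Q_h)\eta,w_h)=0\) for discrete \(w_h\) and using \(\|(1-Q_h)w\|_{L^2}\lesssim h\|w\|_{H^1}\) twice, giving \(h\cdot\|(1-Q_h)\eta\|_{L^2}\). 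We then bound \((1-Q_h)\eta\) by the interpolation errors of the two components, which are of the same size as the main interpolation error.

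The main term is the pulled-back interpolation error \(g-\L_hI_hg\) in \(H^{-1}\), which we bound by its \(L^2\) norm times \(h\) via duality. Locally we estimate \(g\circ G-I_{h}(g\circ G)\) on each curved element using \cite[Theorem~4.4.20]{brenner2008mathematical} with \(W^{s,p}\) norms. We use the piecewise smoothness bounds of \(G\) from \cite{ER2013} to transfer Sobolev norms, and the interpolant is continuous since \(s=d>d/p\) for suitable \(p\). We take \(s=d\), the maximal order allowed by \(f\in C^d\), which requires \(k\ge d-1\) so that polynomials of degree \(k\) reproduce order \(d\). This gives \(\|g-\L_hI_hg\|_{L^2}\lesssim h^{d}\|g\|_{W^{d,q}}\) up to the \(H^{-1}\) gain. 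The chain rule (Fa\`a di Bruno) expresses \(\partial^d f(\Pi_r u)\) as sums of \(f^{(a)}(\Pi_r u)\prod \partial^{\beta_i}\Pi_r u\). We bound \(f^{(a)}(\Pi_ru)\) in \(L^\infty\)-type or \(L^p\) norms using the growth condition \eqref{eq:growth_condition} together with \(\|\Pi_r u\|_{L^6}\lesssim\|u\|_{H^1}\) (and \(\zeta_\Omega<3\) when \(d=3\)). We bound the derivative products by Gagliardo--Nirenberg/Sobolev combined with the Bernstein estimate \eqref{eq:Bernstein_low}, \(\|\Pi_r u\|_{H^{1+q}}\lesssim r^q\|u\|_{H^1}\).

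The key counting is that \(d\) derivatives distributed over factors, each measured starting from \(H^1\), cost at most \(r^{d-1}\) in total. For example, in \(d=3\) the worst product \(|\nabla\Pi_ru|^3\) lies in \(L^2\) with \(\|\nabla \Pi_ru\|_{L^6}^3\lesssim r^3\), but we combine it with the extra factor \(h\) from duality and only need the \(L^{p}\), \(p<2\), version. The boundary component follows the same pattern on the \((d-1)\)-dimensional manifold \(\Gamma\) with trace inequalities, and there the count is easier. For \(k=1,d=3\) we can only use \(s=2\), which gives \(h^2\|g\|_{W^{2,p}}\lesssim h^2r^2\) after taking the \(H^{-1}\) gain into account. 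The main obstacle will be this product/Sobolev bookkeeping: choosing exponents so that the growth of \(f^{(a)}\) is controlled by \(\|u\|_{H^1}\) while the derivative factors produce exactly \(r^{d-1}\), together with verifying that the non-affine isoparametric map does not spoil the \(W^{d,p}\) interpolation estimate.
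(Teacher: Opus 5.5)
\paragraph{The gap: your handling of $Q_h$.} It fails because of how you route the first term. You bound $\L_h^{H*}(g-\L_hI_hg)$ through \eqref{eq:bounded_H-1}, that is, by the continuous norm $\|g-\L_hI_hg\|_{H^{-1}(\Omega;\Gamma)}$. This norm contains $\L_h(1-Q_h)\eta$ with $\eta=(I_{h,\Omega}g_\Omega,I_{h,\Gamma}g_\Gamma)$.

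Your claim that $(1-Q_h)\eta$ is controlled by the componentwise interpolation errors is false. The two components of $\eta$ differ on $\Gamma_h$ by the interpolant of $f_\Gamma(\Pi_r u)-f_\Omega(\Pi_r u)$. This is $O(1)$ in general (e.g.\ $f_\Omega(s)=s^2$, $f_\Gamma(s)=s^3$) and is not an approximation error. Two facts pin down its size:
\begin{itemize}
\item the discrete trace inequality $\|\gamma_h e_h\|_{L^2(\Gamma_h)}\lesssim h^{-1/2}\|e_h\|_{L^2(\Omega_h)}$ gives a lower bound;
\item extending the mismatch discretely into one layer of elements gives an upper bound.
\end{itemize}
Together they show $\|(1-Q_h)\eta\|_{L^2(\Omega_h;\Gamma_h)}\sim h^{1/2}$. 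Your orthogonality argument therefore yields only $h^{3/2}$, not $h^k$ for $k\ge 2$. For $k=1$ your bound $h$ is fine.

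No sharper estimate can rescue this route. Test against bubble functions on the boundary elements, scaled by $h^{1/2}$ and vanishing on $\Gamma_h$. This shows that $\|(1-Q_h)\eta\|_{H^{-1}(\Omega_h;\Gamma_h)}$, and hence $\|g-\L_hI_hg\|_{H^{-1}(\Omega;\Gamma)}$, is generically of order $h^{3/2}$.

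\paragraph{The fix.} The quantity in the lemma lies in $X_h^k$, so by \eqref{eq:Hminus1_Hminus1h_db} it suffices to test with $w_h\in X_h^k$. Then $m_h(Q_h\eta,w_h)=m_h(\eta,w_h)$, and $Q_h$ drops out exactly, with no trace compatibility needed. This is what the paper does. The paper also applies the geometric consistency estimate to $\L_h^{-1}g$ rather than to $I_hg$:
\[
|m(g,\L_hw_h)-m_h(\L_h^{-1}g,w_h)|\lesssim h^k\|g\|_{L^2(\Omega;\Gamma)}\|w_h\|_{L^2(\Omega_h;\Gamma_h)}.
\]
Only \eqref{eq:fu_L2} is then needed. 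The $L^2$-stability of the nodal interpolant $I_hg$, which you left open and which is not automatic without a relation between $r$ and $h$, never enters.

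\paragraph{The claimed duality gain.} The extra factor $h$ you take ``via duality'' for the interpolation error does not exist. $g-I_hg$ is not orthogonal to any discrete space. For example, the $P_1$ interpolation error of a convex function has a sign, so its $H^{-1}$ and $L^2$ norms are of the same order.

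In the paper the rate $h^d$ comes entirely from the local estimate $\|v-I_hv\|_{L^p(K)}\lesssim h^d|v|_{W^{d,p}(K)}$, valid since $d\le k+1$ and $dp>d$. The $H^{-1}$ norm enters only through the embedding $L^p\hookrightarrow H^{-1}$, with $p=6/5$ for $d=3$ and $p>1$ close to $1$ for $d=2$. For $k=1$, $d=3$ it is simply $L^2\hookrightarrow H^{-1}$ together with $\|f(\Pi_r u)\|_{H^2}\lesssim r^2$.

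Choosing $p<2$ is exactly what makes the Fa\`a di Bruno count give $r^{d-1}$. For instance,
\[
\|\,|\nabla\Pi_r u|^3\|_{L^{6/5}}\le\|\nabla\Pi_r u\|_{L^2}\|\nabla\Pi_r u\|_{L^6}^2\lesssim r^2.
\]
It also lets H\"older's inequality place $f^{(a)}(\Pi_r u)$ in a Lebesgue space controlled by $\|u\|_{H^1}$; in $L^2$ this would require $L^\infty$ control. Your remark that only the $L^p$, $p<2$, version is needed is the right mechanism, but it replaces the duality gain rather than combining with it.
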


\begin{proof}
	The proof is provided in Appendix~\ref{Appendix:C}. 
\end{proof}

We now demonstrate the stability of the interpolation operator when applied to the nonlinear functions.

\begin{lemma}\label{lem:interpolation}
	Let $f_h(u_h) = I_h f(\L_h u_h)$ be defined as in \eqref{eq:def_fh}. Assume that the nonlinear functions
	\(f_{\Omega}\) and \(f_{\Gamma}\) satisfy the growth conditions
	\eqref{eq:growth_condition_zeta} and \eqref{eq:growth_condition}, respectively. Then, for all 
	$u_h, v_h \in X^k_h$, the following estimates hold:
	\begin{align}
		\|f_h(u_h)-f_h(v_h)\|_{H^{-1}(\Omega_h;\Gamma_h)}&\leq C(\|u_h\|_{H^1(\Omega_h;\Gamma_h)}, \|v_h\|_{H^1(\Omega_h;\Gamma_h)})\cdot \|u_h-v_h\|_{L^2(\Omega_h;\Gamma_h)},\label{eq:Ihfuh-Ihfvh_1}\\
		\|f_h(u_h)-f_h(v_h)\|_{L^2(\Omega_h;\Gamma_h)}&\leq C(\|u_h\|_{H^1(\Omega_h;\Gamma_h)}, \|v_h\|_{H^1(\Omega_h;\Gamma_h)})\cdot \|u_h-v_h\|^{\alpha}_{L^2(\Omega_h;\Gamma_h)},\label{eq:Ihfuh-Ihfvh_2}
	\end{align}
	where the constant $\alpha > 0$ depends only on 
	$\zeta_{\Gamma}$, $\zeta_{\Omega}$, and the dimension $d$. 
	Specifically, $\alpha$ can be chosen as
	\begin{align}\label{eq:choose_alpha}
		\alpha=\left\{\begin{array}{l}
			{\displaystyle \frac{1}{2}},\qquad\qquad\qquad\qquad\qquad\quad\;\;\; \text{for}\quad d=2,\\[2mm]
			{\displaystyle \min\left\{\frac{3-\zeta_{\Omega}}{2},\; \frac{1}{2}\right\}},\quad\qquad\qquad\, \text{for}\quad d=3.
		\end{array}\right.
	\end{align}
\end{lemma}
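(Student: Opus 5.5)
Both estimates rest on one reduction: controlling the nodal interpolant of a composite function on each isoparametric element through its nodal values. Write $g:=f(\L_h u_h)-f(\L_h v_h)$, so that $f_h(u_h)-f_h(v_h)=I_h g=Q_h(I_{h,\Omega}g_\Omega,I_{h,\Gamma}g_\Gamma)$.

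\textbf{Reduction to the nodal interpolants.} Since $G$ fixes all interpolation nodes, the nodal values of $I_{h,\Omega}g_\Omega$ are exactly $f_\Omega(u_h(x_j))-f_\Omega(v_h(x_j))$. The same holds on $\Gamma_h$ with $f_\Gamma$. Hence
\[
I_{h,\Omega}g_\Omega=I_{h,\Omega}\bigl(f_\Omega(u_h)-f_\Omega(v_h)\bigr)
\]
as functions on $\Omega_h$, and similarly on the boundary. Moreover, $Q_h$ is an $m_h$-orthogonal projection. It is therefore stable in $L^2(\Omega_h;\Gamma_h)$. By duality and \eqref{eq:Qh_H1_stability_db} it is also stable in $H^{-1}(\Omega_h;\Gamma_h)$: for $w\in H^1$ we have $m_h(Q_h\eta,w)=m_h(\eta,Q_hw)$. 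It therefore suffices to bound the pair of nodal interpolants of $f_\bullet(u_h)-f_\bullet(v_h)$ in the two norms.

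\textbf{$L^2$ bound \eqref{eq:Ihfuh-Ihfvh_2}.} On each element, use norm equivalence on the reference element. The $L^2$ norm of a degree-$k$ finite element function is equivalent to $h^{d/2}$ (bulk) or $h^{(d-1)/2}$ (surface) times the $\ell^2$ norm of its nodal values. This gives
\[
\|I_{h,\Omega}(f_\Omega(u_h)-f_\Omega(v_h))\|_{L^2(K)}\lesssim \|f_\Omega(u_h)-f_\Omega(v_h)\|_{L^\infty(K)}|K|^{1/2}.
\]
By the mean value theorem and \eqref{eq:growth_condition}, the nodal differences are bounded by $(1+|u_h|+|v_h|)^{\zeta-1}|u_h-v_h|$. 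Applying the inverse inequality elementwise (valid for the finite element function $u_h-v_h$), this is bounded by $\|(1+|u_h|+|v_h|)^{\zeta-1}(u_h-v_h)\|_{L^2(K)}$ up to constants. Summing over elements and using Hölder with $H^1\hookrightarrow L^6$ in the bulk for $d=3$, one arrives at the stated $\alpha$:
\begin{itemize}
\item[] If the growth exponent allows a direct Hölder split, we obtain the power $1$.
\item[] Otherwise we interpolate $\|u_h-v_h\|_{L^p}\le\|u_h-v_h\|_{L^2}^{\alpha}\|u_h-v_h\|_{L^6}^{1-\alpha}$ and absorb the second factor into the $H^1$ constant. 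For $d=3$ this yields $\alpha=\min\{(3-\zeta_\Omega)/2,1/2\}$.
\end{itemize}
On $\Gamma_h$ (dimension $\le2$) and for $d=2$, we use $H^1\hookrightarrow L^q$ for all $q<\infty$ to get $\alpha=1/2$. A cruder option is to take $\alpha=1/2$ via $\|w\|_{L^\infty}$-free bounds such as $\|w\|_{L^4}^2\lesssim\|w\|_{L^2}\|w\|_{H^1}$ in 2D.

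\textbf{$H^{-1}$ bound \eqref{eq:Ihfuh-Ihfvh_1}.} Write
\[
I_h g=g_h+(I_hg-g_h),\qquad g_h:=\bigl(f_\Omega(u_h)-f_\Omega(v_h),\,f_\Gamma(u_h)-f_\Gamma(v_h)\bigr),
\]
with $g_h$ defined on $(\Omega_h,\Gamma_h)$. For the first term, the argument of \eqref{eq:fu-fv} from Lemma~\ref{lem:nonlinear_term_estimates} applies on $\Omega_h$: write $g_h=\int_0^1 f'(v_h+s(u_h-v_h))\,\mathrm{d}s\,(u_h-v_h)$ and test against $w\in H^1$ using Hölder and Sobolev embeddings. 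All constants are uniform in $h$, since the family $\{\Omega_h\}$ is uniformly Lipschitz. This gives the bound by $\|u_h-v_h\|_{L^2}$.

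For the interpolation error, pair against $w\in H^1(\Omega_h;\Gamma_h)$ and use the elementwise estimate
\[
\|(1-I_{h,K})\phi\|_{L^1(K)}\lesssim h\,\|\nabla\phi\|_{L^1(K)}
\]
(or an $L^{6/5}$ variant) for the piecewise smooth $\phi=g_h$. This gains a factor $h$. Then $\nabla g_h$ contains $f''(\cdot)\nabla u_h\,(u_h-v_h)$ and $f'(\cdot)\nabla(u_h-v_h)$. The first is handled by Hölder. In the second, $h\|\nabla(u_h-v_h)\|$ is converted to $\|u_h-v_h\|_{L^2}$ by the inverse inequality.

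\textbf{Main obstacle.} The delicate point is this interpolation remainder in $d=3$ with $\zeta_\Omega$ near $3$. Pointwise bounds on $u_h$ are not available, so the weights $(1+|u_h|)^{\zeta-2}|\nabla u_h|$ must be controlled in $L^{3/2}$-type norms. This requires elementwise inverse estimates from $L^p$ to $L^q$ on the finite element space, which is where the $C^d$ assumption on $f$ and the growth restriction are needed.
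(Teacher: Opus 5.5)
\textbf{Verdict: the $L^2$ part is sound, but the $H^{-1}$ part has a genuine gap.}

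\textbf{The $L^2$ part.} This is essentially the paper's argument: bound the interpolant by its nodal values, apply the mean value theorem with the growth bound at the nodes, and close with H\"older and $H^1\hookrightarrow L^6$. Your exponent $\alpha=(3-\zeta_\Omega)/2$ is correct.

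One step is under-justified. The bound $|K|^{1/2}\sup_K(1+|u_h|+|v_h|)^{\zeta-1}|u_h-v_h|\lesssim\|(1+|u_h|+|v_h|)^{\zeta-1}(u_h-v_h)\|_{L^2(K)}$ is a \emph{weighted} inverse inequality, not the standard one for $u_h-v_h$. It does hold, by compactness on the reference element. The paper avoids it by applying H\"older to the nodal sums first, and only then returning to $L^p$ norms via $\|w_h\|_{L^p(\Omega_h)}\sim h^{d/p}(\sum_i|w_h(a_i)|^p)^{1/p}$ on the finite element space.

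\textbf{The gap in the $H^{-1}$ part.} Your elementwise estimate $\|(1-I_{h,K})\phi\|_{L^1(K)}\lesssim h\|\nabla\phi\|_{L^1(K)}$ is false for nodal interpolation when $d\ge2$, and so is its $L^{6/5}$ variant. Point values are not controlled by $W^{1,p}$ for $p\le d$, which is why the standard estimate needs $\ell p>d$ (cf.\ \eqref{eq:condition_ell}).

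Restricting to $\phi=f_\Omega(u_h)-f_\Omega(v_h)$ does not rescue it, because $u_h$ is an arbitrary finite element function. Take $k=1$ and let $f_\Omega$ be a smooth step, equal to $0$ on $(-\infty,0]$ and $1$ on $[1,\infty)$. Set $v_h\equiv-1$. Let $u_h$ be linear on $K$ with $|\nabla u_h|=\delta^{-1}$ and $u_h\ge0$ only on a cap of height $2\delta$ at a vertex $z$. Then $I_{h,K}\phi$ is the barycentric coordinate of $z$, so $\|(1-I_{h,K})\phi\|_{L^1(K)}\gtrsim h^d$. On the other hand $h\|\nabla\phi\|_{L^1(K)}\lesssim h\delta^{d-1}$. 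Letting $\delta\to0$ breaks the inequality, and the $L^{6/5}$ version fails the same way. Everything built on this step therefore does not stand. That includes your treatment of $f''(\cdot)\nabla u_h(u_h-v_h)$ and your claim that $f\in C^d$ is needed here.

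\textbf{How to fix it.} Drop the splitting $I_hg=g_h+(I_hg-g_h)$ and rerun your own nodal-value argument in $L^{p_1}$ instead of $L^2$.

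First remove $Q_h$. Your duality argument works; the paper instead tests only with $w_h\in X_h^k$ and uses \eqref{eq:Hminus1_Hminus1h_db}. Then use the embedding $L^{p_1}(\Omega_h)\hookrightarrow H^{-1}(\Omega_h)$, with $p_1=6/5$ for $d=3$ and any $p_1>1$ for $d=2$ and on $\Gamma_h$. This reduces everything to
\[
\|I_{h,\Omega}(f_\Omega(u_h)-f_\Omega(v_h))\|_{L^{p_1}(\Omega_h)}\lesssim h^{d/p_1}\Bigl(\sum_i A_i^{(\zeta_\Omega-1)p_1}B_i^{p_1}\Bigr)^{1/p_1},\qquad A_i=1+|u_h(a_i)|+|v_h(a_i)|,\quad B_i=|u_h(a_i)-v_h(a_i)|.
\]
Apply discrete H\"older with $1/p_1=1/q_1+1/2$, i.e.\ $q_1=3$, and then the discrete--continuous norm equivalence. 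This gives the bound $(1+\|u_h\|_{L^{3(\zeta_\Omega-1)}}^{\zeta_\Omega-1}+\|v_h\|_{L^{3(\zeta_\Omega-1)}}^{\zeta_\Omega-1})\|u_h-v_h\|_{L^2(\Omega_h)}$. The condition $3(\zeta_\Omega-1)\le 6$ is exactly $\zeta_\Omega\le3$, so $H^1\hookrightarrow L^6$ closes the estimate.

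This route needs no interpolation-error estimate, no gain of $h$, and no derivative of $f$ beyond $f'$. It is the paper's proof.
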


\begin{proof}
	The proof is provided in Appendix~\ref{Appendix:C}. 
\end{proof}

\begin{remark}\upshape
	In three dimensions, the exponent \(\alpha\) in
	\eqref{eq:choose_alpha} depends on the growth exponent \(\zeta_\Omega\) and tends to zero as \(\zeta_\Omega\uparrow3\). Consequently, the
	smallness thresholds \(h_0\) and \(\tau_0\) in
	Theorem~\ref{thm:convergence_db} may deteriorate as
	\(\zeta_\Omega\) approaches the critical value \(3\).
	The convergence result is therefore uniform for every fixed
	\(\zeta_\Omega<3\), but not uniformly with respect to the limit
	\(\zeta_\Omega\uparrow3\).
\end{remark}

\begin{remark}\label{rem:interpolation}\upshape
	We note that, by arguments analogous to those used in the proof of 
	\eqref{eq:Ihfuh-Ihfvh_1} and \eqref{eq:Ihfuh-Ihfvh_2}, 
	it follows directly that
	\begin{align}\label{eq:Ihfuh}
		\|f_h(u_h)\|_{L^2(\Omega_h;\Gamma_h)}
		\le C\big(\|u_h\|_{H^1(\Omega_h;\Gamma_h)}\big),
	\end{align}
	under the same assumptions as in Lemma~\ref{lem:interpolation}.
\end{remark}

\section{Convergence of the fully discrete scheme~\eqref{eq:fully_discrete_db}}\label{sec:proof}

In this section, we prove the convergence result stated in Theorem~\ref{thm:convergence_db}, building on the preliminary estimates established in the previous sections.
We begin by introducing the local consistency error
\[
\mathcal{R}^n := \sum_{i=1}^4 R_i(t_n),
\]
where the terms \(R_i(t_n)\) arise from the consistency error decomposition in \eqref{eq:decompose_U}. By iterating \eqref{eq:decompose_U}, we obtain
\begin{align}\label{eq:iterate_U_db}
	U(t_{n+1})
	=
	e^{t_{n+1}L}U(0)
	+
	\sum_{j=0}^{n}\tau e^{(t_{n+1}-t_{j+1})L}\varphi_1(\tau L)\Pi_r F(\Pi_r U(t_j))
	+
	\sum_{j=0}^n e^{(t_{n+1}-t_{j+1})L}\mathcal{R}^j.
\end{align}
Similarly, the fully discrete scheme \eqref{eq:fully_discrete_db} admits the representation
\begin{align}\label{eq:iterate_Uh_db}
	\L_h U^{n+1}_h
	=
	\L_h e^{t_{n+1}L_h}U^0_h
	+
	\sum_{j=0}^n \tau \L_h e^{(t_{n+1}-t_{j+1})L_h}\varphi_1(\tau L_h)F_h(U^j_h).
\end{align}
The convergence analysis is based on comparing the fully discrete representation \eqref{eq:iterate_Uh_db} with the iterated form \eqref{eq:iterate_U_db} of the frequency-localized auxiliary scheme \eqref{eq:semi-discrete_db}. To this end, we first estimate the semidiscrete consistency error \(\mathcal{R}^n\), and then derive bounds for the approximation of the nonlinear term \(e^{sL}\varphi_1(\tau L)\Pi_r F(\Pi_r U(t_n)).\)

\subsection{Consistency error estimate for the semi-discrete scheme \eqref{eq:semi-discrete_db}}\label{sec:consistency}
In this subsection, we establish the following consistency error estimate:
\begin{lemma}\label{lem:consistency_error_db}
	If the solution \( U \) of the nonlinear wave equation with dynamic boundary conditions \eqref{system_db} 
	in the smooth domain \( \Omega \) possesses the regularity 
	$U \in C([0,T]; H^{1}(\Omega; \Gamma) \times L^{2}(\Omega; \Gamma)),$
	and if the nonlinear term \( F \) is given by \eqref{system_notation_db} 
	with nonlinear functions \( f(u) = (f_{\Omega}(u), f_{\Gamma}(u)) \) 
	satisfying the growth conditions \eqref{eq:growth_condition}, 
	then the following result holds:
	\begin{align*}
		&\|R_1(t_n)\|_{L^2(\Omega;\Gamma)\times H^{-1}(\Omega;\Gamma)}+\|R_3(t_n)\|_{L^2(\Omega;\Gamma)\times H^{-1}(\Omega;\Gamma)}\lesssim\tau^2,\\
		&\|R_2(t_n)\|_{L^2(\Omega;\Gamma)\times H^{-1}(\Omega;\Gamma)}+\|R_4(t_n)\|_{L^2(\Omega;\Gamma)\times H^{-1}(\Omega;\Gamma)}\lesssim \tau r^{-1}.
	\end{align*}
\end{lemma}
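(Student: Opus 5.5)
The plan is to estimate each $R_i(t_n)$ directly in $\mathcal Y^0\times\mathcal Y^{-1}\sim L^2(\Omega;\Gamma)\times H^{-1}(\Omega;\Gamma)$. Two structural facts do most of the work. First, every $R_i$ has the form $e^{(\tau-s)L}(0,g)^\top$ (or $\tau\varphi_1(\tau L)(0,g)^\top$). By \eqref{eq:stability_phi1} with $\ell=0$, its $L^2\times H^{-1}$ norm is bounded by $\int_0^\tau\|g(s)\|_{H^{-1}(\Omega;\Gamma)}\,\d s$. Second, the exact solution stays uniformly bounded in $H^1\times L^2$. The same holds for $e^{sL}U(t_n)$, $e^{sL}\Pi_rU(t_n)$ and $\Pi_rU(t_n)$, by \eqref{eq:stability_phi1} with $\ell=1$ and Lemma~\ref{Bernstein}. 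Hence all constants coming from Lemma~\ref{lem:nonlinear_term_estimates} are uniform in $n,r,\tau$.

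\emph{$R_1$.} By \eqref{eq:fu-fv}, the integrand is bounded by $C\|u(t_n+s)-(e^{sL}U(t_n))_1\|_{L^2}$. The mild formulation \eqref{eq:variation-of-constant-db} gives
\[
U(t_n+s)-e^{sL}U(t_n)=\int_0^s e^{(s-\sigma)L}F(U(t_n+\sigma))\,\d\sigma .
\]
By \eqref{eq:fu_L2} we have $\|F(U)\|_{H^1\times L^2}=\|f(u)\|_{L^2}\le C$, so by the $\ell=1$ stability this difference is $O(s)$ in $H^1\times L^2$, and in particular its first component is $O(s)$ in $L^2$. Integrating over $s\in[0,\tau]$ gives $\tau^2$.

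\emph{$R_2$.} Again by \eqref{eq:fu-fv}, the integrand is bounded by $C\|(e^{sL}(1-\Pi_r)U(t_n))_1\|_{L^2}$. Stability in $\mathcal Y^0\times\mathcal Y^{-1}$ bounds this by $\|(1-\Pi_r)U(t_n)\|_{L^2\times H^{-1}}$. By \eqref{eq:Bernstein_high} this is $\lesssim r^{-1}\|U(t_n)\|_{H^1\times L^2}$, which yields $\tau r^{-1}$.

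\emph{$R_4$.} We have $\tau\varphi_1(\tau L)=\int_0^\tau e^{(\tau-s)L}\,\d s$, which is bounded by $\tau$ on $\mathcal Y^0\times\mathcal Y^{-1}$. So it suffices to bound $\|(1-\Pi_r)f(\Pi_ru(t_n))\|_{H^{-1}}$. Using \eqref{eq:Bernstein_high} with $\ell=0$ and then \eqref{eq:fu_L2}, this is $\lesssim r^{-1}\|f(\Pi_ru)\|_{L^2}\le Cr^{-1}$.

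\emph{$R_3$.} This is the main obstacle. Writing $W(\sigma)=e^{\sigma L}\Pi_rU(t_n)=(w,\partial_tw)$, we get $\frac{\d}{\d\sigma}F(W)=(0,f'(w)\partial_\sigma w)$. The factor $\partial_\sigma w$ lies only in $L^2$, uniformly in $r$, because $\|\partial_\sigma w\|_{L^2}\lesssim\|U(t_n)\|_{H^1\times L^2}$. Estimate \eqref{eq:fuv} therefore gives
\[
\|f'(w)\partial_\sigma w\|_{H^{-1}}\le C(\|w\|_{H^1})\,\|\partial_\sigma w\|_{L^2}\le C,
\]
independent of $r$. This is exactly why the $H^{-1}$ norm is used: in $L^2$, $\partial_\sigma w$ would cost a factor $r$. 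The double integral then yields $\tau^2$. The delicate point is justifying \eqref{eq:fuv} for a merely $L^2$ factor, including the boundary part $f_\Gamma'(w)\partial_\sigma w$ on $\Gamma$. This is taken from Lemma~\ref{lem:nonlinear_term_estimates}, but we must check that its hypotheses hold along the smooth trajectory $W(\sigma)$, which is smooth thanks to $\Pi_r$ and so makes the chain rule legitimate.
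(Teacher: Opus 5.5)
Your proposal is correct and follows essentially the same argument as the paper: stability of $e^{tL}$ in $L^2\times H^{-1}$, the nonlinear estimates of Lemma~\ref{lem:nonlinear_term_estimates}, and the Bernstein bound \eqref{eq:Bernstein_high} for $R_2,R_4$, with $R_3$ handled through the chain-rule identity and \eqref{eq:fuv} exactly as in the paper. The only minor deviation is in $R_1$. You apply \eqref{eq:fu-fv} and bound $U(t_n+s)-e^{sL}U(t_n)$ in $H^1\times L^2$ via \eqref{eq:fu_L2}, whereas the paper uses the mean-value form with \eqref{eq:fuv} and bounds that difference only in $L^2\times H^{-1}$ via \eqref{eq:fu_H-1}; both yield $O(\tau^2)$.
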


\begin{proof}
We first observe, by Taylor's theorem, that
	\begin{align*}
		&\left\|F\big(U(t_n+s)\big)-F(e^{s L}U(t_n))\right\|_{L^2(\Omega;\Gamma)\times H^{-1}(\Omega;\Gamma)}\\
		&\quad=\left\|\int_0^1F^{\prime}\big(\theta U(t_n+s)+(1-\theta)e^{sL}U(t_n)\big)\d \theta \Big(U(t_n+s)-e^{s L}U(t_n)\Big)\right\|_{L^2(\Omega;\Gamma)\times H^{-1}(\Omega;\Gamma)}.
	\end{align*}
	Moreover, the variation-of-constants formula \eqref{eq:variation-of-constant-db} yields
	\begin{align*}
		U(t_n+s)-e^{s L}U(t_n)=\int_{0}^{s}e^{(s-\sigma)L}F\big(U(t_n+\sigma)\big)\d \sigma.
	\end{align*}
	Consequently, we obtain
	\begin{align*}
		&\|R_1(t_n)\|_{L^2(\Omega;\Gamma)\times H^{-1}(\Omega;\Gamma)}\\
		&\lesssim \tau^2 \sup_{\theta,s,\sigma}\left\|F^{\prime}\big(\theta U(t_n+s)+(1-\theta)e^{sL}U(t_n)\big)e^{(s-\sigma)L}F\big(U(t_n+\sigma)\right\|_{L^2(\Omega;\Gamma)\times H^{-1}(\Omega;\Gamma)}.
	\end{align*}
	We denote $U_{\theta}=(U_{\theta,1},U_{\theta,2})^{\top}:=\theta U(t_n+s)+(1-\theta)e^{sL}U(t_n)$, and note that
	\begin{align*}
		F^{\prime}(U_\theta)=\begin{pmatrix}
			0 & 0 \\ f^{\prime}(U_{\theta,1}) & 0
		\end{pmatrix}.
	\end{align*}
	Then, applying the boundedness of \(e^{tL}\) from \eqref{eq:stability_phi1} together with the nonlinear estimates \eqref{eq:fu_H-1} and \eqref{eq:fuv}, we obtain
	\begin{align*}
		&\left\|F^{\prime}\big(\theta U(t_n+s)+(1-\theta)e^{sL}U(t_n)\big)e^{(s-\sigma)L}F\big(U(t_n+\sigma)\right\|_{L^2(\Omega;\Gamma)\times H^{-1}(\Omega;\Gamma)}\\
		&=\left\|f^{\prime}(U_{\theta,1})\Big(e^{(s-\sigma)L}F\big(U(t_n+\sigma)\Big)_1\right\|_{H^{-1}(\Omega;\Gamma)}\\
		&\lesssim C(\|U_{\theta,1}\|_{H^1(\Omega;\Gamma)})\cdot\left\|\Big(e^{(s-\sigma)L}F\big(U(t_n+\sigma)\Big)_1\right\|_{L^2(\Omega;\Gamma)}\\
		&\lesssim C(\|U_{\theta,1}\|_{H^1(\Omega;\Gamma)})\cdot\left\|F\big(U(t_n+\sigma)\right\|_{L^2(\Omega;\Gamma)\times H^{-1}(\Omega;\Gamma)}\\
		&=C(\|U_{\theta,1}\|_{H^1(\Omega;\Gamma)})\cdot \|f(u(t_n+\sigma))\|_{H^{-1}(\Omega;\Gamma)}\\
		&\lesssim C\big(\|U_{\theta,1}\|_{H^1(\Omega;\Gamma)}, \|u(t_n+\sigma)\|_{H^1(\Omega;\Gamma)}\big)\cdot\big(\|u(t_n+\sigma)\|_{L^2(\Omega;\Gamma)}+1\big).
	\end{align*}
	Here, $\big(e^{(s-\sigma)L}F(U(t_n+\sigma))\big)_1$ denotes the first component of the vector
	$e^{(s-\sigma)L}F(U(t_n+\sigma))$.
	
	From the definition of $U_{\theta}$, it follows that
	\begin{align*}
		\|U_{\theta,1}\|_{H^1(\Omega;\Gamma)}&\leq \|U_\theta\|_{H^1(\Omega;\Gamma)\times L^2(\Omega;\Gamma)}\\
		&\lesssim \|U(t_n+s)\|_{H^1(\Omega;\Gamma)\times L^2(\Omega;\Gamma)}+\|U(t_n)\|_{H^1(\Omega;\Gamma)\times L^2(\Omega;\Gamma)}.
	\end{align*}
	Therefore,
	\begin{align}\label{eq:estimate_R1_db}
		\|R_1(t_n)\|_{L^2(\Omega;\Gamma)\times H^{-1}(\Omega;\Gamma)}\lesssim \tau^2 C(\|U\|_{C([0,T]; H^{1}(\Omega; \Gamma) \times L^{2}(\Omega; \Gamma))})\lesssim \tau^2,
	\end{align}
	under the regularity condition  $U \in C([0,T]; H^{1}(\Omega; \Gamma) \times L^{2}(\Omega; \Gamma))$.
	
	For the term $R_3$, we employ the identity 
	\begin{align}\label{eq:d-eFe}
		\frac{\d}{\d \sigma}F(e^{\sigma L}\Pi_r U(t_n))&=\begin{pmatrix}
			0& 0\\
			f^{\prime}\big(\Pi_r\tilde{u}(t_n+\sigma) )& 0
		\end{pmatrix}\begin{pmatrix}
			0& 1\\
			-A & 0
		\end{pmatrix}\begin{pmatrix}
			\Pi_r\tilde{u}(t_n+\sigma)\\
			\Pi_r\tilde{v}(t_n+\sigma)
		\end{pmatrix}\notag\\
		&=\begin{pmatrix}
			0\\ f^{\prime}\big(\Pi_r\tilde{u}(t_n+\sigma)\big)\cdot\Pi_r\tilde{v}(t_n+\sigma)
		\end{pmatrix}
	\end{align}
	where \((\tilde{u}(t_n+\sigma), \tilde{v}(t_n+\sigma))^\top := e^{\sigma L} (u(t_n), v(t_n))^\top\), together with the estimate \eqref{eq:fuv} to derive
	
	%
\begin{align*}
	\|R_3(t_n)\|_{L^2(\Omega;\Gamma)\times H^{-1}(\Omega;\Gamma)}&\leq \tau^2 \sup_{\sigma}\left\|\frac{\d }{\d \sigma}F\left(e^{\sigma L}\Pi_{r} U(t_n)\right)\right\|_{L^2(\Omega;\Gamma)\times H^{-1}(\Omega;\Gamma)}\\
	&=\tau^2 \sup_{\sigma}\left\|f^{\prime}(\Pi_r \tilde{u}(t_n+\sigma))\Pi_{r} \tilde{v}(t_n+\sigma)\right\|_{ H^{-1}(\Omega;\Gamma)}\\
	&\lesssim \tau^2 \sup_{\sigma}C(\|\Pi_r \tilde{u}(t_n+\sigma)\|_{H^1(\Omega;\Gamma)})\cdot \|\Pi_{r} \tilde{v}(t_n+\sigma)\|_{L^2(\Omega;\Gamma)}.
\end{align*}
Since $U \in C([0,T]; H^{1}(\Omega; \Gamma) \times L^{2}(\Omega; \Gamma))$, we have
\begin{align*}
	\|\Pi_r \tilde{u}(t_n+\sigma)\|_{H^1(\Omega;\Gamma)}+\|\Pi_{r} \tilde{v}(t_n+\sigma)\|_{L^2(\Omega;\Gamma)}\lesssim 1,
\end{align*}
and thus
\begin{align}\label{eq:estimate_R3_db}
	\|R_3(t_n)\|_{L^2(\Omega;\Gamma)\times H^{-1}(\Omega;\Gamma)}\lesssim \tau^2.
\end{align}

For the terms $R_2$ and $R_4$, we apply the boundedness of the operators 
$e^{(\tau-s)L}$ and $\varphi_1(\tau L)$ from \ref{eq:stability_phi1}, along with the Bernstein-type inequality \eqref{eq:Bernstein_high} and 
the nonlinear estimates \eqref{eq:fu-fv} and \eqref{eq:fu_L2}. This yields
\begin{align*}
	&\|R_2(t_n)\|_{L^2(\Omega;\Gamma)\times H^{-1}(\Omega;\Gamma)}+\|R_4(t_n)\|_{L^2(\Omega;\Gamma)\times H^{-1}(\Omega;\Gamma)}\\
	&\lesssim \tau \sup_{s} \left\|f(\tilde{u}(t_n+s))-f(\Pi_{r}\tilde{u}(t_n+s))\right\|_{H^{-1}(\Omega;\Gamma)}\\
	&\quad+\tau\sup_{s} \left\|f(\Pi_{r}\tilde{u}(t_n+s))-\Pi_{r} f(\Pi_{r}\tilde{u}(t_n+s))\right\|_{H^{-1}(\Omega;\Gamma)}\\
	&\lesssim \tau \sup_{s}C\Big(\|\tilde{u}(t_n+s)\|_{H^{1}(\Omega;\Gamma)}, \|\Pi_r\tilde{u}(t_n+s)\|_{H^{1}(\Omega;\Gamma)}\Big)\cdot \|\tilde{u}(t_n+s)-\Pi_{r}\tilde{u}(t_n+s)\|_{L^2(\Omega;\Gamma)}\\
	&\quad +\tau r^{-1} \sup_{s}\|f(\Pi_r \tilde{u}(t_n+s))\|_{L^2(\Omega;\Gamma)}\\
	&\lesssim \tau r^{-1} \sup_{s}C\Big(\|\tilde{u}(t_n+s)\|_{H^{1}(\Omega;\Gamma)}, \|\Pi_r\tilde{u}(t_n+s)\|_{H^{1}(\Omega;\Gamma)}\Big).
\end{align*}
Hence, under the condition 
$U \in C([0,T]; H^{1}(\Omega;\Gamma)\times L^{2}(\Omega;\Gamma))$, we deduce that
\begin{align}
	\|R_2(t_n)\|_{L^2(\Omega;\Gamma)\times H^{-1}(\Omega;\Gamma)}+\|R_4(t_n)\|_{L^2(\Omega;\Gamma)\times H^{-1}(\Omega;\Gamma)}\lesssim \tau r^{-1}.
\end{align}
Combining this inequality with \eqref{eq:estimate_R1_db} and \eqref{eq:estimate_R3_db}, 
we establish the desired consistency error estimate of the lemma.
\end{proof}

\subsection{Error estimate for the approximation of the nonlinear term}

We now turn to the estimation of the remaining discrepancy between the fully discrete scheme and the original continuous problem. More precisely, for each \(s\in[0,T]\), we estimate the difference between
\[
e^{sL}\varphi_1(\tau L)\Pi_{r} F(\Pi_{r}U(t_n))
\quad \text{and} \quad
\mathcal{L}_h e^{sL_h}\varphi_1(\tau L_h) F_h(U^{n}_h).
\]
This term will be controlled by combining the lemmas established in Section~\ref{subsec:preliminary_db}. We state the corresponding result in the following lemma.

\begin{lemma}\label{lem:nonlinear_error}
	Under the same assumptions as in Theorem~\ref{thm:convergence_db}, and for $r \leq h^{-1}$, we have
	\begin{align}\label{eq:nonlinear_error}
		&\left\|e^{sL}\varphi_1(\tau L)\Pi_{r} F(\Pi_{r}U(t_n))-\L_h e^{sL_h}\varphi_1(\tau L_h) F_h(U^{n}_h) \right\|_{L^2(\Omega;\Gamma)\times H^{-1}(\Omega;\Gamma)}\notag\\
		&\leq C\Big(\|U(t_n)\|_{H^1\times L^2}, \|U^n_h\|_{H^1\times L^2}\Big)\cdot\Big(h^{\ell_k}r^{\ell_k}+h^{k+\frac{1}{2}}r^{k}+h+r^{-1}\Big)\notag\\
		&\quad+C\Big(\|U(t_n)\|_{H^1\times L^2}, \|U^n_h\|_{H^1\times L^2}\Big)\cdot \left\|U(t_n)-\L_h U^n_h \right\|_{L^2(\Omega;\Gamma)\times H^{-1}(\Omega;\Gamma)},
	\end{align}
	where \(\ell_k\) is defined in \eqref{eq:def_lk}, and
	\(C\!\left(\|U(t_n)\|_{H^1\times L^2}, \|U^n_h\|_{H^1\times L^2}\right)\)
	denotes a constant depending only on
	\(\|U(t_n)\|_{H^1\times L^2}\) and \(\|U^n_h\|_{H^1\times L^2}\), and is independent of \(\tau\), \(h\), and \(r\).
\end{lemma}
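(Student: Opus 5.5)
I plan a telescoping decomposition with intermediate quantity $\L_h e^{sL_h}\varphi_1(\tau L_h)\P_h^*\Pi_r F(\Pi_r U(t_n))$, so that the error splits into a linear propagator error on a frequency-localized datum and a discrete nonlinear mismatch. Write $W:=\varphi_1(\tau L)\Pi_r F(\Pi_r U(t_n))$. Since $\Pi_r$ commutes with $L$, we have $W=\Pi_r\varphi_1(\tau L)F(\Pi_rU(t_n))$, and $\|\varphi_1(\tau L)F(\Pi_rU(t_n))\|_{H^1\times L^2}\lesssim \|f(\Pi_r u(t_n))\|_{L^2}\le C(\|U(t_n)\|)$ by \eqref{eq:stability_phi1} (with $\ell=1$) and \eqref{eq:fu_L2}. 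The error is then bounded by
\begin{align*}
E_1&:=\|e^{sL}W-\L_h e^{sL_h}\P_h^* W\|,\qquad
E_2:=\|\L_h e^{sL_h}(\P_h^*\varphi_1(\tau L)-\varphi_1(\tau L_h)\P_h^*)\Pi_r F(\Pi_rU(t_n))\|,\\
E_3&:=\|\L_h e^{sL_h}\varphi_1(\tau L_h)\bigl(\P_h^*\Pi_rF(\Pi_rU(t_n))-F_h(U_h^n)\bigr)\|,
\end{align*}
all measured in $L^2(\Omega;\Gamma)\times H^{-1}(\Omega;\Gamma)$. For $E_1$ I apply Theorem~\ref{lem:estimate_etL-etLh_db} with $V(0)=\varphi_1(\tau L)F(\Pi_rU(t_n))$. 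This gives $h^2r^2+h^{3/2}r$ for $k=1$ and $h^{k+2}r^{k+2}+h^{k+1/2}r^k+h$ for $k\ge2$, which is exactly $h^{\ell_k}r^{\ell_k}+h^{k+1/2}r^k+h$.

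For $E_2$, I avoid commuting $\P_h^*$ with $\varphi_1$ by writing $\varphi_1(\tau L)=\tau^{-1}\int_0^\tau e^{\sigma L}\,d\sigma$, and likewise for $L_h$. Then
\[
E_2\le \tau^{-1}\int_0^\tau \bigl\|\L_h e^{(s+\sigma)L_h}\P_h^*Y-e^{(s+\sigma)L}Y\bigr\|+\bigl\|e^{(s+\sigma)L}Y-\L_h e^{sL_h}\P_h^*e^{\sigma L}Y\bigr\|\,d\sigma,
\]
where $Y:=\Pi_r F(\Pi_rU(t_n))=\Pi_r(0,f(\Pi_ru))$. Both integrands are again propagator errors applied to $\Pi_r$-localized data: the first has datum $F(\Pi_rU)$, and the second has datum $e^{\sigma L}F(\Pi_r U)$, using that $\Pi_r$ commutes with $e^{\sigma L}$. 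Each datum has $H^1\times L^2$ norm bounded by $C\|f(\Pi_ru)\|_{L^2}$. Hence Theorem~\ref{lem:estimate_etL-etLh_db} gives the same rate as for $E_1$, and in fact this argument already absorbs $E_1$ as well.

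For $E_3$, I use \eqref{eq:equivalence_H-1} and \eqref{eq:boundedness_lift_operator} to pass to the discrete norms, and Lemma~\ref{lem:basic_results}(c),(d) with $\theta=0$ for stability. This reduces $E_3$ to $\|\L_h^{H*}\Pi_rf(\Pi_ru(t_n))-I_hf(\L_hu_h^n)\|_{H^{-1}(\Omega_h;\Gamma_h)}$, which I split into three pieces:
\begin{enumerate}[(a)]
\item $\L_h^{H*}(\Pi_r-1)f(\Pi_ru)$, bounded by \eqref{eq:bounded_H-1} and \eqref{eq:Bernstein_high} by $r^{-1}\|f(\Pi_r u)\|_{L^2}$;
\item $\L_h^{H*}f(\Pi_ru)-I_hf(\Pi_ru)$, bounded by Lemma~\ref{lem:high_order_nonlinear_term_estimates};
\item $I_hf(\Pi_ru)-I_hf(\L_hu_h^n)=f_h(\L_h^{-1}\Pi_ru)-f_h(u_h^n)$.
\end{enumerate}
For (c), I want to apply \eqref{eq:Ihfuh-Ihfvh_1}. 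That requires $\L_h^{-1}\Pi_ru\in X_h^k$, which is false in general. I would therefore insert $\L_h^{V*}\Pi_r u\in X_h^k$ and write (c) as $f_h(\L_h^{V*}\Pi_ru)-f_h(u_h^n)$ plus $I_h[f(\Pi_ru)-f(\L_h\L_h^{V*}\Pi_ru)]$. The first part is bounded by $C\|\L_h\L_h^{V*}\Pi_ru-\L_hu_h^n\|_{L^2}$. The triangle inequality turns this into $\|(1-\L_h\L_h^{V*})\Pi_ru\|_{L^2}+\|(1-\Pi_r)u\|_{L^2}+\|u(t_n)-\L_hu_h^n\|_{L^2}$, which by \eqref{eq:H1_estimate_Lv*} is at most $h+r^{-1}+\|U(t_n)-\L_hU_h^n\|$. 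The $H^1$-norms entering the constants are controlled by \eqref{eq:bounded_H1} and the $H^1$ bound on $\L_h^{V*}$.

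The remaining interpolation-of-difference term I would handle by redoing the argument of Lemma~\ref{lem:interpolation} at the level of nodal values. This is the step I expect to be the main obstacle: nodal interpolation is not $H^{-1}$-stable for rough functions. I would try to compare $I_h$ applied to $g$ with $\L_h^{H*}g$ plus an interpolation error of order $h$, using that both arguments lie in $H^1$ with $H^1$-bounded norms. Finally, the piece from Lemma~\ref{lem:high_order_nonlinear_term_estimates}, of size $r^{d-1}h^d+h^k$ (or $r^2h^2+h$), is dominated by $h^{\ell_k}r^{\ell_k}+h^{k+1/2}r^k+h$ provided $r\le h^{-1}$, with $h^k\le h$ for $k\ge1$. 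Collecting all terms gives \eqref{eq:nonlinear_error}.
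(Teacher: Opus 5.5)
Your linear part and pieces (a), (b) are sound and coincide with the paper's $N_1$, $N_2$, $N_3$. Your $E_1+E_2$ is exactly the paper's $N_1$, which the paper bounds in one step by writing both $\varphi_1$'s as time averages of exponentials and applying Theorem~\ref{lem:estimate_etL-etLh_db} to $\Pi_rF(\Pi_rU(t_n))$, so the split is unnecessary but harmless.

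\textbf{The gap is in step (c).} Comparing with $\L_h^{V*}\Pi_r u$ produces the remainder $I_h[f(\Pi_r u)-f(\L_h\L_h^{V*}\Pi_r u)]$, and you give no valid argument for it. Your suggested route, comparing $I_h g$ with $\L_h^{H*}g$ up to an $O(h)$ interpolation error, does not follow from $H^1$-boundedness of the arguments. Nodal interpolation has no error bound in terms of $H^1$ norms alone; for $d=3$ it is not even bounded on $H^1$. Moreover, $\L_h\L_h^{V*}\Pi_r u$ is only piecewise smooth, so neither the global interpolation estimates nor Lemma~\ref{lem:high_order_nonlinear_term_estimates} apply. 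You would need elementwise $W^{d,p}$ bounds on $f(\L_h^{V*}\Pi_r u)$, which you do not supply.

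\textbf{The missing idea.} $I_h$ sees only nodal values, and the lift map $G$ fixes every interpolation node. So the right discrete comparison function is the nodal interpolant $I_h\Pi_r u(t_n)\in X_h^k$, which is trace-compatible by Remark~\ref{rem:relation_Ih}. Since $\L_h I_h\Pi_r u$ and $\Pi_r u$ agree at all nodes, $f(\L_h I_h\Pi_r u)$ and $f(\Pi_r u)$ have identical bulk and surface nodal values. Hence, exactly,
\[
I_h f(\Pi_r u)=f_h(I_h\Pi_r u).
\]
Step (c) then becomes $f_h(I_h\Pi_r u)-f_h(u_h^n)$, and \eqref{eq:Ihfuh-Ihfvh_1} applies directly with no remainder.

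The remaining estimates are routine. The constant is controlled by $\|I_h\Pi_r u\|_{H^1(\Omega_h;\Gamma_h)}\lesssim(1+hr)\|\Pi_r u\|_{H^1(\Omega;\Gamma)}$, using the isoparametric interpolation estimate of \cite{ER2013} and \eqref{eq:Bernstein_low}. The difference satisfies $\|I_h\Pi_r u-u_h^n\|_{L^2(\Omega_h;\Gamma_h)}\lesssim (h^{k+1}r^k+r^{-1})\|u(t_n)\|_{H^1(\Omega;\Gamma)}+\|U(t_n)-\L_hU_h^n\|_{L^2(\Omega;\Gamma)\times H^{-1}(\Omega;\Gamma)}$. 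For $r\le h^{-1}$ this gives the required $h+r^{-1}+\|U(t_n)-\L_hU_h^n\|$.

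The same identity would also rescue your decomposition: your remainder equals $f_h(I_h\Pi_r u)-f_h(\L_h^{V*}\Pi_r u)$, which \eqref{eq:Ihfuh-Ihfvh_1} bounds by $O(h)$. But once you have the identity, the detour through $\L_h^{V*}$ is superfluous.
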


\begin{proof}
	We decompose the difference between 
	$e^{sL}\varphi_1(\tau L)\Pi_{r} F(\Pi_{r}U(t_n))$ 
	and $\mathcal{L}_h e^{sL_h}\varphi_1(\tau L_h) F_h(U^{n}_h)$ as follows:
	\begin{align}\label{eq:def_N1N2N3N4}
		&e^{sL}\varphi_1(\tau L)\Pi_{r} F(\Pi_{r}U(t_n))-\L_h e^{sL_h}\varphi_1(\tau L_h) F_h(U^{n}_h)\notag\\
		&=e^{sL}\varphi_1(\tau L)\Pi_{r} F(\Pi_{r}U(t_n))-\L_h e^{sL_h}\varphi_1(\tau L_h) \P^*_h \Pi_{r} F(\Pi_{r}U(t_n))\notag\\
		&\quad+\L_h e^{sL_h}\varphi_1(\tau L_h) \P^*_h \Pi_{r} F(\Pi_{r}U(t_n))-\L_h e^{sL_h}\varphi_1(\tau L_h) \P^*_h  F(\Pi_{r}U(t_n))\notag\\
		&\quad +\L_h e^{sL_h}\varphi_1(\tau L_h) \P^*_h  F(\Pi_{r}U(t_n))-\L_h e^{sL_h}\varphi_1(\tau L_h) I_h  F(\Pi_{r}U(t_n))\notag\\
		&\quad +\L_h e^{sL_h}\varphi_1(\tau L_h) I_h  F(\Pi_{r}U(t_n))-\L_h e^{sL_h}\varphi_1(\tau L_h)  F_h(U^n_h)\notag\\
		&:=N_1+N_2+N_3+N_4,
	\end{align}
	where $\P^*_h=\text{diag}(\L^{H*}_h,\L^{H*}_h)$. The four terms have the following interpretations: \(N_1\) represents the error caused by replacing the continuous operator \(e^{sL}\varphi_1(\tau L)\) with its discrete counterpart \(e^{sL_h}\varphi_1(\tau L_h)\); \(N_2\) is the error induced by the frequency truncation of the nonlinear term; \(N_3\) measures the discrepancy between the adjoint lift operator \(\mathcal{L}_h^{H*}\) and the interpolation operator \(I_h\); and \(N_4\) contains the nonlinear stability term associated with \(I_h\).
	
	We begin with \(N_1\). Note that
	\begin{align}
		e^{sL}\varphi_1(\tau L)\Pi_{r} F(\Pi_{r}U(t_n))=&\frac{1}{\tau}\int_0^{\tau}e^{(s+\sigma)L}\Pi_{r} F(\Pi_{r}U(t_n))\d \sigma,\label{eq:expand_phi1}\\
		\L_h e^{sL_h}\varphi_1(\tau L_h) \P^*_h \Pi_{r} F(\Pi_{r}U(t_n))=&\frac{1}{\tau}\int_0^{\tau}\L_h e^{(s+\sigma)L_h} \P^*_h \Pi_{r} F(\Pi_{r}U(t_n))\d \sigma.\label{eq:expand_phi1_h}
	\end{align}
	Since \(F(U)=(0,f(u))^\top\), by combining representations \eqref{eq:expand_phi1} and \eqref{eq:expand_phi1_h} with the estimate between two different exponential operators in Theorem~\ref{lem:estimate_etL-etLh_db} and the nonlinear term estimate \eqref{eq:fu_L2} in Lemma~\ref{lem:nonlinear_term_estimates}, we obtain
	\begin{align}\label{eq:estimate_N1}
		&\|N_1\|_{L^2(\Omega;\Gamma)\times H^{-1}(\Omega;\Gamma)}\notag\\
		&\leq \sup_{\sigma\in[0,\tau]}\left\|e^{(s+\sigma)L}\Pi_{r} F(\Pi_{r}U(t_n))-\L_h e^{(s+\sigma)L_h} \P^*_h \Pi_{r} F(\Pi_{r}U(t_n))\right\|_{L^2(\Omega;\Gamma)\times H^{-1}(\Omega;\Gamma)}\notag\\
		&\lesssim (h^{\ell_k}r^{\ell_k}+h^{k+\frac{1}{2}}r^{k}+h)\left\|\Pi_{r} F(\Pi_{r}U(t_n))\right\|_{H^1(\Omega;\Gamma)\times L^{2}(\Omega;\Gamma)}\notag\\
		&=(h^{\ell_k}r^{\ell_k}+h^{k+\frac{1}{2}}r^k+h)\left\|\Pi_{r} f(\Pi_{r}u(t_n))\right\|_{L^2(\Omega;\Gamma)}\notag\\
		&\leq C\Big(\left\|U(t_n)\right\|_{H^1(\Omega;\Gamma)\times L^{2}(\Omega;\Gamma)} \Big)\cdot (h^{\ell_k}r^{\ell_k}+h^{k+\frac{1}{2}}r^k+h).
	\end{align}
	For the remaining terms, \( N_2 \), \( N_3 \), and \( N_4 \), using the boundedness of 
	\(\mathcal{L}_h\) from
	\(L^2(\Omega_h;\Gamma_h)\times H^{-1}(\Omega_h;\Gamma_h)\)
	to
	\(L^2(\Omega;\Gamma)\times H^{-1}(\Omega;\Gamma)\) (see~\eqref{eq:boundedness_lift_operator} and \eqref{eq:equivalence_H-1} in Corollary~\ref{cor:boundedness_properties}),
	together with the uniform boundedness of
	\(e^{sL_h}\varphi_1(\tau L_h)\) on
	\(L^2(\Omega_h;\Gamma_h)\times H^{-1}(\Omega_h;\Gamma_h)\) (see Lemma~\ref{lem:basic_results}(c)--(d)), we infer that
	\begin{align}\label{eq:estimate_N2N3N4}
		&\|N_2\|_{L^2(\Omega;\Gamma)\times H^{-1}(\Omega;\Gamma)}+\|N_3\|_{L^2(\Omega;\Gamma)\times H^{-1}(\Omega;\Gamma)}+\|N_4\|_{L^2(\Omega;\Gamma)\times H^{-1}(\Omega;\Gamma)}\notag\\
		&\lesssim \|\P^*_h \Pi_{r} F(\Pi_{r}U(t_n))-\P^*_h  F(\Pi_{r}U(t_n))\|_{L^2(\Omega_h;\Gamma_h)\times H^{-1}(\Omega_h;\Gamma_h)}\notag\\
		&\quad+\|\P^*_h F(\Pi_{r}U(t_n))-I_h F(\Pi_{r}U(t_n))\|_{L^2(\Omega_h;\Gamma_h)\times H^{-1}(\Omega_h;\Gamma_h)}\notag\\
		&\quad+\|I_h  F(\Pi_{r}U(t_n))- F_h(U^n_h)\|_{L^2(\Omega_h;\Gamma_h)\times H^{-1}(\Omega_h;\Gamma_h)}\notag\\
		&:=\|\widetilde{N}_2\|_{L^2(\Omega_h;\Gamma_h)\times H^{-1}(\Omega_h;\Gamma_h)}+\|\widetilde{N}_3\|_{L^2(\Omega_h;\Gamma_h)\times H^{-1}(\Omega_h;\Gamma_h)}+\|\widetilde{N}_4\|_{L^2(\Omega_h;\Gamma_h)\times H^{-1}(\Omega_h;\Gamma_h)}.
	\end{align}
	For the term $\widetilde{N}_2$, we apply the boundedness of 
	$\mathcal{L}^{H*}_h$ in both the $L^2$ and $H^{-1}$ norms 
	(see \eqref{eq:L2_bound_Lh*} and \eqref{eq:bounded_H-1} in Corollary~\ref{cor:boundedness_properties}), Bernstein-type inequality \eqref{eq:Bernstein_high},
	together with the nonlinear estimate for $f$ in \eqref{eq:fu_L2}. 
	This yields
	\begin{align}\label{eq:estimate_N2}
		&\|\widetilde{N}_2\|_{L^2(\Omega_h;\Gamma_h)\times H^{-1}(\Omega_h;\Gamma_h)}\lesssim\|\Pi_{r} F(\Pi_{r}U(t_n))- F(\Pi_{r}U(t_n))\|_{L^2(\Omega;\Gamma)\times H^{-1}(\Omega;\Gamma)}\notag\\
		&\quad=\|\Pi_{r} f(\Pi_{r}u(t_n))- f(\Pi_{r}u(t_n))\|_{H^{-1}(\Omega;\Gamma)}\notag\\
		&\quad\lesssim r^{-1}\|f(\Pi_r u(t_n))\|_{L^2(\Omega;\Gamma)}\lesssim r^{-1}\cdot C\Big(\left\|U(t_n)\right\|_{H^1(\Omega;\Gamma)\times L^{2}(\Omega;\Gamma)} \Big).
	\end{align}
For $\widetilde{N}_3$, by the definitions of the nonlinear term $F$ and the operator $\P_h^*$, we have
	\begin{align}\label{eq:tilde_N3}
		\|\widetilde{N}_3\|_{L^2(\Omega_h;\Gamma_h)\times H^{-1}(\Omega_h;\Gamma_h)}=\|\L_h^{H*} f(\Pi_{r}u(t_n))-I_h f(\Pi_{r}u(t_n))\|_{H^{-1}(\Omega_h;\Gamma_h)}.
	\end{align}
	We distinguish two cases.
	
	First, if $k=1$ and $d=3$, then by \eqref{eq:fu_H_ell_1} in Lemma~\ref{lem:high_order_nonlinear_term_estimates},
		\begin{align}\label{eq:interpolation_1}
			\|\L_h^{H*} f(\Pi_{r}u(t_n))-I_h f(\Pi_{r}u(t_n))\|_{H^{-1}(\Omega_h;\Gamma_h)}&\leq C\Big(\left\|U(t_n)\right\|_{H^1(\Omega;\Gamma)\times L^{2}(\Omega;\Gamma)} \Big)\cdot (h^2 r^2+h)\notag\\
			&\lesssim h^{\ell_k} r^{\ell_k}+h.
		\end{align}

		For all other cases, we have $k\ge d-1$, and therefore \eqref{eq:fu_H_ell} in Lemma~\ref{lem:high_order_nonlinear_term_estimates} yields
		\begin{align}\label{eq:interpolation_2}
		\|\L_h^{H*} f(\Pi_{r}u(t_n))-I_h f(\Pi_{r}u(t_n))\|_{H^{-1}(\Omega_h;\Gamma_h)}&\leq C\Big(\left\|U(t_n)\right\|_{H^1(\Omega;\Gamma)\times L^{2}(\Omega;\Gamma)} \Big)\cdot (h^{d} r^{d-1}+h)\notag\\
		&\lesssim h,
		\end{align}
		where in the last step we used the condition $r\le h^{-1}$. 
		
		Combining \eqref{eq:tilde_N3}, \eqref{eq:interpolation_1}, and \eqref{eq:interpolation_2}, we conclude that in all cases
		\begin{align}\label{eq:estimate_N3}
			\|\widetilde{N}_3\|_{L^2(\Omega_h;\Gamma_h)\times H^{-1}(\Omega_h;\Gamma_h)}\leq (h+h^{\ell_k}r^{\ell_k})\cdot C\Big(\|U(t_n)\|_{H^1(\Omega;\Gamma)\times L^2(\Omega;\Gamma)}\Big).
	\end{align}
	
	For the term \(\widetilde{N}_4\) in \eqref{eq:estimate_N2N3N4}, recall that \(I_h\) denotes the interpolation operator defined in \eqref{eq:def_Ih}. Since \(\Pi_r u(t_n)\) is trace-compatible and \(G\) preserves the interpolation nodes, the functions \(\mathcal L_h I_h \Pi_r u(t_n)\) and \(\Pi_r u(t_n)\) agree at all nodal points. Consequently, their bulk and surface nonlinear nodal interpolants coincide. Applying \(Q_h\) to the resulting, possibly trace-incompatible, bulk--surface pair therefore yields
		\begin{align*}
				I_h  F(\Pi_{r}U(t_n))=F_h(I_h\Pi_{r}U(t_n)).
		\end{align*} 
	Applying Lemma~\ref{lem:interpolation} we deduce:
	\begin{align}\label{eq:decompose_N4}
		&\|\widetilde{N}_4\|_{L^2(\Omega_h;\Gamma_h)\times H^{-1}(\Omega_h;\Gamma_h)}=\|F_h(I_h\Pi_{r}U(t_n))-  F_h(U^n_h)\|_{L^2(\Omega_h;\Gamma_h)\times H^{-1}(\Omega_h;\Gamma_h)}\notag\\[2mm]
		&=\|f_h(I_h\Pi_{r}u(t_n))- f_h(u^n_h)\|_{H^{-1}(\Omega_h;\Gamma_h)}\notag\\
		&\leq C\Big(\|I_h\Pi_{r}u(t_n)\|_{H^1(\Omega_h;\Gamma_h)},\|u^n_h\|_{H^1(\Omega_h;\Gamma_h)}\Big)\cdot \|I_h\Pi_{r}u(t_n)-u^n_h\|_{L^2(\Omega_h;\Gamma_h)}.
	\end{align}
	Using Proposition~5.4 of \cite{ER2013}, which gives the \(H^1(\Omega;\Gamma)\)-approximation property of \(\mathcal{L}_h I_h\), together with the boundedness of \(\mathcal{L}_h\) and the Bernstein-type inequality \eqref{eq:Bernstein_low}, we obtain
	\begin{align*}
		\|I_h\Pi_{r}u(t_n)\|_{H^1(\Omega_h;\Gamma_h)}&\lesssim \|\L_h I_h\Pi_{r}u(t_n)\|_{H^1(\Omega;\Gamma)}\\
		&\leq \|\Pi_{r}u(t_n)\|_{H^1(\Omega;\Gamma)}+\|\Pi_{r}u(t_n)-\L_h I_h\Pi_{r}u(t_n)\|_{H^1(\Omega;\Gamma)}\\
		&\lesssim \|\Pi_{r}u(t_n)\|_{H^1(\Omega;\Gamma)}+h\|\Pi_{r}u(t_n)\|_{H^2(\Omega;\Gamma)}\\
		&\lesssim (1+hr)\|\Pi_{r}u(t_n)\|_{H^1(\Omega;\Gamma)}.
	\end{align*}
	When $r\le h^{-1}$, this gives
	\begin{align}\label{eq:estimate_N4_1}
		\|I_h\Pi_{r}u(t_n)\|_{H^1(\Omega_h;\Gamma_h)}\lesssim\|\Pi_{r}u(t_n)\|_{H^1(\Omega;\Gamma)}.
	\end{align}
	Similarly
	\begin{align}\label{eq:estimate_N4_2}
			&\|I_h\Pi_{r}u(t_n)-u^n_h\|_{L^2(\Omega_h;\Gamma_h)}\lesssim \|\L_h I_h\Pi_{r}u(t_n)-\L_h u^n_h\|_{L^2(\Omega;\Gamma)}\notag\\
			&\quad\leq \|\Pi_{r}u(t_n)-\L_h I_h\Pi_{r}u(t_n)\|_{L^2(\Omega;\Gamma)}+\|u(t_n)-\Pi_{r}u(t_n)\|_{L^2(\Omega;\Gamma)}\notag\\
			&\qquad+\|u(t_n)-\L_h u^n_h\|_{L^2(\Omega;\Gamma)}\notag\\
			&\quad\lesssim h^{k+1}\|\Pi_{r}u(t_n)\|_{H^{k+1}(\Omega;\Gamma)}+r^{-1}\|u(t_n)\|_{H^1(\Omega;\Gamma)}+\|u(t_n)-\L_h u^n_h\|_{L^2(\Omega;\Gamma)}\notag\\
			&\quad \lesssim \left(h^{k+1} r^k +r^{-1}\right)\|u(t_n)\|_{H^1(\Omega;\Gamma)}+\|U(t_n)-\L_h U^n_h\|_{L^2(\Omega;\Gamma)\times H^{-1}(\Omega;\Gamma)}.
	\end{align}
	Substituting \eqref{eq:estimate_N4_1} and \eqref{eq:estimate_N4_2} into
	\eqref{eq:decompose_N4}, and using the condition \(r \le h^{-1}\), we obtain
	\begin{align}\label{eq:estimate_N4}
			&\|\widetilde{N}_4\|_{L^2(\Omega_h;\Gamma_h)\times H^{-1}(\Omega_h;\Gamma_h)}\notag\\
			&\leq C\Big(\|u(t_n)\|_{H^1(\Omega;\Gamma)},\|u^n_h\|_{H^1(\Omega_h;\Gamma_h)}\Big)\cdot \left(h +r^{-1}+\|U(t_n)-\L_h U^n_h\|_{L^2(\Omega;\Gamma)\times H^{-1}(\Omega;\Gamma)}\right).
	\end{align}
	Finally, by combining \eqref{eq:def_N1N2N3N4}, \eqref{eq:estimate_N1}, \eqref{eq:estimate_N2N3N4}, \eqref{eq:estimate_N2}, \eqref{eq:estimate_N3} and \eqref{eq:estimate_N4}, we derive the desired bound~\eqref{eq:nonlinear_error}, 
	which concludes the proof of the lemma.
\end{proof}

\subsection{The proof of Theorem~\ref{thm:convergence_db}}
	Subtracting \eqref{eq:iterate_Uh_db} from \eqref{eq:iterate_U_db} yields
	\begin{align}\label{eq:U-Uh_db}
		&U(t_{n+1})-\L_h U^{n+1}_h=e^{t_{n+1}L}U(0)-\L_h e^{t_{n+1} L_h}U_h^0\notag\\
		&+\sum_{j=0}^{n}\tau \left(e^{(t_{n+1}-t_{j+1})L}\varphi_1(\tau L)\Pi_r F(\Pi_r U(t_j))-\L_h e^{(t_{n+1}-t_{j+1}) L_h}\varphi_1(\tau L_h)F_h(U^j_h)\right)\notag\\
		&+\sum_{j=0}^n e^{(t_{n+1}-t_{j+1})L} \mathcal{R}^j.
	\end{align}
	For the first term on the right-hand side of \eqref{eq:U-Uh_db}, 
	note that $U^0_h = \P^*_h U^0 = (\L^{H*}_h u^0, \L^{H*}_h v^0)^{\top}$.  
	Applying Theorem~\ref{lem:estimate_etL-etLh_db} and using the boundedness of the operators 
	$\L_h$, $e^{t_{n+1}L_h}$, $e^{t_{n+1}L}$, and $\L^{*H}_h$ in the 
	$L^2$- and $H^{-1}$-norms, we obtain
	\begin{align}
		&\left\|e^{t_{n+1}L}U(0)-\L_h e^{t_{n+1} L_h}U_h^0\right\|_{L^2(\Omega;\Gamma)\times H^{-1}(\Omega;\Gamma)}\notag\\
		&\quad\leq \left\|e^{t_{n+1}L}U(0)-e^{t_{n+1}L}\Pi_r U(0)\right\|_{L^2(\Omega;\Gamma)\times H^{-1}(\Omega;\Gamma)}\notag\\
		&\qquad+\left\|e^{t_{n+1}L}\Pi_r U(0)-\L_h e^{t_{n+1} L_h}\P^*_h \Pi_r U(0) \right\|_{L^2(\Omega;\Gamma)\times H^{-1}(\Omega;\Gamma)}\notag\\
		&\qquad+\left\|\L_h e^{t_{n+1} L_h}\P^*_h \Pi_r U(0)-\L_h e^{t_{n+1} L_h} \P^*_h U(0) \right\|_{L^2(\Omega;\Gamma)\times H^{-1}(\Omega;\Gamma)}\notag\\
		&\quad \lesssim \|U(0)-\Pi_r U(0)\|_{L^2(\Omega;\Gamma)\times H^{-1}(\Omega;\Gamma)}+(h^{\ell_k}r^{\ell_k}+h^{k+\frac{1}{2}}r^k+h)\|U(0) \|_{H^1(\Omega;\Gamma)\times L^2(\Omega;\Gamma)}\notag\\
		&\quad\lesssim C\Big(\|U(0) \|_{H^1(\Omega;\Gamma)\times L^2(\Omega;\Gamma)}\Big)\cdot\left(r^{-1}+h^{\ell_k}r^{\ell_k}+h^{k+\frac{1}{2}}r^k+h\right).
	\end{align}
	The second and third terms on the right-hand side of \eqref{eq:U-Uh_db} 
	can be bounded directly using Lemma~\ref{lem:nonlinear_error} 
	and Lemma~\ref{lem:consistency_error_db}.  
	Under the assumptions of Theorem~\ref{thm:convergence_db}, we know that 
	$U \in C([0,T];H^1(\Omega;\Gamma)\times L^2(\Omega;\Gamma))$, namely,
	\begin{align*}
		\sup_{t\in[0,T]}\|U(t) \|_{H^1(\Omega;\Gamma)\times L^2(\Omega;\Gamma)}\lesssim 1.
	\end{align*}
	Thus we have
	\begin{align}\label{eq:gronwall_1}
		&\left\|U(t_{n+1})-\L_h U^{n+1}_h\right\|_{L^2(\Omega;\Gamma)\times H^{-1}(\Omega;\Gamma)}\notag\\
		&\leq C_0 \cdot \left(r^{-1}+h^{\ell_k}r^{\ell_k}+h^{k+\frac{1}{2}}r^k+h\right)\notag\\
		&\quad+\sum_{j=0}^{n}\tau \cdot C\Big(\|U^j_h\|_{H^1(\Omega_h;\Gamma_h)\times L^2(\Omega_h;\Gamma_h)}\Big)\cdot\Big(r^{-1}+h^{\ell_k}r^{\ell_k}+h^{k+\frac{1}{2}}r^k+h\Big)\notag\\
		&\quad +\sum_{j=0}^{n}\tau \cdot C\Big(\|U^j_h\|_{H^1(\Omega_h;\Gamma_h)\times L^2(\Omega_h;\Gamma_h)}\Big)\cdot\left\|U(t_{j})-\L_h U^{j}_h\right\|_{L^2(\Omega;\Gamma)\times H^{-1}(\Omega;\Gamma)}\notag\\
		&\quad +\sum_{j=0}^{n} \tau\cdot  C_0 \cdot (\tau+r^{-1}),
	\end{align}
	where $C_0$ is a constant independent of $\tau$, $h$, $r$, and the bound of 
	the numerical solution, and where 
	$C(\|U^j_h\|_{H^1(\Omega_h;\Gamma_h)\times L^2(\Omega_h;\Gamma_h)})$
	denotes a constant depending only on 
	$\|U^j_h\|_{H^1(\Omega_h;\Gamma_h)\times L^2(\Omega_h;\Gamma_h)}$.
	
	Since some of the constants in \eqref{eq:gronwall_1} depend on the 
	$H^1\times L^2$-norm of the numerical solution, 
	it is essential to guarantee the boundedness of the numerical solution in this functional space 
	before applying Gronwall’s inequality to \eqref{eq:gronwall_1} 
	and thereby establishing the error estimate stated in Theorem~\ref{thm:convergence_db}. 
	
	Taking the $H^1\times L^2$-norm on both sides of \eqref{eq:fully_discrete_db} 
	and using the boundedness of the operators $e^{t_{n+1} L_h}$ and $\varphi_1(\tau L_h)$ from Lemma~\ref{lem:basic_results} (c)--(d), we obtain
	\begin{align}\label{eq:U_H1}
		&\|U^{n+1}_h\|_{H^1(\Omega_h;\Gamma_h)\times L^2(\Omega_h;\Gamma_h)}\leq\left\| e^{t_{n+1} L_h}U^0_h\right\|_{H^1(\Omega_h;\Gamma_h)\times L^2(\Omega_h;\Gamma_h)}\notag\\
		&\quad\qquad\qquad\qquad\qquad\qquad\qquad+\sum_{j=0}^n\tau \left\| e^{(t_{n+1}-t_{j+1}) L_h}\varphi_1(\tau L_h) F_h(U^j_h)\right\|_{H^1(\Omega_h;\Gamma_h)\times L^2(\Omega_h;\Gamma_h)}\notag\\
		&\lesssim \left\| U^0_h\right\|_{H^1(\Omega_h;\Gamma_h)\times L^2(\Omega_h;\Gamma_h)}+\sum_{j=0}^n\tau \left\| F_h(U^j_h)\right\|_{H^1(\Omega_h;\Gamma_h)\times L^2(\Omega_h;\Gamma_h)}.
	\end{align}
	For the second term on the right-hand side of \eqref{eq:U_H1}, 
	we insert the interpolation of the nonlinear term $\Pi_r U(t_j)$.  
	By applying the estimates in Lemma~\ref{lem:interpolation} and Remark~\ref{rem:interpolation} 
	for the interpolation operator $I_h$, we have
\begin{align}\label{eq:IhFhUh}
	&\left\|F_h(U^j_h)\right\|_{H^1(\Omega_h;\Gamma_h)\times L^2(\Omega_h;\Gamma_h)}\leq \left\|f_h(I_h \Pi_r u(t_j))\right\|_{L^2(\Omega_h;\Gamma_h)}+\left\|f_h(I_h \Pi_r u(t_j))- f_h(u^j_h)\right\|_{L^2(\Omega_h;\Gamma_h)}\notag\\[2mm]
	&\quad\lesssim  C\Big(\left\|I_h \Pi_r u(t_j)\right\|_{H^1(\Omega_h;\Gamma_h)}\Big)\notag\\
	&\qquad+C\Big(\left\|I_h \Pi_r u(t_j)\right\|_{H^1(\Omega_h;\Gamma_h)},\|u^j_h\|_{H^1(\Omega_h;\Gamma_h)}\Big)\cdot \left\|I_h \Pi_r u(t_j)-u^j_h\right\|^\alpha_{L^2(\Omega_h;\Gamma_h)}.
\end{align}
Using the interpolation error estimate from Proposition~5.4 of \cite{ER2013} together with the Bernstein-type inequality \eqref{eq:Bernstein_low}, we obtain
\begin{align*}
	\left\|\L_h I_h \Pi_r u(t_j)-\Pi_r u(t_j)\right\|_{H^1(\Omega;\Gamma)}\lesssim h \left\|\Pi_r u(t_j)\right\|_{H^2(\Omega;\Gamma)}\lesssim hr\left\|\Pi_r u(t_j)\right\|_{H^1(\Omega;\Gamma)},
\end{align*}
we deduce that, under the condition $r \leq h^{-1}$ and with $u(t_j)\in H^1(\Omega;\Gamma)$,
\begin{align*}
	\left\|I_h \Pi_r u(t_j)\right\|_{H^1(\Omega_h;\Gamma_h)}\lesssim \left\|\L_h I_h \Pi_r u(t_j)\right\|_{H^1(\Omega;\Gamma)}\lesssim \left\|\Pi_r u(t_j)\right\|_{H^1(\Omega;\Gamma)}+hr\left\|\Pi_r u(t_j)\right\|_{H^1(\Omega;\Gamma)}\lesssim 1,
\end{align*}
Substituting this bound and the estimate \eqref{eq:estimate_N4_2} 
for $\|I_h \Pi_r u(t_j)-u^j_h\|$ into \eqref{eq:IhFhUh} yields
\begin{align*}
	&\left\|F_h(U^j_h)\right\|_{H^1(\Omega_h;\Gamma_h)\times L^2(\Omega_h;\Gamma_h)}\notag\\
	&\leq C_1 +C\Big(\|U^j_h\|_{H^1(\Omega_h;\Gamma_h)\times L^2(\Omega_h;\Gamma_h)}\Big)\cdot \left(h+r^{-1}+\left\|U(t_{j})-\L_h U^{j}_h\right\|_{L^2(\Omega;\Gamma)\times H^{-1}(\Omega;\Gamma)}\right)^{\alpha},
\end{align*}
where we have used the condition $r\leq h^{-1}$ and constant $C_1$ is independent of $\tau$, $h$, $r$, and the bound of the numerical solution.
	
	Substituting this estimate into \eqref{eq:U_H1} and noting that
	\begin{align*}
		\left\| U^0_h\right\|_{H^1(\Omega_h;\Gamma_h)\times L^2(\Omega_h;\Gamma_h)}=\left\|\P^*_h U^0\right\|_{H^1(\Omega_h;\Gamma_h)\times L^2(\Omega_h;\Gamma_h)}\lesssim \|U^0\|_{H^1(\Omega;\Gamma)\times L^2(\Omega;\Gamma)}\lesssim 1,
	\end{align*}
	we deduce
	\begin{align}\label{eq:gronwall_2}
		&\|U^{n+1}_h\|_{H^1(\Omega_h;\Gamma_h)\times L^2(\Omega_h;\Gamma_h)}\notag\\
		&\leq C_0+\sum_{j=0}^n \tau C\Big(\|U^j_h\|_{H^1(\Omega_h;\Gamma_h)\times L^2(\Omega_h;\Gamma_h)}\Big)\cdot \left(r^{-1}+\left\|U(t_{j})-\L_h U^{j}_h\right\|_{L^2(\Omega;\Gamma)\times H^{-1}(\Omega;\Gamma)}\right)^{\alpha},
	\end{align}
	where $C_0$ denotes a constant independent of $\tau$, $h$, $r$, and the numerical solution bound.
	
	To balance the different error contributions, we choose \(r\) such that
	\begin{align*}
		r^{-1}=h^{\ell_k}r^{\ell_k},\qquad\text{that is}\qquad r=h^{-\frac{\ell_k}{\ell_k+1}}.
	\end{align*}
	By the definition of \(\ell_k\) in \eqref{eq:def_lk}, one readily verifies that \(-(k+\frac{1}{2})\frac{\ell_k+1}{\ell_k}+k<-1\) and \(-\frac{\ell_k+1}{\ell_k}<-1\). Hence,
	\begin{align*}
		h^{k+\frac12}r^k+h \lesssim r^{-1}=h^{\ell_k}r^{\ell_k}=h^{\frac{\ell_k}{\ell_k+1}}
	\end{align*}
	for all \(k\ge 1\) and \(h\leq 1\).
	
Let
\[
\delta:=h^{\frac{\ell_k}{\ell_k+1}}, \qquad
a^n=\left\|U(t_n)-\mathcal L_h U_h^n\right\|_{L^2(\Omega;\Gamma)\times H^{-1}(\Omega;\Gamma)},
\qquad
b^n=\|U_h^n\|_{H^1(\Omega_h;\Gamma_h)\times L^2(\Omega_h;\Gamma_h)}.
\]
Then \eqref{eq:gronwall_1} and \eqref{eq:gronwall_2} can be rewritten as
\begin{align}\label{ineq_ab}
	a^{n+1}
	\le A+\sum_{j=0}^n \tau\, C(b^j)(\delta+a^j),\qquad
	b^{n+1}
	\le B+\sum_{j=0}^n \tau\, C(b^j)(\delta+a^j)^\alpha,
\end{align}
with
\[
A=C_0(\tau+\delta), \qquad B=C_0.
\]
Moreover,
\[
a^0=\left\|U(0)-\mathcal L_h P_h^*U(0)\right\|_{L^2\times H^{-1}}\le C_0h,
\qquad
b^0=\|P_h^*U(0)\|_{H^1\times L^2}\le C_0.
\]
Since \(\frac{\ell_k}{\ell_k+1}<1\), for \(0<h<1\) we have \(h\le \delta\), and therefore
\[
a^0\le  C_0\delta \le A, \qquad b^0\le B.
\]
Possibly enlarging the function \(C\), we may assume that \(C:[0,\infty)\to [0,\infty)\) is nondecreasing. Set
\[
\widetilde C:=\sup_{0\le s\le C_0+1}C(s).
\]
Since \(\alpha>0\) and \(A,\delta\to 0\) as \(h,\tau\to 0\), there exist \(h_0>0\) and \(\tau_0>0\) such that, for all \(0<h<h_0\) and \(0<\tau<\tau_0\),
\[
\widetilde C T\bigl(\delta+(A+\widetilde C T\delta)e^{T\widetilde C}\bigr)^\alpha\le 1.
\]

Therefore, Lemma~4.3 in \cite{cao2026approximating} applies to \eqref{ineq_ab}, and for all \(0\le n\le T/\tau\),
\[
b^n\le B+1=C_0+1,\qquad
a^n\le (A+\widetilde C T\delta)e^{n\tau\widetilde C}.
\]
Since \(n\tau\le T\), we conclude that
\begin{align*}
	\sup_{0\leq n\leq T/\tau}\|U(t_n)-\L_h U^n_h\|_{L^2(\Omega;\Gamma)\times H^{-1}(\Omega;\Gamma)}\leq C_0 e^{T\widetilde{C}}\cdot \tau +\big(C_0+\widetilde{C}T\big)e^{T\widetilde{C}} \cdot h^{\frac{\ell_k}{\ell_k+1}}.
\end{align*}
This completes the proof of Theorem~\ref{thm:convergence_db}.

\section{Numerical experiments}\label{sec:numerical_experiments}

In this section, we present a numerical experiment on a peanut-shaped domain, see Figure~\ref{fig:peanut_domain}. The domain is centered around the origin and has maximal width of $\approx1.8$ in x-direction and $\approx0.8$ in y-direction, while the narrowest part is $\approx0.4$. Note that this domain allows for smooth eigenfunctions without being convex. 
Since we have already discussed convergence on the square and an equilateral triangle in \cite{cao2026approximating}, and since other smooth domains do not provide further insight, we restrict ourselves to this one domain.
As pointed out in Remark~\ref{rem:other_bc}, our theory extends to the homogeneous Dirichlet problem,
and we thus include them in our experiments.

\begin{figure}
	\includegraphics[scale=0.3]{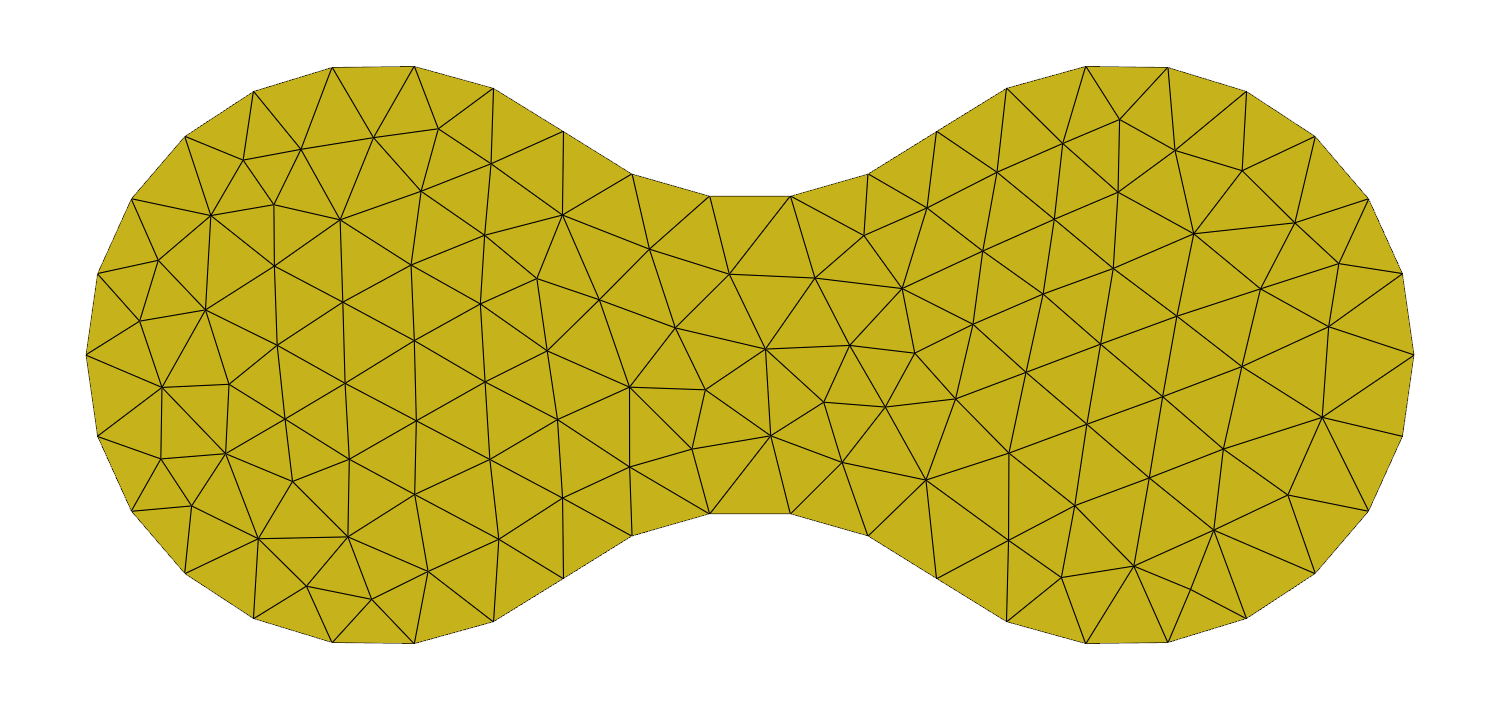}
	\caption{Plot of the peanut-shaped domain triangulated with \texttt{Gmsh} and mesh width $h \approx \frac18$.}
	\label{fig:peanut_domain}
\end{figure}

\subsection{Implementation}

We first discuss the implementation of the algorithm 
in Python. We first generate the meshes in
\texttt{Gmsh} \cite{geuzaine2009gmsh},
where we allow for a polynomial approximation of the boundary leading to the computational domain $\Omega_h$.
For the assembly of the mass and stiffness matrices $\massmatrix$ and $\stiffmatrix$, we use the \texttt{DOLFINx} environment \cite{DOLFINx23}, by setting
\begin{equation}
	(\massmatrix)_{i,j} = \int_{\Omega_h} \varphi_i \varphi_j \, dx
	+
	\int_{\Gamma_h} \varphi_i \varphi_j \, ds,
	\quad
	(\stiffmatrix)_{i,j} = \int_{\Omega_h} \nabla \varphi_i \cdot  \nabla \varphi_j \, dx 
	+
	 \int_{\Gamma_h} \nabla_{\Gamma_h} \varphi_i \cdot  \nabla_{\Gamma_h} \varphi_j \, ds .
\end{equation}
These objects are stored using the \texttt{PETSc for Python} library \cite{petsc_2,petsc_1}, which allows for many options in the solution of the upcoming linear systems. 
For the assembly of the Dirichlet problem, the boundary integrals are omitted and corresponding boundary degrees of freedom are eliminated.
We denote by $(\solutionvec^{n},\solutiondtvec^n)$
the coefficient vectors of $( u_h^n, v_h^n ) \approx ( u(t_n), u'(t_n) )$,
and note that the exponential integrator in \eqref{eq:fully_discrete_db} is equivalent to the exact solution $(\solutionvec,\solutionvec')$  at time $t_{n+1} = t_n + \tau$ of
\begin{equation} \label{eq:sys_for_rat_kry}
 \massmatrix \solutionvec''(t) = - \stiffmatrix \solutionvec(t)  + \massmatrix \loadvectorn, 
	\quad 
	t \in [t_n, t_n +\tau],
	\qquad
	\solutionvec(t_n) = \solutionvec^n,  
	\quad 
	\solutionvec'(t_n) = \solutiondtvec^n, 
\end{equation}
%
where $\loadvectorn$ denotes the vector corresponding to the function $I_h f(\L_h u_h^n)$ from \eqref{eq:def_fh},
and the new approximations are defined as $\solutionvec^{n+1} = \solutionvec(t_{n+1})$
and 
$\solutiondtvec^{n+1} = \solutionvec'(t_{n+1})$.
The solution of \eqref{eq:sys_for_rat_kry} is computed using a rational Krylov approximation as suggested in 
\cite{GriH08} and \cite{HocPSTW15}. 
For writing and reading of the meshes and functions
we use \texttt{ADIOS4DOLFINx}
\cite{ADIOS4DOLFINx2024}.

	For the error computation we use the
	$\massmatrix$-inner product for the $L^2(\Omega_h;\Gamma_h)$-
	and	$L^2(\Omega_h)$-norm and the $(\stiffmatrix+\massmatrix)^{-1}$-inner product for the $H^{-1}(\Omega_h;\Gamma_h)$ and $H^{-1}(\Omega_h)$-norm. By Lemma~\ref{lem:basic_results}(b), for \(c_G=1\), the latter norms
	are uniformly equivalent to the corresponding \(H^{-1}\)-norms.

The code corresponding to the experiments in this section is made publicly available at
\begin{equation*}
	\text{\url{https://github.com/BenjaminDoerich/ExpoFem-LowRegWave}}.
\end{equation*}

\subsection{Dynamical and Dirichlet boundary conditions}

\begin{figure}
	\includegraphics[scale=0.3]{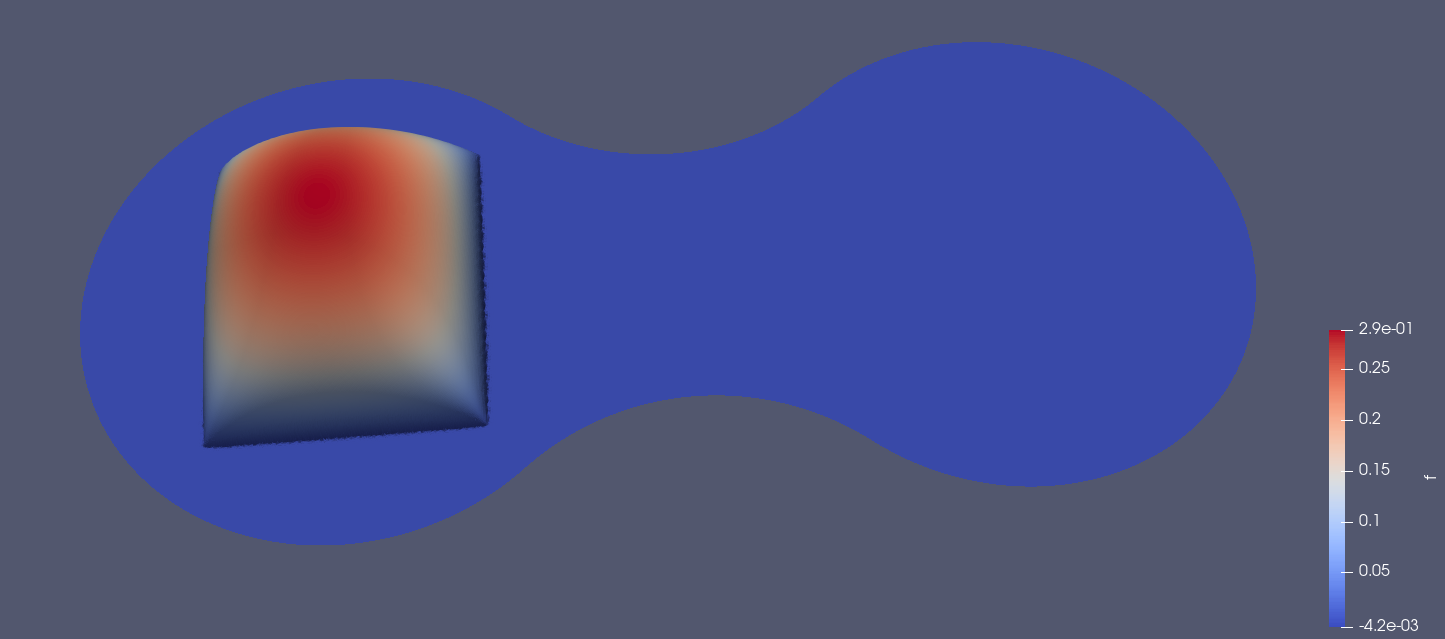}
	\caption{Initial datum defined in \eqref{eq:init_bump} 
	computed with the $L^2$-projection and cubic elements.}
	\label{fig:bump_inits}
\end{figure}

For both types of boundary conditions, we are on the time interval $[0,0.8]$ and we use the same initial conditions $(u^0,v^0)$. The initial displacement, shown in Figure~\ref{fig:bump_inits}, is the compactly supported bump
\begin{equation} \label{eq:init_bump}
u^{0}(x,y)=\left\{ 
	\begin{array}{ll}
		{\displaystyle H \sqrt{r_x^2-(x-x_0)^2} \sqrt{r_y^2 - (y - y_0)^2}},\quad & {\displaystyle (x,y) \in Q },\\[1mm]
		{\displaystyle 0},\quad &\text{else, }
	\end{array}
	\right. 
\end{equation}
with box $Q = ( x_0 - r_x , x_0 + r_x   ) \times ( y_0 - r_y , y_0 + r_y   ) $ and parameters $H \approx 2.9$, $(x_0,y_0) = (-0.5,0)$ and $(r_x,r_y) \approx (0.2, 0.23)$. The square-root behaviour of \(u^0\) along the sides of \(Q\) implies that \(u^0\in H^{1-\varepsilon}(\Omega)\) for every \(\varepsilon>0\), while \(u^0\notin H^1(\Omega)\). Since \(Q\Subset\Omega\), no additional boundary singularity is introduced. Thus this is a borderline rough displacement, slightly below but arbitrarily close to the finite-energy space. Although this datum is slightly outside the assumptions of Theorem~\ref{thm:convergence_db}, it provides a meaningful borderline test of the predicted low-regularity convergence rates. 
The initial velocity is given by
$v^0 (x,y) = \frac{1}{10}(x^2-1)$ and as nonlinearities we choose
$f_\Omega (u) = u^2$ and $f_\Gamma (u) = u^3$.
If not stated otherwise, we employ the scaling of the spatial and temporal parameters according to
\begin{equation} \label{eq:relation_h_tau}
h \approx \frac{1}{T} \taufac  \tau^{\frac{\ell_k+1}{\ell_k}}
\end{equation} 
with $\ell_k$ defined in \eqref{eq:def_lk}
to obtain a similar error contribution in Theorem~\ref{thm:convergence_db}, and restrict ourselves to polynomials of order $k=1,2,3$.
We first compute reference solutions with some small step size $\tauref$ and fine mesh parameter $\hspaceref$, see Table~\ref{tab:exp1_data}, and note that $\taufac$ is the same for the reference solution as well as for the other approximations. 
\begin{table}[h!]
\centering
\begin{tabular}{||c c c c c||} 
 \hline
 $k$ & $\taufac$ & $\tauref$ & $\hspaceref$ &  $\text{dofs}_{\text{ref}}$ \\ [0.5ex] 
 \hline\hline
 1 & 3.75 & $\sim  3\cdot 10^{-3}$ & $\sim 1\cdot 10^{-3}$ &  778,098 \\
 \hline
 2 & 2.5 & $\sim  3\cdot 10^{-3}$  & $\sim 2\cdot 10^{-3}$  & 928,393 \\
 \hline
 3 & 2.5 & $\sim  3\cdot 10^{-3}$ & $\sim 3\cdot 10^{-3}$ & 1,284,916 \\
 \hline
 \end{tabular}
\caption{Choice of the parameters for the experiment corresponding to the initial datum defined in \eqref{eq:init_bump}.}
 \label{tab:exp1_data}
\end{table}

The error $E(T)$ is computed in the
$L^2(\Omega_h;\Gamma_h) \times H^{-1}(\Omega_h;\Gamma_h)$-norm
at the final time $T = 0.8$ and scaled by the $L^2(\Omega_h;\Gamma_h) \times H^{-1}(\Omega_h;\Gamma_h)$-norm of the reference solution.
For the Dirichlet case $L^2(\Omega_h;\Gamma_h)$ is replaced by $L^2(\Omega_h)$, and $H^{-1}(\Omega_h;\Gamma_h)$ by $H^{-1}(\Omega_h)$.
  A plot of the reference solution at the final time is shown in Figure~\ref{fig:bump_sols}.
\begin{figure}
	\begin{subfigure}{\textwidth}
	\centering
		\includegraphics[scale=0.25]{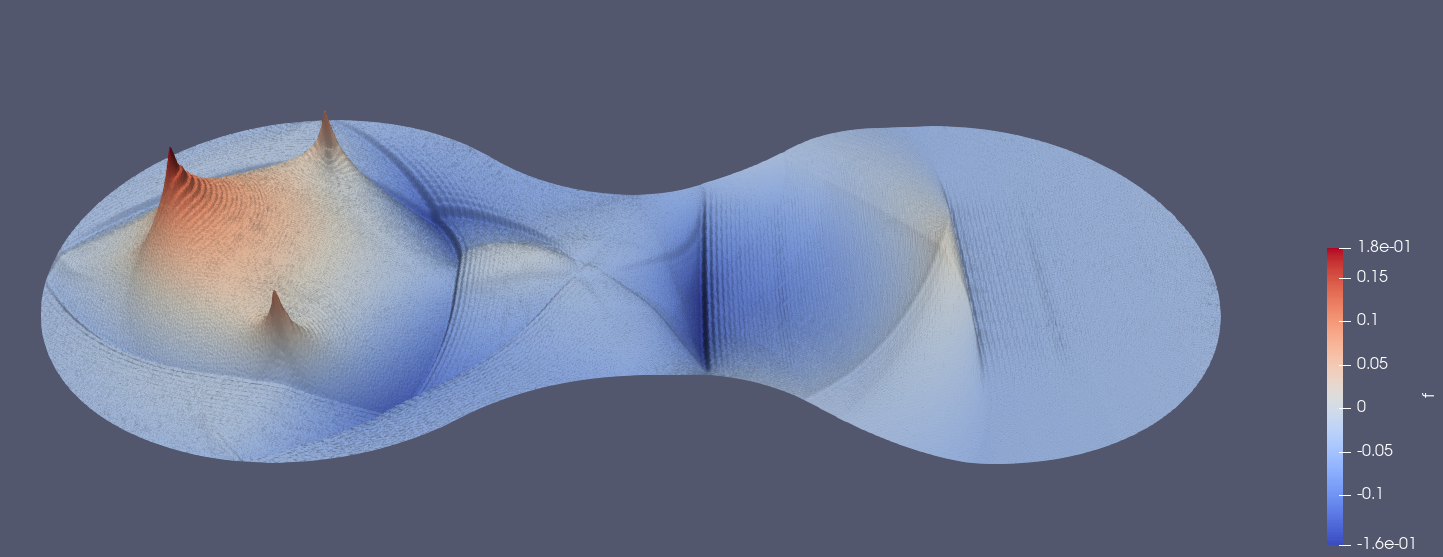}
\end{subfigure}%
	\par\bigskip
	\begin{subfigure}{\textwidth}
	\centering
	\includegraphics[scale=0.25]{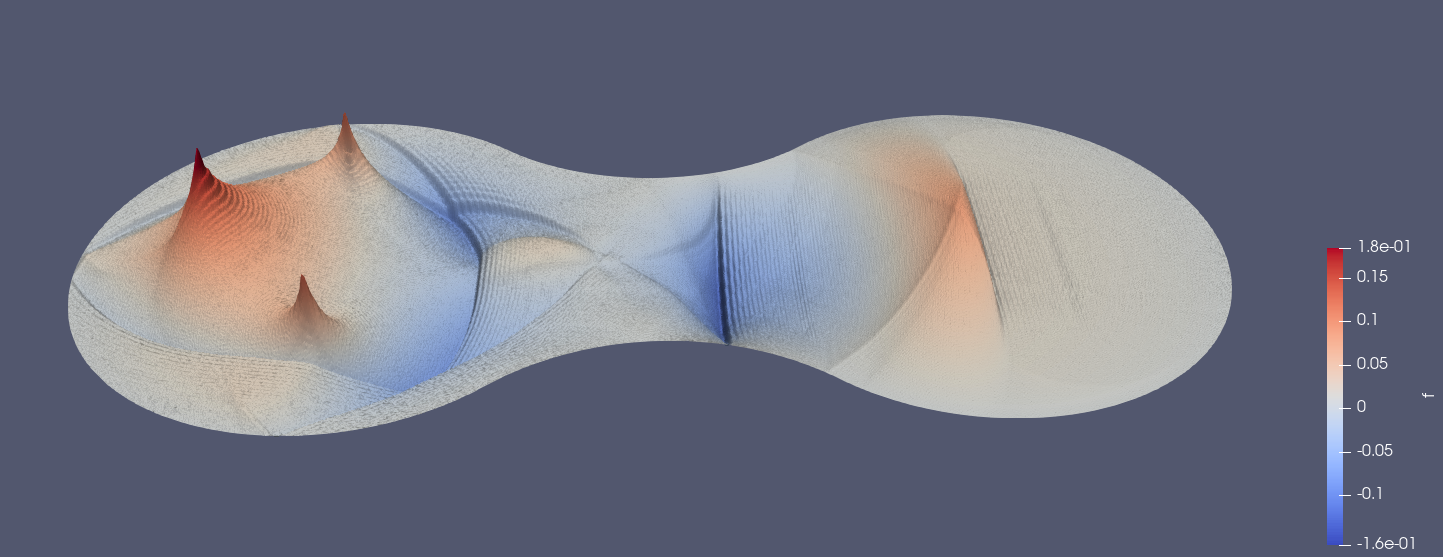}
	\end{subfigure}%
	\caption{Approximation by the reference solution computed for 
	initial datum defined in \eqref{eq:init_bump}	
	with cubic elements and the data from Table~\ref{tab:exp1_data}
	at time $T=0.8$ for 
	dynamical boundary conditions (top) 
	and
	Dirichlet boundary conditions (bottom). }
	\label{fig:bump_sols}
\end{figure}

For both the dynamical boundary and the Dirichlet case, we performed the same simulations, i.e. we used the same meshes and time step sizes. In the upper left plot of 
Figure~\ref{fig:exp_dyn_2x2grid} (for the dynamical boundary case) and Figure~\ref{fig:exp_Dir_2x2grid}
 (for the Dirichlet case), we computed the error under the scaling \eqref{eq:relation_h_tau} and included reference lines that show the expected order $\tau$. In the upper right plot, we further plotted the error over the degrees of freedom in order to show also the computational advantage of higher order elements.
 In the second part of the experiments, we study the convergence with respect to $h$ by choosing some fixed $\tau_0 = 4 \tauref$ and decreasing values of $h$. 
We observe in the lower left plot that the spatial error aligns with the expected order $h^{\frac{\ell_k}{\ell_k+1}}$.
The plots on the lower right, again show the advantage of the higher order elements.

\subsection*{Acknowledgments}

We
would like to thank Tim Buchholz for his valuable input on the \texttt{DOLFINx} 
and \texttt{PETSc} implementation of the finite element solver.

\begin{figure}
\textbf{Case of dynamic boundary conditions}\\[0.5em]
  \centering
  \begin{subfigure}{0.48\textwidth}
    \centering
	\begingroup%
	\StrCount{num_exp/peanut_bump_dyn_100/peanut_bump_dyn_100_joint_tau}{/}[\matches]%
	\StrBefore[\matches]{num_exp/peanut_bump_dyn_100/peanut_bump_dyn_100_joint_tau}{/}[\datapath]%
	\includestandalone[]{num_exp/peanut_bump_dyn_100/peanut_bump_dyn_100_joint_tau}%
	\endgroup%

    \label{fig:exp_dyn_joint-tau}
  \end{subfigure}%
  \hfill
  \begin{subfigure}{0.48\textwidth}
    \centering
	\begingroup%
	\StrCount{num_exp/peanut_bump_dyn_100/peanut_bump_dyn_100_joint_dofs}{/}[\matches]%
	\StrBefore[\matches]{num_exp/peanut_bump_dyn_100/peanut_bump_dyn_100_joint_dofs}{/}[\datapath]%
\providecommand{\logLogSlopeTriangle}[5]
{

    \pgfplotsextra
    {
        \pgfkeysgetvalue{/pgfplots/xmin}{\xmin}
        \pgfkeysgetvalue{/pgfplots/xmax}{\xmax}
        \pgfkeysgetvalue{/pgfplots/ymin}{\ymin}
        \pgfkeysgetvalue{/pgfplots/ymax}{\ymax}

        \pgfmathsetmacro{\xArel}{#1}
        \pgfmathsetmacro{\yArel}{#3}
        \pgfmathsetmacro{\xBrel}{#1-#2}
        \pgfmathsetmacro{\yBrel}{\yArel}
        \pgfmathsetmacro{\xCrel}{\xArel}

        \pgfmathsetmacro{\lnxB}{\xmin*(1-(#1-#2))+\xmax*(#1-#2)} 
        \pgfmathsetmacro{\lnxA}{\xmin*(1-#1)+\xmax*#1} 
        \pgfmathsetmacro{\lnyA}{\ymin*(1-#3)+\ymax*#3} 
        \pgfmathsetmacro{\lnyC}{\lnyA+#4*(\lnxA-\lnxB)}
        \pgfmathsetmacro{\yCrel}{\lnyC-\ymin)/(\ymax-\ymin)} 

        \coordinate (A) at (rel axis cs:\xArel,\yArel);
        \coordinate (B) at (rel axis cs:\xBrel,\yBrel);
        \coordinate (C) at (rel axis cs:\xCrel,\yCrel);

        \draw[#5]   (A)-- node[pos=0.5,anchor=north] {1}
                    (B)-- 
                    (C)-- node[pos=0.5,anchor=west] {#4}
                    cycle;
    }
}

    \begin{tikzpicture}
      \begin{axis}[
          width=0.925\linewidth,
          height=0.9\linewidth,
          grid=major,
          grid style={dashed,gray!30},
          xlabel=dofs,
          xmode=log,
          ymode=log,
          ymax=1.1,
          legend style={at={(0.7,0.21)},anchor=north}, 
          legend columns = 2
          x tick label style={rotate=0,anchor=north}, 
          legend columns=-1,
          legend to name=Legendforall
        ]
        \addplot+[mark=*, color=blue]
        table[x=dofs_1,y=error_tot_1,col sep=comma] 
        {\datapath/data/ref_sol_test_num_17dyn_shift_dyn_h_0.002967359050445104_tau_0.003125_all_pol_deg_joint_conv_relative_time.txt};
        \addlegendentry{\(k=1\)}

        
        \addplot+[mark=*, color=red]
        table[x=dofs_2,y=error_tot_2,col sep=comma] 
        {\datapath/data/ref_sol_test_num_17dyn_shift_dyn_h_0.002967359050445104_tau_0.003125_all_pol_deg_joint_conv_relative_time.txt};
        \addlegendentry{\(k=2\)}

        
        \addplot+[mark=*, color=cyan,  mark options={draw=cyan,fill=cyan}]
		table[x=dofs_3,y=error_tot_3,col sep=comma] 
		{\datapath/data/ref_sol_test_num_17dyn_shift_dyn_h_0.002967359050445104_tau_0.003125_all_pol_deg_joint_conv_relative_time.txt};
		\addlegendentry{\(k=3\)}




      \end{axis}
    \end{tikzpicture}
%
	\endgroup%

    \label{fig:exp_dyn_joint-dofs}
  \end{subfigure}
  \par\bigskip
  \begin{subfigure}{0.48\textwidth}
    \centering
	\begingroup%
	\StrCount{num_exp/peanut_bump_dyn_100/peanut_bump_dyn_100_tau}{/}[\matches]%
	\StrBefore[\matches]{num_exp/peanut_bump_dyn_100/peanut_bump_dyn_100_tau}{/}[\datapath]%
		\providecommand{\logLogSlopeTriangle}[5]
{
	
	\pgfplotsextra
	{
		\pgfkeysgetvalue{/pgfplots/xmin}{\xmin}
		\pgfkeysgetvalue{/pgfplots/xmax}{\xmax}
		\pgfkeysgetvalue{/pgfplots/ymin}{\ymin}
		\pgfkeysgetvalue{/pgfplots/ymax}{\ymax}
		
		\pgfmathsetmacro{\xArel}{#1}
		\pgfmathsetmacro{\yArel}{#3}
		\pgfmathsetmacro{\xBrel}{#1-#2}
		\pgfmathsetmacro{\yBrel}{\yArel}
		\pgfmathsetmacro{\xCrel}{\xArel}
		
		\pgfmathsetmacro{\lnxB}{\xmin*(1-(#1-#2))+\xmax*(#1-#2)} 
		\pgfmathsetmacro{\lnxA}{\xmin*(1-#1)+\xmax*#1} 
		\pgfmathsetmacro{\lnyA}{\ymin*(1-#3)+\ymax*#3} 
		\pgfmathsetmacro{\lnyC}{\lnyA+#4*(\lnxA-\lnxB)}
		\pgfmathsetmacro{\yCrel}{\lnyC-\ymin)/(\ymax-\ymin)} 
		
		\coordinate (A) at (rel axis cs:\xArel,\yArel);
		\coordinate (B) at (rel axis cs:\xBrel,\yBrel);
		\coordinate (C) at (rel axis cs:\xCrel,\yCrel);
		
	
	\draw[#5]   (A)-- node[midway, auto, font=\scriptsize] {1}
	(B)-- 
	(C)-- node[midway, auto, font=\scriptsize] {#4}
	cycle;
}
}

\providecommand{\logLogSlopeTriangleFlip}[5]
{

\pgfplotsextra
{
	\pgfkeysgetvalue{/pgfplots/xmin}{\xmin}
	\pgfkeysgetvalue{/pgfplots/xmax}{\xmax}
	\pgfkeysgetvalue{/pgfplots/ymin}{\ymin}
	\pgfkeysgetvalue{/pgfplots/ymax}{\ymax}
	
	\pgfmathsetmacro{\xArel}{#1}
	\pgfmathsetmacro{\yArel}{#3}
	\pgfmathsetmacro{\xBrel}{#1-#2}
	\pgfmathsetmacro{\yBrel}{\yArel}
	\pgfmathsetmacro{\xCrel}{\xArel}
	
	\pgfmathsetmacro{\lnxB}{\xmin*(1-(#1-#2))+\xmax*(#1-#2)} 
	\pgfmathsetmacro{\lnxA}{\xmin*(1-#1)+\xmax*#1} 
	\pgfmathsetmacro{\lnyA}{\ymin*(1-#3)+\ymax*#3} 
	\pgfmathsetmacro{\lnyC}{\lnyA+#4*(\lnxA-\lnxB)}
	\pgfmathsetmacro{\yCrel}{\lnyC-\ymin)/(\ymax-\ymin)} 
	
	\coordinate (A) at (rel axis cs:\xBrel,\yCrel); 
	\coordinate (B) at (rel axis cs:\xBrel,\yBrel); 
	\coordinate (C) at (rel axis cs:\xCrel,\yCrel); 
	
	\draw[#5]
	(A) -- node[midway, auto, swap, font=\scriptsize] {$1$} (B)
	-- (C)
	-- node[midway, auto , swap,font=\scriptsize] {#4}
	cycle;
}
}

    \begin{tikzpicture}
      \begin{axis}[
          width=0.925\linewidth,
          height=0.9\linewidth,
          grid=major,
          grid style={dashed,gray!30},
          xlabel=$h$,
          ylabel={$E(T = 0.8)$},
          ylabel style={yshift= 0.5em},    
          xmode=log,
          ymode=log,
          ymax=1.1,
          legend style={at={(0.7,0.21)},anchor=north}, 
          legend columns = 2
          x tick label style={rotate=0,anchor=north}, 
          legend columns=-1,
          legend to name=Legendforall
        ]
        \addplot+[mark=*, color=blue]
        table[x=has_1,y=error_tot_1,col sep=comma] 
        {\datapath/data/ref_sol_test_num_17dyn_shift_dyn_h_0.002967359050445104_tau_0.003125_all_pol_deg_h_conv_relative_time.txt};
        \addlegendentry{\(k=1\)}

        
        \addplot+[mark=*, color=red]
        table[x=has_2,y=error_tot_2,col sep=comma] 
        {\datapath/data/ref_sol_test_num_17dyn_shift_dyn_h_0.002967359050445104_tau_0.003125_all_pol_deg_h_conv_relative_time.txt};
        \addlegendentry{\(k=2\)}

        
        \addplot+[mark=*, color=cyan,  mark options={draw=cyan,fill=cyan}]
		table[x=has_3,y=error_tot_3,col sep=comma] 
		{\datapath/data/ref_sol_test_num_17dyn_shift_dyn_h_0.002967359050445104_tau_0.003125_all_pol_deg_h_conv_relative_time.txt};
		\addlegendentry{\(k=3\)}

        \addplot+[mark=None, dashed, color=blue]
        table[x=has_1,y expr=\thisrow{line_over_curve_has_1}*0.9,col sep=comma] 
        {\datapath/data/ref_sol_test_num_17dyn_shift_dyn_h_0.002967359050445104_tau_0.003125_all_pol_deg_h_conv_relative_time.txt};
   		\logLogSlopeTriangleFlip{0.28}{0.13}{0.34}{0.67}{blue, thick};
                
		\addplot+[mark=None, dashed, color=red]
		table[x=has_2,y expr=\thisrow{line_over_curve_has_2}*0.9,col sep=comma] 
		{\datapath/data/ref_sol_test_num_17dyn_shift_dyn_h_0.002967359050445104_tau_0.003125_all_pol_deg_h_conv_relative_time.txt};
		\logLogSlopeTriangleFlip{0.28}{0.13}{0.17}{0.8}{red, thick};

 	    \addplot+[mark=None, dashed, color=cyan]
		table[x=has_3,y expr=\thisrow{line_over_curve_has_3}*0.9,col sep=comma] 
		{\datapath/data/ref_sol_test_num_17dyn_shift_dyn_h_0.002967359050445104_tau_0.003125_all_pol_deg_h_conv_relative_time.txt};
		\logLogSlopeTriangle{0.4}{0.13}{0.11}{0.83}{cyan, thick};

  \end{axis}
    \end{tikzpicture}
%
	\endgroup%

    \label{fig:exp_dyn_onlyh-tau}
  \end{subfigure}%
  \hfill
  \begin{subfigure}{0.48\textwidth}
    \centering
	\begingroup%
	\StrCount{num_exp/peanut_bump_dyn_100/peanut_bump_dyn_100_dofs}{/}[\matches]%
	\StrBefore[\matches]{num_exp/peanut_bump_dyn_100/peanut_bump_dyn_100_dofs}{/}[\datapath]%
\providecommand{\logLogSlopeTriangle}[5]
{

    \pgfplotsextra
    {
        \pgfkeysgetvalue{/pgfplots/xmin}{\xmin}
        \pgfkeysgetvalue{/pgfplots/xmax}{\xmax}
        \pgfkeysgetvalue{/pgfplots/ymin}{\ymin}
        \pgfkeysgetvalue{/pgfplots/ymax}{\ymax}

        \pgfmathsetmacro{\xArel}{#1}
        \pgfmathsetmacro{\yArel}{#3}
        \pgfmathsetmacro{\xBrel}{#1-#2}
        \pgfmathsetmacro{\yBrel}{\yArel}
        \pgfmathsetmacro{\xCrel}{\xArel}

        \pgfmathsetmacro{\lnxB}{\xmin*(1-(#1-#2))+\xmax*(#1-#2)} 
        \pgfmathsetmacro{\lnxA}{\xmin*(1-#1)+\xmax*#1} 
        \pgfmathsetmacro{\lnyA}{\ymin*(1-#3)+\ymax*#3} 
        \pgfmathsetmacro{\lnyC}{\lnyA+#4*(\lnxA-\lnxB)}
        \pgfmathsetmacro{\yCrel}{\lnyC-\ymin)/(\ymax-\ymin)} 

        \coordinate (A) at (rel axis cs:\xArel,\yArel);
        \coordinate (B) at (rel axis cs:\xBrel,\yBrel);
        \coordinate (C) at (rel axis cs:\xCrel,\yCrel);

        \draw[#5]   (A)-- node[pos=0.5,anchor=north] {1}
                    (B)-- 
                    (C)-- node[pos=0.5,anchor=west] {#4}
                    cycle;
    }
}

    \begin{tikzpicture}
      \begin{axis}[
          width=0.925\linewidth,
          height=0.9\linewidth,      
          grid=major,
          grid style={dashed,gray!30},
          xlabel=dofs,
          xmode=log,
          ymode=log,
          ymax=1.1,
          legend style={at={(0.7,0.21)},anchor=north}, 
          legend columns = 2
          x tick label style={rotate=0,anchor=north}, 
          legend columns=-1,
          legend to name=Legendforall
        ]
       \addplot+[mark=*, color=blue]
        table[x=dofs_1,y=error_tot_1,col sep=comma] 
        {\datapath/data/ref_sol_test_num_17dyn_shift_dyn_h_0.002967359050445104_tau_0.003125_all_pol_deg_h_conv_relative_time.txt};
        \addlegendentry{\(k=1\)}


       \addplot+[mark=*, color=red]
        table[x=dofs_2,y=error_tot_2,col sep=comma] 
        {\datapath/data/ref_sol_test_num_17dyn_shift_dyn_h_0.002967359050445104_tau_0.003125_all_pol_deg_h_conv_relative_time.txt};
        \addlegendentry{\(k=2\)}

        
        \addplot+[mark=*, color=cyan,  mark options={draw=cyan,fill=cyan}]
		table[x=dofs_3,y=error_tot_3,col sep=comma] 
		{\datapath/data/ref_sol_test_num_17dyn_shift_dyn_h_0.002967359050445104_tau_0.003125_all_pol_deg_h_conv_relative_time.txt};
		\addlegendentry{\(k=3\)}


		

      \end{axis}
    \end{tikzpicture}
%
	\endgroup%

    \label{fig:exp_dyn_onlyh-dofs}
  \end{subfigure}
  \par\bigskip
  \ref{Legendforall}
  \caption{Error plots for the initial data \eqref{eq:init_bump}. Top row: Equilibrated error $E(T=0.8)$ via \eqref{eq:relation_h_tau} for $k=1,2,3$. 
 In the left plot, the dashed lines indicate order~$\tau$.
  Bottom row: Convergence in $h$ for fixed $\tau$. The dashed lines indicate the expected order of convergence $h^{\frac{\ell_k}{\ell_k+1}}$ from Theorem~\ref{thm:convergence_db}.}
  \label{fig:exp_dyn_2x2grid}
\end{figure}

\begin{figure}
\textbf{Case of Dirichlet boundary conditions}\\[0.5em]
  \centering
  \begin{subfigure}{0.48\textwidth}
    \centering
	\begingroup%
	\StrCount{num_exp/peanut_bump_Dir_100/peanut_bump_Dir_100_joint_tau}{/}[\matches]%
	\StrBefore[\matches]{num_exp/peanut_bump_Dir_100/peanut_bump_Dir_100_joint_tau}{/}[\datapath]%
	\includestandalone[]{num_exp/peanut_bump_Dir_100/peanut_bump_Dir_100_joint_tau}%
	\endgroup%

    \label{fig:exp_Dir_joint-tau}
  \end{subfigure}%
  \hfill
  \begin{subfigure}{0.48\textwidth}
    \centering
	\begingroup%
	\StrCount{num_exp/peanut_bump_Dir_100/peanut_bump_Dir_100_joint_dofs}{/}[\matches]%
	\StrBefore[\matches]{num_exp/peanut_bump_Dir_100/peanut_bump_Dir_100_joint_dofs}{/}[\datapath]%
\providecommand{\logLogSlopeTriangle}[5]
{

    \pgfplotsextra
    {
        \pgfkeysgetvalue{/pgfplots/xmin}{\xmin}
        \pgfkeysgetvalue{/pgfplots/xmax}{\xmax}
        \pgfkeysgetvalue{/pgfplots/ymin}{\ymin}
        \pgfkeysgetvalue{/pgfplots/ymax}{\ymax}

        \pgfmathsetmacro{\xArel}{#1}
        \pgfmathsetmacro{\yArel}{#3}
        \pgfmathsetmacro{\xBrel}{#1-#2}
        \pgfmathsetmacro{\yBrel}{\yArel}
        \pgfmathsetmacro{\xCrel}{\xArel}

        \pgfmathsetmacro{\lnxB}{\xmin*(1-(#1-#2))+\xmax*(#1-#2)} 
        \pgfmathsetmacro{\lnxA}{\xmin*(1-#1)+\xmax*#1} 
        \pgfmathsetmacro{\lnyA}{\ymin*(1-#3)+\ymax*#3} 
        \pgfmathsetmacro{\lnyC}{\lnyA+#4*(\lnxA-\lnxB)}
        \pgfmathsetmacro{\yCrel}{\lnyC-\ymin)/(\ymax-\ymin)} 

        \coordinate (A) at (rel axis cs:\xArel,\yArel);
        \coordinate (B) at (rel axis cs:\xBrel,\yBrel);
        \coordinate (C) at (rel axis cs:\xCrel,\yCrel);

        \draw[#5]   (A)-- node[pos=0.5,anchor=north] {1}
                    (B)-- 
                    (C)-- node[pos=0.5,anchor=west] {#4}
                    cycle;
    }
}

    \begin{tikzpicture}
      \begin{axis}[
          width=0.925\linewidth,
          height=0.9\linewidth,
          grid=major,
          grid style={dashed,gray!30},
          xlabel=dofs,
          xmode=log,
          ymode=log,
          ymax=1.1,
          legend style={at={(0.7,0.21)},anchor=north}, 
          legend columns = 2
          x tick label style={rotate=0,anchor=north}, 
          legend columns=-1,
          legend to name=Legendforall
        ]
        
                \addplot+[mark=*, color=blue]
        table[x=dofs_1,y=error_tot_1,col sep=comma] 
        {\datapath/data/ref_sol_test_num_17dyn_shift_Dir_h_0.002967359050445104_tau_0.003125_all_pol_deg_joint_conv_relative_time.txt};
        \addlegendentry{\(k=1\)}

        
        \addplot+[mark=*, color=red]
        table[x=dofs_2,y=error_tot_2,col sep=comma] 
        {\datapath/data/ref_sol_test_num_17dyn_shift_Dir_h_0.002967359050445104_tau_0.003125_all_pol_deg_joint_conv_relative_time.txt};
        \addlegendentry{\(k=2\)}

        
        \addplot+[mark=*, color=cyan,  mark options={draw=cyan,fill=cyan}]
        table[x=dofs_3,y=error_tot_3,col sep=comma] 
        {\datapath/data/ref_sol_test_num_17dyn_shift_Dir_h_0.002967359050445104_tau_0.003125_all_pol_deg_joint_conv_relative_time.txt};
        \addlegendentry{\(k=3\)}





      \end{axis}
    \end{tikzpicture}
%
	\endgroup%

    \label{fig:exp_Dir_joint-dofs}
  \end{subfigure}
  \par\bigskip
  \begin{subfigure}{0.48\textwidth}
    \centering
	\begingroup%
	\StrCount{num_exp/peanut_bump_Dir_100/peanut_bump_Dir_100_tau}{/}[\matches]%
	\StrBefore[\matches]{num_exp/peanut_bump_Dir_100/peanut_bump_Dir_100_tau}{/}[\datapath]%
		\providecommand{\logLogSlopeTriangle}[5]
{
	
	\pgfplotsextra
	{
		\pgfkeysgetvalue{/pgfplots/xmin}{\xmin}
		\pgfkeysgetvalue{/pgfplots/xmax}{\xmax}
		\pgfkeysgetvalue{/pgfplots/ymin}{\ymin}
		\pgfkeysgetvalue{/pgfplots/ymax}{\ymax}
		
		\pgfmathsetmacro{\xArel}{#1}
		\pgfmathsetmacro{\yArel}{#3}
		\pgfmathsetmacro{\xBrel}{#1-#2}
		\pgfmathsetmacro{\yBrel}{\yArel}
		\pgfmathsetmacro{\xCrel}{\xArel}
		
		\pgfmathsetmacro{\lnxB}{\xmin*(1-(#1-#2))+\xmax*(#1-#2)} 
		\pgfmathsetmacro{\lnxA}{\xmin*(1-#1)+\xmax*#1} 
		\pgfmathsetmacro{\lnyA}{\ymin*(1-#3)+\ymax*#3} 
		\pgfmathsetmacro{\lnyC}{\lnyA+#4*(\lnxA-\lnxB)}
		\pgfmathsetmacro{\yCrel}{\lnyC-\ymin)/(\ymax-\ymin)} 
		
		\coordinate (A) at (rel axis cs:\xArel,\yArel);
		\coordinate (B) at (rel axis cs:\xBrel,\yBrel);
		\coordinate (C) at (rel axis cs:\xCrel,\yCrel);
		
	
	\draw[#5]   (A)-- node[midway, auto, font=\scriptsize] {1}
	(B)-- 
	(C)-- node[midway, auto, font=\scriptsize] {#4}
	cycle;
}
}

\providecommand{\logLogSlopeTriangleFlip}[5]
{

\pgfplotsextra
{
	\pgfkeysgetvalue{/pgfplots/xmin}{\xmin}
	\pgfkeysgetvalue{/pgfplots/xmax}{\xmax}
	\pgfkeysgetvalue{/pgfplots/ymin}{\ymin}
	\pgfkeysgetvalue{/pgfplots/ymax}{\ymax}
	
	\pgfmathsetmacro{\xArel}{#1}
	\pgfmathsetmacro{\yArel}{#3}
	\pgfmathsetmacro{\xBrel}{#1-#2}
	\pgfmathsetmacro{\yBrel}{\yArel}
	\pgfmathsetmacro{\xCrel}{\xArel}
	
	\pgfmathsetmacro{\lnxB}{\xmin*(1-(#1-#2))+\xmax*(#1-#2)} 
	\pgfmathsetmacro{\lnxA}{\xmin*(1-#1)+\xmax*#1} 
	\pgfmathsetmacro{\lnyA}{\ymin*(1-#3)+\ymax*#3} 
	\pgfmathsetmacro{\lnyC}{\lnyA+#4*(\lnxA-\lnxB)}
	\pgfmathsetmacro{\yCrel}{\lnyC-\ymin)/(\ymax-\ymin)} 
	
	\coordinate (A) at (rel axis cs:\xBrel,\yCrel); 
	\coordinate (B) at (rel axis cs:\xBrel,\yBrel); 
	\coordinate (C) at (rel axis cs:\xCrel,\yCrel); 
	
	\draw[#5]
	(A) -- node[midway, auto, swap, font=\scriptsize] {$1$} (B)
	-- (C)
	-- node[midway, auto , swap,font=\scriptsize] {#4}
	cycle;
}
}

    \begin{tikzpicture}
      \begin{axis}[
          width=0.925\linewidth,
          height=0.9\linewidth,
          grid=major,
          grid style={dashed,gray!30},
          xlabel=$h$,
          ylabel={$E(T = 0.8)$},
          ylabel style={yshift= 0.5em},    
          xmode=log,
          ymode=log,
          ymax=1.1,
          legend style={at={(0.7,0.21)},anchor=north}, 
          legend columns = 2
          x tick label style={rotate=0,anchor=north}, 
          legend columns=-1,
          legend to name=Legendforall
        ]
        \addplot+[mark=*, color=blue]
        table[x=has_1,y=error_tot_1,col sep=comma] 
        {\datapath/data/ref_sol_test_num_17dyn_shift_Dir_h_0.002967359050445104_tau_0.003125_all_pol_deg_h_conv_relative_time.txt};
        \addlegendentry{\(k=1\)}

        
       \addplot+[mark=*, color=red]
        table[x=has_2,y=error_tot_2,col sep=comma] 
        {\datapath/data/ref_sol_test_num_17dyn_shift_Dir_h_0.002967359050445104_tau_0.003125_all_pol_deg_h_conv_relative_time.txt};
        \addlegendentry{\(k=2\)}

        
        \addplot+[mark=*, color=cyan,  mark options={draw=cyan,fill=cyan}]
		table[x=has_3,y=error_tot_3,col sep=comma] 
		{\datapath/data/ref_sol_test_num_17dyn_shift_Dir_h_0.002967359050445104_tau_0.003125_all_pol_deg_h_conv_relative_time.txt};
		\addlegendentry{\(k=3\)}

       \addplot+[mark=None, dashed, color=blue]
        table[x=has_1,y expr=\thisrow{line_over_curve_has_1}*0.9,col sep=comma] 
        {\datapath/data/ref_sol_test_num_17dyn_shift_Dir_h_0.002967359050445104_tau_0.003125_all_pol_deg_h_conv_relative_time.txt};
       	\logLogSlopeTriangleFlip{0.28}{0.13}{0.41}{0.67}{blue, thick};

		\addplot+[mark=None, dashed, color=red]
		table[x=has_2,y expr=\thisrow{line_over_curve_has_2}*0.9,col sep=comma] 
		{\datapath/data/ref_sol_test_num_17dyn_shift_Dir_h_0.002967359050445104_tau_0.003125_all_pol_deg_h_conv_relative_time.txt};
		\logLogSlopeTriangle{0.28}{0.13}{0.21}{0.8}{red, thick};
				
 	   \addplot+[mark=None, dashed, color=cyan]
		table[x=has_3,y expr=\thisrow{line_over_curve_has_3}*0.9,col sep=comma] 
		{\datapath/data/ref_sol_test_num_17dyn_shift_Dir_h_0.002967359050445104_tau_0.003125_all_pol_deg_h_conv_relative_time.txt};
		\logLogSlopeTriangle{0.38}{0.13}{0.1}{0.83}{cyan, thick};
		
  \end{axis}
    \end{tikzpicture}
%
	\endgroup%

    \label{fig:exp_Dir_onlyh-tau}
  \end{subfigure}%
  \hfill
  \begin{subfigure}{0.48\textwidth}
    \centering
	\begingroup%
	\StrCount{num_exp/peanut_bump_Dir_100/peanut_bump_Dir_100_dofs}{/}[\matches]%
	\StrBefore[\matches]{num_exp/peanut_bump_Dir_100/peanut_bump_Dir_100_dofs}{/}[\datapath]%
\providecommand{\logLogSlopeTriangle}[5]
{

    \pgfplotsextra
    {
        \pgfkeysgetvalue{/pgfplots/xmin}{\xmin}
        \pgfkeysgetvalue{/pgfplots/xmax}{\xmax}
        \pgfkeysgetvalue{/pgfplots/ymin}{\ymin}
        \pgfkeysgetvalue{/pgfplots/ymax}{\ymax}

        \pgfmathsetmacro{\xArel}{#1}
        \pgfmathsetmacro{\yArel}{#3}
        \pgfmathsetmacro{\xBrel}{#1-#2}
        \pgfmathsetmacro{\yBrel}{\yArel}
        \pgfmathsetmacro{\xCrel}{\xArel}

        \pgfmathsetmacro{\lnxB}{\xmin*(1-(#1-#2))+\xmax*(#1-#2)} 
        \pgfmathsetmacro{\lnxA}{\xmin*(1-#1)+\xmax*#1} 
        \pgfmathsetmacro{\lnyA}{\ymin*(1-#3)+\ymax*#3} 
        \pgfmathsetmacro{\lnyC}{\lnyA+#4*(\lnxA-\lnxB)}
        \pgfmathsetmacro{\yCrel}{\lnyC-\ymin)/(\ymax-\ymin)} 

        \coordinate (A) at (rel axis cs:\xArel,\yArel);
        \coordinate (B) at (rel axis cs:\xBrel,\yBrel);
        \coordinate (C) at (rel axis cs:\xCrel,\yCrel);

        \draw[#5]   (A)-- node[pos=0.5,anchor=north] {1}
                    (B)-- 
                    (C)-- node[pos=0.5,anchor=west] {#4}
                    cycle;
    }
}

    \begin{tikzpicture}
      \begin{axis}[
          width=0.925\linewidth,
          height=0.9\linewidth,      
          grid=major,
          grid style={dashed,gray!30},
          xlabel=dofs,
          xmode=log,
          ymode=log,
          ymax=1.1,
          legend style={at={(0.7,0.21)},anchor=north}, 
          legend columns = 2
          x tick label style={rotate=0,anchor=north}, 
          legend columns=-1,
          legend to name=Legendforall
        ]
        \addplot+[mark=*, color=blue]
         table[x=dofs_1,y=error_tot_1,col sep=comma] 
        {\datapath/data/ref_sol_test_num_17dyn_shift_Dir_h_0.002967359050445104_tau_0.003125_all_pol_deg_h_conv_relative_time.txt};
        \addlegendentry{\(k=1\)}

        
        \addplot+[mark=*, color=red]
        table[x=dofs_2,y=error_tot_2,col sep=comma] 
        {\datapath/data/ref_sol_test_num_17dyn_shift_Dir_h_0.002967359050445104_tau_0.003125_all_pol_deg_h_conv_relative_time.txt};
        \addlegendentry{\(k=2\)}

        
        \addplot+[mark=*, color=cyan,  mark options={draw=cyan,fill=cyan}]
		table[x=dofs_3,y=error_tot_3,col sep=comma] 
		{\datapath/data/ref_sol_test_num_17dyn_shift_Dir_h_0.002967359050445104_tau_0.003125_all_pol_deg_h_conv_relative_time.txt};
		\addlegendentry{\(k=3\)}


		

      \end{axis}
    \end{tikzpicture}
%
	\endgroup%

    \label{fig:exp_Dir_onlyh-dofs}
  \end{subfigure}
  \par\bigskip
  \ref{Legendforall}
  \caption{Error plots for the initial data \eqref{eq:init_bump}. Top row: Equilibrated error $E(T=0.8)$ via \eqref{eq:relation_h_tau} for $k=1,2,3$. 
 In the left plot, the dashed lines indicate order~$\tau$.
  Bottom row: Convergence in $h$ for fixed $\tau$. The dashed lines indicate the expected order of convergence $h^{\frac{\ell_k}{\ell_k+1}}$ from Theorem~\ref{thm:convergence_db}.}
  \label{fig:exp_Dir_2x2grid}
\end{figure}

\appendix
\begin{center}
	\textbf{\large{Appendix}}
\end{center}
 \section{Proof of basic discrete stability properties}\label{sec:proof_of_etL-etLh}
\begin{proof}[Proof of Lemma~\ref{lem:basic_results}(a)]

We first recall the existence of a trace-compatible Scott--Zhang
quasi-interpolation operator \(J_h:H^1(\Omega_h;\Gamma_h)\rightarrow X_h^k\) satisfying
\begin{align}\label{eq:coupled-SZ}
	\|J_hv\|_{H^1(\Omega_h;\Gamma_h)}
	+h^{-1}\|v-J_hv\|_{L^2(\Omega_h;\Gamma_h)}
	\le C\|v\|_{H^1(\Omega_h;\Gamma_h)}.
\end{align}
We use the boundary-preserving Scott--Zhang construction from
\cite[Section~2]{scott1990finite}. More precisely, for an interior
degree of freedom, the averaging functional is chosen on a bulk
simplex, whereas for a boundary degree of freedom it is chosen, after
pullback, on a boundary face of the corresponding reference element.
The same averaging functional is used for the shared bulk--surface
boundary degree of freedom. Since the trace of the bulk nodal basis
coincides with the nodal basis of the induced surface finite element
space, it follows that
\[
\gamma_hJ_hv=J_{h,\Gamma}(\gamma_hv),
\]
where \(J_{h,\Gamma}\) is the Scott--Zhang operator on \(\Gamma_h\).
The standard local stability and approximation estimates then yield
\eqref{eq:coupled-SZ}; see \cite{scott1990finite}. The uniformity of
the constants on the curved isoparametric meshes follows from shape
regularity and the standard pullback estimates for the element and
boundary-face maps; see, e.g.,
\cite[Appendix~A]{chaumont2025pollution}.

Since \(Q_h\) is the \(m_h\)-orthogonal projection, it is contractive in \(L^2(\Omega_h;\Gamma_h)\). Moreover, its best-approximation property and \eqref{eq:coupled-SZ} imply
\begin{align}\label{eq:combined-Qh-error}
	\|v-Q_hv\|_{L^2(\Omega_h;\Gamma_h)}
	\le
	\|v-J_hv\|_{L^2(\Omega_h;\Gamma_h)}
	\le
	Ch\|v\|_{H^1(\Omega_h;\Gamma_h)}.
\end{align}
Furthermore, since \(J_hv\in X_h^k\), we have \(Q_hJ_hv=J_hv\).
Therefore, the inverse estimate, the \(L^2\)-contractivity of \(Q_h\), and \eqref{eq:coupled-SZ} yield
\begin{align*}
	\|Q_hv\|_{H^1(\Omega_h;\Gamma_h)}
	&\le
	\|J_hv\|_{H^1(\Omega_h;\Gamma_h)}
	+
	\|Q_h(v-J_hv)\|_{H^1(\Omega_h;\Gamma_h)}
	\\
	&\lesssim
	\|J_hv\|_{H^1(\Omega_h;\Gamma_h)}
	+
	h^{-1}
	\|Q_h(v-J_hv)\|_{L^2(\Omega_h;\Gamma_h)}
	\\
	&\le
	\|J_hv\|_{H^1(\Omega_h;\Gamma_h)}
	+
	h^{-1}
	\|v-J_hv\|_{L^2(\Omega_h;\Gamma_h)}
	\\
	&\lesssim
	\|v\|_{H^1(\Omega_h;\Gamma_h)}.
\end{align*}
Together with \eqref{eq:combined-Qh-error}, this proves part~(a).
\end{proof}

\begin{proof}[Proof of Lemma~\ref{lem:basic_results}(b)]
	For \(w_h\in X_h^k\), define the discrete negative norm by
	\[
	\|w_h\|_{-1,h}
	:=
	\sup_{\substack{v_h\in X_h^k\\ \|v_h\|_{H^1(\Omega_h;\Gamma_h)}=1}}
	m_h(w_h,v_h).
	\]
	We first prove that
	\begin{equation}\label{eq:Ah-half-discrete-Hminus1_db}
		\|\widetilde{A}_h^{-1/2}w_h\|_{L^2(\Omega_h;\Gamma_h)}
		\sim
		\|w_h\|_{-1,h}.
	\end{equation}
	Let \(z_h:=\widetilde{A}_h^{-1}w_h\in X_h^k\). Then, for every \(v_h\in X_h^k\), we have \(m_h(w_h,v_h)=m_h(\widetilde{A}_h z_h,v_h)=\tilde a_h(z_h,v_h)\). Hence, by the continuity of \(\tilde a_h\),
	\[
	\|w_h\|_{-1,h}
	=
	\sup_{\substack{v_h\in X_h^k\\ \|v_h\|_{H^1(\Omega_h;\Gamma_h)}=1}}
	\tilde a_h(z_h,v_h)
	\lesssim \|z_h\|_{H^1(\Omega_h;\Gamma_h)}.
	\]
	Conversely, choosing \(v_h=z_h/\|z_h\|_{H^1(\Omega_h;\Gamma_h)}\) and using the coercivity of \(\tilde a_h\), we obtain
	\[
	\|w_h\|_{-1,h}
	\gtrsim
	\frac{\tilde a_h(z_h,z_h)}{\|z_h\|_{H^1(\Omega_h;\Gamma_h)}}
	\gtrsim
	\|z_h\|_{H^1(\Omega_h;\Gamma_h)}.
	\]
	Therefore, by \eqref{eq:def_tilde_ah}:
	\begin{align}\label{eq:relation_wh}
		\|w_h\|_{-1,h}
		\sim
		\|z_h\|_{H^1(\Omega_h;\Gamma_h)}
		\sim
		\sqrt{\tilde a_h(z_h,z_h)}.
	\end{align}
	
	Moreover, since \(\widetilde{A}_h\) is symmetric positive definite on \(X_h^k\),
	\[
	\|\widetilde{A}_h^{-1/2}w_h\|_{L^2(\Omega_h;\Gamma_h)}^2
	=
	m_h(\widetilde{A}_h^{-1/2}w_h,\widetilde{A}_h^{-1/2}w_h)
	=
	m_h(\widetilde{A}_h^{-1}w_h,w_h)
	=
	m_h(z_h,\widetilde{A}_h z_h)
	=
	\tilde a_h(z_h,z_h).
	\]
	Combining this with \eqref{eq:relation_wh}, we obtain \eqref{eq:Ah-half-discrete-Hminus1_db}.
	
	Next, since \(X_h^k\subset H^1(\Omega_h;\Gamma_h)\), it follows immediately that
	\begin{align}\label{eq:relation_wh_1}
		\|w_h\|_{-1,h}
		=
		\sup_{\substack{v_h\in X_h^k\\ \|v_h\|_{H^1(\Omega_h;\Gamma_h)}=1}}
		m_h(w_h,v_h)
		\le
		\sup_{\substack{v\in H^1(\Omega_h;\Gamma_h)\\ \|v\|_{H^1(\Omega_h;\Gamma_h)}=1}}
		m_h(w_h,v)
		=
		\|w_h\|_{H^{-1}(\Omega_h;\Gamma_h)}.
	\end{align}
	
	To prove the reverse inequality, let \(v\in H^1(\Omega_h;\Gamma_h)\) satisfy \(\|v\|_{H^1(\Omega_h;\Gamma_h)}=1\). Since \(w_h\in X_h^k\) and \(Q_h\) is the \(L^2\)-orthogonal projection defined by \eqref{eq:def_Qh}, the \(H^1\)-stability of \(Q_h\) on quasi-uniform meshes implies that
	\[
	m_h(w_h,v)=m_h(w_h,Q_h v)
	\le
	\|w_h\|_{-1,h}\,\|Q_hv\|_{H^1(\Omega_h;\Gamma_h)}
	\lesssim \|w_h\|_{-1,h}\,\|v\|_{H^1(\Omega_h;\Gamma_h)}.
	\]
	Taking the supremum over all \(v\in H^1(\Omega_h;\Gamma_h)\) with \(\|v\|_{H^1(\Omega_h;\Gamma_h)}=1\), we conclude that
	\(
	\|w_h\|_{H^{-1}(\Omega_h;\Gamma_h)}
	\lesssim \|w_h\|_{-1,h}.
	\)
	Together with \eqref{eq:relation_wh_1}, this yields
	\begin{align}\label{eq:Hminus1_Hminus1h_db}
		\|w_h\|_{H^{-1}(\Omega_h;\Gamma_h)}\sim \|w_h\|_{-1,h}.
	\end{align}
	Combining this with \eqref{eq:Ah-half-discrete-Hminus1_db}, we complete the proof.
\end{proof}

\begin{proof}[Proof of Lemma~\ref{lem:basic_results}(c)]
	Since \(X_h^k\times X_h^k\) is finite-dimensional,
	\(L_h\) is a bounded linear operator and therefore generates a
	\(C_0\)-group.
	Let \(\widetilde{U}_h(t)= \big(\tilde{u}_h(t),\tilde{v}_h(t)\big)^{\top}=e^{tL_h}U_h(0).\)
	Then
	\begin{align}\label{eq:discrete_linear_wave}
		\partial_t\tilde{u}_h=\tilde{v}_h,
		\qquad
		\partial_t\tilde{v}_h=-A_h\tilde{u}_h.		
	\end{align}
	
	We first prove the estimate \eqref{assump_etLh} for \(\theta=1\). Define
	\begin{align*}
		\mathcal{E}_{1}(t):=
		\widetilde a_h(\tilde u_h(t),\tilde u_h(t))
		+
		m_h(\tilde v_h(t),\tilde v_h(t)).
	\end{align*}
	Using \eqref{eq:discrete_linear_wave}, the definition of \(A_h\),
	and the symmetry of \(a_h\) and \(m_h\), we obtain
	\begin{align*}
		\frac{\d}{\d t}\mathcal{E}_{1}(t)
		&=
		2\widetilde a_h(\tilde u_h,\tilde v_h)
		+
		2m_h(\tilde v_h,-A_h\tilde u_h)
		\\
		&=
		2a_h(\tilde u_h,\tilde v_h)
		+
		2c_Gm_h(\tilde u_h,\tilde v_h)
		-
		2m_h(A_h\tilde u_h,\tilde v_h)=
		2c_Gm_h(\tilde u_h,\tilde v_h).
	\end{align*}
	Since \(\widetilde a_h(\tilde u_h,\tilde u_h)
	\ge c_Gm_h(\tilde u_h,\tilde u_h),\)
	the Cauchy--Schwarz and Young inequalities imply
	\[
	\frac{\d}{\d t}\mathcal{E}_{1}(t)
	\le C \mathcal{E}_{1}(t),
	\]
	where \(C\) is independent of \(h\). Therefore, Gronwall's inequality gives
	\begin{align}
		\mathcal{E}_{1}(t)\le e^{Ct}\mathcal{E}_{1}(0).
	\end{align}
	By the uniform equivalence of \(\widetilde a_h(\cdot,\cdot)\)
	with the \(H^1(\Omega_h;\Gamma_h)\)-norm, this yields
	\begin{align}\label{eq:etLh_1}
		\|e^{tL_h}U_h(0)\|_
		{H^1(\Omega_h;\Gamma_h)\times L^2(\Omega_h;\Gamma_h)}
		\le
		e^{C_Lt}
		\|U_h(0)\|_
		{H^1(\Omega_h;\Gamma_h)\times L^2(\Omega_h;\Gamma_h)}.
	\end{align}
	
	We next prove the estimate for \(\theta=0\). Since \(\widetilde A_h\) is invertible, define the weak energy
	\begin{align*}
		\mathcal{E}_{0}(t)
		:=
		m_h(\tilde u_h(t),\tilde u_h(t))
		+
		m_h(\widetilde A_h^{-1}\tilde v_h(t),\tilde v_h(t)).
	\end{align*}
	Equivalently,
	\begin{align*}
		\mathcal{E}_{0}(t)
		=
		\|\tilde u_h(t)\|_{L^2(\Omega_h;\Gamma_h)}^2
		+
		\|\widetilde A_h^{-1/2}\tilde v_h(t)\|_
		{L^2(\Omega_h;\Gamma_h)}^2.
	\end{align*}
	Using \(A_h=\widetilde A_h-c_G\), we calculate
	\begin{align*}
		\frac{\d}{\d t}\mathcal{E}_{0}(t)
		&=
		2m_h(\tilde u_h,\tilde v_h)
		-
		2m_h(\widetilde A_h^{-1}\tilde v_h,A_h\tilde u_h)
		\\
		&=
		2m_h(\tilde u_h,\tilde v_h)
		-
		2m_h(\widetilde A_h^{-1}\tilde v_h,
		(\widetilde A_h-c_G)\tilde u_h)=
		2c_Gm_h(\widetilde A_h^{-1}\tilde v_h,\tilde u_h).
	\end{align*}
	The lower spectral bound of \(\widetilde A_h\) implies
	\begin{align*}
		\|\widetilde A_h^{-1}\tilde v_h\|_{L^2(\Omega_h;\Gamma_h)}
		\le
		c_G^{-1/2}
		\|\widetilde A_h^{-1/2}\tilde v_h\|_
		{L^2(\Omega_h;\Gamma_h)}.
	\end{align*}
	Consequently,
	\begin{align*}
		\left|
		\frac{\d}{\d t}\mathcal{E}_{0}(t)
		\right|
		&\le
		2c_G^{1/2}
		\|\tilde u_h(t)\|_{L^2(\Omega_h;\Gamma_h)}
		\|\widetilde A_h^{-1/2}\tilde v_h(t)\|_
		{L^2(\Omega_h;\Gamma_h)}\le C \mathcal{E}_{0}(t),
	\end{align*}
	with \(C\) independent of \(h\). Another application of Gronwall's inequality gives
	\[
	\mathcal{E}_{0}(t)\le e^{Ct}\mathcal{E}_{0}(0).
	\]
	By part~(b),
	\[
	\mathcal{E}_{0}(t)^{1/2}
	\sim
	\|\widetilde U_h(t)\|_
	{L^2(\Omega_h;\Gamma_h)\times
		H^{-1}(\Omega_h;\Gamma_h)},
	\]
	uniformly in \(h\). Hence,
	\begin{align}\label{eq:etLh_0}
		\|e^{tL_h}U_h(0)\|_
		{L^2(\Omega_h;\Gamma_h)\times
			H^{-1}(\Omega_h;\Gamma_h)} 
		\le
		e^{C_Lt}
		\|U_h(0)\|_
		{L^2(\Omega_h;\Gamma_h)\times
			H^{-1}(\Omega_h;\Gamma_h)}.
	\end{align}
	Combining \eqref{eq:etLh_1} and \eqref{eq:etLh_0} proves the assertion.
\end{proof}

\begin{proof}[Proof of Lemma~\ref{lem:basic_results}(d)]
	For \(\tau>0\), the representation
	\[
	\varphi_1(\tau L_h)
	=
	\frac1\tau\int_0^\tau e^{\sigma L_h}\,\d\sigma
	\]
	together with part~(c) yields \eqref{eq:phi1_stability_db}; the case
	\(\tau=0\) follows from \(\varphi_1(0)=\mathrm{Id}\).
\end{proof}

\section{Geometric transfer and projection estimates}
In this appendix, we collect the estimates for the lift and adjoint-lift
operators that are used in the proof of the main error bounds. In particular,
we derive approximation estimates in the \(L^2(\Omega;\Gamma)\)-norm and in the
weak \(H^{-1}(\Omega;\Gamma)\)-norm. As a first step, we record the geometric
consistency estimates for the lifted bilinear forms. These quantify the defect
between the discrete bilinear forms on the computational domain
\((\Omega_h;\Gamma_h)\) and their continuous counterparts on the exact domain
\((\Omega,\Gamma)\) after transfer by the lift operator, and they will be used
repeatedly throughout this appendix.

\begin{lemma}[Geometric consistency of the lifted bilinear forms]
	\label{lem:geom_consistency}
	Let \(\mathcal L_h\) be the lift operator associated with the isoparametric
	discretization of degree \(k\). Then, for all
	\(\eta_h,\xi_h\in L^2(\Omega_h;\Gamma_h)\),
	\begin{align}
		\bigl|
		m_h(\eta_h,\xi_h)-m(\mathcal L_h\eta_h,\mathcal L_h\xi_h)
		\bigr|
		&\lesssim
		h^k
		\|\mathcal L_h\eta_h\|_{L^2(B_h^{\mathcal L})}
		\|\mathcal L_h\xi_h\|_{L^2(B_h^{\mathcal L})}
		\notag\\
		&\quad
		+
		h^{k+1}
		\|\mathcal L_h\eta_h\|_{L^2(\Omega;\Gamma)}
		\|\mathcal L_h\xi_h\|_{L^2(\Omega;\Gamma)}.
		\label{eq:geom_consistency_m}
	\end{align}
	Moreover, for all \(\eta_h,\xi_h\in H^1(\Omega_h;\Gamma_h)\),
	\begin{align}
		\bigl|
		\tilde a_h(\eta_h,\xi_h)-\tilde a(\mathcal L_h\eta_h,\mathcal L_h\xi_h)
		\bigr|
		&\lesssim
		h^k
		\|\mathcal L_h\eta_h\|_{H^1(B_h^{\mathcal L})}
		\|\mathcal L_h\xi_h\|_{H^1(B_h^{\mathcal L})}
		\notag\\
		&\quad
		+
		h^{k+1}
		\|\mathcal L_h\eta_h\|_{H^1(\Omega;\Gamma)}
		\|\mathcal L_h\xi_h\|_{H^1(\Omega;\Gamma)}.
		\label{eq:geom_consistency_a}
	\end{align}
	Here, \(B_h^{\L}\) denotes the layer of lifted elements with a boundary face.
\end{lemma}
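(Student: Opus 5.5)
The plan is to write both defects as integrals over the exact geometry by a change of variables through $G$, and then exploit that $G$ deviates from the identity only in the boundary layer. Write $\eta=\mathcal L_h\eta_h$ and $\xi=\mathcal L_h\xi_h$. For the bulk part we have
\[
\int_{\Omega_h}\eta_h\xi_h\,\d x=\int_\Omega \eta\,\xi\,\bigl|\det DG^{-1}\bigr|\,\d x ,
\]
and for the surface part
\[
\int_{\Gamma_h}\eta_h\xi_h\,\d s=\int_\Gamma \eta\,\xi\,\delta_h^{-1}\,\d s ,
\]
where $\delta_h$ denotes the surface area element ratio induced by $G|_{\Gamma_h}$. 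The defect in $m$ is therefore $\int_\Omega \eta\xi(|\det DG^{-1}|-1)\,\d x+\int_\Gamma \eta\xi(\delta_h^{-1}-1)\,\d s$. Analogously, the bulk gradient term equals $\int_\Omega \nabla\eta\cdot(\mathcal B_h-I)\nabla\xi\,\d x$ with $\mathcal B_h=DG^{-1}(DG^{-1})^{\top}|\det DG^{-1}|$ composed with $G^{-1}$, and the surface gradient term becomes $\int_\Gamma \nabla_\Gamma\eta\cdot(\mathcal A_h-P)\nabla_\Gamma\xi\,\d s$, with $P$ the tangential projection and $\mathcal A_h$ the usual tangential metric correction, exactly as in \cite{ER2013}.

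The geometric input consists of the estimates for $G$ taken from \cite[Section~4]{ER2013}, together with their surface analogues (cf. \cite{ER2021}). The map $G$ equals the identity outside $B_h^{\mathcal L}$; inside, it is elementwise $C^{k+1}$ and satisfies $\|DG-I\|_{L^\infty(B_h^{\mathcal L})}\lesssim h^k$, so that $\||\det DG^{-1}|-1\|_{L^\infty}+\|\mathcal B_h-I\|_{L^\infty}\lesssim h^k$, supported in the layer. On the surface the defect is of higher order: $\|1-\delta_h\|_{L^\infty(\Gamma)}+\|P-\mathcal A_h\|_{L^\infty(\Gamma)}\lesssim h^{k+1}$, which is the standard isoparametric surface estimate. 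Inserting these bounds, the bulk contributions are bounded by $h^k\|\eta\|_{L^2(B_h^{\mathcal L})}\|\xi\|_{L^2(B_h^{\mathcal L})}$ (respectively with gradients, giving the $H^1(B_h^{\mathcal L})$ terms). The surface contributions are bounded by $h^{k+1}\|\eta\|_{L^2(\Gamma)}\|\xi\|_{L^2(\Gamma)}$, and with tangential gradients by $h^{k+1}$ times $H^1(\Gamma)$ norms, both of which are controlled by the $h^{k+1}$ full-norm terms. For $\tilde a=a+c_Gm$, the zeroth-order part is absorbed using \eqref{eq:geom_consistency_m}, since $L^2$ norms are dominated by $H^1$ norms.

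The only real obstacle is justifying the sharp surface estimate $h^{k+1}$ rather than $h^k$. I would deduce it from the property \eqref{eq:def_G}, namely that $G(p)-p$ is normal to $\Gamma$ at $G(p)$, via the classical argument of Dziuk and Demlow. One writes $\Gamma_h$ locally as a normal graph $p=x+d(p)\vec n(x)$ with $\|d\|_{L^\infty}\lesssim h^{k+1}$ and $\|\nabla_\Gamma d\|\lesssim h^k$. The area ratio is then $\delta_h=(1-d\kappa)\,\vec n\cdot\vec n_h$ up to curvature factors, and $1-\vec n\cdot\vec n_h\lesssim|\vec n-\vec n_h|^2\lesssim h^{2k}\le h^{k+1}$ for $k\ge1$. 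The same quadratic cancellation handles $P-\mathcal A_h$. All the required facts are available from \cite{ER2013}, and I would cite them rather than rederive them.
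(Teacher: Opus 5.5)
Your argument is correct and follows essentially the paper's route. The paper simply cites \cite[Lemma~10.13]{ER2021}, which combines the bulk and surface geometric estimates of \cite[Lemmas~8.24 and 9.24]{ER2021}, and your change of variables through $G$ unpacks exactly what lies behind that citation: an $O(h^k)$ bulk defect supported in $B_h^{\mathcal L}$ together with $O(h^{k+1})$ surface defects $1-\delta_h$ and $P-\mathcal A_h$. One harmless slip: pulling back $\nabla\eta_h=DG^{\top}(\nabla\eta\circ G)$ gives the bulk coefficient $\mathcal B_h=\bigl(DG\,DG^{\top}|\det DG|^{-1}\bigr)\circ G^{-1}$ rather than the expression built from $(DG)^{-1}(DG)^{-\top}$, but since $\|DG-I\|_{L^\infty}\lesssim h^k$ this does not affect the bound.
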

\begin{proof}
	The stated geometric consistency estimates follow from
	\cite[Lemma~10.13]{ER2021} for bulk--surface problems. More precisely, they are
	obtained by combining the geometric approximation estimates for the bulk domain
	and the surface given in \cite[Lemmas~8.24 and 9.24]{ER2021}; see also
	\cite[Lemma~5.6]{kovacs2018high}.
\end{proof}
To estimate the terms over the boundary layer \(B_h^{\mathcal L}\) in
\eqref{eq:geom_consistency_m} and \eqref{eq:geom_consistency_a}, we use the
following important boundary-layer estimate.

\begin{lemma}[\cite{ER2013}, Lemma~6.3]\label{lem:boundary_layer_estimate}
	For all \(\eta\in H^1(\Omega)\), there holds
	\begin{align}\label{eq:boundary_layer_estimate}
		\|\eta\|_{L^2(B_h^{\mathcal L})}\lesssim h^{1/2}\|\eta\|_{H^1(\Omega)}.
	\end{align}
\end{lemma}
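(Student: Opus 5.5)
The estimate \eqref{eq:boundary_layer_estimate} is a trace-type inequality on a thin tubular layer near \(\Gamma\). The plan is to reduce it to a one-dimensional fundamental-theorem-of-calculus argument in normal coordinates. Since \(\Gamma\) is smooth, there is \(\delta_0>0\) such that the tubular neighbourhood \(U_{\delta_0}=\{x\in\Omega:\operatorname{dist}(x,\Gamma)<\delta_0\}\) is parametrized by the diffeomorphism
\[
\Phi:\Gamma\times(0,\delta_0)\to U_{\delta_0},\qquad \Phi(y,s)=y-s\,\vec n(y).
\]
Its Jacobian is bounded above and below uniformly. The first step is geometric: the layer \(B_h^{\mathcal L}\) of lifted elements with a boundary face is contained in \(U_{ch}\) for some \(c\) independent of \(h\). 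This follows from quasi-uniformity, since every such element has diameter \(\lesssim h\) and touches \(\Gamma\) after lifting, and from the uniform bi-Lipschitz property of \(G\). We then take \(h\) small enough that \(ch<\delta_0\).

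The second step is the main estimate on the strip \(U_\delta\) with \(\delta=ch\). By density it suffices to consider smooth \(\eta\). For \(y\in\Gamma\) and \(0<s<\delta\), write
\[
\eta(\Phi(y,s))=\eta(\Phi(y,t))-\int_s^t \partial_\sigma\eta(\Phi(y,\sigma))\,\d\sigma
\]
for an auxiliary \(t\in(0,\delta_0)\). Averaging over \(t\in(\delta_0/2,\delta_0)\) and applying Cauchy--Schwarz gives
\[
|\eta(\Phi(y,s))|^2\lesssim \int_0^{\delta_0}\bigl(|\eta|^2+|\nabla\eta|^2\bigr)(\Phi(y,\sigma))\,\d\sigma .
\]
The right-hand side does not depend on \(s\). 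Integrating over \(s\in(0,\delta)\) and \(y\in\Gamma\), and changing variables with the bounded Jacobian, we obtain
\[
\|\eta\|_{L^2(U_\delta)}^2\lesssim \delta\,\|\eta\|_{H^1(U_{\delta_0})}^2\lesssim h\,\|\eta\|_{H^1(\Omega)}^2 .
\]
Taking square roots yields the claimed \(h^{1/2}\) bound.

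An equivalent route is the standard trace-type estimate \(\|\eta\|_{L^2(\Gamma_s)}\lesssim\|\eta\|_{H^1(\Omega)}\), uniform for \(s\in[0,\delta_0]\), where \(\Gamma_s\) denotes the parallel surface at distance \(s\). Combining it with the coarea formula gives the same conclusion.

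The only delicate point is the first step. The containment \(B_h^{\mathcal L}\subset U_{ch}\) and the uniformity of the constants rely on the properties of \(G\) from \cite{ER2013}, namely that \(G\) is bi-Lipschitz and that \(|G(x)-x|\lesssim h^{k+1}\). These ensure that lifted boundary elements still have diameter \(O(h)\) and lie within \(O(h)\) of \(\Gamma\). Once this is in place, the analytic part is routine.
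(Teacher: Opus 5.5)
Your proof is correct. Note that the paper does not prove this bound itself; it quotes \cite[Lemma~6.3]{ER2013}. The standard argument behind that citation is essentially the ``equivalent route'' you mention at the end. There, $B_h^{\mathcal L}$ is placed inside a strip $\{x\in\Omega:\mathrm{dist}(x,\Gamma)\le ch\}$. One then uses a trace inequality $\|\eta\|_{L^2(\Gamma_s)}\lesssim\|\eta\|_{H^1(\Omega)}$ on the parallel surfaces $\Gamma_s$, with a constant uniform in $s\in[0,\delta_0]$. Finally one integrates in $s$ by the coarea formula, using $|\nabla\,\mathrm{dist}|=1$.

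Your main argument instead applies the fundamental theorem of calculus directly along the normal rays of the tubular coordinates $\Phi(y,s)$. The two are the same computation in different packaging. If you integrate your pointwise bound over $y\in\Gamma$ for fixed $s$, you get exactly the uniform parallel-surface trace inequality, with the Jacobian of $\Phi$ in place of the area element of $\Gamma_s$. Your version is therefore self-contained and shows explicitly how the constant depends on $\delta_0$ and the curvature of $\Gamma$. The coarea version is shorter if the uniform trace inequality is taken as known.

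The geometric input is the same in both, and here you can simplify. You do not need $|G(x)-x|\lesssim h^{k+1}$. It is enough that $G$ is uniformly Lipschitz and maps $\Gamma_h$ onto $\Gamma$. Then every lifted element with a boundary face has diameter $O(h)$ and contains a point of $\Gamma$, so it lies in the $O(h)$-strip.
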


We next derive the following projection estimate, which will be a key
ingredient in the proof of the error bound for the lifted adjoint operator
\(\mathcal L_h^{H*}\).

\begin{lemma}
	\label{lem:L2_projection_error} 
	Let \(Q_h : L^2(\Omega_h;\Gamma_h)\to X_h^k\)
	be the \(L^2\)-projection defined by \eqref{eq:def_Qh}.
	Then, for every \(u\in H^{k+1}(\Omega;\Gamma)\) and every integer \(0\le \ell\le k+1\),
	there holds
	\begin{align*}
		\|(1-Q_h)\mathcal L_h^{-1}u\|_{L^2(\Omega_h;\Gamma_h)}
		\lesssim h^\ell \|u\|_{H^\ell(\Omega;\Gamma)}.
	\end{align*}
\end{lemma}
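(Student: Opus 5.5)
Since $Q_h$ is the $m_h$-orthogonal projection onto $X_h^k$, it realizes the best approximation in $L^2(\Omega_h;\Gamma_h)$. The plan is therefore to bound $\|(1-Q_h)\mathcal L_h^{-1}u\|_{L^2(\Omega_h;\Gamma_h)}$ by $\|\mathcal L_h^{-1}u-w_h\|_{L^2(\Omega_h;\Gamma_h)}$ for a well-chosen $w_h\in X_h^k$. We then transport this quantity to the exact geometry by the norm equivalences \eqref{eq:boundedness_lift_operator}, which give $\|\mathcal L_h^{-1}u-w_h\|_{L^2(\Omega_h;\Gamma_h)}\sim\|u-\mathcal L_h w_h\|_{L^2(\Omega;\Gamma)}$. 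The problem thus reduces to exhibiting, for each integer $0\le\ell\le k+1$, some $w_h\in X_h^k$ with $\|u-\mathcal L_h w_h\|_{L^2(\Omega;\Gamma)}\lesssim h^\ell\|u\|_{H^\ell(\Omega;\Gamma)}$.

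The case $\ell=0$ is trivial: take $w_h=0$, or use the $L^2$-contractivity of $1-Q_h$ together with \eqref{eq:boundedness_lift_operator}. For $2\le\ell\le k+1$ (and also $\ell=1$ if $d\le 3$ and the trace is smooth enough), the natural choice is $w_h=I_h u$, the isoparametric nodal interpolant. By Remark~\ref{rem:relation_Ih}, $u$ is trace compatible, so $I_hu=(I_{h,\Omega}u,\gamma_hI_{h,\Omega}u)$. The lifted interpolation estimate of \cite[Proposition~5.4]{ER2013} (see also \cite{ER2021}) then gives
\[
\|u-\mathcal L_hI_hu\|_{L^2(\Omega)}+\|u-\mathcal L_hI_hu\|_{L^2(\Gamma)}\lesssim h^\ell\big(\|u\|_{H^\ell(\Omega)}+\|u\|_{H^\ell(\Gamma)}\big).
\]
Nodal interpolation requires point values, i.e. $H^\ell$ with $\ell>d/2$ in the bulk and $\ell>(d-1)/2$ on the surface; for $d=2,3$ this holds for every $\ell\ge2$.

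For $\ell=1$, nodal interpolation is not defined in $H^1$ when $d\ge2$. Here I would instead use the trace-compatible Scott--Zhang operator $J_h$ from the proof of Lemma~\ref{lem:basic_results}(a). Applying \eqref{eq:coupled-SZ} to $v=\mathcal L_h^{-1}u\in H^1(\Omega_h;\Gamma_h)$ yields $\|v-J_hv\|_{L^2(\Omega_h;\Gamma_h)}\lesssim h\|v\|_{H^1(\Omega_h;\Gamma_h)}$, and by \eqref{eq:boundedness_lift_operator} the right-hand side is $\lesssim h\|u\|_{H^1(\Omega;\Gamma)}$. Equivalently, one can simply invoke \eqref{eq:Qh_approximation_db} directly. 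This case therefore requires no interpolation theory on the curved domain at all.

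The main obstacle is the case $\ell=k+1$, because the constants in the lifted interpolation estimate must be uniform on curved isoparametric elements. This requires the piecewise $C^{k+1}$ bounds on $G$ and $G^{-1}$ with uniformly bounded derivatives, so that the Bramble--Hilbert argument on each lifted curved element $\mathcal L_hK$ gives $h^{k+1}|u|_{H^{k+1}}$ without losing powers of $h$. On the surface component, the same estimate is needed for the lifted surface interpolant on $\Gamma$, using that the boundary faces of $\mathcal L_hK$ are the exact curved patches of $\Gamma$ parametrized by $G\circ F_K$. I would cite \cite[Proposition~5.4]{ER2013} and \cite[Lemmas~8.24, 9.24]{ER2021} for exactly these element-wise bounds and sum over elements. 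Intermediate values of $\ell$ follow in the same way (or by interpolation of spaces, if needed).
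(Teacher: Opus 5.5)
Your proposal is correct and follows essentially the same route as the paper: best approximation by the $m_h$-orthogonal projection $Q_h$, the standard estimate \eqref{eq:Qh_approximation_db} combined with the lift norm equivalences for $\ell=0,1$, and the trace-compatible nodal interpolant $I_h\mathcal L_h^{-1}u$ for $2\le\ell\le k+1$, with uniform constants coming from the piecewise $C^{k+1}$ bounds on $G$. The only difference is cosmetic: you transfer to $(\Omega,\Gamma)$ first and cite the lifted interpolation estimate of \cite[Proposition~5.4]{ER2013}, whereas the paper applies the local interpolation estimates on each $K$ and each boundary face $F$ of $\Omega_h$ and then bounds $\|\mathcal L_h^{-1}u\|_{H^\ell(K)}\lesssim\|u\|_{H^\ell(\mathcal L_h(K))}$.
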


\begin{proof}
	We first consider the cases \(\ell=0,1\). By
	\eqref{eq:boundedness_lift_operator}, the inverse lift satisfies
	\(\mathcal L_h^{-1}u\in H^1(\Omega_h;\Gamma_h)\). Hence, the standard
	approximation estimate for the \(L^2\)-projection \(Q_h\) yields
	\begin{align*}
		\|(1-Q_h)\mathcal L_h^{-1}u\|_{L^2(\Omega_h;\Gamma_h)}
		\lesssim h^\ell \|\mathcal L_h^{-1}u\|_{H^\ell(\Omega_h;\Gamma_h)}
		\lesssim h^\ell \|u\|_{H^\ell(\Omega;\Gamma)}.
	\end{align*}
	
	We now turn to the case \(2\le \ell\le k+1\). Since the inverse lift is induced by a geometric mapping that is only piecewise smooth, one cannot directly invoke a global \(H^\ell\)-stability estimate for \(\mathcal L_h^{-1}\). Instead, we use the best-approximation property of the \(L^2\)-projection together with local interpolation estimates for the nodal interpolant applied to \(\mathcal L_h^{-1}u\).
	
	Since \(d\le3\) and \(\ell\ge2\), nodal interpolation is well defined. Moreover, \(u\in H^\ell(\Omega;\Gamma)\) is trace compatible, so \(I_h\mathcal L_h^{-1}u\) coincides with the standard trace-compatible nodal interpolant; see Remark~\ref{rem:relation_Ih}. Let \(\mathcal F_h\) denote the set of boundary faces of \(\mathcal T_h\). Then
	\begin{align*}
		&\|(1-Q_h)\mathcal L_h^{-1}u\|_{L^2(\Omega_h;\Gamma_h)}
		\le
		\|(1-I_h)\mathcal L_h^{-1}u\|_{L^2(\Omega_h;\Gamma_h)} \\
		&\qquad=
		\left(
		\sum_{K\in \mathcal T_h}
		\|(1-I_{h,\Omega})\mathcal L_h^{-1}u\|_{L^2(K)}^2
		+
		\sum_{F\in \mathcal F_h}
		\|(1-I_{h,\Gamma})\mathcal L_h^{-1}u\|_{L^2(F)}^2
		\right)^{1/2} \\
		&\qquad\lesssim
		\left(
		\sum_{K\in \mathcal T_h}
		h^{2\ell}\|\mathcal L_h^{-1}u\|_{H^\ell(K)}^2
		+
		\sum_{F\in \mathcal F_h}
		h^{2\ell}\|\mathcal L_h^{-1}u\|_{H^\ell(F)}^2
		\right)^{1/2} \\
		&\qquad\lesssim
		\left(
		\sum_{K\in \mathcal T_h}
		h^{2\ell}\|u\|_{H^\ell(\mathcal L_h(K))}^2
		+
		\sum_{F\in \mathcal F_h}
		h^{2\ell}\|u\|_{H^\ell(\mathcal L_h(F))}^2
		\right)^{1/2} \lesssim h^\ell \|u\|_{H^\ell(\Omega;\Gamma)}.
	\end{align*}
	This completes the proof.
\end{proof}

\subsection{The proof of Lemma~\ref{lem:estimate_Lh}} \label{Appendix:A}
We proceed in three steps. 
First, we prove the \(L^2\)-estimate \eqref{eq:L2_eastimate_Lh*}. 
Second, we establish the auxiliary bound \eqref{eq:bounded_H1}, which is later recorded in Corollary~\ref{cor:boundedness_properties} but is needed here. 
Finally, we prove the \(H^{-1}\)-estimate \eqref{eq:H-1_estimate}.

\begin{proof}[Proof of \eqref{eq:L2_eastimate_Lh*}]
	From the definition of the bilinear form $m$, we have the following equality:
	\begin{align}\label{eq:L2_estimate_Lh*_1}
		\|(1-\L_h \L_h^{H*})u\|_{L^2(\Omega;\Gamma)}&=\sup_{\|v\|_{L^2(\Omega;\Gamma)}=1}m\left((1-\L_h \L_h^{H*})u,v\right)\notag\\
		&=\sup_{\|v\|_{L^2(\Omega;\Gamma)}=1}m\left((\L_h\L_h^{-1}-\L_h \L_h^{H*})u,v\right)\notag\\
		&\leq\sup_{\|v\|_{L^2(\Omega;\Gamma)}=1}m\left(\L_h(1-Q_h)\L_h^{-1}u,v\right)\notag\\
		&\quad +\sup_{\|v\|_{L^2(\Omega;\Gamma)}=1}\Big(m\left(\L_hQ_h\L_h^{-1}u,v\right)-m\left(\L_h \L_h^{H*}u,v\right)\Big),
	\end{align}
	where $Q_h$ is the classical $L^2$ projection defined in \eqref{eq:def_Qh}.
	Since \(Q_h\L_h^{-1}u \in X^k_h\) and \(\L_h^{H*}u \in X^k_h\), the definitions of \(Q_h\) and \(\L_h^{H*}\) imply
	\begin{align}\label{eq:relation_m}
		m\left(\L_hQ_h\L_h^{-1}u,v\right)=m_h\left(Q_h\L_h^{-1}u,\L^{H*}_hv\right),\quad \text{and}\quad m\left(\L_h \L_h^{H*}u,v\right)=m\left(u,\L_h \L_h^{H*}v\right).
	\end{align}
	In particular, \(\L_h\L_h^{H*}\) is self-adjoint with respect to \(m\).

	We infer from the second equality in \eqref{eq:relation_m} that $\L_h \L_h^{H*}$ is a self-adjoint operator with respect to the inner product in $L^2(\Omega; \Gamma)$. Using the relations in \eqref{eq:relation_m}, we can further decompose \eqref{eq:L2_estimate_Lh*_1} as follows:
	\begin{align}\label{eq:L2_estimate_Lh*_2}
		&\|(1-\L_h \L_h^{H*})u\|_{L^2(\Omega;\Gamma)}\notag\\
		&\leq \sup_{\|v\|_{L^2(\Omega;\Gamma)}=1}m\left(\L_h(1-Q_h)\L_h^{-1}u,v\right)+\sup_{\|v\|_{L^2(\Omega;\Gamma)}=1}\Big(m_h\left(Q_h\L_h^{-1}u,\L^{H*}_hv\right)-m\left(u,\L_h \L_h^{H*}v\right)\Big)\notag\\
		&\leq\sup_{\|v\|_{L^2(\Omega;\Gamma)}=1}m\left(\L_h(1-Q_h)\L_h^{-1}u,v\right)+\sup_{\|v\|_{L^2(\Omega;\Gamma)}=1}m_h\left((1-Q_h)\L_h^{-1}u,\L^{H*}_hv\right)\notag\\
		&\quad +\sup_{\|v\|_{L^2(\Omega;\Gamma)}=1}\Big(m_h\left(\L_h^{-1}u,\L^{H*}_hv\right)-m\left(u,\L_h \L_h^{H*}v\right)\Big).
	\end{align}
	The first two terms on the right-hand side of \eqref{eq:L2_estimate_Lh*_2}
	are estimated by Lemma~\ref{lem:L2_projection_error} together with
	\eqref{eq:boundedness_lift_operator} and \eqref{eq:L2_bound_Lh*}. Indeed,
	for every \(\ell\in\{0,1,\dots,k\}\),
	\begin{align}\label{eq:L2_estimate_part1_part2}
		&m\!\left(\L_h(1-Q_h)\L_h^{-1}u,v\right)+m_h\left((1-Q_h)\L_h^{-1}u,\L^{H*}_hv\right)\notag\\
		&\lesssim \|\L_h(1-Q_h)\L_h^{-1}u\|_{L^2(\Omega;\Gamma)}\|v\|_{L^2(\Omega;\Gamma)}+\|(1-Q_h)\L_h^{-1}u\|_{L^2(\Omega_h;\Gamma_h)}\|\L^{H*}_hv\|_{L^2(\Omega_h;\Gamma_h)}\notag\\
		&\lesssim \|(1-Q_h)\L_h^{-1}u\|_{L^2(\Omega_h;\Gamma_h)}\|v\|_{L^2(\Omega;\Gamma)}\;\lesssim\;
		h^{\ell}\|u\|_{H^\ell(\Omega; \Gamma)}\,
		\|v\|_{L^2(\Omega;\Gamma)}.
	\end{align}
	
	To estimate the last term on the right-hand side of
	\eqref{eq:L2_estimate_Lh*_2}, we apply the geometric consistency estimate
	\eqref{eq:geom_consistency_m} from Lemma~\ref{lem:geom_consistency}. This yields
	\begin{align}\label{eq:L2_estimate_part3}
		&\Big|m_h\left(\L_h^{-1}u,\L^{H*}_hv\right)-m\left(u,\L_h \L_h^{H*}v\right)\Big|\notag\\
		&\quad\lesssim h^{k}\|u\|_{L^2(B_h^{\L})}\|\L_h \L_h^{H*}v\|_{L^2(B_h^{\L})}
		+ h^{k+1}\|u\|_{L^2(\Omega;\Gamma)}\|\L_h \L_h^{H*}v\|_{L^2(\Omega;\Gamma)}\notag\\
		&\quad\lesssim h^{k}\|u\|_{L^2(\Omega;\Gamma)}\|\L_h \L_h^{H*}v\|_{L^2(\Omega;\Gamma)}\lesssim h^{k}\|u\|_{L^2(\Omega;\Gamma)}.
	\end{align}
	In the last step, we used the inclusion \(B_h^{\L}\subset\Omega\) together with the \(L^2(\Omega;\Gamma)\)-boundedness of the operators \(\L_h\) and \(\L_h^{H*}\); see \eqref{eq:boundedness_lift_operator} and \eqref{eq:L2_bound_Lh*}, respectively.

	Now, substituting \eqref{eq:L2_estimate_part1_part2} and \eqref{eq:L2_estimate_part3} into \eqref{eq:L2_estimate_Lh*_2}, we deduce \eqref{eq:L2_eastimate_Lh*}.
\end{proof}

As an immediate consequence of \eqref{eq:L2_eastimate_Lh*}, combined with the inverse inequality, we obtain the \(H^1\)-boundedness of \(\L_h^{H*}\), namely \eqref{eq:bounded_H1}. Although this estimate is stated later in Corollary~\ref{cor:boundedness_properties}, we prove it here since it will be used in the proof of \eqref{eq:H-1_estimate}.
\begin{proof}[Proof of \eqref{eq:bounded_H1}]
	By the triangle inequality,
	\begin{align}\label{eq:triangle_LH*}
		\|\L_h^{H*}u\|_{H^{1}(\Omega_h;\Gamma_h)}
		\leq \|Q_h \L_h^{-1}u\|_{H^{1}(\Omega_h;\Gamma_h)}
		+ \|Q_h \L_h^{-1}u - \L_h^{H*}u\|_{H^{1}(\Omega_h;\Gamma_h)}.
	\end{align}
	Using the estimate in \eqref{eq:boundedness_lift_operator} and the \(H^1\)-boundedness of the classical \(L^2\)-projection operator \(Q_h\), we obtain
	\begin{align}\label{eq:boundedness_Qh}
		\|Q_h \L_h^{-1}u\|_{H^{1}(\Omega_h;\Gamma_h)}
		\lesssim \|\L_h^{-1}u\|_{H^{1}(\Omega_h;\Gamma_h)}
		\lesssim \|u\|_{H^1(\Omega;\Gamma)}.
	\end{align}
	For the second term on the right-hand side of \eqref{eq:triangle_LH*}, the inverse inequality and the \(L^2\)-estimate \eqref{eq:L2_eastimate_Lh*} yield
	\begin{align*}
		\|Q_h \L_h^{-1}u - \L_h^{H*}u\|_{H^{1}(\Omega_h;\Gamma_h)}
		&\lesssim h^{-1}\|Q_h \L_h^{-1}u - \L_h^{H*}u\|_{L^2(\Omega_h;\Gamma_h)} \\
		&\leq h^{-1}\Bigl(
		\|(1-Q_h)\L_h^{-1}u\|_{L^2(\Omega_h;\Gamma_h)}
		+ \|\L_h^{-1}u - \L_h^{H*}u\|_{L^2(\Omega_h;\Gamma_h)}
		\Bigr) \\
		&\lesssim
		h^{-1}\|(1-Q_h)\L_h^{-1}u\|_{L^2(\Omega_h;\Gamma_h)}
		+  h^{-1}\|(1-\L_h\L_h^{H*})u\|_{L^2(\Omega;\Gamma)} \\
		&\lesssim \|u\|_{H^1(\Omega;\Gamma)},
	\end{align*}
	where in the last step we used Lemma~\ref{lem:L2_projection_error} and \eqref{eq:L2_eastimate_Lh*}. Combining this estimate with \eqref{eq:triangle_LH*} and \eqref{eq:boundedness_Qh}, we obtain \eqref{eq:bounded_H1}.
\end{proof}

We are now in a position to prove the weak-norm error estimate \eqref{eq:H-1_estimate}. The argument combines the \(L^2\)-estimate \eqref{eq:L2_eastimate_Lh*} with the \(H^1\)-boundedness of \(\L_h^{H*}\) established in \eqref{eq:bounded_H1}.

\begin{proof}[Proof of \eqref{eq:H-1_estimate}]
	{By duality, and decomposing 
		\(v=(1-\L_h\L_h^{H*})v+\L_h\L_h^{H*}v\), we obtain
		\begin{align}\label{eq:decompose_H-1_estimate}
			&\|(1-\L_h \L_h^{H*})u\|_{H^{-1}(\Omega;\Gamma)}\notag\\
			&\quad=\sup_{\|v\|_{H^1(\Omega;\Gamma)}=1}m\left((1-\L_h \L_h^{H*})u,v\right)\notag\\
			&\quad\leq \sup_{\|v\|_{H^1(\Omega;\Gamma)}=1}m\left((1-\L_h \L_h^{H*})u,(1-\L_h \L_h^{H*})v\right)+\sup_{\|v\|_{H^1(\Omega;\Gamma)}=1}m\left((1-\L_h \L_h^{H*}) u,\L_h \L_h^{H*}v\right)\notag\\
			&\quad\leq \sup_{\|v\|_{H^1(\Omega;\Gamma)}=1} \|(1-\L_h \L_h^{H*})u\|_{L^2(\Omega;\Gamma)}\|(1-\L_h \L_h^{H*})v\|_{L^2(\Omega;\Gamma)}\notag\\
			&\qquad+ \sup_{\|v\|_{H^1(\Omega;\Gamma)}=1}\Big(m\left(u,\L_h \L_h^{H*} v\right)-m\left(\L_h \L_h^{H*} u,\L_h \L_h^{H*}v\right)\Big)
		\end{align}
		From the estimate of the operator $\mathcal{L}_h \mathcal{L}_h^{H*}$ in \eqref{eq:L2_eastimate_Lh*}, the first term on the right-hand side of
		\eqref{eq:decompose_H-1_estimate} can be bounded as
		\begin{align}\label{eq:decompose_H-1_estimate_part1}
			&\sup_{\|v\|_{H^1(\Omega;\Gamma)}=1} 
			\|(1-\L_h \L_h^{H*})u\|_{L^2(\Omega;\Gamma)}\,
			\|(1-\L_h \L_h^{H*})v\|_{L^2(\Omega;\Gamma)}\notag\\
			&\quad\lesssim\;
			\sup_{\|v\|_{H^1(\Omega;\Gamma)}=1} h^{\ell}\|u\|_{H^{\ell}(\Omega;\Gamma)}\cdot h\|v\|_{H^1(\Omega;\Gamma)}=h^{\ell+1}\|u\|_{H^{\ell}(\Omega;\Gamma)}.
		\end{align}
		for all $0\leq \ell\leq k$.

		For the second term in \eqref{eq:decompose_H-1_estimate}, we treat separately the cases \(\ell\geq1\) and \(\ell=0\). We first note that \(m\left(u,\L_h \L_h^{H*} v\right)=m_h\left(\L_h^{H*} u,\L_h^{H*}v\right)\).
		Applying the geometric consistency estimate \eqref{eq:geom_consistency_m} and boundary-layer estimate \eqref{eq:boundary_layer_estimate}, we obtain
		\begin{align}\label{eq:decompose_H-1_estimate_part2_1}
			&\Big|m\left(u,\L_h \L_h^{H*} v\right)-m\left(\L_h \L_h^{H*} u,\L_h \L_h^{H*}v\right)\Big| =\Big|m_h\left(\L_h^{H*} u,\L_h^{H*}v\right)-m\left(\L_h \L_h^{H*} u,\L_h \L_h^{H*}v\right)\Big|\notag\\
			&\quad\lesssim h^{k}\|\L_h \L_h^{H*} u\|_{L^2(B_h^{\L})}\|\L_h \L_h^{H*}v\|_{L^2(B_h^{\L})}
			+ h^{k+1}\|\L_h \L_h^{H*} u\|_{L^2(\Omega;\Gamma)}\|\L_h \L_h^{H*}v\|_{L^2(\Omega;\Gamma)}\notag\\
			&\quad \lesssim h^{k+1}\|\L_h \L_h^{H*} u\|_{H^1(\Omega)}\|\L_h \L_h^{H*}v\|_{H^1(\Omega)}
			+ h^{k+1}\|\L_h \L_h^{H*} u\|_{L^2(\Omega;\Gamma)}\|\L_h \L_h^{H*}v\|_{L^2(\Omega;\Gamma)}\notag\\
			&\quad\lesssim h^{k+1}\|\L_h \L_h^{H*} u\|_{H^1(\Omega;\Gamma)}\|\L_h \L_h^{H*}v\|_{H^1(\Omega;\Gamma)}\notag\\
			&\quad\lesssim h^{k+1}\| u\|_{H^1(\Omega;\Gamma)}\|v\|_{H^1(\Omega;\Gamma)}\lesssim
			h^{\ell+1}
			\|u\|_{H^\ell(\Omega;\Gamma)}
			\|v\|_{H^1(\Omega;\Gamma)},
		\end{align}
		for $1\leq \ell\leq k$, where we have used the \(H^1\)-boundedness of \(\L_h^{H*}\) established in \eqref{eq:bounded_H1} and boundedness of \(\L_h\) in the last inequality. 
		
		And Similarly, for $\ell=0$, retaining the \(L^2(B_h^{\L})\)-norm in the above estimate and using
		the inclusion \(B_h^{\L}\subset \Omega_h\), we have
		\begin{align}\label{eq:decompose_H-1_estimate_part2_2}
			\Big|
			m\left(u,\L_h \L_h^{H*} v\right)
			-
			m\left(\L_h \L_h^{H*} u,\L_h \L_h^{H*}v\right)
			\Big| 
			&\lesssim
			h^{k}
			\|\L_h \L_h^{H*} u\|_{L^2(\Omega;\Gamma)}
			\|\L_h \L_h^{H*}v\|_{L^2(\Omega;\Gamma)}
			\notag\\
			&\lesssim
			h
			\|u\|_{L^2(\Omega;\Gamma)}
			\|v\|_{L^2(\Omega;\Gamma)} .
		\end{align}
		Finally, combining
		\eqref{eq:decompose_H-1_estimate},
		\eqref{eq:decompose_H-1_estimate_part1}, \eqref{eq:decompose_H-1_estimate_part2_1},
		and \eqref{eq:decompose_H-1_estimate_part2_2},
		we obtain the desired estimate \eqref{eq:H-1_estimate}.}
\end{proof}

\subsection{Proof of Corollary~\ref{cor:boundedness_properties}}\label{Appendix:B}
The estimate \eqref{eq:bounded_H1} has already been proved in Subsection~\ref{Appendix:A}, since it is needed in the proof of \eqref{eq:H-1_estimate} in Lemma~\ref{lem:estimate_Lh}. 
Hence, it remains to establish \eqref{eq:equivalence_H-1} and \eqref{eq:bounded_H-1}.

\begin{proof}[Proof of \eqref{eq:equivalence_H-1}]
	Using duality and the $H^1$-boundedness of $\L^{H*}_h$ in \eqref{eq:bounded_H1}, we have
	\begin{align}\label{eq:bound_H-1}
		\|\L_h u_h\|_{H^{-1}(\Omega;\Gamma)}&=\sup_{\|v\|_{H^1(\Omega;\Gamma)}=1}m\left(\L_h u_h,v\right)=\sup_{\|v\|_{H^1(\Omega;\Gamma)}=1}m_h\left( u_h,\L^{H*}_hv\right)\notag\\
		&\leq \sup_{\|v\|_{H^1(\Omega;\Gamma)}=1} \|u_h\|_{H^{-1}(\Omega_h;\Gamma_h)}\|\L^{H*}_hv\|_{H^1(\Omega_h;\Gamma_h)}\lesssim \|u_h\|_{H^{-1}(\Omega_h;\Gamma_h)}.
	\end{align}
	This establishes the boundedness result of $\L_h$ in $H^{-1}$ in \eqref{eq:equivalence_H-1}.
	
	On the other hand, we begin by noting that
	\begin{align}\label{eq:decompose_H-1}
		&\|u_h\|_{H^{-1}(\Omega_h;\Gamma_h)}=\sup_{\|v_h\|_{H^1(\Omega_h;\Gamma_h)}=1}m_h\left(u_h,v_h\right)\notag\\
		&\leq\sup_{\|v_h\|_{H^1(\Omega_h;\Gamma_h)}=1}\Big(m_h\left(u_h,v_h\right)-m(\L_h u_h, \L_h v_h)\Big)+\sup_{\|v_h\|_{H^1(\Omega_h;\Gamma_h)}=1}m(\L_h u_h, \L_h v_h).
	\end{align}
	For the second term on the right-hand side of \eqref{eq:decompose_H-1}, using the boundedness of the lift operator in \eqref{eq:boundedness_lift_operator}, we deduce:
	\begin{align}\label{eq:m_LhuLhv}
		m(\L_h u_h, \L_h v_h)\leq \|\L_h u_h\|_{H^{-1}(\Omega;\Gamma)}\|\L_h v_h\|_{H^1(\Omega;\Gamma)}\lesssim \|\L_h u_h\|_{H^{-1}(\Omega;\Gamma)}\|v_h\|_{H^1(\Omega_h;\Gamma_h)}.
	\end{align}
	To estimate the first term on the right-hand side of \eqref{eq:decompose_H-1}, we again invoke the geometric consistency estimate \eqref{eq:geom_consistency_m}, the inclusion \(B_h^{\L}\subset\Omega\) together with the boundary-layer estimate from Lemma~\ref{lem:boundary_layer_estimate}.
	\begin{align}\label{eq:mh-m}
		&m_h\left(u_h,v_h\right)-m(\L_h u_h,\L_h v_h)\notag\\
		&\quad\lesssim h^{k}\|\L_h u_h\|_{L^2(B_h^{\L})}\|\L_h v_h\|_{L^2(B_h^{\L})}
		+ h^{k+1}\|\L_h u_h\|_{L^2(\Omega;\Gamma)}\|\L_h v_h\|_{L^2(\Omega;\Gamma)}\notag\\
		&\quad\lesssim \left(h^{k+\frac{1}{2}}\|\L_h  v_h\|_{H^1(\Omega;\Gamma)}+h^{k+1}\|\L_h v_h\|_{L^2(\Omega;\Gamma)}\right)\|\L_h u_h\|_{L^2(\Omega;\Gamma)}.
	\end{align}
	Furthermore, since \(u_h\in X_h^k\), the inverse inequality on the discrete space, together with the norm equivalence under lifting, yields
	\begin{align*}
		\|\L_h u_h\|^2_{L^2(\Omega;\Gamma)}=m(\L_h u_h,\L_h u_h)&\leq \|\L_h u_h\|_{H^{-1}(\Omega;\Gamma)}\|\L_h u_h\|_{H^{1}(\Omega;\Gamma)}\\
		&\lesssim h^{-1}\|\L_h u_h\|_{H^{-1}(\Omega;\Gamma)}\|\L_h u_h\|_{L^2(\Omega;\Gamma)}.
	\end{align*}
	This implies:
	$\|\L_h u_h\|_{L^2(\Omega;\Gamma)}\lesssim h^{-1}\|\L_h u_h\|_{H^{-1}(\Omega;\Gamma)}$. Now, substituting this inequality into \eqref{eq:mh-m}, and noting that $k\geq 1$, we derive:
	\begin{align}\label{eq:bound_mh-m}
		m_h\left(u_h,v_h\right)-m(\L_h u_h, \L_h v_h)&\lesssim h^{k-\frac{1}{2}}\|\L_h u_h\|_{H^{-1}(\Omega;\Gamma)}\cdot\|\L_h v_h\|_{H^{1}(\Omega;\Gamma)}\notag\\
		&\lesssim h^{\frac{1}{2}}\|\L_h u_h\|_{H^{-1}(\Omega;\Gamma)}\|v_h\|_{H^1(\Omega_h;\Gamma_h)}.
	\end{align}
	Finally, combining \eqref{eq:decompose_H-1}, \eqref{eq:m_LhuLhv}, and \eqref{eq:bound_mh-m}, we conclude that:
	\begin{align*}
		\|u_h\|_{H^{-1}(\Omega_h;\Gamma_h)}\lesssim \|\L_h u_h\|_{H^{-1}(\Omega;\Gamma)}.
	\end{align*}
	This completes the proof of \eqref{eq:equivalence_H-1}.
\end{proof}

\begin{proof}[Proof of \eqref{eq:bounded_H-1}]
	Since $\L_h^{H*}u \in X^k_h$, applying \eqref{eq:equivalence_H-1} yields
	\begin{align}\label{eq:boundedness_Lh_1}
		\|\L_h^{H*}u\|_{H^{-1}(\Omega_h;\Gamma_h)}\lesssim \|\L_h \L_h^{H*}u\|_{H^{-1}(\Omega;\Gamma)}.
	\end{align}
	Furthermore, by duality and the boundedness result established in \eqref{eq:bounded_H1}, we have
	\begin{align*}
		\|\L_h \L_h^{H*}u\|_{H^{-1}(\Omega;\Gamma)}&=\sup_{\|w\|_{H^1(\Omega;\Gamma)}=1}m(\L_h \L_h^{H*}u, w)\\
		&=\sup_{\|w\|_{H^1(\Omega;\Gamma)}=1}m(u, \L_h \L_h^{H*}w)\\
		&\leq \sup_{\|w\|_{H^1(\Omega;\Gamma)}=1} \|u\|_{H^{-1}(\Omega;\Gamma)} \|\L_h \L_h^{H*}w\|_{H^{1}(\Omega;\Gamma)}\lesssim \|u\|_{H^{-1}(\Omega;\Gamma)}.
	\end{align*}
	Combining this estimate with \eqref{eq:boundedness_Lh_1} completes the proof.
\end{proof}

\subsection{The proof of Lemma~\ref{lem:high_order_estimates}}\label{Appendix:estimate_kp1}
\begin{proof}[Proof of \eqref{eq:L2_eastimate_Lh*_kp1}]
	Starting from \eqref{eq:L2_estimate_Lh*_2}, it remains to estimate the three terms on the right-hand side. For the first two terms, the \(L^2\)-projection error estimate in Lemma~\ref{lem:L2_projection_error} yields
	\begin{align}
		\left|m\!\left(\L_h(1-Q_h)\L_h^{-1}u,v\right)\right|
		+
		\left|m_h\!\left((1-Q_h)\L_h^{-1}u,\L_h^{H*}v\right)\right|
		\lesssim
		h^{k+1}\|u\|_{H^{k+1}(\Omega;\Gamma)}\|v\|_{L^2(\Omega;\Gamma)}.
		\label{eq:L2_estimate_part12}
	\end{align}
	Using the geometric consistency estimate \eqref{eq:geom_consistency_m}, the inclusion \(B_h^{\L}\subset\Omega\), the boundary-layer estimate from Lemma~\ref{lem:boundary_layer_estimate}, and the boundedness of \(\L_h\) and \(\L_h^{H*}\) from \eqref{eq:boundedness_lift_operator} and \eqref{eq:L2_bound_Lh*}, we obtain
	\begin{align}
		&\left|m_h\left(\L_h^{-1}u,\L_h^{H*}v\right)-m\left(u,\L_h \L_h^{H*}v\right)\right|
		\notag\\
		&\quad\lesssim
		h^{k}\|u\|_{L^2(B_h^{\L})}\|\L_h \L_h^{H*}v\|_{L^2(B_h^{\L})}
		+
		h^{k+1}\|u\|_{L^2(\Omega;\Gamma)}\|\L_h \L_h^{H*}v\|_{L^2(\Omega;\Gamma)}
		\notag\\
		&\quad\lesssim
		h^{k+\frac12}\|u\|_{H^1(\Omega;\Gamma)}\|\L_h \L_h^{H*}v\|_{L^2(\Omega;\Gamma)}\lesssim
		h^{k+\frac12}\|u\|_{H^1(\Omega;\Gamma)}\|v\|_{L^2(\Omega;\Gamma)}.
		\label{eq:equiv_L2_error_1}
	\end{align}
	Combining \eqref{eq:L2_estimate_Lh*_2}, \eqref{eq:L2_estimate_part12}, and \eqref{eq:equiv_L2_error_1}, we obtain \eqref{eq:L2_eastimate_Lh*_kp1}.
\end{proof}

\begin{proof}[Proof of \eqref{eq:Hm1_eastimate_Lh*_kp1}]
	By the definition of the dual norm,
	\begin{align}\label{eq:decompose_Hm1_Lh*}
		\|(1-\L_h\L_h^{H*})u\|_{H^{-1}(\Omega;\Gamma)}
		&=\sup_{\|v\|_{H^1(\Omega;\Gamma)}=1}
		m((1-\L_h\L_h^{H*})u,v)\notag\\
		&\leq \sup_{\|v\|_{H^1(\Omega;\Gamma)}=1}m((1-\L_h\L_h^{H*})u,(1-\L_h\L_h^{H*})v)\notag\\
		&\quad +\sup_{\|v\|_{H^1(\Omega;\Gamma)}=1}m((1-\L_h\L_h^{H*})u,\L_h\L_h^{H*}v).
	\end{align}
	For the first term, by the Cauchy--Schwarz inequality, the estimates \eqref{eq:L2_eastimate_Lh*} and \eqref{eq:L2_eastimate_Lh*_kp1} yield
	\begin{align}\label{eq:1st_term_Hm1_Lh*}
		&\big|m((1-\L_h\L_h^{H*})u,(1-\L_h\L_h^{H*})v)\big|\notag\\
		&\quad\le \|(1-\L_h\L_h^{H*})u\|_{L^2(\Omega;\Gamma)}
		\|(1-\L_h\L_h^{H*})v\|_{L^2(\Omega;\Gamma)}\notag\\
		&\quad\lesssim \Bigl(
		h^{k+1}\|u\|_{H^{k+1}(\Omega;\Gamma)}
		+
		h^{k+\frac12}\|u\|_{H^1(\Omega;\Gamma)}
		\Bigr)\cdot h\|v\|_{H^1(\Omega;\Gamma)}\notag\\
		&\quad\lesssim \Bigl(h^{k+2}\|u\|_{H^{k+1}(\Omega;\Gamma)}
		+
		h^{k+\frac32}\|u\|_{H^1(\Omega;\Gamma)}\Bigr)\,\|v\|_{H^1(\Omega;\Gamma)}.
	\end{align}
	Next we estimate the second term on the right-hand side of \eqref{eq:decompose_Hm1_Lh*}. By the definition of $\L_h^{H*}$, the geometric consistency estimate \eqref{eq:geom_consistency_m}, boundary layer estimate in Lemma~\ref{lem:boundary_layer_estimate} and the $H^1$-boundedness of $\L_h^{H*}$ from \eqref{eq:bounded_H1},
	\begin{align}\label{eq:2nd_term_Hm1_Lh*}
		&\big|m((1-\L_h\L_h^{H*})u,\L_h\L_h^{H*}v)\big|\notag\\
		&\quad=
		\bigl|
		m_h(\L_h^{H*}u,\L_h^{H*}v)
		-
		m(\L_h\L_h^{H*}u,\L_h\L_h^{H*}v)
		\bigr|\notag\\
		&\quad\lesssim
		h^k
		\|\L_h\L_h^{H*}u\|_{L^2(B_h^\L)}
		\|\L_h\L_h^{H*}v\|_{L^2(B_h^\L)}
		+
		h^{k+1}
		\|\L_h\L_h^{H*}u\|_{L^2(\Omega;\Gamma)}
		\|\L_h\L_h^{H*}v\|_{L^2(\Omega;\Gamma)}\notag\\
		&\quad\lesssim h^{k+1}
		\|\L_h\L_h^{H*}u\|_{H^1(\Omega;\Gamma)}
		\|\L_h\L_h^{H*}v\|_{H^1(\Omega;\Gamma)}\notag\\
		&\quad\lesssim h^{k+1}\|u\|_{H^1(\Omega;\Gamma)}\|v\|_{H^1(\Omega;\Gamma)}
		\lesssim h^{k+1}\|u\|_{H^1(\Omega;\Gamma)}.
	\end{align}
	Combining \eqref{eq:decompose_Hm1_Lh*}--\eqref{eq:2nd_term_Hm1_Lh*}, and noting that $h^{k+\frac32}\le h^{k+1}$ for $0<h\le1$, we conclude \eqref{eq:Hm1_eastimate_Lh*_kp1}.
\end{proof}

\begin{proof}[Proofs of \eqref{eq:Hm1_eastimate_Lv*_kp1}]
	
	Since \(\widetilde A\) is invertible, for any \(v\in H^1(\Omega;\Gamma)\) with \(\|v\|_{H^1(\Omega;\Gamma)}=1\), we set \(w=\widetilde{A}^{-1}v.\)
	Then, by the definition of the \(H^{-1}(\Omega;\Gamma)\)-norm, we have
	\begin{align}\label{eq:decompose_Hm1_Lv*}
		\|(1-\L_h\L_h^{V*})u\|_{H^{-1}(\Omega;\Gamma)}
		&=\sup_{\|v\|_{H^1(\Omega;\Gamma)}=1}
		m((1-\L_h\L_h^{V*})u,v)\notag\\
		&=\sup_{\|v\|_{H^1(\Omega;\Gamma)}=1}
		\tilde{a}((1-\L_h\L_h^{V*})u,w)\notag\\
		&\leq \sup_{\|v\|_{H^1(\Omega;\Gamma)}=1}
		\tilde{a}((1-\L_h\L_h^{V*})u,(1-\L_h\L_h^{V*})w)\notag\\
		&\quad+
		\sup_{\|v\|_{H^1(\Omega;\Gamma)}=1}
		\tilde{a}((1-\L_h\L_h^{V*})u,\L_h\L_h^{V*}w).
	\end{align}
	
	We first estimate the first term on the right-hand side of \eqref{eq:decompose_Hm1_Lv*}. By the Cauchy--Schwarz inequality, the estimate \eqref{eq:H1_estimate_Lv*}, and the assumption \(k\ge 2\), we obtain
	\begin{align*}
		\big|\tilde{a}((1-\L_h\L_h^{V*})u,(1-\L_h\L_h^{V*})w)\big|
		&\lesssim
		\|(1-\L_h\L_h^{V*})u\|_{H^1(\Omega;\Gamma)}
		\|(1-\L_h\L_h^{V*})w\|_{H^1(\Omega;\Gamma)}\\
		&\lesssim
		h^k\|u\|_{H^{k+1}(\Omega;\Gamma)}
		\cdot
		h^2\|w\|_{H^3(\Omega;\Gamma)}.
	\end{align*}
	Moreover, elliptic regularity implies \(\|w\|_{H^3(\Omega;\Gamma)}\lesssim \|v\|_{H^1(\Omega;\Gamma)}\lesssim 1\).
	Therefore,
	\begin{align}\label{eq:1st_term_Hm1_Lv*}
		\big|\tilde{a}((1-\L_h\L_h^{V*})u,(1-\L_h\L_h^{V*})w)\big|
		\lesssim
		h^{k+2}\|u\|_{H^{k+1}(\Omega;\Gamma)}\|v\|_{H^1(\Omega;\Gamma)}.
	\end{align}
	
	Next, we estimate the second term on the right-hand side of \eqref{eq:decompose_Hm1_Lv*}. By applying the geometric consistency estimate \eqref{eq:geom_consistency_a} we obtain
	\begin{align*}
		&\big|\tilde{a}((1-\L_h\L_h^{V*})u,\L_h\L_h^{V*}w)\big|\notag\\
		&\quad=
		\bigl|
		\tilde{a}_h(\L_h^{V*}u,\L_h^{V*}w)
		-
		\tilde{a}(\L_h\L_h^{V*}u,\L_h\L_h^{V*}w)
		\bigr|\notag\\
		&\quad\lesssim
		h^k
		\|\L_h\L_h^{V*}u\|_{H^1(B_h^\L)}
		\|\L_h\L_h^{V*}w\|_{H^1(B_h^\L)}
		+
		h^{k+1}
		\|\L_h\L_h^{V*}u\|_{H^1(\Omega;\Gamma)}
		\|\L_h\L_h^{V*}w\|_{H^1(\Omega;\Gamma)}.
	\end{align*}
	Using \eqref{eq:H1_estimate_Lv*}, the boundary-layer estimate, and the elliptic regularity estimate for \(w=\widetilde{A}^{-1}v\), we have
	\begin{align}\label{eq:appendix_u_Bh}
		\|\L_h\L_h^{V*}u\|_{H^1(B_h^\L)}
		\le
		\|\L_h\L_h^{V*}u\|_{H^1(\Omega;\Gamma)}
		\lesssim
		\|u\|_{H^1(\Omega;\Gamma)},
	\end{align}
	and
	\begin{align*}
		\|\L_h\L_h^{V*}w\|_{H^1(B_h^\L)}
		&\le
		\|w\|_{H^1(B_h^\L)}
		+
		\|(1-\L_h\L_h^{V*})w\|_{H^1(\Omega;\Gamma)}\\
		&\lesssim
		h^{\frac12}\|w\|_{H^2(\Omega;\Gamma)}
		+
		h\|w\|_{H^2(\Omega;\Gamma)}\lesssim
		h^{\frac12}\|v\|_{L^2(\Omega;\Gamma)}.
	\end{align*}
	Hence,
	\begin{align}\label{eq:2nd_term_Hm1_Lv*}
		\big|\tilde{a}((1-\L_h\L_h^{V*})u,\L_h\L_h^{V*}w)\big|
		\lesssim
		h^{k+\frac12}\|u\|_{H^1(\Omega;\Gamma)}\|v\|_{L^2(\Omega;\Gamma)}.
	\end{align}
	Combining \eqref{eq:decompose_Hm1_Lv*}, \eqref{eq:1st_term_Hm1_Lv*}, and \eqref{eq:2nd_term_Hm1_Lv*}, we obtain \eqref{eq:Hm1_eastimate_Lv*_kp1}.
\end{proof}

\section{Proof of the estimates on the nonlinear terms}\label{Appendix:C}
\begin{proof}[Proof of Lemma~\ref{lem:nonlinear_term_estimates}]
	Among the estimates \eqref{eq:fu_L2}--\eqref{eq:fuv}, we only prove \eqref{eq:fu-fv}, since the remaining ones follow by analogous arguments from the same growth assumptions and Sobolev embeddings.
	
	By the fundamental theorem of calculus,
	\begin{align*}
		f_{\Omega}(u)-f_{\Omega}(v)=(u-v)\int_{0}^{1}f_{\Omega}^{\prime}(\theta u +(1-\theta)v)\d \theta.
	\end{align*}
	By the growth condition \eqref{eq:growth_condition}, we obtain
	\begin{align}\label{eq:taylor_fu-fv}
		\Big|f_{\Omega}(u)-f_{\Omega}(v)\Big|\lesssim \Big(1+|u|^{\zeta_{\Omega}-1}+|v|^{\zeta_{\Omega}-1}\Big)\cdot|u-v|.
	\end{align}
	Analogously,
	\begin{align}
		\Big|f_{\Gamma}(u)-f_{\Gamma}(v)\Big|\lesssim \Big(1+|u|^{\zeta_{\Gamma}-1}+|v|^{\zeta_{\Gamma}-1}\Big)\cdot|u-v|.
	\end{align}
	Since $\Omega$ is a smooth domain, by the definition of $H^{-1}(\Omega;\Gamma)$ and the Sobolev embeddings 
	$L^{p_{1}}(\Omega)\hookrightarrow H^{-1}(\Omega) $
	and { $L^{p_{2}}(\Gamma)\hookrightarrow H^{-1}(\Gamma)$}
	for 
	\begin{align}\label{eq:p1p2}
		p_{1}=\left\{\begin{array}{l}
			1+,\quad\text{for}\quad d=2;\\[2mm]
			{\displaystyle \frac{6}{5}} ,\quad \text{for}\quad d= 3;
		\end{array}\right.\quad \text{and}\quad
		p_{2}=1+,\quad\text{for}\quad d=2,3.
	\end{align}
	Here and below, \(1+\) denotes fixed exponents
	strictly larger than \(1\), possibly different for \(p_1\) and
	\(p_2\), chosen sufficiently close to \(1\), depending only on the prescribed growth exponents \(\zeta_\Omega\) and \(\zeta_\Gamma\), respectively, so that the Hölder exponents used below are admissible for the corresponding Sobolev embeddings.
	
	We have
	\begin{align*}
		&\|f(u)-f(v)\|_{H^{-1}(\Omega;\Gamma)}=\sup_{\|w\|_{H^1(\Omega;\Gamma)}=1}m\big(f(u)-f(v),w\big)\\
		&\leq \sup_{\|w\|_{H^1(\Omega;\Gamma)}=1}\Big(\|f_{\Omega}(u)-f_{\Omega}(v)\|_{H^{-1}(\Omega)}\|w\|_{H^1(\Omega)}+\|f_{\Gamma}(u)-f_{\Gamma}(v)\|_{H^{-1}(\Gamma)}\|w\|_{H^1(\Gamma)}\Big)\\
		&\lesssim \|f_{\Omega}(u)-f_{\Omega}(v)\|_{L^{p_{1}}(\Omega)}+\|f_{\Gamma}(u)-f_{\Gamma}(v)\|_{L^{p_{2}}(\Gamma)}\\
		&\lesssim \Big(1+\left\||u|^{\zeta_{\Omega}-1}\right\|_{L^{q_1}(\Omega)}+\left\||v|^{\zeta_{\Omega}-1}\right\|_{L^{q_1}(\Omega)}\Big)\cdot\|u-v\|_{L^2(\Omega)}\\
		&\qquad+\Big(1+\left\||u|^{\zeta_{\Gamma}-1}\right\|_{L^{q_2}(\Gamma)}+\left\||v|^{\zeta_{\Gamma}-1}\right\|_{L^{q_2}(\Gamma)}\Big)\cdot\|u-v\|_{L^2(\Gamma)},
	\end{align*}
	where we have used the H\"{o}lder's inequality in the last inequality with
	\begin{align*}
		\frac{1}{q_1}=\frac{1}{p_1}-\frac{1}{2},\quad\text{and}\quad  \frac{1}{q_2}=\frac{1}{p_2}-\frac{1}{2}.
	\end{align*}
	That is,
	\begin{align}\label{eq:q1q2}
		q_1=\frac{2p_1}{2-p_1},\quad\text{and}\quad q_2=\frac{2p_2}{2-p_2},
	\end{align}
	Hence,
	\begin{align}\label{eq:estimate_fu-fv}
		\|f(u)-f(v)\|_{H^{-1}(\Omega;\Gamma)}\lesssim& \big(1+\big\|u\big\|^{\zeta_{\Omega}-1}_{L^{q_1(\zeta_{\Omega}-1)}(\Omega)}+\big\|v\big\|^{\zeta_{\Omega}-1}_{L^{q_1(\zeta_{\Omega}-1)}(\Omega)}\Big)\cdot\|u-v\|_{L^2(\Omega)}\notag\\
		&+\big(1+\big\|u\big\|^{\zeta_{\Gamma}-1}_{L^{q_2(\zeta_{\Gamma}-1)}(\Gamma)}+\big\|v\big\|^{\zeta_{\Gamma}-1}_{L^{q_2(\zeta_{\Gamma}-1)}(\Gamma)}\Big)\cdot\|u-v\|_{L^2(\Gamma)}.
	\end{align}
	Next, recall the Sobolev embeddings
	\begin{align*}
		H^1(\Omega)\hookrightarrow L^{\theta_1}(\Omega), \quad \text{and}\quad H^1(\Gamma)\hookrightarrow L^{\theta_2}(\Gamma),
	\end{align*}
	where
	\begin{align}\label{eq:l1l2}
		\theta_1=\left\{\begin{array}{l}
			<\infty,\quad \;\;\text{for}\quad d=2;\\[2mm]
			6,\quad\qquad\text{for}\quad d= 3;
		\end{array}\right.\quad \text{and}\quad \theta_2<\infty,\quad\text{for}\quad d=2,3.
	\end{align}
	By the restriction \eqref{eq:growth_condition_zeta}, we may choose $p_1$ and $p_2$ such that
	\begin{align*}
		q_1(\zeta_{\Omega}-1)\le \theta_1,
		\qquad
		q_2(\zeta_{\Gamma}-1)\le \theta_2
	\end{align*}
	for some finite $\theta_1,\theta_2$ as above. Indeed, for $d=3$, choosing $p_1=\frac65$ gives $q_1=3$, and hence
	\begin{align*}
		q_1(\zeta_{\Omega}-1)=3(\zeta_{\Omega}-1)\le 6
	\end{align*}
	by \eqref{eq:growth_condition_zeta}. For $d=2$, $\theta_1$ can be taken arbitrarily large. Likewise, since $\theta_2$ may be chosen arbitrarily large on $\Gamma$, we can select $p_2\in(1,2)$ so that $q_2(\zeta_{\Gamma}-1)<\infty$ is admissible.
	
	Combining \eqref{eq:estimate_fu-fv} with \eqref{eq:growth_condition_zeta}, \eqref{eq:p1p2}, \eqref{eq:q1q2}, and \eqref{eq:l1l2}, we deduce
	\begin{align*}
		\|f(u)-f(v)\|_{H^{-1}(\Omega;\Gamma)}\lesssim& \big(1+\big\|u\big\|^{\zeta_{\Omega}-1}_{L^{\theta_1}(\Omega)}+\big\|v\big\|^{\zeta_{\Omega}-1}_{L^{\theta_1}(\Omega)}\Big)\cdot\|u-v\|_{L^2(\Omega)}\\
		&+\big(1+\big\|u\big\|^{\zeta_{\Gamma}-1}_{L^{\theta_2}(\Gamma)}+\big\|v\big\|^{\zeta_{\Gamma}-1}_{L^{\theta_2}(\Gamma)}\Big)\cdot\|u-v\|_{L^2(\Gamma)}\\
		\lesssim& \Big(1+\big\|u\big\|^{\max\{\zeta_{\Gamma},\zeta_{\Omega}\}-1}_{H^{1}(\Omega;\Gamma)}+\big\|v\big\|^{\max\{\zeta_{\Gamma},\zeta_{\Omega}\}-1}_{H^{1}(\Omega;\Gamma)}\Big)\cdot \|u-v\|_{L^2(\Omega;\Gamma)}.
	\end{align*}
	This completes the proof of \eqref{eq:fu-fv}.
\end{proof}

\begin{proof}[Proof of Lemma~\ref{lem:high_order_nonlinear_term_estimates}]
We first prove \eqref{eq:fu_H_ell}. By the norm equivalence result established in \eqref{eq:Hminus1_Hminus1h_db},
\begin{align}\label{eq:LH*f-Ihf}
		\|\L_h^{H*}f(\Pi_r u)- I_h f(\Pi_{r}u)\|_{H^{-1}(\Omega_h;\Gamma_h)}\lesssim \sup_{\substack{w_h\in X_h^k\\ \|w_h\|_{H^1(\Omega_h;\Gamma_h)}=1}}
		m_h\big(\L_h^{H*}f(\Pi_r u)- I_h f(\Pi_{r}u), w_h\big).
\end{align}
By the definitions of $\L_h^{H*}$ and $I_h$ in \eqref{eq:def:Lh*H} and \eqref{eq:def_Ih}, for any $w_h\in X_h^k$,
\begin{align*}
	m_h(\L_h^{H*}f(\Pi_r u)- I_h f(\Pi_{r}u), w_h)=m(f(\Pi_r u),\L_h w_h)-m_h\Big(\big(I_{h,\Omega}f_{\Omega}(\Pi_{r}u),I_{h,\Gamma}f_{\Gamma}(\Pi_{r}u)\big), w_h\Big).
\end{align*}
Moreover, by Lemma~\ref{lem:geom_consistency} and estimate \eqref{eq:fu_L2},
\begin{align*}
	\Big|m(f(\Pi_r u),\L_h w_h)-m_h(\L_h^{-1}f(\Pi_r u), w_h)\Big|&\lesssim h^k \big\|f(\Pi_r u)\big\|_{L^2(\Omega;\Gamma)}\big\|\L_h w_h\big\|_{L^2(\Omega;\Gamma)}\\
	&\lesssim h^k\,C(\|u\|_{H^1(\Omega;\Gamma)})\,\|w_h\|_{L^2(\Omega_h;\Gamma_h)}.
\end{align*}
On the other hand,
\begin{align*}
	&\Big|m_h(\L_h^{-1}f(\Pi_r u)- I_h f(\Pi_{r}u), w_h)\Big|\\
	&\quad\leq \left|\int_{\Omega_h} \Big(\L_h^{-1}f_{\Omega}(\Pi_r u)-I_{h,\Omega}f_{\Omega}(\Pi_r u)\Big)\,w_h\d x\right|+\left|\int_{\Gamma_h} \Big(\L_h^{-1}f_{\Gamma}(\Pi_r u)-I_{h,\Gamma}f_{\Gamma}(\Pi_r u)\Big)\,w_h\d s\right|\\
	&\quad\leq \Big\|\L_h^{-1}f_{\Omega}(\Pi_r u)-I_{h,\Omega}f_{\Omega}(\Pi_r u)\Big\|_{H^{-1}(\Omega_h)}\,\|w_h\|_{H^1(\Omega_h)}\\
	&\qquad+\Big\|\L_h^{-1}f_{\Gamma}(\Pi_r u)-I_{h,\Gamma}f_{\Gamma}(\Pi_r u)\Big\|_{H^{-1}(\Gamma_h)}\,\|w_h\|_{H^1(\Gamma_h)}.
\end{align*}
Substituting these bounds into \eqref{eq:LH*f-Ihf}, we obtain
\begin{align}\label{eq:estimate_LH*f-Ihf}
	&\|\L_h^{H*}f(\Pi_r u)- I_h f(\Pi_{r}u)\|_{H^{-1}(\Omega_h;\Gamma_h)}\lesssim \Big\|\L_h^{-1}f_{\Omega}(\Pi_r u)-I_{h,\Omega}f_{\Omega}(\Pi_r u)\Big\|_{H^{-1}(\Omega_h)}\notag\\
	&\qquad+\Big\|\L_h^{-1}f_{\Gamma}(\Pi_r u)-I_{h,\Gamma}f_{\Gamma}(\Pi_r u)\Big\|_{H^{-1}(\Gamma_h)}+h^k\,C(\|u\|_{H^1(\Omega;\Gamma)}).
\end{align}
Therefore, it remains to estimate the nodal interpolation errors for the bulk and surface nonlinearities $f_{\Omega}$ and $f_{\Gamma}$.

For simplicity, we present only the estimate for the bulk nonlinearity $f_{\Omega}$; the estimate for $f_{\Gamma}$ is derived in the same way and is in fact simpler. Using the Sobolev embedding $L^p(\Omega)\hookrightarrow H^{-1}(\Omega)$ for
		\begin{align}\label{eq:p}
		p=\left\{\begin{array}{l}
			1+,\quad\;\text{for}\quad d=2;\\[2mm]
			\frac{6}{5},\qquad \text{for}\quad d= 3;
		\end{array}\right.
	\end{align}
	and the boundedness of $\L_h^{-1}$ in $L^{p}(\Omega)$, we have
	\begin{align}\label{eq:interpolation_H-1}
		\Big\|\L_h^{-1}f_{\Omega}(\Pi_r u)-I_{h,\Omega}f_{\Omega}(\Pi_r u)\Big\|_{H^{-1}(\Omega_h)}
		\lesssim \|\L_h^{-1}f_{\Omega}(\Pi_r u)- I_{h,\Omega} f_{\Omega}(\Pi_{r}u)\|_{L^p(\Omega_h)}.
	\end{align}
	We next estimate the right-hand side by means of the standard local interpolation estimate for degree-$k$ isoparametric nodal interpolation on a shape-regular mesh; see, for example, \cite[Theorem~4.4.20]{brenner2008mathematical}. For each element $K\in\mathcal T_h$,
	\begin{align*}
		\|\L_h^{-1}f_{\Omega}(\Pi_r u)-I_{h,\Omega} f_{\Omega}(\Pi_{r}u)\|_{L^p(K)}
		\lesssim  h_K^{\ell}\Big|\L_h^{-1}f_{\Omega}(\Pi_{r}u)\Big|_{W^{\ell,p}(K)},
	\end{align*}
	provided that 
	\begin{align}\label{eq:condition_ell}
		1\leq \ell\leq k+1\quad \text{and}\quad \ell\cdot p>d.
	\end{align}
	
	Let  $\widetilde{K}=G(K)$. By \cite[Lemma~4.6]{ER2013}, the lift mapping $G$ defined in \eqref{eq:def_G} is piecewise $C^{k+1}$, and its derivatives up to order $k+1$, are uniformly bounded independently of $h$. Hence, by the definition of the inverse lift $\L_h^{-1}$ and the higher-order chain rule, we obtain
	\begin{align*}
		\Big|\L_h^{-1}f_{\Omega}(\Pi_{r}u)\Big|_{W^{\ell,p}(K)}\lesssim \Big|f_{\Omega}(\Pi_{r}u)\Big|_{W^{\ell,p}(\widetilde{K})},
	\end{align*}
	Summing the local estimates over all elements and using the quasi-uniformity of the mesh, we infer that
	\begin{align}\label{eq:interpolation_f_Lp}
		\|\L_h^{-1}f_{\Omega}(\Pi_r u)-I_{h,\Omega} f_{\Omega}(\Pi_{r}u)\|_{L^p(\Omega_h)}&\lesssim h^{\ell}\left(\sum_{K\in \mathcal{T}_h}\Big|f_{\Omega}(\Pi_{r}u)\Big|_{W^{\ell,p}(\widetilde{K})}^p\right)^{\frac{1}{p}}\notag\\
		&\leq h^{\ell}\Big|f_{\Omega}(\Pi_r u)\Big|_{W^{\ell,p}(\Omega)}.
	\end{align}
	In the present case since $k\ge d-1$, we can choose $\ell=d$ to satisfy \eqref{eq:condition_ell} in the above estimate. Thus it remains to bound $\Big|f_{\Omega}(\Pi_r u)\Big|_{W^{d,p}(\Omega)}$.
	
	By a direct application of the higher-order chain rule and Assumption~\ref{assump_f_db}, we obtain
	\begin{align}\label{eq:calculate_df}
		|\nabla^{d}f_{\Omega}(\Pi_{r} u)|&\lesssim \sum_{a=1}^{d}|f_{\Omega}^{(a)}(\Pi_{r}u)|\sum_{\xi_1+\xi_2+\cdots+\xi_a=d}\prod_{j=1}^{a} \bigl|\nabla^{\xi_j}\Pi_r u\bigr|\notag\\
		&\lesssim
		\sum_{a=1}^{d}\max\{(1+|\Pi_r u|)^{\zeta_\Omega-a},1\}
		\sum_{\substack{\xi_1+\cdots+\xi_a=d\\ \xi_j\ge 1}}
		\prod_{j=1}^{a} \bigl|\nabla^{\xi_j}\Pi_r u\bigr|.
	\end{align}
	
	We now distinguish the cases $d=2$ and $d=3$.
	
	\medskip
	\noindent
	\textit{Case $d=2$.}
	By \eqref{eq:calculate_df} and Assumption~\ref{assump_f_db}, it suffices to consider the case $\zeta_{\Omega}\ge 2$, since the case $\zeta_{\Omega}<2$ is simpler and can be treated in the same way.
	\begin{align*}
		&\|\nabla^{d}f_{\Omega}(\Pi_r u)\|_{L^p(\Omega)}\lesssim \left\|(1+|\Pi_r u|)^{\zeta_{\Omega}-2}\left|\nabla\Pi_r u\right|^2\right\|_{L^p(\Omega)}+\left\|(1+|\Pi_r u|)^{\zeta_{\Omega}-1}\left|\nabla^{2}\Pi_r u\right|\right\|_{L^p(\Omega)}\notag\\
		&\quad\leq \left\|(1+|\Pi_r u|)^{\zeta_{\Omega}-2}\right\|_{L^q(\Omega)}\left\|\left|\nabla\Pi_r u\right|^2\right\|_{L^2(\Omega)}+\left\|(1+|\Pi_r u|)^{\zeta_{\Omega}-1}\right\|_{L^q(\Omega)}\left\|\nabla^2\Pi_r u\right\|_{L^2(\Omega)},
	\end{align*}
	where
	\begin{align*}
		\frac{1}{q}+\frac{1}{2}=\frac{1}{p},\quad \text{thus}\quad q<\infty.
	\end{align*}
	From the Sobolev embedding $H^{\frac{3}{2}}(\Omega)\hookrightarrow W^{1,4}(\Omega)$ and $H^{1}(\Omega)\hookrightarrow L^\theta(\Omega)$ for all $\theta<\infty$ when $d=2$, there holds
	\begin{align*}
		\|\nabla^{d}f_{\Omega}(\Pi_r u)\|_{L^p(\Omega)}&\lesssim (1+\|\Pi_ru\|^{\zeta_{\Omega}-2}_{L^{(\zeta_{\Omega}-2)q}(\Omega)})\cdot\|\nabla \Pi_{r} u\|_{L^4(\Omega)}^2+(1+\|\Pi_ru\|^{\zeta_{\Omega}-1}_{L^{(\zeta_{\Omega}-1)q}(\Omega)})\cdot\left\|\nabla^2\Pi_r u\right\|_{L^2(\Omega)}\\
		&\leq C(\|u\|_{H^1(\Omega)})\cdot \left(\|\Pi_r u\|_{H^{\frac{3}{2}}(\Omega)}^2+\|\Pi_r u\|_{H^2(\Omega)}\right).
	\end{align*}
	Using the Bernstein inequality \eqref{eq:Bernstein_low} for $\Pi_r$, i.e. $\|\Pi_r u\|_{H^2(\Omega)}\lesssim r\|u\|_{H^1(\Omega)}$ and $\|\Pi_r u\|_{H^{\frac{3}{2}}(\Omega)}\lesssim\|\Pi_r u\|^{\frac{1}{2}}_{H^{1}(\Omega)}\|\Pi_r u\|^{\frac{1}{2}}_{H^{2}(\Omega)} \lesssim r^{\frac{1}{2}}\|u\|_{H^1(\Omega)}$,
	 we conclude that
	\begin{align}\label{eq:estimat_Lp_2d}
		\|\nabla^{d}f_{\Omega}(\Pi_r u)\|_{L^p(\Omega)}\leq r\cdot C(\|u\|_{H^1(\Omega)}), \quad \text{for}\quad d=2.
	\end{align}
	
	\medskip
	\noindent
	\textit{Case $d=3$.}
	By \eqref{eq:calculate_df} and  Assumption~\ref{assump_f_db}, it suffices to consider the case $2\leq \zeta_{\Omega}<3$,
	\begin{align*}
		&\|\nabla^{d}f_{\Omega}(\Pi_r u)\|_{L^p(\Omega)}\\
		&\lesssim \left\|\left|\nabla\Pi_r u\right|^3\right\|_{L^p(\Omega)}+\left\|(1+|\Pi_ru|)^{\zeta_{\Omega}-2}\left|\nabla^{2}\Pi_r u\right|\cdot \left|\nabla\Pi_r u\right|\right\|_{L^p(\Omega)}+\left\|(1+|\Pi_r u|)^{\zeta_{\Omega}-1}\left|\nabla^{3}\Pi_r u\right|\right\|_{L^p(\Omega)}\\
		&\leq \left\|\nabla\Pi_r u\right\|_{L^{2}(\Omega)}\cdot\left\|\nabla\Pi_r u\right\|^2_{L^{q_1}(\Omega)}+\left(1+\left\||\Pi_ru|^{\zeta_{\Omega}-2}\right\|_{L^{q_2}(\Omega)}\right)\left\|\nabla^{2}\Pi_r u\right\|_{L^2(\Omega)}\left\|\nabla\Pi_r u\right\|_{L^6(\Omega)}\\
		&\quad +\left(1+\left\||\Pi_ru|^{\zeta_{\Omega}-1}\right\|_{L^{q_3}(\Omega)}\right)\left\|\nabla^{3}\Pi_r u\right\|_{L^2(\Omega)},
	\end{align*}
	where $q_1=q_2=6$ and $q_3=3$ such that 
	\begin{align*}
		\frac{1}{2}+\frac{2}{q_1}=\frac{1}{q_2}+\frac{1}{2}+\frac{1}{6}=\frac{1}{q_3}+\frac{1}{2}=\frac{1}{p}=\frac{5}{6}.
	\end{align*}
	Since $\zeta_{\Omega}<3$, which implies $(\zeta_{\Omega}-2)\cdot q_2<6$ and $(\zeta_{\Omega}-1)\cdot q_3<6$, from the Sobolev embedding $H^1(\Omega)\hookrightarrow L^6(\Omega)$, in three dimensions, we infer that
	\begin{align}\label{eq:estimat_Lp_3d}
		\|\nabla^{d}f_{\Omega}(\Pi_r u)\|_{L^p(\Omega)}\lesssim &\|\Pi_r u\|_{H^1(\Omega)}\cdot \|\Pi_{r} u\|_{H^2(\Omega)}^2+\left(1+\|u\|_{H^1(\Omega)}^{\zeta_{\Omega}-2}\right) \|\Pi_r u\|^2_{H^2(\Omega)}\notag\\
		&+\left(1+\|u\|_{H^1(\Omega)}^{\zeta_{\Omega}-1}\right) \|\Pi_r u\|_{H^3(\Omega)}\notag\\
		\leq& r^2\cdot C(\|u\|_{H^1(\Omega)}),
	\end{align}
	where we have used the Bernstein inequality for $\Pi_r$.
	
	Combining \eqref{eq:estimat_Lp_2d} and \eqref{eq:estimat_Lp_3d} with \eqref{eq:interpolation_H-1} and \eqref{eq:interpolation_f_Lp}, we deduce that
	\begin{align*}
		\|\L_h^{-1}f_{\Omega}(\Pi_r u)- I_{h,\Omega}  f_{\Omega}(\Pi_{r}u)\|_{H^{-1}(\Omega_h)}\leq C(\|u\|_{H^1(\Omega)}) r^{d-1}h^{d},
	\end{align*}
	when $k\geq d-1$.
	Combining this with the analogous estimate for the surface nonlinearity $f_\Gamma$, and with \eqref{eq:estimate_LH*f-Ihf}, yields \eqref{eq:fu_H_ell}.

	We next prove \eqref{eq:fu_H_ell_1}, that is, the case $k<d-1$. This can occur only when $k=1$ and $d=3$. In this case, Proposition~5.4 of \cite{ER2013} yields
	\begin{align*}
		&\| f_{\Omega}(\Pi_{r}u(t_n))-\L_h I_{h,\Omega}  f_{\Omega}(\Pi_{r}u(t_n))\|_{L^2(\Omega)}+\| f_{\Gamma}(\Pi_{r}u(t_n))-\L_h I_{h,\Gamma}  f_{\Gamma}(\Pi_{r}u(t_n))\|_{L^2(\Gamma)}\\
		&\quad\lesssim  h^{2} \|f(\Pi_{r}u(t_n))\|_{H^{2}(\Omega)\times H^2(\Gamma)}.
	\end{align*}
	By the same chain-rule and Sobolev-embedding argument as above,
	together with the Bernstein estimate \eqref{eq:Bernstein_low}, we obtain
	\[
	\|f(\Pi_r u)\|_{H^2(\Omega)\times H^2(\Gamma)}
	\lesssim
	r^2 C\bigl(\|u\|_{H^1(\Omega;\Gamma)}\bigr).
	\]

	Therefore, by the boundedness of $\L_h^{-1}$ on $L^2(\Omega)$ and $L^2(\Gamma)$, together with the Sobolev embeddings $L^2(\Omega_h)\hookrightarrow H^{-1}(\Omega_h)$ and $L^2(\Gamma_h)\hookrightarrow H^{-1}(\Gamma_h)$, we obtain
	\begin{align*}
		&\Big\|\L_h^{-1}f_{\Omega}(\Pi_r u)-I_{h,\Omega}f_{\Omega}(\Pi_r u)\Big\|_{H^{-1}(\Omega_h)}+\Big\|\L_h^{-1}f_{\Gamma}(\Pi_r u)-I_{h,\Gamma}f_{\Gamma}(\Pi_r u)\Big\|_{H^{-1}(\Gamma_h)}\\
		&\quad\leq h^2r^2 C(\|u\|_{H^1(\Omega;\Gamma)}).
	\end{align*}
	Together with \eqref{eq:estimate_LH*f-Ihf}, this completes the proof of \eqref{eq:fu_H_ell_1}.
\end{proof}

\begin{proof}[Proof of Lemma~\ref{lem:interpolation}]
	We first prove \eqref{eq:Ihfuh-Ihfvh_1}. By the equivalence of the discrete dual norm, H\"{o}lder's inequality, and the Sobolev embeddings 	$L^{p_{1}}(\Omega_h)\hookrightarrow H^{-1}(\Omega_h)$ and $L^{p_{2}}(\Gamma_h)\hookrightarrow H^{-1}(\Gamma_h)$, we obtain
	\begin{align}\label{eq:decompose_Ihfuh-Ihfvh}
		&\|f_h(u_h)-f_h(v_h)\|_{H^{-1}(\Omega_h;\Gamma_h)}
		\lesssim
		\sup_{\substack{w_h\in X^k_h\\
				\|w_h\|_{H^1(\Omega_h;\Gamma_h)}=1}}
		\big|m_h(f_h(u_h)-f_h(v_h),w_h)\big|
		\notag\\
		&\quad\lesssim
		\|I_{h,\Omega} f_{\Omega}(\L_h u_h)
		-I_{h,\Omega} f_{\Omega}(\L_h v_h)\|_{L^{p_1}(\Omega_h)}+
		\|I_{h,\Gamma} f_{\Gamma}(\L_h u_h)
		-I_{h,\Gamma} f_{\Gamma}(\L_h v_h)\|_{L^{p_2}(\Gamma_h)}.
	\end{align}
	Here, we have used the definition of $I_h$ in \eqref{eq:def_Ih}
	and the $m_h$-orthogonality of $Q_h$.
	We mainly estimate the first term on the right-hand side of 
	\eqref{eq:decompose_Ihfuh-Ihfvh}; the second term can be handled analogously.
	
	Note that the interpolation operator $I_{h,\Omega}$ is not continuous with respect to the $L^{p_1}(\Omega_h)$-norm. Following the idea in \cite{HL2020}, we instead employ the discrete 
	$L^{p}(\Omega_h)$-norm, defined for $p\in[1,\infty)$ by
	\begin{align}\label{eq:def_discrete_norm}
		\left|\!\left|\!\left| u_h \right|\!\right|\!\right|_{p}:=h^{\frac{d}{p}}\left(\sum_{i=1}^{N}\big|u_h(a_i)\big|^p\right)^{\frac{1}{p}},
	\end{align}
	where $a_i$ $(i=1,\dots,N)$ denote the nodal points and $N=O(h^{-d})$ is the total number of these nodes.  
	For a finite element function $u_h \in X^k_h$, owing to the locality of the basis functions and the finite-dimensionality of the space, one can readily show that on $X^k_h$ the discrete norm 
	$\left|\!\left|\!\left|\,\cdot\,\right|\!\right|\!\right|_{p}$ and the $L^p$-norm $\|\cdot\|_{L^p(\Omega_h)}$ are equivalent 
	(for $p=2$ this is well known; for a general $p$ one may refer to \cite[Lemma~5.2]{Leibold2017}).
	
	Since \(I_{h,\Omega} f_{\Omega}(\mathcal L_h u_h),\, I_{h,\Omega} f_{\Omega}(\mathcal L_h v_h) \in V^{\Omega}_h\) and
	\(I_{h,\Omega}\) is the nodal interpolation operator, we evaluate these functions at the nodal
	points \(a_i\), \(i=1,\dots,N\). Moreover, for the isoparametric finite element
	discretization, the lifting operator \(\mathcal L_h\) leaves the nodal points \(a_i\)
	invariant. Hence,
	\[
	I_{h,\Omega} f_{\Omega}(\mathcal L_h u_h)(a_i)=f_{\Omega}(u_h(a_i)),
	\quad\text{and}\quad
	I_{h,\Omega} f_{\Omega}(\mathcal L_h v_h)(a_i)=f_{\Omega}(v_h(a_i)).
	\]
	Therefore, by the equivalence between the \(L^{p_1}(\Omega_h)\)-norm and the
	discrete norm \(\left|\!\left|\!\left| \cdot \right|\!\right|\!\right|_{p_1}\), together with
	\eqref{eq:taylor_fu-fv}, we obtain
	\begin{align}\label{eq:Ihfu-Ihfv_Lp}
		\|I_{h,\Omega} f_{\Omega}(\L_h u_h)-I_{h,\Omega} f_{\Omega}(\L_h v_h)\|_{L^{p_1}(\Omega_h)}&\sim h^{\frac{d}{p_1}}\left(\sum_{i=1}^{N}\Big|f_{\Omega}(u_h(a_i))- f_{\Omega}(v_h(a_i))\Big|^{p_1}\right)^{\frac{1}{p_1}}\notag\\
		&\lesssim h^{\frac{d}{p_1}}\left(\sum_{i=1}^{N}A_i^{(\zeta_{\Omega}-1)p_1}B_i^{p_1}\right)^{\frac{1}{p_1}},
	\end{align}
	where
	\begin{align}\label{eq:notations_AB}
		A_i=1+|u_h(a_i)|+|v_h(a_i)|,\quad \text{and}\quad B_i=|u_h(a_i)-v_h(a_i)|.
	\end{align}
	Using H\"{o}lder's inequality and the equivalence between the discrete and continuous norms, we obtain
	\begin{align*}
		h^{\frac{d}{p_1}}\left(\sum_{i=1}^{N}A_i^{(\zeta_{\Omega}-1)p_1}B_i^{p_1}\right)^{\frac{1}{p_1}}&\leq h^{\frac{d}{q_1}}\left(\sum_{i=1}^{N}A_i^{(\zeta_{\Omega}-1)q_1}\right)^{\frac{1}{q_1}}\cdot h^{\frac{d}{2}}\left(\sum_{i=1}^{N}B_i^{2}\right)^{\frac{1}{2}}\\
		&\lesssim  \Big(1+\left|\!\left|\!\left| u_h \right|\!\right|\!\right|_{(\zeta_{\Omega}-1)q_1}^{\zeta_{\Omega}-1}+\left|\!\left|\!\left| v_h \right|\!\right|\!\right|_{(\zeta_{\Omega}-1)q_1}^{\zeta_{\Omega}-1}\Big)\cdot \left|\!\left|\!\left|u_h- v_h \right|\!\right|\!\right|_{2}\\
		&\lesssim \left(1+\left\|u_h\right\|_{L^{(\zeta_{\Omega}-1)q_1}(\Omega_h)}^{\zeta_{\Omega}-1}+\left\|v_h\right\|_{L^{(\zeta_{\Omega}-1)q_1}(\Omega_h)}^{\zeta_{\Omega}-1}\right)\cdot \|u_h- v_h\|_{L^2(\Omega_h)},
	\end{align*}
	where $q_1$ is given by \eqref{eq:q1q2}.  
	Since $H^1(\Omega_h)\hookrightarrow L^{\theta_1}(\Omega_h)$ with $\theta_1$ given in \eqref{eq:l1l2}, 
	and under the assumption on $\zeta_{\Omega}$ in \eqref{eq:growth_condition_zeta} such that $(\zeta_{\Omega}-1)q_1\leq \theta_1$, we have
	\begin{align*}
		h^{\frac{d}{p_1}}\left(\sum_{i=1}^{N}A_i^{(\zeta_{\Omega}-1)p_1}B_i^{p_1}\right)^{\frac{1}{p_1}}\lesssim \left(1+\left\|u_h\right\|_{H^1(\Omega_h)}^{\zeta_{\Omega}-1}+\left\|v_h\right\|_{H^1(\Omega_h)}^{\zeta_{\Omega}-1}\right)\cdot \|u_h- v_h\|_{L^2(\Omega_h)}.
	\end{align*}
	Substituting the above estimate into \eqref{eq:Ihfu-Ihfv_Lp} yields
	\begin{align*}
		\|I_{h,\Omega} f_{\Omega}(\L_h u_h)-I_{h,\Omega} f_{\Omega}(\L_h v_h)\|_{L^{p_1}(\Omega_h)}\lesssim \left(1+\left\|u_h\right\|_{H^1(\Omega_h)}^{\zeta_{\Omega}-1}+\left\|v_h\right\|_{H^1(\Omega_h)}^{\zeta_{\Omega}-1}\right)\cdot \|u_h- v_h\|_{L^2(\Omega_h)}.
	\end{align*}
	Analogously, for the second term on the right-hand side of 
	\eqref{eq:decompose_Ihfuh-Ihfvh}, we have
	\begin{align*}
		\|I_{h,\Gamma} f_{\Gamma}(\L_h u_h)-I_{h,\Gamma} f_{\Gamma}(\L_h v_h)\|_{L^{p_2}(\Gamma_h)}\lesssim \left(1+\left\|u_h\right\|_{H^1(\Gamma_h)}^{\zeta_{\Gamma}-1}+\left\|v_h\right\|_{H^1(\Gamma_h)}^{\zeta_{\Gamma}-1}\right)\cdot \|u_h- v_h\|_{L^2(\Gamma_h)}.
	\end{align*}
	This completes the proof of \eqref{eq:Ihfuh-Ihfvh_1}.
	
	We next proceed to the proof of the estimate \eqref{eq:Ihfuh-Ihfvh_2}.  
	From the definition of the space $L^2(\Omega_h; \Gamma_h)$ and the definition of $I_h$ in \eqref{eq:def_Ih}, we have
	\begin{align}\label{eq:decompose_Ihfuh-Ihfvh_L2}
		&\|I_{h} f_h(\L_h u_h)-I_{h} f_h(\L_h v_h)\|_{L^2(\Omega_h;\Gamma_h)}\notag\\
		&\quad\leq \|I_{h,\Omega} f_{\Omega}(\L_h u_h)-I_{h,\Omega} f_{\Omega}(\L_h v_h)\|_{L^{2}(\Omega_h)}+\|I_{h,\Gamma} f_{\Gamma}(\L_h u_h)-I_{h,\Gamma} f_{\Gamma}(\L_h v_h)\|_{L^{2}(\Gamma_h)}.
	\end{align}
	As before, we mainly estimate the first term on the right-hand side of 
	\eqref{eq:decompose_Ihfuh-Ihfvh_L2}; the second term can be treated analogously.
	
	Using the discrete norm defined in \eqref{eq:def_discrete_norm} and the notation introduced in \eqref{eq:notations_AB}, we have
	\begin{align}\label{eq:Ihfu-Ihfv_L2}
		\|I_{h,\Omega} f_{\Omega}(\L_h u_h)-I_{h,\Omega} f_{\Omega}(\L_h v_h)\|_{L^{2}(\Omega_h)}&\sim h^{\frac{d}{2}}\left(\sum_{i=1}^{N}\Big| f_{\Omega}(u_h(a_i))- f_{\Omega}(v_h(a_i))\Big|^{2}\right)^{\frac{1}{2}}\notag\\
		&\lesssim h^{\frac{d}{2}}\left(\sum_{i=1}^{N}A_i^{2(\zeta_{\Omega}-1)}B_i^{2}\right)^{\frac{1}{2}}\notag\\
		&=h^{\frac{d}{p}}h^{\frac{\alpha d}{2}}\left(\sum_{i=1}^{N}A_i^{2(\zeta_{\Omega}-1)}B_i^{2(1-\alpha)}\cdot B_i^{2\alpha}\right)^{\frac{1}{2}},
	\end{align}
	where the exponent $p$ is chosen such that
	\begin{align}\label{eq:def_p}
		\frac{1}{p}+\frac{\alpha}{2}=\frac{1}{2},\quad \text{i.e.}\quad p=\frac{4}{2-2\alpha}.
	\end{align}
	Applying H\"{o}lder's inequality yields
	\begin{align}\label{eq:holder_L2}
		h^{\frac{d}{p}}h^{\frac{\alpha d}{2}}\left(\sum_{i=1}^{N}A_i^{2(\zeta_{\Omega}-1)}B_i^{2(1-\alpha)}\cdot B_i^{2\alpha}\right)^{\frac{1}{2}}\leq h^{\frac{d}{p}}h^{\frac{\alpha d}{2}} \left(\sum_{i=1}^{N}\Big(A_i^{\zeta_{\Omega}-1}B_i^{1-\alpha}\Big)^{p}\right)^{\frac{1}{p}}\cdot\left(\sum_{i=1}^{N} \big(B_i^{\alpha}\big)^{\frac{2}{\alpha}}\right)^{\frac{\alpha}{2}}
	\end{align}
	Note that, since $B_i=|u_h(a_i)-v_h(a_i)|
	\le |u_h(a_i)|+|v_h(a_i)|
	\le A_i$, we have
	\begin{align*}
		A_i^{\zeta_{\Omega}-1}B_i^{1-\alpha}=\Big(1+|u_h(a_i)|+|v_h(a_i)|\Big)^{\zeta_{\Omega}-1}\cdot |u_h(a_i)-v_h(a_i)|^{1-\alpha}\lesssim A_i^{\zeta_{\Omega}-\alpha}.
	\end{align*}
	Combining \eqref{eq:Ihfu-Ihfv_L2}, \eqref{eq:holder_L2}, and the equivalence between the discrete and continuous norms gives
	\begin{align}\label{eq:Ihfu-Ihfv_L2_new}
		&\|I_{h,\Omega} f_{\Omega}(\L_h u_h)-I_{h,\Omega} f_{\Omega}(\L_h v_h)\|_{L^{2}(\Omega_h)}\lesssim h^{\frac{d}{p}}h^{\frac{\alpha d}{2}} \left(\sum_{i=1}^{N}\big(A_i^{\zeta_{\Omega}-\alpha}\big)^{p}\right)^{\frac{1}{p}}\cdot\left(\sum_{i=1}^{N} \big(B_i^{\alpha}\big)^{\frac{2}{\alpha}}\right)^{\frac{\alpha}{2}}\notag\\
		&\lesssim  \Big(1+\left|\!\left|\!\left| u_h \right|\!\right|\!\right|_{(\zeta_{\Omega}-\alpha)p}^{\zeta_{\Omega}-\alpha}+\left|\!\left|\!\left| v_h \right|\!\right|\!\right|_{(\zeta_{\Omega}-\alpha)p}^{\zeta_{\Omega}-\alpha}\Big)\cdot \left|\!\left|\!\left|u_h- v_h \right|\!\right|\!\right|^{\alpha}_{2}\notag\\
		&\lesssim \left(1+\left\|u_h\right\|_{L^{(\zeta_{\Omega}-\alpha)p}}^{\zeta_{\Omega}-\alpha}+\left\|v_h\right\|_{L^{(\zeta_{\Omega}-\alpha)p}}^{\zeta_{\Omega}-\alpha}\right)\cdot \|u_h- v_h\|^{\alpha}_{L^2(\Omega_h)}.
	\end{align}
	For the Sobolev embeddings, we know that 
	{ \begin{align*}
		H^1(\Omega_h)\hookrightarrow L^{\theta}(\Omega_h), \quad \text{for}\quad \theta\left\{\begin{array}{l}
			<\infty,\quad \;\;\text{for}\quad d=2;\\[2mm]
			=6,\quad\text{for}\quad d= 3.
		\end{array}\right.
	\end{align*}}

	If $d=2$, it is straightforward to verify that 
	\begin{align*}
		(\zeta_{\Omega}-\alpha)p<\infty, \quad \text{for}\quad \alpha=\frac{1}{2},\quad \text{and} \quad p=\frac{4}{2-2\alpha}.
	\end{align*}

	For $d=3$, one can check directly that
	\begin{align*}
		(\zeta_{\Omega}-\alpha)p=\frac{2d}{d-2},\quad \text{for}\quad \alpha=\frac{d-2}{2}\left(\frac{d}{d-2}-\zeta_{\Omega}\right),\quad \text{and} \quad p=\frac{4}{2-2\alpha}.
	\end{align*}
	
	Hence, under the choice of $\alpha$ in \eqref{eq:choose_alpha}, we obtain
	\begin{align*}
		\left\|u_h\right\|_{L^{(\zeta_{\Omega}-\alpha)p}}+\left\|v_h\right\|_{L^{(\zeta_{\Omega}-\alpha)p}}\lesssim \|u_h\|_{H^1(\Omega_h)}+\|v_h\|_{H^1(\Omega_h)}.
	\end{align*}
	Substituting this estimate into \eqref{eq:Ihfu-Ihfv_L2_new}, we conclude that
	\begin{align*}
		\|I_{h,\Omega} f_{\Omega}(\L_h u_h)-I_{h,\Omega} f_{\Omega}(\L_h v_h)\|_{L^{2}(\Omega_h)}\lesssim \left(1+\|u_h\|_{H^1(\Omega_h)}^{\zeta_{\Omega}-\alpha}+\|v_h\|_{H^1(\Omega_h)}^{\zeta_{\Omega}-\alpha}\right)\cdot \|u_h- v_h\|^{\alpha}_{L^2(\Omega_h)}.
	\end{align*}
	
	Analogously, we have
	\begin{align*}
		\|I_{h\Gamma} f_{\Gamma}(\L_h u_h)-I_{h,\Gamma} f_{\Gamma}(\L_h v_h)\|_{L^{2}(\Gamma_h)}\lesssim \left(1+\|u_h\|_{H^1(\Gamma_h)}^{\zeta_{\Gamma}-\alpha}+\|v_h\|_{H^1(\Gamma_h)}^{\zeta_{\Gamma}-\alpha}\right)\cdot \|u_h- v_h\|^{\alpha}_{L^2(\Gamma_h)}.
	\end{align*}
	This completes the proof of \eqref{eq:Ihfuh-Ihfvh_2}.
\end{proof}

\bibliographystyle{abbrv} 
\bibliography{literature}

\end{document}